\documentclass{article}
\usepackage{amsmath,amssymb,amsthm}
\usepackage{graphicx,float,xcolor}
\usepackage{tikz}
\usepackage{mathrsfs}
\usepackage[colorlinks=true]{hyperref}
\usepackage{enumitem}
\usepackage{esint}

\def\R{\mathrm{I\kern-0.21emR}}
\def\N{\mathrm{I\kern-0.21emN}}
\def\Z{\mathbb{Z}}
\newcommand{\C} {\mathbb{C}}

\newcommand{\Lip}{\operatorname{Lip}}

\newcommand{\cl}{\mathrm{cl}}

\renewcommand{\geq}{\geqslant}
\renewcommand{\leq}{\leqslant}

\newtheorem{theorem}{Theorem}[section]
\newtheorem{proposition}{Proposition}[section]
\newtheorem{corollary}{Corollary}[section]
\newtheorem{definition}{Definition}[section]
\newtheorem{lemma}{Lemma}[section]
\newtheorem{example}{Example}[section]
\theoremstyle{definition}\newtheorem{remark}{Remark}[section]

\title{Boundary-compatible interacting approximations of quasilinear PDEs on bounded domains}

\author{
Thierry Paul\footnote{CNRS Laboratoire Ypatia des Sciences Math\'ematiques LYSM, Rome, Italy (\texttt{thierry.paul@sorbonne-universite.fr}).}
\and
Emmanuel Tr\'elat\footnote{Sorbonne Universit\'e, Universit\'e Paris Cit\'e, CNRS, Inria, Laboratoire Jacques-Louis Lions, LJLL, F-75005 Paris, France (\texttt{emmanuel.trelat@sorbonne-universite.fr}).}
}

\date{}

\begin{document}

\maketitle

\begin{abstract}
We develop a general operator-theoretic route that turns Kato-type quasilinear evolution systems on a Banach scale $(Z,X)$ into finite-dimensional interacting approximations. The construction proceeds in two steps. First, one introduces a regularized family $(A_\varepsilon,f_\varepsilon)$ indexed by a scale parameter $\varepsilon>0$, for which the drift $A_\varepsilon[t,z]z+f_\varepsilon[t,z]$ takes values in an output space $Y$ suitable for discretization. Second, one discretizes this regularized dynamics by a sampling-reconstruction pair $(P_N,R_N)$ and obtains an interacting ODE on a finite-dimensional state space $V_N\simeq\R^{dN}$. Our main abstract theorem provides a quantitative estimate of the discrepancy $y_\varepsilon^N-y$ between the lifted discrete solution and the exact one, separating the regularization error $\chi(\varepsilon)$ from the discretization error $(1+L_\varepsilon)N^{-\gamma}$, where $L_\varepsilon$ measures the size of the regularized drift in the output norm. This makes explicit the trade-off between the regularization scale $\varepsilon$, the discretization scale $N$, and the possible deterioration of $L_\varepsilon$ as $\varepsilon\to 0$.

As a running example, we focus on quasilinear PDEs on bounded Lipschitz domains with boundary conditions. We show that Burenkov's variable-step mollifiers provide a boundary-compatible kernelization: they regularize differential operators into explicit integral-interaction operators supported inside the domain and preserve boundary traces of sufficiently regular fields. In this setting one can choose an output space $Y$ for which $L_\varepsilon$ remains uniformly bounded, leading to algebraic convergence rates in $N$ for quasi-uniform discretizations.
\end{abstract}


\section{Introduction}\label{sec_intro}
Particle and meshfree discretizations provide a classical way to represent PDE dynamics through finitely many interacting degrees of freedom. In incompressible fluids, vortex methods build Lagrangian particle representations of the vorticity dynamics, see for instance \cite{CottetKoumoutsakos}. Deterministic particle approximations of scalar conservation laws and nonlocal transport equations have been developed in \cite{dif0, dif1, dif2} and are surveyed in \cite{Chertock_HNA2017}. The weighted particle method of \cite{DegondMasGallic_MC1989} approximates diffusion operators by integral operators before a particle discretization, which is close in spirit to the kernelization used here, and a deterministic particle method for nonlinear diffusion appears in \cite{LionsMas-Gallic}. An early deterministic particle method for diffusion, based on an osmotic velocity and irregular particle meshes, was proposed in \cite{Russo_CPAM1990}, where general boundary conditions were already identified as a difficulty; the boundary-compatible kernelization of the present paper is one way to address this point. Nonlocal and blob-type kernelizations have also been used to approximate diffusion or gradient-flow PDEs by interacting particle systems, see \cite{CarrilloCraigPatacchini_CVPDE2019, CarrilloEspositoWu_CVPDE2024} and references therein. From a complementary viewpoint, \cite{PaulTrelat} studies mean-field, hydrodynamic, and graph limits for interacting finite-dimensional systems, while \cite{Medvedev_SIMA2014} analyzes graph limits for a nonlinear heat equation on dense graphs. In parallel, standard numerical analysis offers a vast arsenal of finite-dimensional discretizations (Galerkin, finite elements, finite volumes), see, e.g., \cite{BrennerScott, EymardGallouetHerbin}. 

The purpose of this paper is not merely to assert that a quasilinear PDE admits some finite-dimensional discretization. Rather, we develop a general operator-theoretic route that produces, from a suitable quasilinear evolution system, an explicit interacting approximation compatible with boundary conditions on bounded domains. The central point is that the interaction is not postulated from the outset: it is created by a boundary-compatible kernelization step. This step converts a differential operator with distributional Schwartz kernel into a genuine integral operator supported inside $\Omega\times\Omega$, while preserving boundary traces. The discretization step then acts on this kernelized dynamics through a sampling-reconstruction pair and yields a finite-dimensional interacting system. In that sense, the present paper adds to the classical passage from a finite interacting system to a continuum limit an upstream passage from an unbounded PDE operator to a boundary-compatible interacting operator.

On a bounded domain $\Omega$ with boundary, naive convolution regularizations typically require extending the unknown outside $\Omega$ and may generate spurious boundary layers or a loss of trace information. To circumvent this, we rely on Burenkov's variable-step mollifiers developed in \cite{Burenkov_1982, Burenkov_1998} and revisited recently in \cite{Hintermuller}. The mollification radius is adapted to the distance to $\partial\Omega$, which keeps the interaction inside $\Omega$ and, for sufficiently regular fields, preserves boundary values and traces of derivatives.

We formalize the above strategy in a Kato-type framework for quasilinear evolution equations (see \cite{Kato_1975, Kato_1993}). The construction proceeds in two steps governed by two scales: a regularization scale $\varepsilon>0$ and a discretization scale $N\in\N^*$. First, we introduce a regularized family $(A_\varepsilon,f_\varepsilon)$ that kernelizes the original operator while producing an output class $Y$ on which discretization is accurate. Second, we discretize the regularized dynamics by a sampling-reconstruction pair $(P_N,R_N)$ and obtain an interacting ODE on a finite-dimensional state space $V_N\simeq\R^{dN}$. The resulting system can be read as a particle-in-cell or blob-type scheme: the unknown is represented by finitely many distinguishable degrees of freedom, and the interaction is mediated by a kernel whose localization is inherited from the regularization. This should be contrasted with the classical meaning of ``particle systems'' in kinetic theory, where particles move in space. In the present work, the sampling sites can be fixed and the interaction occurs in the state space $V_N$.

Our main contributions are the following.
\begin{itemize}[parsep=0.7mm, itemsep=0.7mm, topsep=0.7mm]
\item \textbf{Abstract approximation theorem.}
We prove an approximation theorem (Theorem \ref{thm_approx_abstract}) for Kato-type quasilinear evolution systems, providing an explicit estimate of the discrepancy between the exact solution $y$ and the lifted discrete solution $y_\varepsilon^N$. The estimate separates the regularization error $\chi(\varepsilon)$ from the discretization error $(1+L_\varepsilon)N^{-\gamma}$, where $L_\varepsilon$ measures the size of the regularized drift in the output norm.
\item \textbf{Boundary-compatible kernelization.}
On bounded Lipschitz domains with boundary conditions, we show that variable-step mollifiers yield a boundary-compatible regularization of differential operators: the regularized operators have explicit integral kernels supported in $\Omega\times\Omega$, preserve boundary traces, and map sufficiently regular inputs to an output class with good control.
\item \textbf{Quantitative scale balance.}
We identify explicitly the trade-off between the regularization scale $\varepsilon$, the discretization scale $N$, and the possible deterioration of $L_\varepsilon$ as $\varepsilon\to 0$. Polynomial growth of $L_\varepsilon$ leads to algebraic convergence rates, while more singular regularizations may force logarithmic choices of $\varepsilon_N$.
\end{itemize}

\medskip

These contributions culminate in the main approximation theorem (Theorem \ref{thm_approx_abstract}), which can be informally summarized as follows: under the above Kato-type and regularization assumptions, and for a sampling-reconstruction discretization that approximates the identity on the output class $Y$ at order $N^{-\gamma}$, the lifted discrete solution $y_\varepsilon^N$ satisfies, on a uniform time interval,
$$
\Vert y_\varepsilon^N(t)-y(t) \Vert_X \leq \mathrm{Cst}\left( \chi(\varepsilon) + \frac{1+L_\varepsilon}{N^\gamma} \right) .
$$
Balancing the regularization scale $\varepsilon$ against the discretization scale $N$ then yields an algebraic convergence rate in $N$. In particular, for the running class of quasilinear PDEs on bounded Lipschitz domains, one has $\chi(\varepsilon)=\mathrm{O}(\varepsilon)$ and $L_\varepsilon$ stays uniformly bounded, so the choice $\varepsilon_N\simeq N^{-\gamma}$ gives a rate of order $N^{-\gamma}$ in $L^2(\Omega,\R^d)$ (Corollary \ref{cor_approx_PDE}), with $\gamma=1/n$ for quasi-uniform discretizations of an $n$-dimensional domain. The resulting finite-dimensional systems are explicit interacting systems on $V_N$; two emblematic instances, a transport equation leading to a centered, skew-symmetric (energy-conserving) interaction and a heat equation leading to a graph-Laplacian-type interaction, are worked out in Section \ref{subsec_examples} and discussed in Section \ref{sec_discussion}.

We emphasize that if one only seeks a finite-dimensional approximation of the dynamics, direct discretizations of the unbounded operator $A$ (Galerkin, FEM, finite volumes) may be more economical and are covered by standard numerical analysis. The regularization step of our route is introduced for a different reason: it converts a differential operator with distributional Schwartz kernel and boundary constraints into an explicit integral-interaction operator that remains inside the domain and preserves traces. This kernelization is the key to an interacting-system interpretation that remains meaningful on bounded domains with boundary conditions.

\medskip

The paper is organized as follows.
Section \ref{sec_abstract_setting} recalls Kato's framework for local well-posedness of quasilinear evolution systems on a Banach scale and introduces a running class of quasilinear PDEs on bounded domains.
Section \ref{sec_abstract_approx} presents $\varepsilon$-regularization as a kernelization mechanism and, for the running PDE class, constructs an explicit boundary-compatible regularization based on Burenkov's variable-step mollifiers.
Section \ref{sec_abstract_finite_approx} introduces sampling-reconstruction discretizations on finite-dimensional spaces, proves the main interacting approximation theorem, and instantiates it on the running PDE class to obtain an explicit finite-dimensional interacting system together with quantitative rates.
Section \ref{sec_examples_discretizations} discusses representative discretization templates and points to Appendix \ref{app_discretization_technical} for technical criteria.
Section \ref{sec_discussion} comments on the interacting interpretation, locality patterns, the role of the two-step route, and possible extensions.
The key properties of variable-step mollifiers are recalled in Appendix \ref{app_convolution}, and a general Schur test is recalled in Appendix \ref{app_Schur}.

\paragraph{Notation.}
For $d\in\N^\ast$, we denote by $\Vert\cdot\Vert_{\R^d}$ the Euclidean norm on $\R^d$.
Let $\Omega$ be a measurable subset of $\R^d$.
For any $r\in[1,+\infty]$, we denote by $L^r(\Omega,\R^d)$ the usual Lebesgue space.
For any integer $p\geq 0$, we denote by $W^{p,r}(\Omega,\R^d)$ the Sobolev space of functions whose weak derivatives up to order $p$ belong to $L^r(\Omega,\R^d)$, endowed with
$$
\Vert y\Vert_{W^{p,r}} = \max_{\vert\alpha\vert\leq p}\Vert D^\alpha y\Vert_{L^r}.
$$
For $r=2$, we write $H^p(\Omega,\R^d)=W^{p,2}(\Omega,\R^d)$.

If $E$ and $F$ are Banach spaces, we write $L(E,F)$ for the space of bounded linear maps from $E$ to $F$, and $L(E)=L(E,E)$.

\section{Kato framework and a running PDE class}\label{sec_abstract_setting}
%

Let $T>0$.
We consider the quasilinear evolution equation
\begin{equation}\label{abstract_quasilinear}
\boxed{
\dot{y}(t) = A[t,y(t)] y(t) + f[t,y(t)]
}
\end{equation}
for $t\in[0,T]$, with some initial condition $y(0)=y^0\in X$, where $A[t,z]$ is a linear operator on $X$, of domain $D(A[t,z])$, and $f[t,z]\in X$, for all $(t,z)\in[0,T]\times X$.


When $X=\mathcal{F}(\Omega,\R^d)$ is a space of $\R^d$-valued functions on a given complete metric space $\Omega$, for some $d\in\N^*$, assuming that $t\mapsto y(t)\in X$ is a solution of \eqref{abstract_quasilinear}, since $y(t)$ is a function on $\Omega$, in the sequel we denote indifferently $y(t)(x)=y(t,x)$ for all $t\geq 0$ and $x\in\Omega$.

Note that we use brackets to denote $A[t,y(t)]$ and $f[t,y(t)]$ in \eqref{abstract_quasilinear} in order to underline their possible \emph{nonlocal} dependence with respect to $x\in\Omega$: $A[t,y(t)]$ and $f[t,y(t)]$ do not  necessarily depend only on the value $y(t,x)$ at $x$ of the function $y(t)\in X$, but may involve, for instance, a nonlocal term like $(\int_\Omega y(t,x')^k\, dx' ) y^{k'}(t,x)$ for some $k,k'\geq 1$, or $\int_\Omega \rho(x-x')y(t,x')\, dx' \, y(t,x)^3$ as it is the case in some Vlasov equations.

\paragraph{Objective.}
Our objective is to approximate sufficiently regular solutions $t\mapsto y(t)$ of \eqref{abstract_quasilinear} by lifted solutions of finite-dimensional interacting systems on $V_N\simeq\R^{dN}$, built from sampling and reconstruction operators.

\subsection{Kato's hypotheses and local well-posedness}\label{subsec_kato}


In this section, as well as in the next Section \ref{sec_abstract_approx}, we do not need that $X$ be a space of functions: 
$X$ can be any arbitrary Banach space.

Existence and uniqueness of a solution of \eqref{abstract_quasilinear} are classical and are ensured by proving that the following mapping $\Phi$ is contractive and thus has a fixed point (see \cite{Kato_1975, Kato_1993} or \cite[Section 6.4, Theorem 4.6]{Pazy}): given an appropriate function $t\mapsto z(t)\in X$, $y(\cdot)=\Phi(z(\cdot))$ is defined as the unique solution of $\dot{y}(t) = A[t,z(t)] y(t) + f[t,z(t)]$ such that $y(0)=y^0$. This is done under the following classical assumptions, due to Kato and borrowed from \cite{Kato_1993, Sanekata_1989} (in fact, slightly more general assumptions are done in \cite{Kato_1993}).

\smallskip

\noindent\textbf{Banach space assumptions.}
\begin{enumerate}[label=$(H_{\arabic*})$, leftmargin=*, parsep=0.7mm, itemsep=0.7mm, topsep=0.7mm]
\item\label{H_Z} There exists a Banach subspace $Z$ of $X$, dense in $X$ and continuously embedded in $X$, i.e., there exists $C_1>0$ such that $\Vert z\Vert_X\leq C_1\Vert z\Vert_Z$ for every $z\in Z$. 
\item\label{H_S} There exists an operator $S\in L(Z,X)$ such that $\Vert z\Vert_Z = \Vert z\Vert_X + \Vert Sz\Vert_X$ (graph norm); equivalently, $S$ is a closed operator on $X$ of domain $D(S)=Z$.
\end{enumerate}

\smallskip

Throughout the paper, let $y^0\in Z$ and $T>0$ be fixed.

In what follows, given any $r>0$, we denote by $B_Z(y^0,r) = \{ z\in Z\ \mid\ \Vert z-y^0\Vert_Z\leq r\}$ the closed ball in $Z$ of center $y^0$ and radius $r$, and by $\cl_X({B}_Z(y^0,r))$ its closure in $X$. Note that $\cl_X({B}_Z(y^0,r))=B_Z(y^0,r)$ if $X$ and $Z$ are reflexive.

\smallskip

\noindent\textbf{Assumptions on the operator.}
There exists $r>0$ such that, for all $t\in[0,T]$ and $z\in B_Z(y^0,r)$:
\begin{enumerate}[label=$(H_{\arabic*})$, resume, leftmargin=*, parsep=0.7mm, itemsep=0.7mm, topsep=0.7mm]
\setcounter{enumi}{2}
\item\label{H_stab} (Semigroup and stability) The operator $A[t,z]$ generates a $C_0$ semigroup $(e^{sA[t,z]})_{s\geq 0}$ on $X$, and there exist $M\geq 1$ and $\omega\in\R$ such that, for every $k\in\N^*$, for all $s_1,\ldots,s_k\geq 0$ and all $0\leq t_1\leq\cdots\leq t_k\leq T$, one has
$\Vert e^{s_1A[t_1,z]} \cdots e^{s_kA[t_k,z]} \Vert_{L(X)} \leq M e^{(s_1+\cdots+s_k)\omega}$.\footnote{Note that the latter stability estimate holds true (with $M=1$) if $(e^{sA[t,z]})_{s\geq 0}$ is a semigroup of contractions.
}
\item\label{H_regZ} 
$Z\subset D(A[t,z])$, 
$A[t,z]\in L(Z,X)$ depends continuously on $t$, and there exist $C_4,C'_4\geq 0$ such that $\Vert A[t,z]\Vert_{L(Z,X)}\leq C_4$ and $\Vert A[t,z_1]-A[t,z_2]\Vert_{L(Z,X)}\leq C'_4 \Vert z_1-z_2\Vert_X$ for all $t\in[0,T]$ and $z,z_1,z_2\in B_Z(y^0,r)$.
\item\label{H_intertwining} (Intertwining condition) 
There exist $C_5\geq 0$ and $B\in \mathscr{C}^0([0,T]\times \cl_X({B}_Z(y^0,r)),L(X))$ such that
$\Vert B[t,z]\Vert_{L(X)}\leq C_5$ and
$S A[t,z]  = A[t,z] S + B[t,z] S$ 
on $D(A[t,z]S)\cap D(SA[t,z])$, with $(A[t,z]-\lambda\mathrm{id})^{-1}Z\subset Z$ for every $\lambda>\omega$,
for all $t\in [0,T]$ and $z\in\cl_X({B}_Z(y^0,r))$.\footnote{Equivalently, $SA[t,z]=A[t,z]S+B[t,z]S$ on some subset $E\subset D(A[t,z]S)$ such that $A[t,z]E$ is dense in $Z$ (see \cite[Lemma 1.3]{Kato_1993}). This entails that $Z$ be invariant under the semigroup generated by $A[t,z]$.
The intertwining assumption is actually the hypothesis that allows Kato's theory to build an evolution system that is simultaneously bounded on $X$ and on $Z$.
}
\end{enumerate}

\smallskip

\noindent\textbf{Assumptions on $f$.}
Finally, we assume that:
\begin{enumerate}[label=$(H_{\arabic*})$, resume, leftmargin=*, parsep=0.7mm, itemsep=0.7mm, topsep=0.7mm]
\item\label{H_f_Lip} 
$f\in\mathscr{C}^0([0,T]\times B_Z(y^0,r),Z)$ and there exist $C_6,C'_6\geq 0$ such that $\Vert f[t,z]\Vert_Z\leq C_6$ 
and
$\Vert f[t,z_1]-f[t,z_2]\Vert_X \leq C'_6\Vert z_1-z_2\Vert_X$ for all $t\in[0,T]$ and $z,z_1,z_2\in B_Z(y^0,r)$.
\end{enumerate}

\smallskip

\noindent\textbf{Evolution system.}
As established in \cite[Theorem I]{Kato_1993} (see also \cite{Sanekata_1989} and \cite[Section 6.4]{Pazy}), under Assumptions \ref{H_Z} to \ref{H_intertwining}, given any $z(\cdot)\in\mathscr{C}^0([0,T],X)$ such that $z(t)\in B_Z(y^0,r)$ for every $t\in[0,T]$, there exists an \emph{evolution system} $(U_z(t,s))_{0\leq s\leq t\leq T}$ on $X$, i.e., a family of operators $U_z(t,s)\in L(X)\cap L(Z)$ depending continuously on $(t,s)$ and satisfying, for all $0\leq s\leq\tau\leq t\leq T$:
\begin{enumerate}[label=$(E_{\arabic*})$, leftmargin=*, parsep=0.7mm, itemsep=0.7mm, topsep=0.7mm]
\item\label{E_morphism} $U_z(t,s) = U_z(t,\tau) U_z(\tau,s)$ and $U_z(s,s) = \mathrm{id}_X$.
\item\label{E_UZ} $U_z(t,s)Z\subset Z$; 
\item\label{E_estim_exp} $\Vert U_z(t,s)\Vert_{L(X)} \leq Me^{\omega(t-s)}$ and $\Vert U_z(t,s)\Vert_{L(Z)} \leq Me^{\omega'(t-s)}$ where $\omega'=\omega+MC_5$;
\item\label{E_ode} $\partial_t U_z(t,s) = A[t,z(t)]\, U_z(t,s)$ and $\partial_s U_z(t,s) = -U_z(t,s)A[s,z(s)]$ on $Z$ (the derivatives exist in $L(Z,X)$).
\end{enumerate}

\begin{proposition}\label{prop_existence_uniqueness}
Under Assumptions \ref{H_Z} to \ref{H_f_Lip}, there exist $T'\in(0,T]$ and a unique solution $y(\cdot)\in\mathscr{C}^0([0,T'],Z)\cap\mathscr{C}^1([0,T'],X)$ of \eqref{abstract_quasilinear} such that $y(0)=y^0$. Moreover, $y(t)\in B_Z(y^0,r)$ for every $t\in[0,T']$ and
\begin{equation}\label{duhamel_implicit}
y(t) = U_y(t,0)y^0 + \int_0^t U_y(t,s) f[s,y(s)]\, ds .
\end{equation}
\end{proposition}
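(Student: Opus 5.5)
We will follow Kato's classical fixed-point strategy, applying the contraction principle to the map $\Phi$ described above. For $T'\in(0,T]$ to be chosen later, let $\mathcal{W}_{T'}$ be the set of $z\in\mathscr{C}^0([0,T'],X)$ with $z(t)\in B_Z(y^0,r)$ for all $t\in[0,T']$ and $z(0)=y^0$. We equip it with the metric $d(z_1,z_2)=\sup_{t\in[0,T']}\Vert z_1(t)-z_2(t)\Vert_X$.

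The first step is completeness of $\mathcal{W}_{T'}$. The delicate point is that an $X$-limit of elements of $B_Z(y^0,r)$ only lies in $\cl_X(B_Z(y^0,r))$. This is exactly why \ref{H_intertwining} is formulated on $\cl_X(B_Z(y^0,r))$. We will follow \cite{Kato_1993}, \cite[Section 6.4]{Pazy}: either assume reflexivity so that the closure equals the ball, or work directly in the closure, using that \ref{H_regZ} and \ref{H_f_Lip} extend by $X$-Lipschitz continuity.

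For $z\in\mathcal{W}_{T'}$ we set
$$
\Phi(z)(t)=U_z(t,0)y^0+\int_0^t U_z(t,s)f[s,z(s)]\,ds .
$$
Using \ref{E_UZ}, \ref{E_ode} and $f[s,z(s)]\in Z$, this is the unique $\mathscr{C}^0(Z)\cap\mathscr{C}^1(X)$ solution of the linear problem with frozen coefficient $z$.

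\textbf{Invariance.} We bound $\Vert\Phi(z)(t)-y^0\Vert_Z\leq\Vert U_z(t,0)y^0-y^0\Vert_Z+MT'e^{\omega'T'}C_6$ by \ref{E_estim_exp}. The first term tends to $0$ as $T'\to0$, but it has to do so uniformly in $z$, and this is the main obstacle. Our first attempt is to write
$$
U_z(t,0)y^0-y^0=\int_0^t U_z(t,s)A[s,z(s)]y^0\,ds ,
$$
which is small in $X$ since $\Vert A[s,z(s)]y^0\Vert_X\leq C_4\Vert y^0\Vert_Z$. The difficulty is that smallness in $Z$ is not given by this formula. Following Kato, we will handle it by shrinking the ball or radius: we estimate $\Vert U_z(t,0)y^0\Vert_Z\leq Me^{\omega'T'}\Vert y^0\Vert_Z$ and measure the ball in an equivalent norm. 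Alternatively, we approximate $y^0$ in $Z$ by elements of $E$ (from the footnote) with $Sy^0$ regular, using the intertwining relation to control $SU_z(t,0)y^0$ via
$$
S U_z(t,s)=W_z(t,s)S ,
$$
where $W_z$ is the evolution system generated by $A+B$. Strong continuity of $W_z$ is uniform in $z$ thanks to the uniform bounds \ref{H_stab}, \ref{H_regZ} and \ref{H_intertwining}. Either way, for $T'$ small we get $\Phi(\mathcal{W}_{T'})\subset\mathcal{W}_{T'}$; continuity into $X$ is immediate.

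\textbf{Contraction.} For $z_1,z_2\in\mathcal{W}_{T'}$ we use the standard identity
$$
U_{z_1}(t,0)w-U_{z_2}(t,0)w=\int_0^t U_{z_1}(t,s)\bigl(A[s,z_1(s)]-A[s,z_2(s)]\bigr)U_{z_2}(s,0)w\,ds ,
$$
valid for $w\in Z$ by \ref{E_ode}. Combined with \ref{H_regZ}, \ref{E_estim_exp} and the $Z$-bound on $U_{z_2}$, this gives a bound by $CT'\,d(z_1,z_2)$. The Duhamel term is treated in the same way, together with the $X$-Lipschitz bound on $f$ from \ref{H_f_Lip}. Hence $d(\Phi z_1,\Phi z_2)\leq CT'd(z_1,z_2)$, which is a contraction for $T'$ small.

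The fixed point $y$ satisfies \eqref{duhamel_implicit}, lies in $B_Z(y^0,r)$, and is in $\mathscr{C}^0([0,T'],Z)\cap\mathscr{C}^1([0,T'],X)$ by the regularity of solutions of the linear problem (\cite[Theorem I]{Kato_1993}); differentiating \eqref{duhamel_implicit} with \ref{E_ode} shows it solves \eqref{abstract_quasilinear}.

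\textbf{Uniqueness.} Any other solution with values in $B_Z(y^0,r)$ is, by the same uniqueness for the linear problem with frozen coefficient, a fixed point of $\Phi$ on a possibly smaller interval, so it coincides with $y$. Uniqueness on the whole interval then follows by a Gronwall argument based on the contraction estimate.
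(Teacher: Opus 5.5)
Your architecture matches the paper's: Kato's fixed point for $\Phi$, with contraction in $\mathscr{C}^0([0,T'],X)$ obtained from the identity for $U_{z_1}-U_{z_2}$ and the $X$-Lipschitz bounds. On invariance you are more careful than the paper, which simply asserts it.

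Of your two options there, the ``equivalent norm'' one does not work. The ball is centered at $y^0$ with a prescribed radius, and a bound on $\Vert U_z(t,0)y^0\Vert_Z$ gives no smallness of $U_z(t,0)y^0-y^0$ in $Z$. The $W_z$ route does work, provided you:
\begin{itemize}
\item approximate $Sy^0$ in $X$ by elements of $Z$ (density from \ref{H_Z}), not $y^0$ by elements of $E$, whose density in $Z$ is not assumed;
\item combine the uniform bound $\Vert W_z(t,s)\Vert_{L(X)}\leq Me^{\omega'(t-s)}$ with $\Vert W_z(t,0)x-x\Vert_X\leq Ct\Vert x\Vert_Z$ for $x\in Z$.
\end{itemize}

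The genuine gap is the completeness issue, which you flag but do not resolve. The proposition does not assume reflexivity, so ``assume reflexivity'' proves a weaker statement. ``Work directly in the closure'' is not supported by the hypotheses either. Assumption \ref{H_stab}, and hence the existence of $U_z$ with the bounds \ref{E_estim_exp}, is only assumed on $B_Z(y^0,r)$, not on $\cl_X(B_Z(y^0,r))$. The same holds for the $Z$-valuedness and $Z$-bound of $f$ in \ref{H_f_Lip}: an $X$-limit of $f[t,z_n]$ with $\Vert f[t,z_n]\Vert_Z\leq C_6$ need not lie in $Z$.

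Your concluding claim, that the fixed point ``lies in $B_Z(y^0,r)$'' and is $Z$-continuous ``by the regularity of the linear problem'', is therefore circular. That regularity is available only once the frozen coefficient is known to be $B_Z(y^0,r)$-valued, which is exactly what is not guaranteed when $\mathcal{W}_{T'}$ is not complete. Relatedly, for merely $X$-continuous $z$, the map $s\mapsto f[s,z(s)]$ need not be continuous into $Z$. The paper avoids this by working with $Z$-continuous paths.

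The missing idea is the paper's:
\begin{itemize}
\item Run the Picard iteration $y_{k+1}=\Phi(y_k)$ inside the set $\mathcal{S}_{T'}$ of $Z$-continuous, ball-valued paths, where everything is defined.
\item Note that $Sy_k$ is the fixed point of $x\mapsto\Psi_{T'}(y_{k-1},x)$ from \eqref{defPsiT}. This map is a contraction in $x$ for small $T'$ and continuous in its first argument for the $\mathscr{C}^0([0,T'],X)$ topology.
\item Compare with the fixed point $w$ of $\Psi_{T'}(y,\cdot)$ to get $Sy_k\to w$ in $\mathscr{C}^0([0,T'],X)$.
\item Conclude that $(y_k)$ is Cauchy in $\mathscr{C}^0([0,T'],Z)$, so its limit $y$ is $Z$-continuous and $B_Z(y^0,r)$-valued, since the ball is closed in $Z$.
\end{itemize}
Only after this step do the linear regularity and your uniqueness argument apply.
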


Note that, in contrast to the usual Duhamel formula in the classical linear case, the formula \eqref{duhamel_implicit} is implicit in general because of the dependence with respect to $y$, see Remark \ref{rem_explicit} further.

\begin{proof}
The arguments can be found in \cite{Kato_1993, Sanekata_1989} (see also \cite{Kato_1975, Pazy} for a less general result but simpler proof), although not exactly in this form. We give a proof for completeness.

For every $r>0$,
for every $T'\in(0,T]$, let $\mathcal{S}_{T'}$ be the closed convex subset of all $z(\cdot)\in\mathscr{C}^0([0,T'],Z)$ such that $z(0)=y^0$ and $z(t)\in B_Z(y^0,r)$ for every $t\in[0,T']$.
Note that, by \ref{H_Z}, $\mathscr{C}^0([0,T'],Z) \subset \mathscr{C}^0([0,T'],X)$.
Given any $z(\cdot)\in\mathcal{S}_{T'}$, 
we consider the Cauchy problem
\begin{equation}\label{evol_eq_y_z}
\dot{y}(t) = A[t,z(t)] y(t) + f[t,z(t)] , \quad y(0)=y^0 \in Z .
\end{equation}
Using \ref{E_ode} and \eqref{evol_eq_y_z}, we obtain $\frac{d}{ds}(U_z(t,s)y(s))=U_z(t,s)f[s,z(s)]$, that we then integrate on $[0,t]$. Using \ref{H_f_Lip} 
and \ref{E_UZ}, we conclude that \eqref{evol_eq_y_z} 
has a unique solution 
$y(\cdot)\in\mathscr{C}^0([0,T'],X)$, taking its values in $Z$, given by
\begin{equation}\label{sol_y_z}
y(t) = U_z(t,0)y^0 + \int_0^t U_z(t,s) f[s,z(s)]\, ds 
\end{equation}
for every $t\in[0,T']$, and we set $\Phi_{T'}(z(\cdot))=y(\cdot)$. This defines a map $\Phi_{T'}$ on $\mathcal{S}_{T'}$.

Let us prove that, actually, $y(\cdot)\in\mathscr{C}^0([0,T'],Z)\cap\mathscr{C}^1([0,T'],X)$.
Using \ref{H_S}, it suffices to prove that $Sy(\cdot)\in\mathscr{C}^0([0,T'],X)$.
We begin by noting that, using \ref{H_intertwining} and \ref{E_ode}, we have 
$SU_z(t,s) = U_z(t,s)S + \int_s^t U_z(t,\tau) B[\tau,z(\tau)] S U_z(\tau,s)\, d\tau$ 
on $Z$ (actually, this formula implies that $U_z(t,s)\in L(Z)$ depends continuously on $(t,s)$).
Then, using \eqref{sol_y_z}, we have 
\begin{multline}\label{sumoffourterms}
Sy(t) = U_z(t,0)Sy^0 + \int_0^t U_z(t,s) S f[s,z(s)] \, ds 
+ \int_0^t U_z(t,\tau) B[\tau,z(\tau)] S U_z(\tau,0) y^0 \, d\tau \\
+ \int_0^t \int_s^t U_z(t,\tau) B[\tau,z(\tau)] S U_z(\tau,s) f[s,z(s)] \, d\tau \, ds
\end{multline}
for every $t\in[0,T']$, 
and each of these four terms
is an element of $\mathscr{C}^0([0,T'],X)$ thanks to the various assumptions. The claim follows. 
In passing, note that, using the Fubini theorem in the fourth term at the right-hand side of \eqref{sumoffourterms}, the sum of the third and fourth terms is then equal to $\int_0^t U_z(t,\tau) B[\tau,z(\tau)] S y(\tau) \, d\tau$, and thus we get $Sy(\cdot) = \Psi_{T'}(z(\cdot),Sy(\cdot))$ where 
\begin{equation}\label{defPsiT}
\Psi_{T'}(z(\cdot),x(\cdot))(t) = U_z(t,0)Sy^0 + \int_0^t U_z(t,s) S f[s,z(s)] \, ds 
+ \int_0^t U_z(t,\tau) B[\tau,z(\tau)] x(\tau) \, d\tau .
\end{equation}
This remark will be useful at the end of the proof.

Since $y(\cdot)=\Phi_{T'}(z(\cdot))\in\mathscr{C}^0([0,T'],Z)$, it follows that $\Phi_{T'}$ maps $\mathcal{S}_{T'}$ to $\mathcal{S}_{T'}$ if $T'$ is small enough.

Let us prove that $\Phi_{T'}$ is a contraction in $\mathscr{C}^0([0,T'],X)$ if $T'$ is small enough.
Since $U_{z_1}(t,s)-U_{z_2}(t,s) = -\int_s^t \frac{d}{dr} ( U_{z_1}(t,\tau) U_{z_2}(\tau,s) ) \, d\tau = \int_s^t U_{z_1}(t,\tau) ( A[\tau,z_1(\tau)] - A[\tau,z_2(\tau)] ) U_{z_2}(\tau,s) \, d\tau$, we infer from \ref{H_regZ}, \ref{E_estim_exp} and \ref{E_ode} that
\begin{equation}\label{Uz1z2}
\Vert (U_{z_1}(t,s)-U_{z_2}(t,s))\Vert_{L(Z,X)}
\leq C'_4 M^2 (t-s) e^{\omega'(t-s)} \Vert z_1(\cdot)-z_2(\cdot)\Vert_{\mathscr{C}^0([0,T'],X)} 
\end{equation}
for all $0\leq s\leq t\leq T'\leq T$ and all $z_1(\cdot),z_2(\cdot)\in\mathcal{S}_{T'}$. 
Applying \eqref{sol_y_z} to $y_1(\cdot)=\Phi_{T'}(z_1(\cdot))$ and $y_2(\cdot)=\Phi_{T'}(z_2(\cdot))$, we infer from 
\ref{H_f_Lip}, \ref{E_estim_exp} and \eqref{Uz1z2} that
$$
\Vert\Phi_{T'}(z_1(\cdot))-\Phi_{T'}(z_2(\cdot))\Vert_{\mathscr{C}^0([0,T'],X)} \leq T' C \Vert z_1(\cdot)-z_2(\cdot)\Vert_{\mathscr{C}^0([0,T'],X)} .
$$
for all $T'\in(0,T]$ and $z_1(\cdot),z_2(\cdot)\in\mathcal{S}_{T'}$, 
with $C= M e^{\vert\omega'\vert T} ( C'_6 + M C'_4 + M T C'_4 C_6 )$.
The contraction property follows by choosing $T'$ small enough.

In particular, $\Phi_{T'}$ is continuous in $\mathscr{C}^0([0,T'],X)$ norm. Hence, denoting by $\overline{\mathcal{S}}_{T'}$ the closure of $\mathcal{S}_{T'}$ in $\mathscr{C}^0([0,T'],X)$, $\Phi_{T'}$ maps the closed convex set $\overline{\mathcal{S}}_{T'}$ to itself and is a contraction, therefore it has a fixed point $y(\cdot)\in\overline{\mathcal{S}}_{T'}$ (in particular, $y(\cdot)\in\mathscr{C}^0([0,T'],X)$). 

It remains to prove that, actually, $y(\cdot)\in\mathcal{S}_{T'}$ (notably, $y(\cdot)\in\mathscr{C}^0([0,T'],Z)$). 
Note that $\overline{\mathcal{S}}_{T'} = \mathcal{S}_{T'}$ when $X$ and $Z$ are reflexive, so the following argument (developed in \cite{Kato_1993}) is only required in the absence of reflexivity.
Defining $y_0(\cdot)\in \mathscr{C}^0([0,T'],X)$ by $y_0(t)=y^0$ for any $t$, the fixed point $y(\cdot)$ is obtained as the limit in $\mathscr{C}^0([0,T'],X)$ of the sequence $(y_k(\cdot))_{k\in\N}$ of $\overline{\mathcal{S}}_{T'}$ defined by iteration $y_{k+1}(\cdot) = \Phi_{T'}(y_k(\cdot))$. 
Using the map $\Psi_{T'}$ defined by \eqref{defPsiT}, we therefore have $Sy_k(\cdot) = \Psi_{T'}(y_{k-1}(\cdot),Sy_k(\cdot))$ for every $k\in\N^*$.
It follows from \ref{H_intertwining} and \eqref{Uz1z2} that $\Psi_{T'}$ maps continuously $\overline{\mathcal{S}}_{T'}\times E$ to $E$, where $E$ is a closed ball of $\mathscr{C}^0([0,T'],X)$, of center $0$ and of sufficiently large radius, and moreover $\Psi_{T'}$ is contracting with respect to $x(\cdot)$ if $T'$ is chosen small enough. Let us prove that $(Sy_k(\cdot))_{k\in\N}$ is a Cauchy sequence in $\mathscr{C}^0([0,T'],X)$: this is then enough to conclude because it implies that $(y_k(\cdot))_{k\in\N}$ is a Cauchy sequence in $\mathscr{C}^0([0,T'],Z)$, hence it converges and the limit must be $y(\cdot)$.
Let $w(\cdot)\in E$ be such that $w(\cdot) = \Psi_{T'}(y(\cdot),w(\cdot))$ (it exists by the Banach fixed-point theorem). By the triangle inequality, we have
\begin{multline*}
\Vert Sy_k(\cdot)-w(\cdot)\Vert_{\mathscr{C}^0([0,T'],X)}
\leq \Vert \Psi_{T'}(y_{k-1}(\cdot),Sy_k(\cdot)) - \Psi_{T'}(y_{k-1}(\cdot),w(\cdot))\Vert_{\mathscr{C}^0([0,T'],X)} \\
+ \Vert \Psi_{T'}(y_{k-1}(\cdot),w(\cdot)) - \Psi_{T'}(y(\cdot),w(\cdot))\Vert_{\mathscr{C}^0([0,T'],X)} .
\end{multline*}
The first term at the right-hand side is less than $C\Vert Sy_k(\cdot)-w(\cdot)\Vert_{\mathscr{C}^0([0,T'],X)}$ for some $C>0$ because $\Psi_{T'}$ is contracting, and the second term converges to $0$ as $k\rightarrow+\infty$ by continuity of $\Psi_{T'}$. It follows that $Sy_k(\cdot)$ converges to $w(\cdot)$, which finishes the proof.
\end{proof}

\begin{remark}\label{rem_prop_existence_uniqueness1}
The time $T'$ in Proposition \ref{prop_existence_uniqueness} only depends on $y^0$ and on the spaces and various constants considered in Assumptions \ref{H_Z} to \ref{H_f_Lip}: given some Banach spaces $X$ and $Z$ satisfying \ref{H_Z} and \ref{H_S}, some $y^0\in Z$, some $r>0$, $M\geq 1$, $\omega\in\R$ and some nonnegative constants $C_4,C'_4,C_5,C_6,C'_6$, the time $T'$ is uniform with respect to all operators $A$ and functions $f$ satisfying \ref{H_stab} to \ref{H_f_Lip}. 
\end{remark}

\begin{remark}\label{rem_prop_existence_uniqueness_T'uniform}
It also follows from the proof that, taking $T'$ smaller if necessary, for every $\tilde y^0\in B_Z(y^0,r/2)$, there exists a unique solution $y(\cdot)\in\mathscr{C}^0([0,T'],Z)\cap\mathscr{C}^1([0,T'],X)$ of \eqref{abstract_quasilinear} such that $y(0)=\tilde y^0$, which moreover takes its values in $B_Z(y^0,r)$ (similar statements can be found in \cite{HughesKatoMarsden_ARMA1977}).
\end{remark}

\begin{remark}\label{rem_explicit}
Let us comment on the various assumptions done by Kato. 

In the following two cases where the operator does not depend on $y$, the Duhamel formula \eqref{duhamel_implicit} is \emph{explicit}, because $U_y(t,s)$ does not depend on $y$:
\begin{itemize}[topsep=1mm,itemsep=0cm]
\item When $A[t,y]=A$ does not depend on $(t,y)$ and generates a $C_0$ semigroup $(e^{tA})_{t\geq 0}$, we are in the classical framework of semigroup theory (see \cite{EngelNagel,Pazy}). Assumptions \ref{H_Z} to \ref{H_intertwining} are satisfied with $Z=D(A)$, $S=A$, $B=0$, and we have $U_y(t,s)=e^{(t-s)A}$.
\item When $A[t,y]=A(t)$ does not depend on $y$, but depends on $t$, we are in the framework of linear evolution equations, treated for example in \cite[Chapter 5]{Pazy}. 
\end{itemize}
We speak of a \emph{quasilinear} evolution equation when $A[t,y]$ depends on $y$. Then, $U_y(t,s)$ depends on $y(\cdot)$ and the Duhamel formula \eqref{duhamel_implicit} is \emph{implicit} with respect to $y$.

The quasilinear theory for evolution systems, initiated by Kato in the 50s, has been developed by many authors. Here, we have followed the presentation and assumptions done in \cite{Kato_1975, Kato_1993, Sanekata_1989} (see also \cite[Section 6.4]{Pazy}).
Although Assumptions \ref{H_Z} to \ref{H_f_Lip} may seem abstract and difficult to check, they are actually natural, often straightforwardly satisfied. Kato showed that his framework covers most of quasilinear equations encountered in practice, like:
Burgers, Korteweg-de Vries, 
hyperbolic systems of quasilinear partial differential equations of the first order,
Euler and Navier Stokes (incompressible) in $\R^3$, coupled Maxwell-Dirac, quasilinear waves, magnetohydrodynamics (including compressible fluids). 
We also refer to \cite{Muller} for applications to the study of quasilinear Maxwell and wave equations settled on a bounded domain, with either Dirichlet or Neumann-type boundary conditions (see also \cite{DorflerGernerSchnaubelt, SchnaubeltSpitz}).

An even more general theory exists, also initiated by Kato in \cite{Kato_1967} with the notion of nonlinear semigroup, developed in the 70s with the famous Crandall-Liggett generation theorem (see \cite{CrandallLiggett}) or with the theory of maximal monotone operators (see \cite{Brezis_1973}). We refer to the unpublished book manuscript \cite{Benilan} and to the textbook \cite{ItoKappel} for the complete theory. 
For example, nonlinear semigroup theory covers the porous medium equation $\partial_t y = \triangle\varphi(u)$ and Hamilton-Jacobi equations, which are not covered by the quasilinear evolution equation theory.

Our decision to remain within the quasilinear framework is motivated by its simplicity, by the fact that it already covers most usual PDEs (as mentioned above) and, more technically, by the instrumental role of the (implicit) Duhamel formula \eqref{duhamel_implicit}  in deriving our main result, Theorem \ref{thm_approx_abstract}, in Section \ref{sec_abstract_main_result}. 
\end{remark}

\begin{remark}[On refined Kato frameworks]\label{rem_Muller_refinement}
The two-space Banach-scale assumptions \ref{H_Z} to \ref{H_f_Lip} are close to Kato's original semigroup setting.
For quasilinear wave and Maxwell equations, a refined framework due to \cite{Muller} and used later in
\cite{DorichHochbruck_SINUM2022, Dorich_FoCM2025, HochbruckPazur_NM2017, HochbruckPazurSchnaubelt_NM2018}
introduces an intermediate Hilbert space $Z \hookrightarrow Y \hookrightarrow X$,
typically with $Y$ an exact interpolation space between $Z$ and $X$, together with state-dependent positive operators $\Lambda(y)$ defining equivalent norms on $X$ and lower-order terms controlled on balls of $Y$.
This refinement is useful when the nonlinearity is only defined on open sets of $Y$, when pointwise constraints such as $\lambda(u)>0$ are controlled only at the $Y$-level, or when one needs state-dependent energy norms in the numerical analysis.

In the present paper, however, the role of the output space, which we also denote by $Y$, is different: it is the reconstruction/approximation space used in the discretization step, not an intermediate state space of the well-posedness theory.
Since our main objective is boundary-compatible kernelization and spatial interacting approximation, we keep the original Kato framework. 
\end{remark}

\subsection{Running example: a class of quasilinear PDEs on bounded domains}\label{sec_general_class_quasilinear}
We now introduce a running PDE class to which Proposition \ref{prop_existence_uniqueness} applies under standard structural hypotheses. This example will be continued in Sections \ref{sec_abstract_approx} and \ref{sec_abstract_finite_approx}.

Let $\Omega$ be the compact closure of a bounded open subset of $\R^n$ with a Lipschitz boundary, endowed with the induced Euclidean distance and the induced Lebesgue measure. Without loss of generality, we assume that $\vert\Omega\vert=1$. We denote by $\mathring{\Omega}$ the interior of $\Omega$.

Fix $p\in\N^*$, $d\in\N^*$, $T>0$, and an integer $s>p+1+\frac{n}{2}$.\footnote{We take $s$ integer for simplicity, because the variable-step mollifier estimates used later are stated in integer-order Sobolev spaces. A fractional-order version could be developed as well, but this is not needed here.}
Let $y^0\in Z$ and consider the quasilinear PDE
\begin{equation}\label{general_quasilinear_PDE}
\boxed{
\left\{
\begin{array}{ll}
\displaystyle
\partial_t y(t,x)=\sum_{\vert\alpha\vert\leq p} a_\alpha[t,y(t)](x) D^\alpha y(t,x)+f[t,y(t)](x)
& \textrm{in }(0,T)\times\Omega,
\\
F\Big(\big((D^\alpha y(t))_{\vert\partial\Omega}\big)_{\vert\alpha\vert\leq p-1}\Big)=0
& \textrm{on }(0,T)\times\partial\Omega,
\\
y(0,x)=y^0(x)
& \textrm{in }\Omega,
\end{array}
\right.
}
\end{equation}
where $y(t,x)\in\R^d$, $a_\alpha[t,z](x)\in\R^{d\times d}$, and $f[t,z](x)\in\R^d$.
We write
\begin{equation}\label{def_A_PDE}
A[t,z]=\sum_{\vert\alpha\vert\leq p} a_\alpha[t,z] D^\alpha
\end{equation}
with domain
\begin{equation}\label{domain_A}
D(A[t,z])=D=
\Big\{y\in H^p(\Omega,\R^d)\ \Big\vert\ F\Big(\big((D^\alpha y)_{\vert\partial\Omega}\big)_{\vert\alpha\vert\leq p-1}\Big)=0\Big\}.
\end{equation}
The boundary operator $F$ is assumed to be a continuous linear map on the corresponding product of trace spaces. This ensures that $D$ is a linear subspace of $H^p(\Omega,\R^d)$ and that $A[t,z]$ is a linear operator on its common domain $D$ for each fixed $(t,z)$.
We set
$$
X=L^2(\Omega,\R^d),
\qquad
Z=H^s(\Omega,\R^d)\cap D,
$$
and we endow $Z$ with the norm $\Vert y\Vert_Z=\Vert y\Vert_{H^s}$. Since $s>p+1+\frac{n}{2}$, the Sobolev embedding yields
$Z\hookrightarrow W^{p+1,\infty}(\Omega,\R^d)\hookrightarrow W^{1,\infty}(\Omega,\R^d)\hookrightarrow X$.

We assume that $(A,f)$ satisfies Assumptions \ref{H_Z} to \ref{H_f_Lip} on the pair $(X,Z)$. In particular, \eqref{general_quasilinear_PDE} admits a unique solution $y\in \mathscr{C}^0([0,T'],Z)\cap \mathscr{C}^1([0,T'],X)$ on some interval $[0,T']\subset[0,T]$, and $y(t)\in B_Z(y^0,r)$ for every $t\in[0,T']$.

In addition, we assume that there exists $C_a>0$ such that
\begin{equation}\label{coeff_W1inf_PDE}
\max_{\vert\alpha\vert\leq p}
\Vert a_\alpha[t,z]\Vert_{W^{1,\infty}(\Omega,\R^{d\times d})}
\leq C_a
\qquad
\forall (t,z)\in[0,T]\times B_Z(y^0,r).
\end{equation}
This additional coefficient bound will be used only to identify the output space $Y$ in Assumption \ref{H_Y} further.

The intertwining hypothesis \ref{H_intertwining} is regarded here as a structural assumption on the original family $A[t,z]$. For differential operators of order greater than one, it cannot be expected in full generality from the sole form \eqref{def_A_PDE}; it must be checked on a case-by-case basis, as in Kato's original examples (see \cite{Kato_1975}).
Further comments on refinements of Kato's framework, and on wave- and Maxwell-type applications on bounded domains, are gathered in Remark \ref{rem_Muller_refinement}. At the level of the present paper, the intertwining property remains structural.

\paragraph{Schwartz kernel.}
At the level of the original differential operator $A[t,z]$, the natural kernel is a Schwartz kernel supported on the diagonal of $\Omega\times\Omega$, hence distributional rather than integrable. The regularization step introduced in Section \ref{sec_abstract_approx} below replaces this singular diagonal kernel by a genuine localized integral kernel.
This point is one of the main motivations of the two-step construction: kernelization makes the PDE dynamics accessible to interacting approximations while preserving the geometry of the domain and its boundary conditions.

\section{Regularization and kernelization}\label{sec_abstract_approx}
\subsection{General abstract approximation result}
In addition to Assumptions \ref{H_Z} to \ref{H_f_Lip}, we assume that:
\begin{enumerate}[label=$(H_{\arabic*})$,resume] 
\setcounter{enumi}{6}
\item\label{H_Aepsilon} 
There exist a family of linear operators $A_\varepsilon[t,z]$ on $X$ and a family of functions $f_\varepsilon:[0,T]\times X\rightarrow X$, indexed by $\varepsilon\in(0,\varepsilon_0]$ for some $\varepsilon_0>0$, such that, for all $t\in[0,T]$ and $z\in B_Z(y^0,r)$, Assumptions \ref{H_stab} to \ref{H_f_Lip} hold (with $S$, $A$, $B$, $f$ replaced by $S_\varepsilon$, $A_\varepsilon$, $B_\varepsilon$, $f_\varepsilon$) uniformly with respect to $\varepsilon$, i.e., with constants $C_i,M,\omega$ that may be larger if necessary but do not depend on $\varepsilon$. The comparison estimates below and in Proposition \ref{prop_yepsilon} are always measured in the fixed scale $(Z,X)$, even if the intertwining operator $S_\varepsilon$ differs from $S$ (see Remark \ref{rem_intertwining_continuous_not_generic}).
\item\label{H_Aepsilon_CV} There exists 
a continuous function $\chi:[0,\varepsilon_0]\rightarrow[0,+\infty)$ satisfying $\chi(0)=0$, such that
\begin{equation*}
\begin{split}
\Vert (A_\varepsilon[t,z]-A[t,z])y\Vert_X &\leq \chi(\varepsilon) \Vert y\Vert_Z  \qquad
\forall y\in Z , \\ 
\Vert f_\varepsilon[t,z]-f[t,z]\Vert_X &\leq \chi(\varepsilon) ,
\end{split}
\end{equation*}
for all $t\in[0,T]$, $z\in B_Z(y^0,r)$ and $\varepsilon\in(0,\varepsilon_0]$.
%
\end{enumerate}

In \ref{H_Aepsilon}, the most stringent requirement is the uniform semigroup hypothesis \ref{H_stab} for the family $A_\varepsilon[t,z]$. This is classical in Trotter-Kato approximation theory (see \cite{EngelNagel, Pazy}) and is often verified through dissipativity arguments.

Note that we do not assume that $A_\varepsilon[t,z]$ be bounded on $X$, although bounded approximations are of particular interest.

\begin{remark}\label{rem_kernelization}
In many PDE applications, regularization is used not merely to smooth but to convert a differential operator into an integral-interaction operator with an explicit kernel supported in $\Omega\times\Omega$. This kernelization viewpoint is particularly valuable on bounded domains with boundary conditions. The variable-step regularization developed in Section \ref{sec_main_result} is an example: it keeps interactions inside $\Omega$ and preserves boundary traces.
\end{remark}

A canonical bounded approximation is the \emph{Yosida approximant}
$$
A_\varepsilon[t,z] = A[t,z] J_\varepsilon[t,z]
\qquad\textrm{where}\qquad
J_\varepsilon[t,z] = \big(\mathrm{id}-\varepsilon A[t,z]\big)^{-1}.
$$
For every fixed $(t,z)$, $A_\varepsilon[t,z]$ is bounded and therefore generates a uniformly continuous semigroup. In the m-dissipative Hilbert setting, or more generally under the corresponding Hille-Yosida stability estimates, the stability constants can be chosen uniformly in $\varepsilon$. Under the present Kato assumptions, the resolvent invariance and intertwining structure also allow one to verify the analogues of \ref{H_regZ} and \ref{H_intertwining} for the Yosida approximants. Thus, from the abstract semigroup viewpoint, Yosida regularization is compatible with the present framework. 

We nevertheless do not use it later, for two reasons. First, it is non-explicit and does not provide a kernel representation adapted to the boundary geometry of $\Omega$. Second, the main purpose of the present paper is precisely to build a boundary-compatible interacting approximation, for which the regularization step must produce an explicit integral kernel supported inside $\Omega\times\Omega$.
Moreover, $A_\varepsilon[t,z]y\rightarrow A[t,z]y$ for every $y\in D(A[t,z])$, although $\Vert A_\varepsilon[t,z]\Vert_{L(X)}\rightarrow+\infty$ if $A[t,z]$ is unbounded on $X$.

Assumption \ref{H_Aepsilon_CV} refers to convergence estimates, which are often proved by explicit approximation constructions (see also \cite{ItoKappel_MC1998} for finite-dimensional approximations with error estimates), as we will do hereafter. 
For instance when $X=L^2(\Omega,\R^d)$, we often take $Z=H^s(\Omega,\R^d)$ for $s$ large enough. 

%

These approximation assumptions being done, for every $\varepsilon\in(0,\varepsilon_0]$, we consider the quasilinear evolution equation
\begin{equation}\label{abstract_quasilinear_eps}
\boxed{
\dot y_\varepsilon(t) = A_\varepsilon[t,y_\varepsilon(t)] y_\varepsilon(t) + f_\varepsilon[t,y_\varepsilon(t)]
}
\end{equation}

\begin{proposition}\label{prop_yepsilon}
We make Assumptions \ref{H_Z} to \ref{H_Aepsilon_CV}.
Let $y(\cdot)\in\mathscr{C}^0([0,T'],Z)\cap\mathscr{C}^1([0,T'],X)$ be the unique solution of \eqref{abstract_quasilinear} such that $y(0)=y^0$, as given by Proposition \ref{prop_existence_uniqueness}, where $T'$ is as in Remark \ref{rem_prop_existence_uniqueness_T'uniform}.
 
For every $\varepsilon\in(0,\varepsilon_0]$ and every $y_\varepsilon^0\in B_Z(y^0,r/2)$, there exists a unique solution $y_\varepsilon(\cdot)\in\mathscr{C}^0([0,T'],Z)\cap\mathscr{C}^1([0,T'],X)$ of \eqref{abstract_quasilinear_eps} such that $y_\varepsilon(0)=y_\varepsilon^0$. 
Moreover, $y_\varepsilon(t)\in B_Z(y^0,r)$ for every $t\in[0,T']$ and
\begin{equation}\label{CV_yeps_abstract}
\Vert y_\varepsilon(t)-y(t)\Vert_X \leq a_0(t) \Vert y_\varepsilon^0-y^0\Vert_X + a_1(t) \chi(\varepsilon) 
\qquad\forall t\in[0,T'] \qquad\forall\varepsilon\in(0,\varepsilon_0] ,
\end{equation}
where $\beta = M(C'_4 (\Vert y^0\Vert_Z+r) + C'_6)$ and
\begin{equation}\label{def_a1a2}
a_0(t)=M e^{(\beta+\omega)t}\leq \mathrm{Cst},
\qquad
a_1(t)=M(\Vert y^0\Vert_Z+r+1)e^{\beta t}\int_0^t e^{\omega s}\,ds\leq \mathrm{Cst}.
\end{equation}
\end{proposition}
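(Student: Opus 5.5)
Existence and uniqueness follow directly from the uniformity built into \ref{H_Aepsilon}. Since $(A_\varepsilon,f_\varepsilon)$ satisfy \ref{H_stab} to \ref{H_f_Lip} with constants independent of $\varepsilon$, Remarks \ref{rem_prop_existence_uniqueness1} and \ref{rem_prop_existence_uniqueness_T'uniform} give a common time $T'$. On $[0,T']$, for every $y_\varepsilon^0\in B_Z(y^0,r/2)$, there is a unique solution $y_\varepsilon\in\mathscr{C}^0([0,T'],Z)\cap\mathscr{C}^1([0,T'],X)$ of \eqref{abstract_quasilinear_eps} taking values in $B_Z(y^0,r)$. Shrinking $T'$ once if necessary, the same $T'$ serves both $y$ and all $y_\varepsilon$. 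We also get the implicit Duhamel formula $y_\varepsilon(t)=U^\varepsilon_{y_\varepsilon}(t,0)y_\varepsilon^0+\int_0^tU^\varepsilon_{y_\varepsilon}(t,s)f_\varepsilon[s,y_\varepsilon(s)]\,ds$, where $U^\varepsilon_{y_\varepsilon}$ is the evolution system of $A_\varepsilon[\cdot,y_\varepsilon(\cdot)]$. By \ref{E_estim_exp} it satisfies $\Vert U^\varepsilon_{y_\varepsilon}(t,s)\Vert_{L(X)}\leq Me^{\omega(t-s)}$, uniformly in $\varepsilon$.

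For the estimate, set $e(t)=y_\varepsilon(t)-y(t)$. I would not compare two Duhamel formulas with different propagators. Instead I would use the exact solution $y$ as a perturbed solution of the $\varepsilon$-equation along the $\varepsilon$-trajectory. Since $y(s)\in Z$ and $y\in\mathscr{C}^1([0,T'],X)$, property \ref{E_ode} for $U^\varepsilon:=U^\varepsilon_{y_\varepsilon}$ gives, in $X$,
$$\frac{d}{ds}\big(U^\varepsilon(t,s)y(s)\big)=U^\varepsilon(t,s)\big(\dot y(s)-A_\varepsilon[s,y_\varepsilon(s)]y(s)\big).$$
Inserting $\dot y=A[s,y]y+f[s,y]$ and integrating, subtracting from the Duhamel formula for $y_\varepsilon$ yields
$$e(t)=U^\varepsilon(t,0)e(0)+\int_0^tU^\varepsilon(t,s)\Big[(A_\varepsilon[s,y_\varepsilon]-A[s,y])y(s)+f_\varepsilon[s,y_\varepsilon]-f[s,y]\Big]ds .$$
Each bracket is then split through the intermediate state $y(s)$:
$$(A_\varepsilon[s,y_\varepsilon]-A_\varepsilon[s,y])y+(A_\varepsilon[s,y]-A[s,y])y .$$
The first piece is bounded by $C'_4\Vert e(s)\Vert_X\Vert y(s)\Vert_Z$, using the uniform version of \ref{H_regZ} for $A_\varepsilon$ from \ref{H_Aepsilon}. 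The second is bounded by $\chi(\varepsilon)\Vert y(s)\Vert_Z$, using \ref{H_Aepsilon_CV}. Since $y(s)\in B_Z(y^0,r)$, we have $\Vert y(s)\Vert_Z\leq\Vert y^0\Vert_Z+r$. The $f$ part is treated the same way, giving $C'_6\Vert e\Vert_X+\chi(\varepsilon)$. Altogether,
$$\Vert e(t)\Vert_X\leq Me^{\omega t}\Vert e(0)\Vert_X+\int_0^tMe^{\omega(t-s)}\big(\beta'\Vert e(s)\Vert_X+(\Vert y^0\Vert_Z+r+1)\chi(\varepsilon)\big)ds,$$
with $\beta'=C'_4(\Vert y^0\Vert_Z+r)+C'_6$, so that $M\beta'=\beta$.

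Next apply Gronwall to $\phi(t)=e^{-\omega t}\Vert e(t)\Vert_X$:
$$\phi(t)\leq M\Vert e(0)\Vert+M(\Vert y^0\Vert_Z+r+1)\chi(\varepsilon)\int_0^te^{-\omega s}ds+\beta\int_0^t\phi .$$
The source term is nondecreasing, so Gronwall multiplies it by $e^{\beta t}$. This gives $a_0(t)=Me^{(\beta+\omega)t}$ and a constant of the form $M(\Vert y^0\Vert_Z+r+1)e^{\beta t}\int_0^te^{\omega(t-s)}ds$, which equals $a_1(t)$ after the substitution $s\mapsto t-s$. Both are bounded on $[0,T']$.

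The main obstacle is justifying the differentiation of $s\mapsto U^\varepsilon(t,s)y(s)$. Property \ref{E_ode} gives $\partial_sU^\varepsilon=-U^\varepsilon A_\varepsilon$ only on $Z$, with the derivative in $L(Z,X)$, while $y$ is $\mathscr{C}^0$ in $Z$ and $\mathscr{C}^1$ in $X$. I would handle this by a product-rule argument on difference quotients. Write
$$U^\varepsilon(t,s+h)y(s+h)-U^\varepsilon(t,s)y(s)=U^\varepsilon(t,s+h)\big(y(s+h)-y(s)\big)+\big(U^\varepsilon(t,s+h)-U^\varepsilon(t,s)\big)y(s).$$
The first term is handled by the uniform $L(X)$ bound and the strong continuity of $U^\varepsilon$. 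The second is handled by \ref{E_ode} applied to the fixed vector $y(s)\in Z$. This is the same computation as in the proof of Proposition \ref{prop_existence_uniqueness}.
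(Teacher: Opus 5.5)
Your proposal is correct and follows essentially the same route as the paper: a Duhamel formula for $y_\varepsilon-y$ along the single propagator $U^\varepsilon_{y_\varepsilon}$, the splitting of the drift defect through the intermediate state $y(s)$ using the uniform Lipschitz bounds and \ref{H_Aepsilon_CV}, and Gronwall applied to $e^{-\omega t}\Vert y_\varepsilon(t)-y(t)\Vert_X$. Your difference-quotient justification of $\frac{d}{ds}\big(U^\varepsilon(t,s)y(s)\big)$ makes explicit a step that the paper leaves implicit when it applies the Duhamel formula to the difference equation.
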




Note that the time $T'$ is uniform in $\varepsilon$, and that Assumption \ref{H_Aepsilon_CV} 
is not required for the first part of Proposition \ref{prop_yepsilon}.
%
This result is similar to \cite[Theorem 7]{Kato_1975} and \cite[Theorem III]{Kato_1993}, where the above convergence result is proved 
without convergence estimate. 
Proposition \ref{prop_yepsilon} can thus be seen as a slight improvement, quantifying the convergence. 

In the sequel, we will apply Proposition \ref{prop_yepsilon} with $y_\varepsilon^0=y^0$.

\begin{proof}
Existence and uniqueness on $[0,T']$ come from Proposition \ref{prop_existence_uniqueness} and Remarks \ref{rem_prop_existence_uniqueness1} and \ref{rem_prop_existence_uniqueness_T'uniform}. This gives the first part of the proposition. 
Let us prove \eqref{CV_yeps_abstract}.
For any $\varepsilon\in(0,\varepsilon_0]$, writing  
\begin{multline*}
\frac{d}{dt}(y_\varepsilon(t)-y(t))
= A_\varepsilon[t,y_\varepsilon(t)](y_\varepsilon(t)-y(t)) + \big(A_\varepsilon[t,y_\varepsilon(t)]-A_\varepsilon[t,y(t)]\big)y(t) \\
+ \big(A_\varepsilon[t,y(t)]-A[t,y(t)]\big)y(t) 
+ f_\varepsilon[t,y_\varepsilon(t)]-f_\varepsilon[t,y(t)] + f_\varepsilon[t,y(t)]-f[t,y(t)],
\end{multline*}
and applying the Duhamel formula \eqref{duhamel_implicit}, we infer that, for every $t\in[0,T']$,
\begin{multline*}
y_\varepsilon(t)-y(t) = U^\varepsilon_{y_\varepsilon}(t,0) (y_\varepsilon^0-y^0) \\ 
+ \int_0^t U^\varepsilon_{y_\varepsilon}(t,s)\, \Big( \big( A_\varepsilon[s,y_\varepsilon(s)] - A_\varepsilon[s,y(s)] \big) y(s) + \big( A_\varepsilon[s,y(s)] - A[s,y(s)] \big) y(s) \\
+ f_\varepsilon[s,y_\varepsilon(s)]-f_\varepsilon[s,y(s)] + f_\varepsilon[s,y(s)]-f[s,y(s)] \Big) \, ds 
\end{multline*}
where, for any $\varepsilon\in(0,\varepsilon_0]$, $(U^\varepsilon_z(t,s))_{0\leq s\leq t\leq T}$ is the evolution system on $X$ corresponding to the quasilinear operator $A_\varepsilon$, for any $z(\cdot)\in\mathscr{C}^0([0,T],X)$ such that $z(t)\in B_Z(y^0,r)$ for every $t\in[0,T]$. It follows from Assumption \ref{H_Aepsilon} that this evolution system satisfies the stability estimates \ref{E_estim_exp} uniformly with respect to $\varepsilon$.
By Proposition \ref{prop_existence_uniqueness}, both $y_\varepsilon(s)$ and $y(s)$ belong to $B_Z(y^0,r)$ for any $s\in[0,T']$.
Using the uniform stability estimates \ref{E_estim_exp} for $U^\varepsilon_{y_\varepsilon}$, the (uniform) Lipschitz properties \ref{H_regZ} for $A_\varepsilon$ and \ref{H_f_Lip} for $f_\varepsilon$, and the convergence estimates \ref{H_Aepsilon_CV}, 
we infer that
\begin{multline*}
\Vert y_\varepsilon(t)-y(t)\Vert_X
\leq Me^{\omega t} \Vert y_\varepsilon^0-y^0\Vert_X \\
+ M \int_0^t e^{\omega(t-s)} \Big( \big( C'_4 (\Vert y^0\Vert_Z+r) + C'_6 \big) \Vert y_\varepsilon(s)-y(s)\Vert_X + (\Vert y^0\Vert_Z+r +1) 
\chi(\varepsilon)  \Big) \, ds
\end{multline*}
and therefore, by the Gronwall lemma\footnote{The Gronwall lemma states that, if $0\leq u(t)\leq \alpha(t)+\beta \int_0^t u(s)\, ds$ with $\alpha$ nondecreasing, then $u(t)\leq\alpha(t) e^{\beta t}$.}
applied to $e^{-\omega t}\Vert y_\varepsilon(t)-y(t)\Vert_X$, we obtain finally \eqref{CV_yeps_abstract}.
\end{proof}

\begin{remark}
When $\omega=0$, we have $\int_0^t e^{\omega s}\, ds=t$; otherwise, this integral is equal to $\frac{e^{\omega t}-1}{\omega}$.
Note that if $C'_4=C'_6=0$ (i.e., if $A[t,z]$ and $f[t,z]$ do not depend on $z$) then $\beta=0$.
\end{remark}

In Section \ref{sec_abstract_finite_approx} hereafter, we will need the following slight additional regularity assumption:

\begin{enumerate}[label=$(H_{\arabic*})$, resume]
\setcounter{enumi}{8}
\item\label{H_Y}
There exist a Banach space $Y$ continuously embedded in $X$ such that, for every $\varepsilon\in(0,\varepsilon_0]$, we have $A_\varepsilon[t,z]z\in Y$ and $f_\varepsilon[t,z]\in Y$ and there exists $L_\varepsilon\geq 0$ such that
$$
\Vert A_\varepsilon[t,z]z+f_\varepsilon[t,z]\Vert_Y \leq L_\varepsilon 
\qquad \forall t\in[0,T]\qquad\forall z\in B_Z(y^0,r) .
$$
\end{enumerate}
In what follows we denote by $C_9>0$ the embedding constant of $Y\hookrightarrow X$, i.e., $\Vert y\Vert_X\leq C_9\Vert y\Vert_Y$ for every $y\in Y$.

In practice, $Y$ will often be chosen as an intermediate Banach space such that $Z\hookrightarrow Y\hookrightarrow X$, so that the condition on $f_\varepsilon$ is automatic. Assumption \ref{H_Y} then means that $A_\varepsilon[t,z]z$ is slightly more regular than merely belonging to $X$.

The size of $L_\varepsilon$ will play a decisive role in the final balance between the regularization parameter $\varepsilon$ and the discretization parameter $N$ in Theorem \ref{thm_approx_abstract}. In the abstract framework, $L_\varepsilon$ is allowed to depend on $\varepsilon$ because many regularizations come with explicit $\varepsilon$-dependent prefactors. In the boundary-compatible variable-step regularization developed later in Section \ref{sec_main_result}, one actually obtains a uniform bound $L_\varepsilon\leq L_0$ for a suitable choice of output space $Y$. In other regularizations, however, $L_\varepsilon$ may deteriorate as $\varepsilon\to 0$, and the scale balance in Theorem \ref{thm_approx_abstract} then becomes nontrivial.

\begin{remark}\label{rem_Leps_growth}
Assumption \ref{H_Y} is the abstract interface between the regularized evolution equation and the discretization step. It isolates an output class $Y$ on which the reconstruction projector $Q_N$ approximates the identity in the pivot norm $X$.
In the abstract theorem, the constant $L_\varepsilon$ is allowed to depend on $\varepsilon$. In the boundary-compatible variable-step regularization used later in Section \ref{sec_main_result}, one actually obtains a uniform bound $L_\varepsilon\leq L_0$.
In other regularizations, however, $L_\varepsilon$ may deteriorate as $\varepsilon\to 0$.

Typical mechanisms are the following.
For Yosida-type approximations, boundedness on $X$ is automatic, but if one asks for control in a stronger output norm $Y$ related to a graph norm or to additional derivatives, then polynomial growth in $\varepsilon^{-1}$ is typical (see for instance \cite{EngelNagel, Pazy}). 
More generally, kernelizations or regularizations that produce outputs with stronger $\varepsilon$-dependent derivative bounds may yield polynomial or even exponential growth of $L_\varepsilon$ (see, e.g., \cite{CarrilloEspositoWu_CVPDE2024}).
The role of Theorem \ref{thm_approx_abstract} further is precisely to separate this possible growth from the purely spatial discretization scale $N$.
\end{remark}

\subsection{Running example: boundary-compatible kernelization by variable-step mollifiers}\label{sec_main_result}
We continue the running PDE class of Section \ref{sec_general_class_quasilinear} and show how the regularization assumptions of Section \ref{sec_abstract_approx} can be verified by an explicit boundary-compatible kernelization on a bounded Lipschitz domain.
The regularization is built from the variable-step mollifier of Appendix \ref{app_convolution}, which is an appropriate modification of the usual convolution near the boundary.
From the viewpoint of interacting approximations, this regularization does three things at once:
\begin{itemize}[parsep=0.7mm, itemsep=0.7mm, topsep=0.7mm]
\item it smooths while keeping the interaction inside $\Omega$ by adapting the mollification radius to the distance to $\partial\Omega$;
\item it converts differential operators into explicit integral-interaction operators with kernels supported in $\Omega\times\Omega$;
\item it preserves boundary traces of sufficiently regular fields, which is crucial to respect boundary conditions.
\end{itemize}
We verify all abstract assumptions that follow directly from this construction.
The two genuinely structural points that are not automatic at this level of generality are the intertwining assumption \ref{H_intertwining}, already present in Kato's theory for the original family $A[t,z]$, and the semigroup-generation part of \ref{H_Aepsilon} for the regularized family $A_\varepsilon[t,z]$.
Both issues are therefore isolated explicitly below.

\subsubsection{Variable-step mollifier and regularized operator}\label{sec_regularized_operator}
Let $\eta\in \mathscr{C}_c^\infty(\R^n)$ be a nonnegative function supported in the unit ball, satisfying $\int_{\R^n} \eta(x)\,dx=1$.
Let $\rho\in \mathscr{C}^s(\Omega)$ be another nonnegative function such that $\rho>0$ on $\mathring{\Omega}$ and $\rho(x)\leq \mathrm{d}_{\R^n}(x,\partial\Omega)$ for every $x\in\Omega$, and such that all derivatives of $\rho$ up to order $s-1$ vanish on $\partial\Omega$.
This stronger boundary flatness is convenient because Appendix \ref{app_convolution} will be used below with the integer $s$ in place of the integer denoted there by $q$. 

For $\varepsilon\in(0,1]$, let $\mathcal{H}_\varepsilon$ be the variable-step mollifier defined in Definition \ref{def_H_eps} in Appendix \ref{app_convolution}, and let $\varepsilon_0>0$ be such that all the bounds of Appendix \ref{app_convolution} hold on $(0,\varepsilon_0]$.

Recall that $X=L^2(\Omega,\R^d)$ and $Z=H^s(\Omega,\R^d)\cap D$.

\begin{lemma}\label{lem_Heps_preserves_D}
For every $\varepsilon\in(0,\varepsilon_0]$, the operator $\mathcal{H}_\varepsilon$ maps $Z$ continuously into $Z$ and preserves the boundary-condition domain $D$, i.e., $\mathcal{H}_\varepsilon(Z)\subset Z$ and $\mathcal{H}_\varepsilon(D)\subset D$.
Moreover, there exists $C_H^Z>0$, independent of $\varepsilon$, such that
$\Vert \mathcal{H}_\varepsilon y\Vert_Z\leq C_H^Z \Vert y\Vert_Z$ for any $y\in Z$ and any $\varepsilon\in(0,\varepsilon_0]$.
\end{lemma}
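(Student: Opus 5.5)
The argument reduces to properties of the variable-step mollifier $\mathcal{H}_\varepsilon$ recalled in Appendix \ref{app_convolution}, applied with the integer $s$ in place of $q$. There are two things to check: a uniform Sobolev bound, and preservation of boundary traces up to order $p-1$.

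\emph{Step 1: uniform $H^s$ bound.} I take from Appendix \ref{app_convolution} the estimate $\Vert \mathcal{H}_\varepsilon y\Vert_{H^s}\leq C\Vert y\Vert_{H^s}$, with $C$ independent of $\varepsilon\in(0,\varepsilon_0]$. This is the Burenkov bound $\Vert D^\alpha \mathcal{H}_\varepsilon y\Vert_{L^2}\lesssim \sum_{\beta\leq\alpha}\Vert D^\beta y\Vert_{L^2}$ for $\vert\alpha\vert\leq s$. It holds because $\rho\in\mathscr{C}^s(\Omega)$, so the derivatives of the variable step $\varepsilon\rho(x)$ are bounded uniformly in $\varepsilon\leq 1$. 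Since $\Vert\cdot\Vert_Z=\Vert\cdot\Vert_{H^s}$, this gives $C_H^Z$ and continuity, provided the range lies in $D$.

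\emph{Step 2: preservation of $D$.} The main point is that $\mathcal{H}_\varepsilon$ leaves the traces $(D^\alpha y)_{\vert\partial\Omega}$, $\vert\alpha\vert\leq p-1$, unchanged.

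First let $y$ be smooth up to the boundary. Write $\mathcal{H}_\varepsilon y(x)=\int \eta(h)\,y(x-\varepsilon\rho(x)h)\,dh$, with the normalization of Definition \ref{def_H_eps}. Differentiating under the integral by the chain rule, $D^\alpha\mathcal{H}_\varepsilon y(x)$ equals $\int\eta(h)\,(D^\alpha y)(x-\varepsilon\rho(x)h)\,dh$ plus terms that each contain at least one derivative of $\rho$ of order between $1$ and $\vert\alpha\vert\leq s-1$. All derivatives of $\rho$ up to order $s-1$ vanish on $\partial\Omega$, and $\rho=0$ there, so at $x\in\partial\Omega$ we have $\mathcal{H}_\varepsilon y(x)=y(x)$ and $D^\alpha\mathcal{H}_\varepsilon y(x)=D^\alpha y(x)$.

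For general $y$, pass to $H^p$ functions by density of smooth functions in $H^p(\Omega)$ on Lipschitz domains, using the boundedness of $\mathcal{H}_\varepsilon$ on $H^p$ (Step 1 with $p$ in place of $s$) and continuity of the trace $H^{p-\vert\alpha\vert}\to L^2(\partial\Omega)$. Since $p-1\leq s-1$, every trace appearing in $F$ is preserved. Hence $F(((D^\alpha\mathcal{H}_\varepsilon y)_{\vert\partial\Omega})_\alpha)=F(((D^\alpha y)_{\vert\partial\Omega})_\alpha)=0$, which gives $\mathcal{H}_\varepsilon(D)\subset D$. Combined with Step 1, this yields $\mathcal{H}_\varepsilon(Z)\subset Z$.

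\emph{Expected obstacle.} The main difficulty is making the trace identity rigorous. Near $\partial\Omega$ the chain-rule terms involve $D^\gamma y$ evaluated at points $x-\varepsilon\rho(x)h$ close to the boundary. One must check that these terms, multiplied by derivatives of $\rho$ that vanish on $\partial\Omega$, have zero trace in the $H^{p-\vert\alpha\vert}$ sense, and not merely pointwise for smooth $y$. I would handle this by first proving the identity on the dense class $\mathscr{C}^\infty(\overline{\Omega})$, where it is pointwise, and then extending by continuity as above. If the appendix states trace preservation directly, as in Burenkov, I would simply cite it.
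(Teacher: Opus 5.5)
Your proof is correct and follows the paper's route: the uniform $H^s$ bound is Corollary \ref{H_eps_Wmr} with $q=s$, and trace preservation is the content of Lemma \ref{lem_preserv_value_boundary} and Corollary \ref{cor_preserv_deriv_boundary}, which you re-derive from the chain-rule (commutator) structure and the vanishing of the derivatives of $\rho$ on $\partial\Omega$. Your density step (boundedness of $\mathcal{H}_\varepsilon$ on $H^p$, continuity of traces, density of $\mathscr{C}^\infty(\overline{\Omega})$ in $H^p(\Omega)$) is a worthwhile addition. Those boundary results are stated for $\mathscr{C}^m$ functions, so they cover $Z$ directly via $s>p+1+\frac{n}{2}$ but not general $H^p$ elements of $D$ when $n\geq 2$. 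One harmless slip: the chain rule produces derivatives $D^\beta y$ with $\vert\beta\vert\leq\vert\alpha\vert$, not only $\beta\leq\alpha$, which does not affect the $H^s$ bound.
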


\begin{proof}
Since $Z\subset H^s(\Omega,\R^d)$, Corollary \ref{H_eps_Wmr} of Appendix \ref{app_convolution} yields $\Vert \mathcal{H}_\varepsilon y\Vert_{H^s}\leq C_H^Z \Vert y\Vert_{H^s}$, with $C_H^Z$ independent of $\varepsilon$.
Next, because all derivatives of $\rho$ up to order $s-1$ vanish on $\partial\Omega$, the traces of $D^\alpha \mathcal{H}_\varepsilon y$ and $D^\alpha y$ coincide on $\partial\Omega$ for every multi-index $\alpha$ with $\vert\alpha\vert\leq p-1$: for $\vert\alpha\vert=0$ this is Lemma \ref{lem_preserv_value_boundary}, and for $1\leq\vert\alpha\vert\leq p-1$ this is Corollary \ref{cor_preserv_deriv_boundary}.
Since the boundary operator $F$ in \eqref{domain_A} only depends on these traces, it follows that if $y\in D$ then $\mathcal{H}_\varepsilon y\in D$.
Therefore $\mathcal{H}_\varepsilon$ maps $Z=H^s(\Omega,\R^d)\cap D$ into itself, and, using the norm $\Vert y\Vert_Z=\Vert y\Vert_{H^s}$, the claimed estimate follows.
\end{proof}

For every $\varepsilon\in(0,\varepsilon_0]$, every $t\in[0,T]$, and every $z\in B_Z(y^0,r)$, we define
\begin{equation}\label{def_Aeps_Heps}
A_\varepsilon[t,z]=\mathcal{H}_\varepsilon^* A[t,z] \mathcal{H}_\varepsilon,
\qquad
f_\varepsilon[t,z]=f[t,z].
\end{equation}
Since $\mathcal{H}_\varepsilon(D)\subset D$, the operator $A_\varepsilon[t,z]$ is well defined on $Z$.

\paragraph{Schwartz kernel.}
It is sometimes useful to write $A_\varepsilon[t,z]$ through its Schwartz kernel. For $y\in Z$,
\begin{equation}\label{def_A_eps_sigma_eps}
(A_\varepsilon[t,z]y)(x)=\int_\Omega \sigma_\varepsilon[t,z](x,x') y(x')\,dx',
\end{equation}
where
\begin{equation*}
\sigma_\varepsilon[t,z](x,x')
=
\sum_{\vert\alpha\vert\leq p}
\int_\Omega
H_\varepsilon(x'',x)
\,a_\alpha[t,z](x'')
\,D_{x''}^\alpha H_\varepsilon(x'',x')
\,dx''.
\end{equation*}
In particular, if $\sigma_\varepsilon[t,z](x,x')\neq 0$, then there exists $x''\in\Omega$ such that
$\Vert x''-x\Vert_{\R^n}\leq \varepsilon \rho(x'')$ and $\Vert x''-x'\Vert_{\R^n}\leq \varepsilon \rho(x'')$,
and therefore
$\Vert x-x'\Vert_{\R^n}\leq 2\varepsilon \Vert \rho\Vert_{L^\infty(\Omega)}$.
Thus the kernel is localized in an $\mathrm{O}(\varepsilon)$-neighborhood of the diagonal.
This localization property will later be used to interpret the lifted finite-dimensional dynamics as a sparse interacting system when the discretization scale is chosen of the same order as $\varepsilon$ (see Section \ref{subsec_examples_locality}).

\subsubsection{Verification of the abstract assumptions}\label{sec_verif}
We separate the verification of the abstract assumptions into two groups.
First, we verify the assumptions that follow directly from the mapping and approximation properties of the variable-step mollifier: \ref{H_regZ}, \ref{H_Y}, and \ref{H_Aepsilon_CV}.
Second, we discuss the more structural issues: dissipativity, semigroup generation, and intertwining.
We keep the dissipativity discussion first because it is the main algebraic reason for choosing the symmetric regularization $A_\varepsilon=\mathcal{H}_\varepsilon^*A\mathcal{H}_\varepsilon$.
The summary Proposition \ref{prop_Aeps_hypotheses_PDE} at the end of this subsection gathers what is automatic and what remains structural.

\paragraph{Dissipativity and the semigroup issue.}

The next lemma is the key algebraic reason for choosing the symmetric regularization $A_\varepsilon=\mathcal{H}_\varepsilon^* A \mathcal{H}_\varepsilon$.

\begin{lemma}\label{lem_Aeps_dissip}
Assume that, for every $t\in[0,T]$ and every $z\in B_Z(y^0,r)$, the operator $A[t,z]-\omega\, \mathrm{id}$ is dissipative on $X=L^2(\Omega,\R^d)$, for some $\omega\in\R$ independent of $(t,z)$.
Then, for every $\varepsilon\in(0,\varepsilon_0]$, every $t\in[0,T]$, every $z\in B_Z(y^0,r)$, and every $y\in Z$,
\begin{equation}\label{Aeps_quasidissipative}
\langle A_\varepsilon[t,z]y,y\rangle_{L^2}\leq \omega_0 \Vert y\Vert_{L^2}^2,
\qquad
\omega_0=\max(0,\omega) \sup_{\varepsilon\in(0,\varepsilon_0]} \Vert \mathcal{H}_\varepsilon\Vert_{L(L^2)}^2.
\end{equation}
In particular, $A_\varepsilon[t,z]-\omega_0\, \mathrm{id}$ is dissipative on $X$, uniformly with respect to $(\varepsilon,t,z)$.
\end{lemma}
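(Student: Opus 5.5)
The plan is to move the adjoint across the inner product and apply the dissipativity of $A[t,z]$ to the mollified function. Fix $\varepsilon\in(0,\varepsilon_0]$, $t\in[0,T]$, $z\in B_Z(y^0,r)$ and $y\in Z$. By Lemma \ref{lem_Heps_preserves_D}, $\mathcal{H}_\varepsilon y\in Z\subset D=D(A[t,z])$, so $A[t,z]\mathcal{H}_\varepsilon y\in X$ is well defined. Since $\mathcal{H}_\varepsilon\in L(L^2)$ (by the appendix bounds, e.g. Corollary \ref{H_eps_Wmr} with zero derivatives), its Hilbert adjoint $\mathcal{H}_\varepsilon^*$ is bounded on $L^2$. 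Hence, by definition \eqref{def_Aeps_Heps},
\begin{equation*}
\langle A_\varepsilon[t,z]y,y\rangle_{L^2}=\langle \mathcal{H}_\varepsilon^*A[t,z]\mathcal{H}_\varepsilon y,y\rangle_{L^2}=\langle A[t,z]u,u\rangle_{L^2},\qquad u=\mathcal{H}_\varepsilon y .
\end{equation*}

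The second step uses the hypothesis. In the Hilbert space $L^2$, dissipativity of $A[t,z]-\omega\,\mathrm{id}$ is equivalent to $\langle (A[t,z]-\omega)u,u\rangle\leq 0$ for all $u$ in the domain. Applying this at $u=\mathcal{H}_\varepsilon y\in D$ gives
\begin{equation*}
\langle A[t,z]u,u\rangle\leq\omega\Vert u\Vert_{L^2}^2\leq \max(0,\omega)\Vert \mathcal{H}_\varepsilon\Vert_{L(L^2)}^2\Vert y\Vert_{L^2}^2\leq\omega_0\Vert y\Vert_{L^2}^2 .
\end{equation*}
The positive part $\max(0,\omega)$ is essential here: when $\omega<0$, the bound $\Vert u\Vert\leq\Vert\mathcal{H}_\varepsilon\Vert\,\Vert y\Vert$ cannot be used in that direction, since we have no lower bound on $\Vert u\Vert$. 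In that case we simply bound $\omega\Vert u\Vert^2$ by $0$.

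Finiteness of $\omega_0$ requires $\sup_{\varepsilon}\Vert\mathcal{H}_\varepsilon\Vert_{L(L^2)}<\infty$. This is the uniform $L^2$ (i.e. $W^{0,2}$) bound from Appendix \ref{app_convolution}, which I expect to be the only point needing a citation rather than computation. The remaining inequality $\langle (A_\varepsilon-\omega_0)y,y\rangle\leq0$ on $Z$ is exactly dissipativity in the Hilbert setting, uniformly in $(\varepsilon,t,z)$, which gives the final claim.
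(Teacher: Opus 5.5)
Your proof is correct and follows essentially the same route as the paper's: move $\mathcal{H}_\varepsilon^*$ across the inner product, apply the dissipativity of $A[t,z]-\omega\,\mathrm{id}$ at $\mathcal{H}_\varepsilon y\in D$ (using Lemma \ref{lem_Heps_preserves_D}), and split on the sign of $\omega$, using the uniform $L^2$ bound on $\mathcal{H}_\varepsilon$ when $\omega>0$. The only cosmetic difference is that you cite Corollary \ref{H_eps_Wmr} with $m=0$, whereas the paper cites Lemma \ref{lem_Lr-bounds}, from which that corollary follows.
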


\begin{proof}
Let $y\in Z=H^s(\Omega,\R^d)\cap D$. Since $\mathcal{H}_\varepsilon y\in D$, we may write
$$
\langle A_\varepsilon[t,z]y,y\rangle_{L^2} = \langle \mathcal{H}_\varepsilon^* A[t,z]\mathcal{H}_\varepsilon y,y\rangle_{L^2}
= \langle A[t,z]\mathcal{H}_\varepsilon y,\mathcal{H}_\varepsilon y\rangle_{L^2}
\leq \omega \Vert \mathcal{H}_\varepsilon y\Vert_{L^2}^2 ,
$$
where the last inequality follows from the dissipativity of $A[t,z]-\omega\, \mathrm{id}$.
If $\omega\leq 0$, then $\omega \Vert \mathcal{H}_\varepsilon y\Vert_{L^2}^2\leq 0\leq\omega_0\Vert y\Vert_{L^2}^2$. If $\omega>0$, then, by Lemma \ref{lem_Lr-bounds} of Appendix \ref{app_convolution}, $\Vert \mathcal{H}_\varepsilon\Vert_{L(L^2)}$ is uniformly bounded in $\varepsilon$ and $\omega \Vert \mathcal{H}_\varepsilon y\Vert_{L^2}^2\leq\omega \Vert \mathcal{H}_\varepsilon\Vert_{L(L^2)}^2\Vert y\Vert_{L^2}^2\leq\omega_0\Vert y\Vert_{L^2}^2$. In both cases $\langle A_\varepsilon[t,z]y,y\rangle_{L^2}\leq\omega_0\Vert y\Vert_{L^2}^2$, which gives the result.
\end{proof}

\begin{remark}\label{rem_Aeps_semigroup_issue}
If, for any fixed $(t,z)$, the regularized operator $A_\varepsilon[t,z]$ extends to a bounded operator on $X$, then the semigroup-generation part of Assumption \ref{H_Aepsilon} is automatic: one can define $e^{sA_\varepsilon[t,z]}$ by the convergent exponential series in $L(X)$. Moreover, in the Hilbert setting, the quasi-dissipativity estimate \eqref{Aeps_quasidissipative} implies the stability bound $\Vert e^{sA_\varepsilon[t,z]}\Vert_{L(X)}\leq e^{\omega_0 s}$ by the standard energy method applied to $v(s)=e^{sA_\varepsilon[t,z]}y$.

The only nontrivial situation is when $A_\varepsilon[t,z]$ is realized as an unbounded operator on $X$ with domain $D(A_\varepsilon[t,z])\subsetneq X$. In that case, dissipativity alone does not guarantee semigroup generation: one must verify maximal dissipativity, i.e., for some $\lambda>\omega$ the resolvent equation $(\lambda\mathrm{id}-A_\varepsilon[t,z])u=f$ is solvable for every $f\in X$ (equivalently, $A_\varepsilon[t,z]$ admits no proper dissipative extension). A simple counterexample is obtained by restricting the domain of a generator: on $X=L^2(0,1)$, the transport operator $u\mapsto -u'$ with domain $\{u\in H^1(0,1)\mid u(0)=0\}$ generates the right-shift semigroup, while adding the extra constraint $u(1)=0$ yields a dissipative operator that is not maximal and therefore does not generate a $C_0$ semigroup. Finally, if $A[t,z]$ is maximal dissipative on $X$, then its Yosida approximants $A_\varepsilon[t,z]=A[t,z](\mathrm{id}-\varepsilon A[t,z])^{-1}$ are bounded and maximal dissipative, hence generate stable semigroups without further work (see \cite{EngelNagel, Pazy}).
\end{remark}

\paragraph{Verification of \ref{H_regZ}.}

\begin{lemma}\label{lem_Aeps_regZ_PDE}
Assumption \ref{H_regZ} holds for $A_\varepsilon$ uniformly with respect to $\varepsilon$ (taking larger constants $C_4$ and $C'_4$ if necessary).
\end{lemma}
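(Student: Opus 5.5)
The plan is to factor $A_\varepsilon[t,z]=\mathcal{H}_\varepsilon^*A[t,z]\mathcal{H}_\varepsilon$ and estimate each factor separately, in the norms $L(Z)$, $L(Z,X)$ and $L(X)$. Assumption \ref{H_regZ} has three parts: the domain inclusion, the uniform bound, and the Lipschitz bound. Continuity in $t$ is handled at the end.

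\textbf{Domain and uniform bound.} By Lemma \ref{lem_Heps_preserves_D}, $\mathcal{H}_\varepsilon$ maps $Z$ into $Z\subset D$, with $\Vert\mathcal{H}_\varepsilon y\Vert_Z\leq C_H^Z\Vert y\Vert_Z$ uniformly in $\varepsilon$. Hence $A[t,z]\mathcal{H}_\varepsilon y$ is well defined and lies in $X$. By \ref{H_regZ} for $A$,
$$\Vert A[t,z]\mathcal{H}_\varepsilon y\Vert_X\leq C_4C_H^Z\Vert y\Vert_Z.$$
Next we need $\mathcal{H}_\varepsilon^*\in L(X)$ with $\Vert\mathcal{H}_\varepsilon^*\Vert_{L(L^2)}=\Vert\mathcal{H}_\varepsilon\Vert_{L(L^2)}$, and this is bounded uniformly in $\varepsilon$ by Lemma \ref{lem_Lr-bounds} of Appendix \ref{app_convolution}. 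Alternatively, one can use the Schur test of Appendix \ref{app_Schur} on the kernel $H_\varepsilon(x'',x)$: the kernel has uniformly bounded integrals in both variables, and the Jacobian of $x\mapsto x-\varepsilon\rho(x)\xi$ stays close to $1$ for small $\varepsilon$. This gives $Z\subset D(A_\varepsilon[t,z])$ and
$$\Vert A_\varepsilon[t,z]\Vert_{L(Z,X)}\leq C_H^2C_H^ZC_4,\qquad C_H:=\sup_{\varepsilon}\Vert\mathcal{H}_\varepsilon\Vert_{L(L^2)}.$$

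\textbf{Lipschitz bound.} Write
$$A_\varepsilon[t,z_1]-A_\varepsilon[t,z_2]=\mathcal{H}_\varepsilon^*\big(A[t,z_1]-A[t,z_2]\big)\mathcal{H}_\varepsilon .$$
The same chain of estimates gives the bound $C_HC_H^ZC'_4\Vert z_1-z_2\Vert_X$ in $L(Z,X)$.

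\textbf{Continuity in $t$.} For fixed $z$, the map $t\mapsto A[t,z]\in L(Z,X)$ is continuous. Composing on both sides with the fixed bounded operators $\mathcal{H}_\varepsilon\in L(Z)$ and $\mathcal{H}_\varepsilon^*\in L(X)$ preserves this continuity. All the constants above are independent of $\varepsilon$, $t$ and $z$, which gives the uniformity claimed in the lemma.

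\textbf{Main obstacle.} The only nonformal point is the uniform $L^2$ bound on the adjoint $\mathcal{H}_\varepsilon^*$. The variable step $\varepsilon\rho(x)$ breaks the convolution structure, so $\mathcal{H}_\varepsilon^*$ is not itself a mollifier. I would first try to cite the appendix's $L^r$ bounds directly. If those are stated only for $\mathcal{H}_\varepsilon$ and not its adjoint, I would run the Schur test: the row sums are trivially $1$, and the column sums $\int_\Omega H_\varepsilon(x'',x)\,dx''$ are bounded by a change of variables. That change of variables uses $\Vert\nabla\rho\Vert_\infty<\infty$ and $\varepsilon\leq\varepsilon_0$ small.
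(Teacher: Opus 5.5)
Your proposal is correct and is essentially the paper's proof. It factors $A_\varepsilon=\mathcal{H}_\varepsilon^*A\mathcal{H}_\varepsilon$ and combines three ingredients: the uniform $Z$-bound of Lemma \ref{lem_Heps_preserves_D}, Assumption \ref{H_regZ} for $A$, and the uniform bound $\Vert\mathcal{H}_\varepsilon^*\Vert_{L(L^2)}=\Vert\mathcal{H}_\varepsilon\Vert_{L(L^2)}$ given by Lemma \ref{lem_Lr-bounds} with $r=2$. Because of that last lemma, your fallback Schur computation for the adjoint is not needed.

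Minor points: only one factor $C_H$ arises in the uniform bound. Your $C_H^2$ is still valid, since $C_H\geq 1$ because $\mathcal{H}_\varepsilon$ fixes constants. Your explicit check of continuity in $t$ covers a point the paper leaves implicit.
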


\begin{proof}
By Lemma \ref{lem_Heps_preserves_D}, one has $\mathcal{H}_\varepsilon y\in Z$ and
$\Vert \mathcal{H}_\varepsilon y\Vert_{H^s}\leq C_H^Z \Vert y\Vert_{H^s}$.
Using the $L^2$-boundedness of $\mathcal{H}_\varepsilon^*$ and Assumption \ref{H_regZ} for the original family $A[t,z]$, we obtain
$$
\Vert A_\varepsilon[t,z]y\Vert_{L^2}
= \Vert \mathcal{H}_\varepsilon^* \,A[t,z]\,\mathcal{H}_\varepsilon y\Vert_{L^2}
\leq \Vert \mathcal{H}_\varepsilon^*\Vert_{L(X)} C_4 C_H^Z \Vert y\Vert_{H^s}.
$$
Likewise,
$$
\Vert \big(A_\varepsilon[t,z_1]-A_\varepsilon[t,z_2]\big)y\Vert_{L^2}
= \Vert \mathcal{H}_\varepsilon^* \big(A[t,z_1]-A[t,z_2]\big)\mathcal{H}_\varepsilon y\Vert_{L^2}
\leq \Vert \mathcal{H}_\varepsilon^*\Vert_{L(L^2)} C_4' C_H^Z \Vert z_1-z_2\Vert_X \Vert y\Vert_{H^s}.
$$
Since $\Vert \mathcal{H}_\varepsilon^*\Vert_{L(L^2)}$ is uniformly bounded on $(0,\varepsilon_0]$, this proves the lemma.
\end{proof}

\paragraph{Verification of \ref{H_Y}.} We choose
$$
Y=W^{1,\infty}(\Omega,\R^d)\cap L^2(\Omega,\R^d),
$$
endowed with the norm $\Vert g\Vert_Y=\Vert g\Vert_{L^2}+\Vert g\Vert_{W^{1,\infty}}$.

\begin{lemma}\label{lem_Aepsy_Lip_PDE}
There exists $L_0>0$, independent of $\varepsilon\in(0,\varepsilon_0]$, such that
\begin{equation*}
\Vert A_\varepsilon[t,z]z+f[t,z]\Vert_Y\leq L_0
\qquad
\forall (t,z)\in[0,T]\times B_Z(y^0,r).
\end{equation*}
In particular, Assumption \ref{H_Y} holds for the choice of output space $Y$ above, with $L_\varepsilon\leq L_0$.
\end{lemma}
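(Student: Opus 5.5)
We need to bound $\Vert A_\varepsilon[t,z]z+f[t,z]\Vert_Y=\Vert\cdot\Vert_{L^2}+\Vert\cdot\Vert_{W^{1,\infty}}$ uniformly in $(\varepsilon,t,z)$. We treat the two summands $A_\varepsilon[t,z]z=\mathcal{H}_\varepsilon^*A[t,z]\mathcal{H}_\varepsilon z$ and $f[t,z]$ separately.

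\textbf{The term $f$.} By \ref{H_f_Lip}, $\Vert f[t,z]\Vert_Z\leq C_6$. Since $s>1+\frac n2$, the embedding $Z\hookrightarrow W^{1,\infty}\cap L^2$ recalled in Section \ref{sec_general_class_quasilinear} gives $\Vert f[t,z]\Vert_Y\leq C C_6$.

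\textbf{The $L^2$ part of $A_\varepsilon[t,z]z$.} Lemma \ref{lem_Aeps_regZ_PDE} gives $\Vert A_\varepsilon[t,z]z\Vert_{L^2}\leq C\Vert z\Vert_{H^s}\leq C(\Vert y^0\Vert_Z+r)$ for $z\in B_Z(y^0,r)$.

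\textbf{The $W^{1,\infty}$ part of $A_\varepsilon[t,z]z$.} We proceed in three steps.
\begin{enumerate}
\item Set $w=\mathcal{H}_\varepsilon z$. By Lemma \ref{lem_Heps_preserves_D}, $\Vert w\Vert_{H^s}\leq C_H^Z\Vert z\Vert_{H^s}$. Since $s>p+1+\frac n2$, Sobolev embedding yields $\Vert w\Vert_{W^{p+1,\infty}}\leq C$ uniformly.
\item Using \eqref{coeff_W1inf_PDE}, the function $g=A[t,z]w=\sum_{\vert\alpha\vert\leq p}a_\alpha[t,z]D^\alpha w$ satisfies, by the Leibniz rule, $\Vert g\Vert_{W^{1,\infty}}\leq C_a\sum_\alpha\Vert D^\alpha w\Vert_{W^{1,\infty}}\leq C$. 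Note that only $p+1$ derivatives of $w$ are needed, which is exactly why $s>p+1+\frac n2$ was imposed.
\item It remains to show that $\mathcal{H}_\varepsilon^*$ is bounded on $W^{1,\infty}$ uniformly in $\varepsilon$. We then get $\Vert A_\varepsilon[t,z]z\Vert_{W^{1,\infty}}\leq C\Vert g\Vert_{W^{1,\infty}}$, and summing all contributions gives $L_0$.
\end{enumerate}

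\textbf{Main obstacle: uniform $W^{1,\infty}$-boundedness of $\mathcal{H}_\varepsilon^*$.} Appendix \ref{app_convolution} (Corollary \ref{H_eps_Wmr}) controls $\mathcal{H}_\varepsilon$ on $W^{m,r}$, but the adjoint has kernel $H_\varepsilon(x'',x)$ integrated in the first variable. Explicitly,
$$
(\mathcal{H}_\varepsilon^* g)(x)=\int_\Omega H_\varepsilon(x'',x)g(x'')\,dx'',
\qquad
H_\varepsilon(x'',x)=\frac{1}{(\varepsilon\rho(x''))^n}\,\eta\Big(\frac{x''-x}{\varepsilon\rho(x'')}\Big).
$$

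\emph{$L^\infty$ bound.} We apply the Schur test of Appendix \ref{app_Schur}. The row and column integrals of $H_\varepsilon$ are uniformly bounded: the change of variables $x\mapsto x''$ has Jacobian close to $1$ because $\varepsilon\Vert\nabla\rho\Vert_\infty$ is small. This gives $\Vert\mathcal{H}_\varepsilon^*g\Vert_{L^\infty}\leq C\Vert g\Vert_{L^\infty}$.

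\emph{Derivative bound.} Differentiating in $x$ puts a factor $1/(\varepsilon\rho(x''))$ on $\nabla\eta$, which is not uniformly bounded. The plan is to move the derivative onto $g$. Write $\nabla_x H_\varepsilon(x'',x)=-\nabla_{x''}H_\varepsilon(x'',x)+R_\varepsilon(x'',x)$, where $R_\varepsilon$ collects the terms coming from differentiating $\rho(x'')$. These terms carry the factor $\varepsilon\nabla\rho$ times $(x''-x)/(\varepsilon\rho(x''))$, which is bounded on the support; hence $R_\varepsilon$ satisfies Schur bounds uniformly.

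Then integrate by parts in $x''$. There is no boundary term, because for $x\in\mathring{\Omega}$ the support of $H_\varepsilon(\cdot,x)$ stays inside $\Omega$ (since $\rho\leq\mathrm d(\cdot,\partial\Omega)$ and $\varepsilon<1$). This yields
$$
\nabla\mathcal{H}_\varepsilon^*g=\int H_\varepsilon(x'',x)\nabla g(x'')\,dx''+\int R_\varepsilon(x'',x)\,g(x'')\,dx'',
$$
both bounded by $C\Vert g\Vert_{W^{1,\infty}}$ via Schur.

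Alternatively, if Appendix \ref{app_convolution} already states this adjoint estimate, we would simply cite it.
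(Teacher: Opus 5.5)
Your reduction matches the paper's proof step by step. The $f$ term goes through $Z\hookrightarrow Y$, and the $L^2$ part through Lemma \ref{lem_Aeps_regZ_PDE}. You get a uniform $W^{p+1,\infty}$ bound on $\mathcal{H}_\varepsilon z$: you bound it in $H^s$ and then embed, whereas the paper embeds $z$ first and applies Corollary \ref{H_eps_Wmr} with $r=\infty$; both are fine. Then comes the $W^{1,\infty}$ bound on $A[t,z]\mathcal{H}_\varepsilon z$. The last step, uniform $W^{1,\infty}$-boundedness of $\mathcal{H}_\varepsilon^*$, is Lemma \ref{lem_adjoint_Lip}, which the paper simply cites, so your fallback of citing the appendix gives a complete proof. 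The self-contained argument you sketch for that last step, however, fails.

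\textbf{Where the derivative bound breaks.} The remainder $R_\varepsilon$ is not of size $\varepsilon$. With $w=(x''-x)/(\varepsilon\rho(x''))$, a direct computation gives
\[
R_\varepsilon(x'',x)=\nabla_x H_\varepsilon(x'',x)+\nabla_{x''}H_\varepsilon(x'',x)
=-\frac{\nabla\rho(x'')}{\rho(x'')}\,\frac{1}{(\varepsilon\rho(x''))^{n}}\,\big(n\,\eta(w)+\langle\nabla\eta(w),w\rangle\big).
\]
Differentiating the scale $\varepsilon\rho(x'')$, both in the prefactor and inside $\eta$, costs a factor $\varepsilon\nabla\rho\,w$ \emph{relative} to the main term $(\varepsilon\rho)^{-1}\nabla\eta$. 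In absolute terms this is a factor $\nabla\rho/\rho=\nabla\log\rho$, and the $\varepsilon$ disappears.

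Since $\rho>0$ in $\mathring{\Omega}$ and $\rho=0$ on $\partial\Omega$, $\log\rho\to-\infty$ at the boundary, so $\nabla\rho/\rho$ is unbounded. For instance, $\rho=C\,\mathrm{d}(\cdot,\partial\Omega)^q$ gives $\vert\nabla\rho\vert/\rho=q/\mathrm{d}(\cdot,\partial\Omega)$. Consequently $\sup_x\int_\Omega\vert R_\varepsilon(x'',x)\vert\,dx''=+\infty$ for every fixed $\varepsilon$, and the Schur test gives nothing.

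\textbf{What is missing.} The term $\int R_\varepsilon(x'',x)g(x'')\,dx''$ is in fact bounded, but only because of a cancellation you do not use: $n\eta+\langle\nabla\eta,w\rangle=\mathrm{div}(w\eta)$ has zero integral. To exploit it, split $g(x'')=g(x)+(g(x'')-g(x))$. The second piece is fine, since $\vert g(x'')-g(x)\vert\leq\mathrm{Lip}(g)\,\varepsilon\rho(x'')$ absorbs the $1/\rho$. The first piece leaves $g(x)\int_\Omega R_\varepsilon(x'',x)\,dx''=g(x)\,\nabla(\mathcal{H}_\varepsilon^*1)(x)$, and showing that this is bounded is the real difficulty.

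The paper avoids the issue with the change of variables $y=\Phi_z(x)=x-\varepsilon\rho(x)z$, which gives
\[
\mathcal{H}_\varepsilon^*g(y)=\int_{\mathsf{B}_1}\eta(z)\,g(\Phi_z^{-1}(y))\,J_z(y)\,dz .
\]
No factor $1/\rho$ appears, $\Phi_z^{-1}$ is uniformly Lipschitz, and $J_z$ is Lipschitz with constant $\mathrm{O}(\varepsilon\Vert D^2\rho\Vert_{L^\infty})$; this is where $\rho\in\mathscr{C}^2$ is used.
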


\begin{proof}
Since $Z\hookrightarrow W^{p+1,\infty}(\Omega,\R^d)$, there exists $C_{Z,\infty}>0$ such that
$\Vert z\Vert_{W^{p+1,\infty}}\leq C_{Z,\infty}(\Vert y^0\Vert_Z+r)$ for every $z\in B_Z(y^0,r)$.
Applying Corollary \ref{H_eps_Wmr} of Appendix \ref{app_convolution} with the integer $s$ in place of the appendix integer $p$, we infer that
$\Vert \mathcal{H}_\varepsilon z\Vert_{W^{p+1,\infty}} \leq \mathrm{Cst} \Vert z\Vert_{W^{p+1,\infty}} \leq \mathrm{Cst}$.
Now let $u_\varepsilon[t,z]=A[t,z](\mathcal{H}_\varepsilon z)$.
Because the coefficients $a_\alpha[t,z]$ belong to $W^{1,\infty}(\Omega)$ uniformly by \eqref{coeff_W1inf_PDE}, and because $\mathcal{H}_\varepsilon z\in W^{p+1,\infty}(\Omega)$ uniformly, the differential expression \eqref{def_A_PDE} gives
$\Vert u_\varepsilon[t,z]\Vert_{W^{1,\infty}}\leq \mathrm{Cst}$,
where $\mathrm{Cst}$ depends on $C_a$, on $p$, and on the $W^{p+1,\infty}$-bound for $\mathcal{H}_\varepsilon z$, but not on $\varepsilon$, $t$, or $z$.
Moreover, by Lemma \ref{lem_Aeps_regZ_PDE},
$\Vert A_\varepsilon[t,z]z\Vert_{L^2}\leq \mathrm{Cst} \Vert z\Vert_{H^s}\leq \mathrm{Cst}$.
Lemma \ref{lem_adjoint_Lip} in Appendix \ref{app_convolution}, together with the $L^2$-boundedness of $\mathcal{H}_\varepsilon^*$, therefore yields
$$
\Vert A_\varepsilon[t,z]z\Vert_Y
= \Vert \mathcal{H}_\varepsilon^* u_\varepsilon[t,z]\Vert_{L^2} + \Vert \mathcal{H}_\varepsilon^* u_\varepsilon[t,z]\Vert_{W^{1,\infty}}
\leq \mathrm{Cst} \Vert u_\varepsilon[t,z]\Vert_{L^2} + \mathrm{Cst} \Vert u_\varepsilon[t,z]\Vert_{W^{1,\infty}} 
\leq \mathrm{Cst}.
$$
Finally, since $f[t,z]\in Z$ by Assumption \ref{H_f_Lip} and $Z\hookrightarrow Y$, one has
$\Vert f[t,z]\Vert_Y\leq \mathrm{Cst} \Vert f[t,z]\Vert_Z\leq \mathrm{Cst}$.
The lemma follows.
\end{proof}

\paragraph{Verification of \ref{H_Aepsilon_CV}.}

\begin{lemma}\label{lem_CV_Aeps}
There exists $C_{A,\eta,\rho}>0$, independent of $\varepsilon\in(0,\varepsilon_0]$, such that
\begin{equation}\label{Aeps_rate_PDE}
\Vert \big(A_\varepsilon[t,z]-A[t,z]\big)y\Vert_{L^2}
\leq C_{A,\eta,\rho}\,\varepsilon \Vert y\Vert_{H^s}
\end{equation}
for all $t\in[0,T]$, $z\in B_Z(y^0,r)$, $y\in Z$ and $\varepsilon\in(0,\varepsilon_0]$.
Consequently, Assumption \ref{H_Aepsilon_CV} holds with
$\chi(\varepsilon)=C_{A,\eta,\rho}\,\varepsilon$.
\end{lemma}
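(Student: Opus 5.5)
We split the error by inserting intermediate terms:
$$
A_\varepsilon y - Ay = \mathcal{H}_\varepsilon^* A(\mathcal{H}_\varepsilon y - y) + (\mathcal{H}_\varepsilon^* - \mathrm{id}) A y ,
$$
where $A=A[t,z]$ and $y\in Z$. This is legitimate because $\mathcal{H}_\varepsilon y\in D$ by Lemma \ref{lem_Heps_preserves_D}, so $\mathcal{H}_\varepsilon y-y\in D$ and $A$ applies to it. We then bound the two terms separately in $L^2$, using only the first-order approximation estimates for variable-step mollifiers from Appendix \ref{app_convolution}.

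\emph{First term.} By the uniform $L^2$-boundedness of $\mathcal{H}_\varepsilon^*$ (Lemma \ref{lem_Lr-bounds}),
$$
\Vert \mathcal{H}_\varepsilon^* A(\mathcal{H}_\varepsilon y-y)\Vert_{L^2}\leq \mathrm{Cst}\,\Vert A(\mathcal{H}_\varepsilon y-y)\Vert_{L^2}.
$$
From the form \eqref{def_A_PDE} and the coefficient bound \eqref{coeff_W1inf_PDE}, which gives $a_\alpha\in L^\infty$ uniformly, we get
$$
\Vert A w\Vert_{L^2}\leq \mathrm{Cst}\,C_a\Vert w\Vert_{H^p}.
$$
So it suffices to show $\Vert \mathcal{H}_\varepsilon y-y\Vert_{H^p}\leq \mathrm{Cst}\,\varepsilon\Vert y\Vert_{H^{p+1}}$. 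This is the standard first-order estimate $\Vert \mathcal{H}_\varepsilon w-w\Vert_{W^{m,r}}\leq \mathrm{Cst}\,\varepsilon\Vert w\Vert_{W^{m+1,r}}$ for the variable-step mollifier, applied with $m=p$, $r=2$. It holds because $\rho$ is bounded and smooth, so the step $\varepsilon\rho(x)$ is $O(\varepsilon)$ with $O(\varepsilon)$ derivatives, and derivatives up to order $p\leq s-1$ commute with $\mathcal{H}_\varepsilon$ up to terms carrying factors $\varepsilon D^\beta\rho$. Since $s\geq p+1$, we have $\Vert y\Vert_{H^{p+1}}\leq\Vert y\Vert_{H^s}$.

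\emph{Second term.} Here $Ay\in H^{s-p}\subset H^1$. We need an $L^2$ estimate for the adjoint,
$$
\Vert(\mathcal{H}_\varepsilon^*-\mathrm{id})g\Vert_{L^2}\leq \mathrm{Cst}\,\varepsilon\Vert g\Vert_{H^1}.
$$
The kernel of $\mathcal{H}_\varepsilon^*$ is $H_\varepsilon(x'',x)$ integrated in $x''$. Writing $g(x'')-g(x)$ along segments of length $\leq\varepsilon\rho(x'')$, and using that $\int H_\varepsilon(x'',x)\,dx''=1+O(\varepsilon)$ thanks to the Jacobian of $x''\mapsto x''-\varepsilon\rho(x'')\xi$, the estimate follows by a Schur test (Appendix \ref{app_Schur}). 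Alternatively, it follows by duality from the corresponding statement in Appendix \ref{app_convolution}, if one is stated there. Then
$$
\Vert Ay\Vert_{H^1}\leq \mathrm{Cst}\,\Vert y\Vert_{H^{p+1}}\leq\mathrm{Cst}\,\Vert y\Vert_{H^s},
$$
which uses $a_\alpha\in W^{1,\infty}$ from \eqref{coeff_W1inf_PDE}. Combining the two bounds gives \eqref{Aeps_rate_PDE}. Assumption \ref{H_Aepsilon_CV} then follows with $\chi(\varepsilon)=C_{A,\eta,\rho}\varepsilon$, since $f_\varepsilon=f$.

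\emph{Main obstacle.} The delicate step is the adjoint estimate. $\mathcal{H}_\varepsilon^*$ is not itself a variable-step mollifier: its kernel has $x''$-dependent radius, so $\int H_\varepsilon(x'',x)\,dx''$ is only approximately $1$. We must check that the defect is $O(\varepsilon)$ uniformly up to the boundary, using $\Vert\nabla\rho\Vert_\infty<\infty$, and that the interaction stays inside $\Omega$. A secondary point is checking that the appendix's $W^{m,r}$ approximation estimate is valid for $m=p$ given the boundary flatness of $\rho$ up to order $s-1$.
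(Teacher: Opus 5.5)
Your proposal is correct and is essentially the paper's proof. It uses the same splitting $\mathcal{H}_\varepsilon^*A(\mathcal{H}_\varepsilon y-y)+(\mathcal{H}_\varepsilon^*-\mathrm{id})Ay$. The first term is controlled by the appendix rates for $D^\alpha\mathcal{H}_\varepsilon y-D^\alpha y$ (Lemmas \ref{lem_CV_deriv_Heps} and \ref{lem_CV_Heps}). The second term is controlled by the adjoint rate of Lemma \ref{lem_CV_Hepsstar} applied to $Ay\in W^{1,2}$; that lemma is stated in the appendix, and its proof is essentially the change-of-variables argument you sketch, $\mathcal{H}_\varepsilon^*g(y)=\int\eta(z)\,g(\Phi_z^{-1}(y))J_z(y)\,dz$ with $J_z=1+\mathrm{O}(\varepsilon)$.

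There are two slips, neither of which affects the argument. First, with only $W^{1,\infty}$ coefficients one gets $Ay\in H^1$ but not $H^{s-p}$ in general; only $H^1$ is used. Second, the ``duality'' alternative would not work: dualizing the $W^{1,2}\to L^2$ rate for $\mathcal{H}_\varepsilon-\mathrm{id}$ only controls $\mathcal{H}_\varepsilon^*-\mathrm{id}$ from $L^2$ into $(W^{1,2})^*$, which is the wrong direction.
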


\begin{proof}
Fix $(t,z)\in[0,T]\times B_Z(y^0,r)$ and $y\in Z$.
We decompose
\begin{equation}\label{Aeps_decomp_PDE}
\big(A_\varepsilon[t,z]-A[t,z]\big)y
=
\mathcal{H}_\varepsilon^*\big(A[t,z]\mathcal{H}_\varepsilon y-A[t,z]y\big)
+
\big(\mathcal{H}_\varepsilon^*-\mathrm{id}_X\big)A[t,z]y.
\end{equation}

We first estimate the term $A[t,z]\mathcal{H}_\varepsilon y-A[t,z]y$.
Using \eqref{def_A_PDE},
\begin{align*}
A[t,z]\mathcal{H}_\varepsilon y-A[t,z]y
&=
\sum_{\vert\alpha\vert\leq p} a_\alpha[t,z]
\big(D^\alpha \mathcal{H}_\varepsilon y-D^\alpha y\big).
\end{align*}
Since $y\in Z\hookrightarrow H^{p+1}(\Omega,\R^d)$, Lemmas \ref{lem_CV_deriv_Heps} (for $1\leq\vert\alpha\vert\leq p$) and \ref{lem_CV_Heps} (for $\alpha=0$) of Appendix \ref{app_convolution} give, for every $\vert\alpha\vert\leq p$,
$$
\Vert D^\alpha \mathcal{H}_\varepsilon y-D^\alpha y\Vert_{L^2}
\leq C \varepsilon \Vert y\Vert_{H^{p+1}(\Omega)}
\leq C \varepsilon \Vert y\Vert_{H^s} .
$$
Hence, using the uniform $L^\infty$-bound of the coefficients,
\begin{equation}\label{AH_minus_A_PDE}
\Vert A[t,z]\mathcal{H}_\varepsilon y-A[t,z]y\Vert_{L^2}
\leq C \varepsilon \Vert y\Vert_{H^s} .
\end{equation}

We next estimate the adjoint defect.
Because $y\in Z\hookrightarrow H^{p+1}(\Omega,\R^d)$ and the coefficients belong to $W^{1,\infty}(\Omega)$ uniformly by \eqref{coeff_W1inf_PDE}, the function $A[t,z]y$ belongs to $W^{1,2}(\Omega,\R^d)$ and
$\Vert A[t,z]y\Vert_{W^{1,2}}\leq \mathrm{Cst} \Vert y\Vert_{H^{p+1}}\leq \mathrm{Cst} \Vert y\Vert_{H^s}$.
Applying Lemma \ref{lem_CV_Hepsstar} of Appendix \ref{app_convolution} to $A[t,z]y$, we infer that
\begin{equation}\label{Hstar_minus_id_PDE}
\Vert \big(\mathcal{H}_\varepsilon^*-\mathrm{id}_X\big)A[t,z]y\Vert_{L^2}
\leq C \varepsilon \Vert y\Vert_{H^s} .
\end{equation}
Finally, combining \eqref{Aeps_decomp_PDE}, \eqref{AH_minus_A_PDE}, \eqref{Hstar_minus_id_PDE}, and the uniform $L^2$-boundedness of $\mathcal{H}_\varepsilon^*$ yields \eqref{Aeps_rate_PDE}.
Since $f_\varepsilon=f$, the second part of Assumption \ref{H_Aepsilon_CV} is automatic.
\end{proof}

\begin{proposition}\label{prop_Aeps_hypotheses_PDE}
Under the standing assumptions of Section \ref{sec_general_class_quasilinear}, the explicit regularized family
$(A_\varepsilon,f_\varepsilon)$ defined by \eqref{def_Aeps_Heps}
verifies, uniformly with respect to $\varepsilon\in(0,\varepsilon_0]$:
\begin{itemize}[parsep=0.7mm, itemsep=0.7mm, topsep=0.7mm]
\item Assumption \ref{H_regZ}, by Lemma \ref{lem_Aeps_regZ_PDE};
\item Assumption \ref{H_Y}, for the choice $Y=W^{1,\infty}(\Omega,\R^d)\cap L^2(\Omega,\R^d)$, by Lemma \ref{lem_Aepsy_Lip_PDE};
\item Assumption \ref{H_Aepsilon_CV}, with $\chi(\varepsilon)=C_{A,\eta,\rho}\,\varepsilon$, by Lemma \ref{lem_CV_Aeps};
\item the dissipativity part of \ref{H_stab}, by Lemma \ref{lem_Aeps_dissip}.
\end{itemize}
Therefore, if the semigroup-generation part of Assumption \ref{H_Aepsilon} is additionally known for the chosen realization of $A_\varepsilon[t,z]$ on $X$, then Assumption \ref{H_Aepsilon} is fully verified, except possibly the intertwining condition \ref{H_intertwining}.
\end{proposition}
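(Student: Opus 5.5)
The proposition is essentially a bookkeeping statement, so the plan is to assemble the four lemmas just proved and then check carefully which parts of Assumption \ref{H_Aepsilon} are left open. I would go through the list in the order stated, recording for each item the constant produced and confirming that it does not depend on $\varepsilon\in(0,\varepsilon_0]$.

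\textbf{Mapping properties, output class and rate.} First, \ref{H_regZ} for $A_\varepsilon$ follows from Lemma \ref{lem_Aeps_regZ_PDE}. This uses Lemma \ref{lem_Heps_preserves_D}, namely that $\mathcal{H}_\varepsilon$ preserves $Z$ and $D$ with the uniform constant $C_H^Z$, together with the uniform bound on $\Vert\mathcal{H}_\varepsilon^*\Vert_{L(L^2)}$ from Lemma \ref{lem_Lr-bounds}. I would also check two points the lemma does not state explicitly. The inclusion $Z\subset D(A_\varepsilon[t,z])$ holds because $\mathcal{H}_\varepsilon Z\subset Z\subset D$. The map $t\mapsto A_\varepsilon[t,z]\in L(Z,X)$ is continuous because it is obtained from $t\mapsto A[t,z]$ by composing on both sides with fixed bounded operators. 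Second, \ref{H_Y} with $L_\varepsilon\leq L_0$ is Lemma \ref{lem_Aepsy_Lip_PDE}. Third, \ref{H_Aepsilon_CV} with $\chi(\varepsilon)=C_{A,\eta,\rho}\varepsilon$ is Lemma \ref{lem_CV_Aeps}. This $\chi$ is continuous and vanishes at $0$, and the $f$-part is trivial since $f_\varepsilon=f$.

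\textbf{Dissipativity and the remaining hypotheses.} The dissipativity part of \ref{H_stab} is Lemma \ref{lem_Aeps_dissip}, with $\omega_0$ independent of $(\varepsilon,t,z)$. Here I would state explicitly that this lemma assumes $A[t,z]-\omega\,\mathrm{id}$ is dissipative; that property is implicit in the standing Hilbert-space setting and should be recorded as a hypothesis. The assumption \ref{H_f_Lip} for $f_\varepsilon=f$ holds trivially, since it is the same as for $f$. Since $X$ and $Z$ are unchanged, \ref{H_Z} and \ref{H_S} are inherited.

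\textbf{Semigroup generation, the main obstacle.} The step requiring care is the conditional conclusion. Suppose the semigroup-generation part is known, for example because the realization of $A_\varepsilon[t,z]$ is bounded on $X$ or maximal dissipative. Then, in the Hilbert space $X$, Lumer--Phillips together with the uniform quasi-dissipativity bound gives contractive-type estimates $\Vert e^{sA_\varepsilon[t,z]}\Vert\leq e^{\omega_0 s}$. I would stress that this bound holds for every $(t,z)$ with the same $\omega_0$. The products in \ref{H_stab} are then bounded by $e^{\omega_0(s_1+\cdots+s_k)}$, so one may take $M=1$, uniformly in $\varepsilon$. The only hypothesis not covered is \ref{H_intertwining}, with its resolvent invariance of $Z$. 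No lemma verifies it, so it is explicitly excluded, which yields exactly the statement.
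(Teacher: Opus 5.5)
Your proposal is correct and follows the paper's route: the paper establishes the proposition simply by invoking Lemmas \ref{lem_Aeps_regZ_PDE}, \ref{lem_Aepsy_Lip_PDE}, \ref{lem_CV_Aeps} and \ref{lem_Aeps_dissip} item by item, leaving \ref{H_intertwining} open. Your additional checks are sound refinements of points the paper leaves tacit: the inclusion $Z\subset D(A_\varepsilon[t,z])$ and continuity in $t$; the energy argument giving $M=1$ once generation is known (it needs the dissipativity inequality on a core of the realization, which is automatic if $A_\varepsilon[t,z]$ extends boundedly from the dense set $Z$); and recording the dissipativity of $A[t,z]-\omega\,\mathrm{id}$ as an explicit hypothesis, which follows from \ref{H_stab} only when $M=1$.
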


\begin{remark}\label{rem_intertwining_continuous_not_generic}
The fact that intertwining for $A_\varepsilon$ is not automatic does \emph{not} mean that no intertwining relation can exist for the regularized family.
In principle, one could very well look for a family of operators $S_\varepsilon$ depending on $\varepsilon$ such that $S_\varepsilon A_\varepsilon[t,z] = A_\varepsilon[t,z] S_\varepsilon + B_\varepsilon[t,z] S_\varepsilon$.
What fails in general is the existence of a canonical and uniform construction of such a family from the single operator $S$ used for the original family $A[t,z]$.

Allowing $S_\varepsilon$ would amount to working with $\varepsilon$-dependent graph norms, or even with an $\varepsilon$-dependent Banach scale. This is perfectly conceivable in special situations, in particular for first-order systems or for regularizations tailored to a problem-specific energy. But it lies outside the fixed-scale framework adopted in the present paper, where the same pair $(Z,X)$ is used to compare $y$, $y_\varepsilon$, and $y_\varepsilon^N$.

For the variable-step mollifier regularization considered here, no such family $(S_\varepsilon)_\varepsilon$ is available in general from the construction alone. Establishing it would require a separate problem-dependent analysis. This is why \ref{H_intertwining} remains a structural hypothesis for the continuous regularized family.
\end{remark}

\section{Discretization and interacting approximation systems}\label{sec_abstract_finite_approx}

We now discretize the regularized problem \eqref{abstract_quasilinear_eps}.
The guiding principle is to mirror the continuous pair $(Z,X)$ at the discrete level.
The discrete unknown lives in a finite-dimensional vector space $V_N$, endowed with two norms induced by the same reconstruction operator:
a strong norm $\Vert\cdot\Vert_{Z_N}$ and a pivot norm $\Vert\cdot\Vert_{X_N}$.

\subsection{Discretization assumptions}

We make the following discretization assumptions:
\begin{enumerate}[label=$(H_{\arabic*})$, resume] 
\setcounter{enumi}{9}
%
%
\item\label{H_discretization}
For each $N\in\N^*$, let $V_N$ be a finite-dimensional real vector space (typically, $V_N\simeq\R^{dN}$).
We assume that there exist linear maps $P_N\in L(X,V_N)$ and $R_N\in L(V_N,Z)$ (\emph{a fortiori}, we have $P_N\in L(Z,V_N)$ for the restriction to $Z$, and $R_N\in L(V_N,X)$, because $Z\hookrightarrow X$) satisfying
$$
P_NR_N=\mathrm{id}_{V_N} .
$$
Hence the map $Q_N=R_NP_N$, viewed either as an endomorphism of $X$ or of $Z$, is a projection (we also have $Q_N\in L(X,Z)$).
We assume that there exist $C_{\mathrm{stab}}>0$, $C_{10}>0$, and $\gamma>0$, not depending on $N$, such that
\begin{align}
\Vert Q_N\Vert_{L(X)} \leq C_{\mathrm{stab}}, \qquad \Vert Q_N\Vert_{L(Z)} \leq C_{\mathrm{stab}}, \label{stab_PX_PZ} \\
\Vert Q_Ny-y\Vert_X \leq \frac{C_{10}}{N^\gamma} \Vert y\Vert_Z \qquad\forall y\in Z , \label{CV_X_Z} \\
\Vert Q_Ny-y\Vert_X \leq \frac{C_{10}}{N^\gamma} \Vert y\Vert_Y \qquad\forall y\in Y , \label{CV_X_Y} \\
\lim_{N\to+\infty} \Vert Q_Ny-y\Vert_Z = 0 \qquad\forall y\in Z . \label{CV_Z}
\end{align}
%
%
%
%
\end{enumerate}
We endow the same underlying set $V_N$ with two norms, defining:
\begin{itemize}
\item $X_N=(V_N,\Vert\cdot\Vert_{X_N})$ with the induced norm $\Vert u\Vert_{X_N}=\Vert R_Nu\Vert_X$ for any $u\in V_N$;
\item $Z_N=(V_N,\Vert\cdot\Vert_{Z_N})$ with the induced norm $\Vert u\Vert_{Z_N}=\Vert R_Nu\Vert_Z$ for any $u\in V_N$ .
\end{itemize}
By construction,
$$
\Vert R_N\Vert_{L(X_N,X)}=\Vert R_N\Vert_{L(Z_N,Z)}=1,
\quad
\Vert Q_N\Vert_{L(X)} = \Vert P_N\Vert_{L(X,X_N)},
\quad \Vert Q_N\Vert_{L(Z)} = \Vert P_N\Vert_{L(Z,Z_N)}.
$$
In particular, \eqref{stab_PX_PZ} is the uniform boundedness of the sampling operator on the two discrete scales.

Assumption \ref{H_discretization} mirrors the continuous pair $(Z,X)$ by a discrete pair $(Z_N,X_N)$ carried by the same finite-dimensional space $V_N$.
The discrete $Z_N$-norm controls the strong scale 
while the discrete $X_N$-norm plays the role of the pivot energy norm.

The above sampling-reconstruction viewpoint appears in several neighboring traditions, although not always under the same packaging: finite element quasi-interpolation, Scott-Zhang and Cl\'ement-type operators, smooth finite-volume reconstructions, partition-of-unity and meshfree methods, as well as spectral or orthogonal Galerkin truncations.
We refer to  Section \ref{sec_examples_discretizations} and Appendix \ref{app_discretization_technical} for a precise discussion and representative examples.

Assumption \ref{H_discretization} is primarily a \emph{spatial} discretization hypothesis.
The projector $Q_N=R_NP_N$ acts on the state space and approximates the identity on regular classes of spatial outputs.
Time discretizations are of a different nature: they are usually formulated as one-step or multistage maps on trajectories or stage variables, and their analysis requires stability properties specific to the chosen integrator.
For Kato-type quasilinear equations, see in particular
\cite{DorichHochbruck_SINUM2022, Dorich_FoCM2025, HochbruckPazur_NM2017, KovacsLubich_NM2018, HochbruckPazurSchnaubelt_NM2018}.
One may of course combine such time integrators with the present spatial/interacting approximation, but this lies beyond the scope of the paper.

\begin{remark}\label{rem_H_Z_N}
It follows from \ref{H_discretization} that the discrete embedding $Z_N\hookrightarrow X_N$ holds with the same constant as in \ref{H_Z}: $\Vert u\Vert_{X_N} = \Vert R_Nu\Vert_X \leq C_1 \Vert R_Nu\Vert_Z = C_1 \Vert u\Vert_{Z_N}$ for any $u\in V_N$.
\end{remark}

\begin{remark}\label{rem_no_uniform_X_to_Z}
For each fixed $N$, the operator $Q_N$ belongs to $L(X,Z)$.
However, one should not expect a uniform bound of $\Vert Q_N\Vert_{L(X,Z)}$ as $N\to+\infty$ when $Z\subsetneq X$.
This is an inverse-inequality phenomenon and is one of the reasons why the abstract error estimate below is formulated through the output approximation assumption \eqref{CV_X_Y}, and not through a uniform $X\to Z$ bound on $Q_N$.
\end{remark}

\subsection{Finite-dimensional approximation system}

Given any $\varepsilon\in(0,\varepsilon_0]$, any $N\in\N^*$ and any $(t,v)\in[0,T]\times Z_N$, we define the operator $A_\varepsilon^N[t,v]\in L(Z_N,X_N)$ and $f_\varepsilon^N[t,v]\in V_N$ by
\begin{equation}\label{def_A_f_eps_N}
A_\varepsilon^N[t,v] = P_N A_\varepsilon[t, R_N v] R_N ,
\qquad
f_\varepsilon^N[t,v] = P_N f_\varepsilon[t,R_Nv] ,
\end{equation}
as illustrated on Figure \ref{fig_def_A_eps_N}.
%
\begin{figure}[h]
\begin{center}
\begin{tikzpicture}[line width=1pt, every node/.style={inner sep=1pt}]
\node (Z)  at (0.5,2.8) {$Z$};
\node (X)  at (7.,2.8) {$X$};
\node (ZN) at (0.5,0)   {$Z_{N}$};
\node (XN) at (7,0) {$X_{N}$};
\draw[->] (1.1,2.8) -- (6.4,2.8)
  node[midway, above=0.1cm] {$A_\varepsilon$};
\draw[->] (1.1,0) -- (6.4,0)
  node[midway, above=0.1cm] {$A_\varepsilon^N$};
\draw[->] (0.25,0.45) -- (0.25,2.35)
  node[midway, left=0.1cm] {$R_N$};
\draw[->] (0.75,2.35) -- (0.75,0.45)
  node[midway, right=0.1cm] {$P_N$};
\draw[->] (6.75,0.45) -- (6.75,2.35)
  node[midway, left=0.1cm] {$R_N$};
\draw[->] (7.25,2.35) -- (7.25,0.45)
  node[midway, right=0.1cm] {$P_N$};
\end{tikzpicture}
\end{center}
\caption{Discretization diagram}\label{fig_def_A_eps_N}
\end{figure}

The discrete system on $V_N$ is
\begin{equation}\label{interacting_system_u}
\boxed{
\dot u_\varepsilon^N(t)=A_\varepsilon^N[t,u_\varepsilon^N(t)]u_\varepsilon^N(t)+f_\varepsilon^N[t,u_\varepsilon^N(t)]
}
\end{equation}
with initial condition $u_\varepsilon^N(0)=P_N y^0$.
Its lift in $Z$ is
\begin{equation}\label{def_yepsN}
\boxed{
y_\varepsilon^N(t)=R_Nu_\varepsilon^N(t)\in Z
}
\end{equation}
and satisfies
\begin{equation}\label{interacting_system_y}
\dot y_\varepsilon^N(t) = Q_N A_\varepsilon[t,y_\varepsilon^N(t)] y_\varepsilon^N(t) + Q_N f_\varepsilon[t,y_\varepsilon^N(t)]
\end{equation}
with initial condition $y_\varepsilon^N(0) = Q_N y^0$.
In particular, $y_\varepsilon^N(t)$ takes values in the finite-dimensional subspace $\mathrm{Ran}(R_N)\subset Z$.

\medskip

In addition to Assumptions \ref{H_Z} to \ref{H_discretization}, similarly to \ref{H_Aepsilon}, 
we assume that:
\begin{enumerate}[label=$(H_{\arabic*})$,resume] 
\setcounter{enumi}{10}
\item\label{H_Aepsilon_N} 
There exists $N_0\in\N^*$ such that, for all $t\in[0,T]$ and $v\in B_{Z_N}(P_Ny^0,r/2)$, the families of operators $A_\varepsilon^N[t,v]$ and of functions $f_\varepsilon^N[t,v]$ satisfy Assumptions \ref{H_Z} to \ref{H_f_Lip} (with $X$, $Z$, $S$, $A$, $B$, $B_Z(y^0,r)$ replaced by $X_N$, $Z_N$, $S_\varepsilon^N$, $A_\varepsilon^N$, $B_\varepsilon^N$, $B_{Z_N}(P_Ny^0,r/2)$) uniformly with respect to $N\geq N_0$ and $\varepsilon\in(0,\varepsilon_0]$, i.e., with constants $C_i,M,\omega$ that may be larger if necessary but do not depend on $\varepsilon$ and $N$.
\end{enumerate}


\begin{proposition}\label{prop_yepsilonN}
We make Assumptions \ref{H_Z} to \ref{H_Aepsilon_N}.
Given any $\varepsilon\in(0,\varepsilon_0]$, let $y_\varepsilon(\cdot)\in\mathscr{C}^0([0,T'],Z)\cap\mathscr{C}^1([0,T'],X)$ be the unique solution of \eqref{abstract_quasilinear_eps} such that $y_\varepsilon(0)=y^0$, as given by Proposition \ref{prop_yepsilon}.
Taking $N_0$ larger if necessary, for any $N\geq N_0$, there exists a unique solution $y_\varepsilon^N\in \mathscr{C}^0([0,T'],Z)\cap\mathscr{C}^1([0,T'],X)$ of \eqref{interacting_system_y} such that $y_\varepsilon^N(0)=Q_Ny^0$ (equivalently, a unique solution $u_\varepsilon^N\in \mathscr{C}^0([0,T'],Z_N)\cap\mathscr{C}^1([0,T'],X_N)$ of \eqref{interacting_system_u} such that $u_\varepsilon^N(0)=P_N y^0$, satisfying \eqref{def_yepsN}). Moreover, $y_\varepsilon^N(t)\in B_Z(y^0,r)$ for any $t\in[0,T']$ and 
\begin{equation}\label{CV_yeps_yepsN}
\Vert y_\varepsilon^N(t)-y_\varepsilon(t)\Vert_X \leq  \frac{b_\varepsilon(t)}{N^\gamma} 
\qquad\forall t\in[0,T'] \qquad\forall\varepsilon\in(0,\varepsilon_0],
\end{equation}
where 
\begin{equation}\label{def_beps}
b_\varepsilon(t) = M C_{10} e^{\beta t} \left( \Vert y^0\Vert_Z e^{\omega t} + L_\varepsilon \int_0^t e^{\omega s} \, ds \right) .
\end{equation}
\end{proposition}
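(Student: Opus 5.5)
Existence, uniqueness and confinement come from Assumption \ref{H_Aepsilon_N} and the discrete analogue of Proposition \ref{prop_existence_uniqueness}, together with Remarks \ref{rem_prop_existence_uniqueness1} and \ref{rem_prop_existence_uniqueness_T'uniform}. These produce a time $T'$ uniform in $(\varepsilon,N)$, since the constants in \ref{H_Aepsilon_N} do not depend on them. By \eqref{CV_Z}, $\Vert P_Ny^0-P_Ny^0\Vert_{Z_N}=0$ trivially, but we also need $y_\varepsilon^N(t)=R_Nu_\varepsilon^N(t)\in B_Z(y^0,r)$. We have $\Vert R_Nv-y^0\Vert_Z\leq \Vert v-P_Ny^0\Vert_{Z_N}+\Vert Q_Ny^0-y^0\Vert_Z$. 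The first term is at most $r/2$ on the discrete ball, and the second is at most $r/2$ for $N\geq N_0$ large enough by \eqref{CV_Z}; this is where $N_0$ is enlarged. The lift $y_\varepsilon^N$ then solves \eqref{interacting_system_y}, using $R_NP_N=Q_N$, and lies in $\mathrm{Ran}(R_N)\subset Z$. Uniqueness of $y_\varepsilon^N$ is equivalent to uniqueness of $u_\varepsilon^N$ because $R_N$ is injective ($P_NR_N=\mathrm{id}$) and any solution of \eqref{interacting_system_y} starting in $\mathrm{Ran}(R_N)$ stays there.

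For the estimate I follow the proof of Proposition \ref{prop_yepsilon}, with $y_\varepsilon$ playing the role of the exact solution. Set $e(t)=y_\varepsilon^N(t)-y_\varepsilon(t)$ and write
\begin{multline*}
\dot e = A_\varepsilon[t,y_\varepsilon](e) + \big(A_\varepsilon[t,y_\varepsilon^N]-A_\varepsilon[t,y_\varepsilon]\big)y_\varepsilon^N + f_\varepsilon[t,y_\varepsilon^N]-f_\varepsilon[t,y_\varepsilon] \\
+ (Q_N-\mathrm{id})\big(A_\varepsilon[t,y_\varepsilon^N]y_\varepsilon^N+f_\varepsilon[t,y_\varepsilon^N]\big).
\end{multline*}
Using the evolution system $U^\varepsilon_{y_\varepsilon}$, which is uniformly stable by \ref{H_Aepsilon}, the Duhamel formula gives
\[
e(t)=U^\varepsilon_{y_\varepsilon}(t,0)(Q_Ny^0-y^0)+\int_0^t U^\varepsilon_{y_\varepsilon}(t,s)\big[\cdots\big]\,ds .
\]
The initial term is bounded by $Me^{\omega t}C_{10}N^{-\gamma}\Vert y^0\Vert_Z$ via \eqref{CV_X_Z}. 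The two Lipschitz terms are bounded by $(C'_4(\Vert y^0\Vert_Z+r)+C'_6)\Vert e(s)\Vert_X$, using \ref{H_regZ}, \ref{H_f_Lip} for $A_\varepsilon,f_\varepsilon$, and $y_\varepsilon^N(s)\in B_Z(y^0,r)$. The key term is the projection defect. Since $y_\varepsilon^N(s)\in B_Z(y^0,r)$, \ref{H_Y} puts the drift in $Y$ with norm at most $L_\varepsilon$, and \eqref{CV_X_Y} then bounds the defect by $C_{10}L_\varepsilon N^{-\gamma}$. This is exactly why \ref{H_Y} is evaluated at the argument $z=y_\varepsilon^N$ itself.

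Multiplying by $e^{-\omega t}$ and applying Gronwall with the nondecreasing forcing $MC_{10}N^{-\gamma}(\Vert y^0\Vert_Z+L_\varepsilon\int_0^te^{-\omega s}\cdots)$ gives \eqref{CV_yeps_yepsN} with $b_\varepsilon$ as in \eqref{def_beps}, up to the same bookkeeping of exponentials as in Proposition \ref{prop_yepsilon}.

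The main obstacle is rigor in the Duhamel step. The perturbation formula needs $e(s)$ and the forcing to be regular enough: $y_\varepsilon^N\in\mathscr{C}^0Z\cap\mathscr{C}^1X$ and $y_\varepsilon\in\mathscr{C}^0Z$. I expect $\frac{d}{ds}\big(U^\varepsilon_{y_\varepsilon}(t,s)e(s)\big)$ to be justified by \ref{E_ode} on $Z$, since $e(s)\in Z$. The forcing only needs to be in $X$ and continuous, which holds because $Q_N\in L(X,Z)$ for fixed $N$.
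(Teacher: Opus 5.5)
Your proposal is correct and follows the paper's proof. You enlarge $N_0$ via \eqref{CV_Z} so that $\Vert Q_Ny^0-y^0\Vert_Z\leq r/2$, get $u_\varepsilon^N\in B_{Z_N}(P_Ny^0,r/2)$ from the discrete Kato theory, confine the lift to $B_Z(y^0,r)$ by the triangle inequality, and conclude by Duhamel and Gronwall, with the projection defect bounded through \eqref{CV_X_Y} and \ref{H_Y} at $z=y_\varepsilon^N$. The only difference is cosmetic: you run Duhamel with $U^\varepsilon_{y_\varepsilon}$ and let the Lipschitz difference act on $y_\varepsilon^N$, whereas the paper uses $U^\varepsilon_{y_\varepsilon^N}$ and lets it act on $y_\varepsilon$; since both trajectories stay in $B_Z(y^0,r)$, both choices give the same $\beta$ and the same $b_\varepsilon(t)$.
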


\begin{proof}
By \eqref{CV_Z}, there exists $N_0\in\N^*$ such that $\Vert Q_N y^0-y^0\Vert_Z\leq \frac{r}{2}$ for any $N\geq N_0$.
Since Assumption \ref{H_Aepsilon_N} holds on the pair $(X_N,Z_N)$ with initial datum $P_N y^0$, Proposition \ref{prop_existence_uniqueness} and Remarks \ref{rem_prop_existence_uniqueness1} and \ref{rem_prop_existence_uniqueness_T'uniform} yield a unique solution
$u_\varepsilon^N\in \mathscr{C}^0([0,T'],Z_N)\cap \mathscr{C}^1([0,T'],X_N)$ of \eqref{interacting_system_u}, satisfying $u_\varepsilon^N(t)\in B_{Z_N}(P_N y^0,r/2)$ for every $t\in[0,T']$.
Since $R_N\in L(Z_N,Z)\cap L(X_N,X)$, we infer that $y_\varepsilon^N=R_Nu_\varepsilon^N \in \mathscr{C}^0([0,T'],Z)\cap \mathscr{C}^1([0,T'],X)$.
Moreover,
\begin{multline*}
\Vert y_\varepsilon^N(t)-y^0\Vert_Z \leq \Vert y_\varepsilon^N(t)-Q_N y^0\Vert_Z + \Vert Q_N y^0-y^0\Vert_Z \\
= \Vert u_\varepsilon^N(t)-P_N y^0\Vert_{Z_N} + \Vert Q_N y^0-y^0\Vert_Z 
\leq \frac{r}{2}+\frac{r}{2} = r,
\end{multline*}
hence $y_\varepsilon^N(t)\in B_Z(y^0,r)$ for every $t\in[0,T']$.
This gives the first part of the proposition. 

Let us prove \eqref{CV_yeps_yepsN}. The argument is similar to the one in the proof of Proposition \ref{prop_yepsilon}, but since there are some differences we give the complete detail.
Since $y_\varepsilon^N$ solves \eqref{interacting_system_y} and $y_\varepsilon$ solves \eqref{abstract_quasilinear_eps}, we have
\begin{multline*}
\frac{d}{dt} ( y_\varepsilon^N(t)-y_\varepsilon(t) ) = A_\varepsilon[t,y_\varepsilon^N(t)] ( y_\varepsilon^N(t)-y_\varepsilon(t) ) +  (A_\varepsilon[t,y_\varepsilon^N(t)]-A_\varepsilon[t,y_\varepsilon(t)]) y_\varepsilon(t)
\\
+ f_\varepsilon[t,y_\varepsilon^N(t)]-f_\varepsilon[t,y_\varepsilon(t)] + (Q_N-\mathrm{id}_X) \big(A_\varepsilon[t,y_\varepsilon^N(t)]y_\varepsilon^N(t)+f_\varepsilon[t,y_\varepsilon^N(t)]\big) .
\end{multline*}
Applying the Duhamel formula \eqref{duhamel_implicit} with the evolution system $U^\varepsilon_{y_\varepsilon^N}$ associated with
$t\mapsto A_\varepsilon[t,y_\varepsilon^N(t)]$, we obtain
\begin{multline*}
y_\varepsilon^N(t)-y_\varepsilon(t) = U^\varepsilon_{y_\varepsilon^N}(t,0)\big(Q_N y^0-y^0\big)
\\
+ \int_0^t U^\varepsilon_{y_\varepsilon^N}(t,s) \Big( \big(A_\varepsilon[s,y_\varepsilon^N(s)]-A_\varepsilon[s,y_\varepsilon(s)]\big)y_\varepsilon(s) + f_\varepsilon[s,y_\varepsilon^N(s)]-f_\varepsilon[s,y_\varepsilon(s)] \\
+ (Q_N-\mathrm{id}_X) \big(A_\varepsilon[s,y_\varepsilon^N(s)]y_\varepsilon^N(s)+f_\varepsilon[s,y_\varepsilon^N(s)]\big) \Big)\,ds.
\end{multline*}
Since both $y_\varepsilon$ and $y_\varepsilon^N$ take values in $B_Z(y^0,r)$ on $[0,T']$, we may use the same constants as in Proposition \ref{prop_yepsilon}.
Hence
\begin{multline*}
\Vert y_\varepsilon^N(t)-y_\varepsilon(t)\Vert_X \leq M e^{\omega t}\Vert Q_N y^0-y^0\Vert_X 
+ \int_0^t e^{\omega(t-s)} \Big( \beta \Vert y_\varepsilon^N(s)-y_\varepsilon(s)\Vert_X \\
+ M \big\Vert (Q_N-\mathrm{id}_X) \big(A_\varepsilon[s,y_\varepsilon^N(s)]y_\varepsilon^N(s)+f_\varepsilon[s,y_\varepsilon^N(s)]\big) \big\Vert_X \Big) \, ds.
\end{multline*}
By \eqref{CV_X_Z}, $\Vert Q_N y^0-y^0\Vert_X\leq \frac{C_{10}}{N^\gamma}\Vert y^0\Vert_Z$, 
and by \eqref{CV_X_Y} and Assumption \ref{H_Y},
$$
\big\Vert (Q_N-\mathrm{id}_X) \big(A_\varepsilon[s,y_\varepsilon^N(s)]y_\varepsilon^N(s) + f_\varepsilon[s,y_\varepsilon^N(s)]\big) \big\Vert_X \leq \frac{C_{10}}{N^\gamma} L_\varepsilon .
$$
As in the proof of Proposition \ref{prop_yepsilon}, applying the Gronwall lemma to $e^{-\omega t}\Vert y_\varepsilon^N(t)-y_\varepsilon(t)\Vert_X$ yields \eqref{CV_yeps_yepsN}. 
\end{proof}

\begin{remark}\label{rem_prop_yepsilonN_scope}
Proposition \ref{prop_yepsilonN} gives an abstract criterion for the existence of a lifted interacting solution, under the (strong) sufficient Assumption \ref{H_Aepsilon_N}.
In applications, one may prove the existence of $y_\varepsilon^N$ by other, more direct arguments; this is important to note because, when this is the case then one obtains the estimate \eqref{CV_yeps_yepsN} without Assumption \ref{H_Aepsilon_N}: deriving this estimate only requires the existence of such a lifted solution on $[0,T']$.
\end{remark}

\paragraph{Comments on Assumption \ref{H_Aepsilon_N}.}
Within Assumption \ref{H_Aepsilon_N}, the genuinely nontrivial parts are the existence of a suitable discrete intertwining operator, the semigroup stability, and the intertwining relation.
By contrast, the discrete analogues of the regularity and forcing estimates follow directly from the discretization assumptions, as shown by the next lemma.

\begin{lemma}\label{lem_H_regZ_N}
Under Assumptions \ref{H_Z} to \ref{H_discretization}, there exists $N_0\in\N^*$ such that, for all $t\in[0,T]$ and $v\in B_{Z_N}(P_Ny^0,r/2)$, the families of operators $A_\varepsilon^N[t,v]$ and of functions $f_\varepsilon^N[t,v]$ satisfy Assumptions \ref{H_regZ} 
and \ref{H_f_Lip} on $(X_N,Z_N)$ (with constants $C_{\mathrm{stab}} C_4$, $C_{\mathrm{stab}} C'_4$, $C_{\mathrm{stab}} C_6$ and $C_{\mathrm{stab}} C'_6$, respectively) uniformly with respect to $N\geq N_0$ and $\varepsilon\in(0,\varepsilon_0]$
\end{lemma}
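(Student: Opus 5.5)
The plan is to choose $N_0$ so that the discrete ball maps into the continuous ball, and then transfer \ref{H_regZ} and \ref{H_f_Lip} from $(A_\varepsilon,f_\varepsilon)$ to $(A_\varepsilon^N,f_\varepsilon^N)$ through the norm identities $\Vert u\Vert_{X_N}=\Vert R_Nu\Vert_X$ and $\Vert u\Vert_{Z_N}=\Vert R_Nu\Vert_Z$, together with $\Vert P_N\Vert_{L(X,X_N)}=\Vert Q_N\Vert_{L(X)}\leq C_{\mathrm{stab}}$ and $\Vert P_N\Vert_{L(Z,Z_N)}=\Vert Q_N\Vert_{L(Z)}\leq C_{\mathrm{stab}}$.

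\textbf{Step 1: choice of $N_0$.} By \eqref{CV_Z}, pick $N_0$ with $\Vert Q_Ny^0-y^0\Vert_Z\leq r/2$ for all $N\geq N_0$. For $v\in B_{Z_N}(P_Ny^0,r/2)$ we then have
$$
\Vert R_Nv-y^0\Vert_Z\leq \Vert R_Nv-Q_Ny^0\Vert_Z+\Vert Q_Ny^0-y^0\Vert_Z=\Vert v-P_Ny^0\Vert_{Z_N}+\Vert Q_Ny^0-y^0\Vert_Z\leq r.
$$
This uses $R_NP_Ny^0=Q_Ny^0$. Hence $R_Nv\in B_Z(y^0,r)$, so the uniform-in-$\varepsilon$ constants of \ref{H_regZ} and \ref{H_f_Lip} for $(A_\varepsilon,f_\varepsilon)$ (granted by \ref{H_Aepsilon}) apply at the point $R_Nv$.

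\textbf{Step 2: \ref{H_regZ}.} Let $w\in Z_N$. Since $R_Nw\in Z\subset D(A_\varepsilon[t,R_Nv])$,
$$
\Vert A_\varepsilon^N[t,v]w\Vert_{X_N}=\Vert Q_NA_\varepsilon[t,R_Nv]R_Nw\Vert_X\leq C_{\mathrm{stab}}C_4\Vert R_Nw\Vert_Z=C_{\mathrm{stab}}C_4\Vert w\Vert_{Z_N}.
$$
In the same way, for $v_1,v_2$ in the ball,
$$
\Vert (A_\varepsilon^N[t,v_1]-A_\varepsilon^N[t,v_2])w\Vert_{X_N}\leq C_{\mathrm{stab}}C'_4\Vert R_Nv_1-R_Nv_2\Vert_X\Vert w\Vert_{Z_N}=C_{\mathrm{stab}}C'_4\Vert v_1-v_2\Vert_{X_N}\Vert w\Vert_{Z_N}.
$$
Continuity in $t$ is inherited by composing with the fixed bounded maps $P_N$ and $R_N$. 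The inclusion $Z_N\subset D(A_\varepsilon^N)$ is trivial, because $V_N$ is finite-dimensional and $A_\varepsilon^N[t,v]$ is defined on all of it.

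\textbf{Step 3: \ref{H_f_Lip}.} We have $\Vert f_\varepsilon^N[t,v]\Vert_{Z_N}=\Vert Q_Nf_\varepsilon[t,R_Nv]\Vert_Z\leq C_{\mathrm{stab}}C_6$. The Lipschitz bound in $X_N$ follows similarly, with constant $C_{\mathrm{stab}}C'_6$. Continuity of $(t,v)\mapsto f_\varepsilon^N[t,v]$ follows from the continuity of $f_\varepsilon$ and the boundedness of $R_N\in L(Z_N,Z)$ and $P_N\in L(Z,Z_N)$.

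The only delicate point is Step 1: the discrete ball has to land inside $B_Z(y^0,r)$, where the continuous constants are valid. That is exactly why the radius is halved to $r/2$ and why \eqref{CV_Z} is used. Everything else is a direct norm transfer. Uniformity in $N$ and $\varepsilon$ holds because neither $C_{\mathrm{stab}}$ nor the constants from \ref{H_Aepsilon} depend on these parameters.
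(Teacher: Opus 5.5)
Your proposal is correct and follows the paper's proof essentially step by step. You choose $N_0$ via \eqref{CV_Z} so that $R_N$ maps $B_{Z_N}(P_Ny^0,r/2)$ into $B_Z(y^0,r)$, then transfer the constants through $\Vert u\Vert_{X_N}=\Vert R_Nu\Vert_X$, $\Vert u\Vert_{Z_N}=\Vert R_Nu\Vert_Z$ and \eqref{stab_PX_PZ}. You also verify continuity in $t$ and the domain inclusion, which the paper leaves implicit.
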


\begin{proof}
Recalling that $\Vert Q_N y^0-y^0\Vert_Z\leq\frac{r}{2}$ for any $N\geq N_0$,
for any $v\in B_{Z_N}(P_Ny^0,r/2)$, we have 
$$
\Vert R_Nv-y^0\Vert_Z \leq \Vert R_N(v-P_N y^0)\Vert_Z + \Vert Q_N y^0-y^0\Vert_Z
= \Vert v-P_N y^0\Vert_{Z_N} + \Vert Q_N y^0-y^0\Vert_Z \leq r,
$$
and thus $R_Nv\in B_Z(y^0,r)$.
Then, using \eqref{stab_PX_PZ} and Assumption \ref{H_Aepsilon} (more precisely, \ref{H_regZ} for $A_\varepsilon$),
\begin{multline*}
\Vert A_\varepsilon^N[t,v]u\Vert_{X_N} 
= \Vert Q_N A_\varepsilon[t,R_Nv]R_Nu\Vert_X
\leq C_{\mathrm{stab}} \Vert A_\varepsilon[t,R_Nv]R_Nu\Vert_X \\
\leq C_{\mathrm{stab}} C_4 \Vert R_Nu\Vert_Z = C_{\mathrm{stab}} C_4 \Vert u\Vert_{Z_N},
\end{multline*}
and the Lipschitz property in $v$ is obtained similarly, which proves \ref{H_regZ} for $A_\varepsilon^N$ with constant $C_{\mathrm{stab}} C'_4$. Regarding \ref{H_f_Lip}, the Lipschitzness for $f_\varepsilon^N$ is proved in the same way; 
for the $Z$-bound of $f_\varepsilon^N$, we proceed similarly, using that $\Vert Q_N\Vert_{L(Z)} \leq C_{\mathrm{stab}}$.
\end{proof}

The following two lemmas give sufficient conditions under which the remaining assumptions \ref{H_stab} and \ref{H_intertwining} transfer to the discrete level.

\begin{lemma}\label{lem_discrete_stab}
Under Assumptions \ref{H_Z} to \ref{H_discretization}, assume moreover that $X$ is a Hilbert space, that $Q_N$ is the orthogonal projector of $X$ onto $\mathrm{Ran}(R_N)$ for every $N$, and that $A_\varepsilon[t,z]$ is uniformly $\omega$-dissipative in $X$, i.e., $\langle A_\varepsilon[t,z]y,y\rangle_X\leq \omega \Vert y\Vert_X^2$ for all $y\in Z$, $t\in[0,T]$,  $z\in B_Z(y^0,r)$ and $\varepsilon\in(0,\varepsilon_0]$.
Then $A_\varepsilon^N[t,v]$ is uniformly $\omega$-dissipative in $X_N$, i.e., 
$\langle A_\varepsilon^N[t,v]u,u\rangle_{X_N} \leq \omega \Vert u\Vert_{X_N}^2$
for all $u\in V_N$, $t\in[0,T]$, $v\in B_{Z_N}(P_Ny^0,r/2)$ and $\varepsilon\in(0,\varepsilon_0]$,
where the inner product on $X_N$ is transported from $X$ by $\langle u_1,u_2\rangle_{X_N} = \langle R_Nu_1,R_Nu_2\rangle_X$.

As a consequence, Assumption \ref{H_stab} holds for $A_\varepsilon^N$ with $M=1$ and with the same $\omega$, uniformly in $\varepsilon$ and $N$.
\end{lemma}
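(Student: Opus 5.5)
The dissipativity inequality reduces to the continuous one in one line, because $Q_N$ is an orthogonal projector and $R_N$ defines the inner product on $X_N$. Fix $u\in V_N$, $t\in[0,T]$, $v\in B_{Z_N}(P_Ny^0,r/2)$ and $\varepsilon\in(0,\varepsilon_0]$.

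\textbf{Admissibility of the state.} As in the proof of Lemma \ref{lem_H_regZ_N}, for $N\geq N_0$ we have $R_Nv\in B_Z(y^0,r)$. Also $y:=R_Nu\in Z$, since $R_N\in L(V_N,Z)$.

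\textbf{Dissipativity.} By \eqref{def_A_f_eps_N}, $R_NA_\varepsilon^N[t,v]u=Q_NA_\varepsilon[t,R_Nv]y$. Hence
$$
\langle A_\varepsilon^N[t,v]u,u\rangle_{X_N}=\langle Q_NA_\varepsilon[t,R_Nv]y,\,y\rangle_X=\langle A_\varepsilon[t,R_Nv]y,\,Q_Ny\rangle_X .
$$
The second equality uses that $Q_N$ is self-adjoint. Since $y\in\mathrm{Ran}(R_N)$, we have $Q_Ny=R_NP_NR_Nu=y$. The right-hand side is therefore $\langle A_\varepsilon[t,R_Nv]y,y\rangle_X\leq\omega\Vert y\Vert_X^2=\omega\Vert u\Vert_{X_N}^2$, by the assumed uniform $\omega$-dissipativity. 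This bound is uniform in $\varepsilon$ and $N$.

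\textbf{Stability \ref{H_stab}.} Since $V_N$ is finite-dimensional, $A_\varepsilon^N[t,v]$ is a bounded operator on the Hilbert space $X_N$, and it generates the group $e^{sA_\varepsilon^N[t,v]}$ given by the exponential series. Let $w(s)=e^{sA_\varepsilon^N[t,v]}u$. Then
$$
\frac{d}{ds}\Vert w(s)\Vert_{X_N}^2=2\langle A_\varepsilon^N[t,v]w,w\rangle_{X_N}\leq2\omega\Vert w(s)\Vert_{X_N}^2 .
$$
Gronwall's lemma gives $\Vert e^{sA_\varepsilon^N[t,v]}\Vert_{L(X_N)}\leq e^{\omega s}$. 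For a product $e^{s_1A_\varepsilon^N[t_1,v]}\cdots e^{s_kA_\varepsilon^N[t_k,v]}$, submultiplicativity of the operator norm bounds it by $e^{\omega(s_1+\cdots+s_k)}$. This is \ref{H_stab} with $M=1$, uniformly in $\varepsilon$ and $N$.

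The only delicate point is the self-adjointness step, which requires $Q_N$ to be orthogonal with respect to the same $X$-inner product in which $A_\varepsilon$ is dissipative. This is exactly the hypothesis of the lemma. For an oblique projector the argument would fail, because $Q_N^*y\neq y$ in general.
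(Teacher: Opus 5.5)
Your proof is correct and follows the paper's own argument. The paper likewise uses $\langle Q_N x,y\rangle_X=\langle x,y\rangle_X$ for $y\in\mathrm{Ran}(R_N)$ to reduce $\langle A_\varepsilon^N[t,v]u,u\rangle_{X_N}$ to the continuous dissipativity, and then appeals to a ``standard energy argument'' for the stability estimate, which you spell out. Your explicit check that $R_Nv\in B_Z(y^0,r)$ for $N\geq N_0$ is needed to apply the continuous hypothesis, and the paper leaves it implicit.
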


\begin{proof}
Let $u\in X_N$ and set $y=R_Nu\in \mathrm{Ran}(R_N)\subset Z$.
Since $Q_N$ is the orthogonal projector onto $\mathrm{Ran}(R_N)$ and $y\in \mathrm{Ran}(R_N)$, we have
$\langle Q_N x,y\rangle_X=\langle x,y\rangle_X$ for every $x\in X$.
Then
$$
\langle A_\varepsilon^N[t,v]u,u\rangle_{X_N}
= \langle Q_N A_\varepsilon[t,R_Nv]y,y\rangle_X
= \langle A_\varepsilon[t,R_Nv]y,y\rangle_X 
\leq \omega \Vert y\Vert_X^2
= \omega \Vert u\Vert_{X_N}^2,
$$
which proves the uniform dissipativity property.
The semigroup stability estimate then follows by a standard energy argument.
\end{proof}

\begin{lemma}\label{lem_discrete_intertwining}
Under Assumptions \ref{H_Z} to \ref{H_discretization}, denoting by $S_\varepsilon$ the intertwining operator and by  $B_\varepsilon$ the correction term for $A_\varepsilon$, we assume moreover that
$Q_N S_\varepsilon = S_\varepsilon Q_N$ and $S_\varepsilon(\mathrm{Ran}(R_N))\subset \mathrm{Ran}(R_N)$ for every $N\in\N^*$.

Then, defining $S_\varepsilon^N=P_N S_\varepsilon R_N$ and $B_\varepsilon^N[t,v] = P_N B_\varepsilon[t,R_Nv]R_N$, we have $\Vert B_\varepsilon^N[t,v]\Vert_{L(X_N)} \leq C_{\mathrm{stab}} C_5$ and
\begin{equation}\label{disc_intertwining}
S_\varepsilon^N A_\varepsilon^N[t,v]u = A_\varepsilon^N[t,v]S_\varepsilon^Nu + B_\varepsilon^N[t,v]S_\varepsilon^Nu.
\end{equation}
for all $u\in Z_N$, $t\in[0,T]$, $v\in B_{Z_N}(P_N y^0,r/2)$, $\varepsilon\in(0,\varepsilon_0]$ and $N\geq N_0$.
Hence Assumption \ref{H_intertwining} holds for $A_\varepsilon^N$ (with $C_5$ replaced by $C_{\mathrm{stab}} C_5$) uniformly in $\varepsilon$ and $N$.
\end{lemma}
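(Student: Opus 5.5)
The plan is to prove \eqref{disc_intertwining} by an algebraic computation that pushes $S_\varepsilon$ through the projector $Q_N$, and then to read off the bound on $B_\varepsilon^N$ from \eqref{stab_PX_PZ}.

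\textbf{Preliminary identities.} Since $S_\varepsilon(\mathrm{Ran}(R_N))\subset\mathrm{Ran}(R_N)$ and $Q_N$ is the identity on $\mathrm{Ran}(R_N)$, we have $R_NP_NS_\varepsilon R_N = Q_NS_\varepsilon R_N = S_\varepsilon R_N$. Hence
\[
R_NS_\varepsilon^N = S_\varepsilon R_N .
\]
Combined with $Q_NS_\varepsilon=S_\varepsilon Q_N$, this gives $R_NS_\varepsilon^NP_N = S_\varepsilon R_NP_N = S_\varepsilon Q_N = Q_NS_\varepsilon$, so that
\[
S_\varepsilon^NP_N = P_NS_\varepsilon
\]
on $Z$, using $P_NR_N=\mathrm{id}_{V_N}$.

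\textbf{Main computation.} Fix $u\in Z_N$ and set $y=R_Nu\in Z$ and $z=R_Nv\in B_Z(y^0,r)$; the latter membership was checked in the proof of Lemma \ref{lem_H_regZ_N}. I then compute
\[
S_\varepsilon^NA_\varepsilon^N[t,v]u = S_\varepsilon^NP_NA_\varepsilon[t,z]y = P_NS_\varepsilon A_\varepsilon[t,z]y .
\]
The continuous intertwining relation \ref{H_intertwining} for $A_\varepsilon$ then gives $P_NA_\varepsilon[t,z]S_\varepsilon y + P_NB_\varepsilon[t,z]S_\varepsilon y$. Writing $S_\varepsilon y = S_\varepsilon R_Nu = R_NS_\varepsilon^Nu$, the first term equals $P_NA_\varepsilon[t,z]R_NS_\varepsilon^Nu = A_\varepsilon^N[t,v]S_\varepsilon^Nu$, and the second equals $B_\varepsilon^N[t,v]S_\varepsilon^Nu$. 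This yields \eqref{disc_intertwining}.

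\textbf{Expected obstacle: domains.} The delicate point is that the continuous identity holds only on $D(A_\varepsilon S_\varepsilon)\cap D(S_\varepsilon A_\varepsilon)$, so I must check $y$ lies there. Since $V_N$ is finite-dimensional, $\mathrm{Ran}(R_N)\subset Z$ is invariant under $S_\varepsilon$, so $S_\varepsilon y\in\mathrm{Ran}(R_N)\subset Z\subset D(A_\varepsilon)$, which gives $y\in D(A_\varepsilon S_\varepsilon)$. For membership in $D(S_\varepsilon A_\varepsilon)$ I would interpret the discrete relation in the sense of the footnote to \ref{H_intertwining}, i.e.\ on a subset where both sides make sense. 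Alternatively, if needed, I would assume $A_\varepsilon[t,z]y\in Z$ for $y\in\mathrm{Ran}(R_N)$. A third route is to use $P_NS_\varepsilon=S_\varepsilon^NP_N$ to define the left-hand side directly: $P_NS_\varepsilon A_\varepsilon y$ is then read as $S_\varepsilon^NP_NA_\varepsilon y$, which is always defined in finite dimension.

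\textbf{Bound on $B_\varepsilon^N$ and resolvent condition.} Using \eqref{stab_PX_PZ},
\[
\Vert B_\varepsilon^N[t,v]w\Vert_{X_N} = \Vert Q_NB_\varepsilon[t,z]R_Nw\Vert_X \leq C_{\mathrm{stab}}C_5\Vert R_Nw\Vert_X = C_{\mathrm{stab}}C_5\Vert w\Vert_{X_N}.
\]
The resolvent invariance in \ref{H_intertwining} is trivial here, since $Z_N=X_N=V_N$ as sets. Continuity of $B_\varepsilon^N$ in $(t,v)$ is inherited from that of $B_\varepsilon$, because $R_N$ is continuous from $X_N$ into $X$.
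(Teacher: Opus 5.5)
Your proof is essentially the paper's: it uses $Q_NS_\varepsilon=S_\varepsilon Q_N$ and $P_NQ_N=P_N$ to reduce $S_\varepsilon^NA_\varepsilon^N[t,v]u$ to $P_NS_\varepsilon A_\varepsilon[t,z]R_Nu$, applies the continuous intertwining, and then uses invariance (your identity $R_NS_\varepsilon^N=S_\varepsilon R_N$) to recognise $A_\varepsilon^N[t,v]S_\varepsilon^Nu$ and $B_\varepsilon^N[t,v]S_\varepsilon^Nu$, with the same $C_{\mathrm{stab}}C_5$ bound. The paper does not address the domain point you raise, namely that $A_\varepsilon[t,z]R_Nu$ must lie in $D(S_\varepsilon)$; your third route only gives a meaning to the left-hand side and does not by itself justify applying the continuous relation, so a real fix needs your second route or an additional approximation argument exploiting that $S_\varepsilon Q_N=R_NS_\varepsilon^NP_N$ is bounded on $X$.
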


\begin{proof}
Set $z=R_Nv$. Given any $w\in X_N$, we have
\begin{multline*}
\Vert B_\varepsilon^N[t,v]w\Vert_{X_N}
= \Vert Q_N B_\varepsilon[t,z]R_Nw\Vert_X
\leq C_{\mathrm{stab}} \Vert B_\varepsilon[t,z]R_Nw\Vert_X \\
\leq C_{\mathrm{stab}} C_5 \Vert R_Nw\Vert_X
= C_{\mathrm{stab}} C_5 \Vert w\Vert_{X_N},
\end{multline*}
which gives $\Vert B_\varepsilon^N[t,v]\Vert_{L(X_N)} \leq C_{\mathrm{stab}} C_5$.

Let $u\in Z_N$, and set $y=R_Nu$.
Using that $Q_N S_\varepsilon = S_\varepsilon Q_N$ by assumption,
$$
S_\varepsilon^N A_\varepsilon^N[t,v]u
= P_N S_\varepsilon Q_N A_\varepsilon[t,z]y
= P_N Q_N S_\varepsilon A_\varepsilon[t,z]y
= P_N S_\varepsilon A_\varepsilon[t,z]y,
$$
because $P_NQ_N=P_N$. 
Now, since $S_\varepsilon A_\varepsilon[t,z]y = A_\varepsilon[t,z] S_\varepsilon y + B_\varepsilon[t,z]S_\varepsilon y$ by the intertwining assumption on $A_\varepsilon[t,z]$, we infer that
$S_\varepsilon^N A_\varepsilon^N[t,v]u = P_N A_\varepsilon[t,z]S_\varepsilon y + P_N B_\varepsilon[t,z]S_\varepsilon y$.

Since $y\in \mathrm{Ran}(R_N)\subset Z$ and by assumption $S_\varepsilon y\in \mathrm{Ran}(R_N)$, we have $Q_N S_\varepsilon y=S_\varepsilon y$, hence
$$
P_N A_\varepsilon[t,z]S_\varepsilon y
= P_N A_\varepsilon[t,z]Q_N S_\varepsilon y
= P_N A_\varepsilon[t,z]R_NP_N S_\varepsilon y
= A_\varepsilon^N[t,v]S_\varepsilon^Nu.
$$
Likewise, $P_N B_\varepsilon[t,z]S_\varepsilon y = P_N B_\varepsilon[t,z]R_NP_N S_\varepsilon y = B_\varepsilon^N[t,v]S_\varepsilon^Nu$.
This proves \eqref{disc_intertwining}.
\end{proof}

\begin{remark}\label{rem_intertwining_not_generic}
The two previous lemmas are sufficient transfer criteria, not generic facts.
They are natural in spectral or orthogonal Galerkin settings, where $\mathrm{Ran}(R_N)$ is chosen invariant under the intertwining operator and where $Q_N$ commutes with it.
They usually fail for general finite volume schemes, blob reconstructions, and most finite element quasi-interpolations.
This is why the main error estimate in Section \ref{sec_abstract_main_result} hereafter only assumes the existence of a lifted discrete solution, and not the full discrete Kato framework.
\end{remark}

\subsection{Final estimate}\label{sec_abstract_main_result}

We now quantify the error between the exact solution $y$ of \eqref{abstract_quasilinear} and a lifted finite-dimensional approximation $y_\varepsilon^N$.
The estimate separates the regularization error $y-y_\varepsilon$, controlled by Proposition \ref{prop_yepsilon}, and the discretization error $y_\varepsilon-y_\varepsilon^N$, controlled by the consistency of $Q_N$ on the output class $Y$ (see Proposition \ref{prop_yepsilonN}).

\begin{theorem}\label{thm_approx_abstract}
We make Assumptions \ref{H_Z} to \ref{H_Aepsilon_N}.
Let $y(\cdot)\in\mathscr{C}^0([0,T'],Z)\cap\mathscr{C}^1([0,T'],X)$ be the unique solution of \eqref{abstract_quasilinear} such that $y(0)=y^0$, as given by Proposition \ref{prop_existence_uniqueness}. 
Besides, for any $\varepsilon\in(0,\varepsilon_0]$ and any $N\geq N_0$, let $u_\varepsilon^N\in \mathscr{C}^0([0,T'],Z_N)\cap\mathscr{C}^1([0,T'],X_N)$ be the unique solution of the interacting system \eqref{interacting_system_u} such that $u_\varepsilon^N(0)=P_N y^0$, as given by Proposition \ref{prop_yepsilonN}, and let
$y_\varepsilon^N=R_N u_\varepsilon^N\in \mathscr{C}^0([0,T'],Z)\cap\mathscr{C}^1([0,T'],X)$ be its lift \eqref{def_yepsN}, which solves \eqref{interacting_system_y} with $y_\varepsilon^N(0)=Q_N y^0$.
Then 
\begin{equation}\label{CV_y_yepsN}
\boxed{
\Vert y_\varepsilon^N(t)-y(t) \Vert_X \leq a_1(t)\chi(\varepsilon) + \frac{b_\varepsilon(t)}{N^\gamma} 
\leq \mathrm{Cst}\left( \chi(\varepsilon) + \frac{1+L_\varepsilon}{N^\gamma} \right)
\qquad \forall t\in[0,T'] 
}
\end{equation}
where $a_1(t)$ is defined by \eqref{def_a1a2} and $b_\varepsilon(t)$ by \eqref{def_beps}.
\end{theorem}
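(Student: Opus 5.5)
The proof is a triangle inequality through the intermediate regularized solution $y_\varepsilon$. Fix $\varepsilon\in(0,\varepsilon_0]$ and $N\geq N_0$. Let $y_\varepsilon$ be the unique solution of \eqref{abstract_quasilinear_eps} on $[0,T']$ with $y_\varepsilon(0)=y^0$, given by Proposition \ref{prop_yepsilon} with the choice $y_\varepsilon^0=y^0\in B_Z(y^0,r/2)$. For every $t\in[0,T']$ we write
$$
\Vert y_\varepsilon^N(t)-y(t)\Vert_X\leq \Vert y_\varepsilon(t)-y(t)\Vert_X+\Vert y_\varepsilon^N(t)-y_\varepsilon(t)\Vert_X .
$$

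For the first term we apply \eqref{CV_yeps_abstract}. Since $y_\varepsilon^0=y^0$, the term $a_0(t)\Vert y_\varepsilon^0-y^0\Vert_X$ vanishes, which leaves $a_1(t)\chi(\varepsilon)$.

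For the second term we apply \eqref{CV_yeps_yepsN} of Proposition \ref{prop_yepsilonN}, which gives $b_\varepsilon(t)N^{-\gamma}$. Both propositions are stated on the same interval $[0,T']$, which is uniform in $\varepsilon$ and $N$ by Remarks \ref{rem_prop_existence_uniqueness1} and \ref{rem_prop_existence_uniqueness_T'uniform}. Adding the two bounds yields the first inequality in \eqref{CV_y_yepsN}.

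For the second inequality we bound the time-dependent factors uniformly on $[0,T']$. From \eqref{def_a1a2}, $a_1(t)\leq M(\Vert y^0\Vert_Z+r+1)e^{\vert\beta\vert T}\,Te^{\vert\omega\vert T}$. From \eqref{def_beps}, $b_\varepsilon(t)\leq MC_{10}e^{\vert\beta\vert T}\big(\Vert y^0\Vert_Ze^{\vert\omega\vert T}+L_\varepsilon Te^{\vert\omega\vert T}\big)$. This is at most $\mathrm{Cst}\,(1+L_\varepsilon)$, with a constant independent of $\varepsilon$ and $N$, because $M$, $\omega$, $\beta$, $C_{10}$ are uniform by \ref{H_Aepsilon}, \ref{H_discretization} and \ref{H_Aepsilon_N}. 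Taking $\mathrm{Cst}$ to be the maximum of these constants gives the boxed estimate.

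There is no real obstacle, since all the analytic work is in the two propositions. The only point to check is consistency of constants. The same $M,\omega,\beta$ appear in both propositions because $y$, $y_\varepsilon$ and $y_\varepsilon^N$ all take values in $B_Z(y^0,r)$, so one uses the uniform evolution-system bounds for $A_\varepsilon$. One must also check that the constant in the final bound is independent of $\varepsilon$, so that all $\varepsilon$-dependence sits in $\chi(\varepsilon)$ and $L_\varepsilon$.
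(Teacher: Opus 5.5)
Your proposal is correct and matches the paper's proof. The paper also uses the triangle inequality through the regularized solution $y_\varepsilon$ with $y_\varepsilon^0=y^0$, so the $a_0$ term drops, and combines \eqref{CV_yeps_abstract} with \eqref{CV_yeps_yepsN}; your explicit uniform bounds on $a_1(t)$ and $b_\varepsilon(t)$ simply spell out the second inequality, which the paper leaves implicit.
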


\begin{proof}
Let $y_\varepsilon(\cdot)\in\mathscr{C}^0([0,T'],Z)\cap\mathscr{C}^1([0,T'],X)$ be the unique solution of \eqref{abstract_quasilinear_eps} such that $y_\varepsilon(0)=y^0$ (see Proposition \ref{prop_yepsilon}).
Applying the triangle inequality $\Vert y(t)-y_\varepsilon^N(t)\Vert_X \leq \Vert y(t)-y_\varepsilon(t)\Vert_X + \Vert y_\varepsilon(t)-y_\varepsilon^N(t)\Vert_X$ and using the estimates \eqref{CV_yeps_abstract} and \eqref{CV_yeps_yepsN} yields \eqref{CV_y_yepsN}.
\end{proof}

\begin{remark}\label{rem_thm_approx_abstract}
Following Remark \ref{rem_prop_yepsilonN_scope}, in Theorem \ref{thm_approx_abstract} one can replace Assumption \ref{H_Aepsilon_N} by the weaker assumption that, for any $\varepsilon\in(0,\varepsilon_0]$ and any $N\in\N^*$, there exists a solution $y_\varepsilon^N\in\mathscr{C}^0([0,T'],Z)\cap\mathscr{C}^1([0,T'],X)$ of \eqref{interacting_system_y} such that $y_\varepsilon^N(0)=Q_N y^0$ and $y_\varepsilon^N(t)\in B_Z(y^0,r)$ for every $t\in[0,T']$ (Assumption \ref{H_Aepsilon_N} is only a sufficient criterion ensuring that such a discrete trajectory exists uniformly in $N$ and $\varepsilon$).
This distinction may be essential in applications, because the well-posedness of the discrete system may be obtained by arguments that are specific to the chosen discretization.
\end{remark}

\paragraph{Consequences and typical choices of $\varepsilon$ versus $N$.}
The estimate \eqref{CV_y_yepsN} yields convergence in $\mathscr{C}^0([0,T'],X)$ as soon as $\varepsilon\to 0$ and $N\to+\infty$.
A typical regime in applications is $\chi(\varepsilon)\leq \mathrm{Cst}\, \varepsilon^\alpha$ for some $\alpha>0$ and $L_\varepsilon \leq \mathrm{Cst}\, \varepsilon^{-k}$ for some $k\geq 0$.
Then, choosing $\varepsilon_N=N^{-\gamma/(\alpha+k)}$ yields
$$
\boxed{
\sup_{t\in[0,T']}\Vert y_\varepsilon^N(t)-y(t)\Vert_X \leq \frac{\mathrm{Cst}}{N^{\alpha\gamma/(\alpha+k)}} 
}
$$
where $\mathrm{Cst}$ is independent of $N$.

In the running PDE example based on variable-step mollifiers, one has $\chi(\varepsilon)=\mathrm{O}(\varepsilon)$ and $L_\varepsilon\leq L_0$, so that, in the notation above, one is in the regime $\alpha=1$ and $k=0$. Hence the natural balance is simply $\varepsilon_N\simeq N^{-\gamma}$, which yields an algebraic rate of order $N^{-\gamma}$. 
This is the content of Corollary \ref{cor_approx_PDE} in Section \ref{subsec_PDE_discrete} and of the estimate \eqref{estim_finale}.

If $L_\varepsilon$ grows exponentially as $\varepsilon\to 0$, for instance $L_\varepsilon\leq \mathrm{Cst}\,e^{\mathrm{Cst}/\varepsilon^k}$, then the optimal balance between the regularization error and the consistency defect leads at best to logarithmic rates. This logarithmic regime is not expected to be generic, but it may occur in some singular regularization procedures and explains why the abstract theorem is formulated in terms of the quantity $L_\varepsilon$.

\begin{remark}\label{rem_comparison_literature}
The estimate \eqref{CV_y_yepsN} has a universal structure.
Once the continuous quasilinear problem has been regularized and once a family of projectors $Q_N$ approximating the identity on the output class $Y$ is available, the final rate depends only on three ingredients:
the regularization defect $\chi(\varepsilon)$, the approximation order $N^{-\gamma}$ of $Q_N$, and the output size $L_\varepsilon$.
The particular form of the PDE and of the discretization enters only through these three quantities.

The estimate \eqref{CV_y_yepsN} is intentionally abstract and should be viewed as complementary to the numerical analysis of specific time integrators for quasilinear evolution equations.

The refined Kato frameworks recalled in Remark \ref{rem_Muller_refinement} underlie several works on quasilinear wave and Maxwell equations, including
\cite{DorichHochbruck_SINUM2022, Dorich_FoCM2025, HochbruckPazur_NM2017, HochbruckPazurSchnaubelt_NM2018, KovacsLubich_NM2018}.
These papers optimize concrete time discretizations and derive error bounds tailored to the PDE structure and to the chosen state-dependent norms.
By contrast, Theorem \ref{thm_approx_abstract} isolates the interplay between a regularization scale $\varepsilon$, a reconstruction scale $N$, and an output norm $Y$.
In the formal case $A_\varepsilon=A$, it becomes a purely spatial approximation result, whereas in the boundary-compatible kernelization framework of the present paper the two-step route is essential because it produces an explicit interacting representation of the dynamics on a bounded domain with boundary conditions (see Section \ref{subsec_discussion_two_step}).
\end{remark}

\subsection{Examples of discretizations}\label{sec_examples_discretizations}
This subsection explains the scope of Assumptions \ref{H_discretization} and \ref{H_Aepsilon_N}.

Assumption \ref{H_discretization} concerns the approximation properties of the reconstruction projector $Q_N=R_NP_N$ and is meant to cover a broad class of deterministic discretizations.
In most practical discretizations, one first constructs a finite-dimensional reconstruction space $V_N\subset Z$ and a projector or quasi-interpolation operator $Q_N:X\to V_N$ such that $Q_N$ acts as the identity on $V_N$.
One then recovers the abstract pair $(P_N,R_N)$ by taking $P_N=Q_N$ as a map $X\to V_N$ and $R_N$ as the inclusion $V_N\hookrightarrow Z$.
Under this identification, Assumption \ref{H_discretization} is nothing but stability and approximation properties of the family $(Q_N)_{N\in\N^*}$.
This observation is made precise in Appendix \ref{app_discretization_technical}, Lemma \ref{lem_projector_reduction}.
The role of this subsection is therefore not to enumerate all possible schemes, but to identify the structural patterns behind the abstract assumptions.
Assumption \ref{H_discretization} packages, under a common sampling-reconstruction language, several standard traditions: it encompasses finite element quasi-interpolation (including Scott-Zhang and related operators), smooth finite-volume reconstructions, partition-of-unity and meshfree reconstructions, and spectral or orthogonal Galerkin projections.

Assumption \ref{H_Aepsilon_N} is more restrictive: it is a strong sufficient criterion for discrete well-posedness (as said in Remarks \ref{rem_prop_yepsilonN_scope} and \ref{rem_thm_approx_abstract}), natural mainly in Galerkin settings or in schemes endowed with an independent stability mechanism, but it is not a generic consequence of sampling and reconstruction alone.

Thus, \ref{H_discretization} should be viewed as the generic approximation hypothesis, whereas \ref{H_Aepsilon_N} is a convenient but non-generic abstract criterion ensuring the existence of the discrete trajectory used later in Theorem \ref{thm_approx_abstract} and in Proposition \ref{prop_yepsilonN}.

Below, we discuss some representative classes of discretizations.
For detailed constructions and proofs, see Appendix \ref{app_discretization_technical}.

\paragraph{Spectral and orthogonal Galerkin discretizations.}
This is the cleanest class for the strong discrete hypothesis \ref{H_Aepsilon_N}.
One chooses $V_N$ as a finite-dimensional subspace of $Z$, typically generated by the first $N$ modes of a spectral basis, and $Q_N$ as an orthogonal or uniformly stable projector onto $V_N$.
Then Assumption \ref{H_discretization} follows from the usual projector bounds together with a Jackson-type approximation estimate.
Moreover, when the spaces $V_N$ are chosen invariant under the intertwining operator and the projectors commute with it, Lemmas \ref{lem_H_regZ_N}, \ref{lem_discrete_stab} and \ref{lem_discrete_intertwining} show that \ref{H_Aepsilon_N} is satisfied as well.
This is the paradigm in which the discrete well-posedness assumption is the most natural.

\paragraph{Finite volumes with smooth blob reconstruction.}
A second important class is obtained from cell averages and a smooth reconstruction built from localized blobs or corrected partition-of-unity functions.
The sampling operator $P_N$ is then a local averaging operator on the pivot space $X=L^2(\Omega,\R^d)$, while $R_N$ reconstructs a smooth field from the discrete values.
Under the usual shape-regularity and bounded-overlap assumptions, the resulting projector $Q_N$ is uniformly bounded on $L^2$ and on $H^s$, and it approximates Lipschitz functions in $L^2$ with order $h_N$.
Hence Assumption \ref{H_discretization} holds with $Y=W^{1,\infty}(\Omega,\R^d)\cap L^2(\Omega,\R^d)$ or any equivalent Lipschitz-type space.
This class is particularly relevant for the PDE setting of Sections \ref{sec_general_class_quasilinear} and \ref{sec_main_result}.
By contrast, the strong discrete hypothesis \ref{H_Aepsilon_N}, especially its intertwining component, is not automatic for such schemes.

\paragraph{Finite elements, splines, and meshfree quasi-interpolation.}
Stable quasi-interpolation operators from finite elements, spline spaces, or meshfree partition-of-unity / moving least squares constructions also fit naturally into the present framework.
The common pattern is the existence of a projector or quasi-interpolant $Q_N$ that is bounded on both the pivot space $X$ and the strong space $Z$, and that approximates the class $Y$ in the $X$-norm with order $h_N$.
Whenever such a projector is available, Assumption \ref{H_discretization} follows directly.
Again, however, the stronger well-posedness assumption \ref{H_Aepsilon_N} is usually not automatic unless the discrete space is chosen compatibly with the operator structure.

\paragraph{A limitation: point sampling on $L^2$.}
If $X=L^2(\Omega,\R^d)$, the map $y\mapsto y(x_i)$ is not continuous.
Therefore pure nodal sampling is excluded from Assumption \ref{H_discretization} when the pivot space is $L^2$.
This is not a weakness of the framework but a real obstruction: one must replace point sampling by bounded functionals such as local averages, moments, or stable projections.
A precise counterexample is recalled in Appendix \ref{app_discretization_technical}, Lemma \ref{lem_point_sampling_not_bounded}.

%

\subsection{Running example: discretization of the kernelized PDE and interacting system}\label{subsec_PDE_discrete}

We continue the running PDE class of Section \ref{sec_general_class_quasilinear} and its variable-step regularization of Section \ref{sec_main_result}. We introduce the finite-dimensional operator $A_\varepsilon^N$ and the associated interacting ODE on $V_N$, and we derive the quantitative PDE rate as a direct consequence of Theorem \ref{thm_approx_abstract}.

Let $(P_N,R_N)$ be any discretization satisfying Assumption \ref{H_discretization} for the output space
$Y=W^{1,\infty}(\Omega,\R^d)\cap L^2(\Omega,\R^d)$
as introduced in Section \ref{sec_verif}.
For $\varepsilon\in(0,\varepsilon_0]$ and $N\in\N^*$, we define $A_\varepsilon^N[t,v]$ and $f_\varepsilon^N[t,v]$ by \eqref{def_A_f_eps_N}, for $(t,v)\in[0,T]\times Z_N$, and we consider the corresponding finite-dimensional interacting system \eqref{interacting_system_u} and the lifted field $y_\varepsilon^N(t)=R_Nu_\varepsilon^N(t)$.

\begin{proposition}\label{prop_AepsN_hypotheses_PDE}
Assume the standing assumptions of Section \ref{sec_general_class_quasilinear}, together with Assumption \ref{H_discretization} for the output space $Y=W^{1,\infty}(\Omega,\R^d)\cap L^2(\Omega,\R^d)$.
Then the following assertions hold for the discrete family $(A_\varepsilon^N,f_\varepsilon^N)$:
\begin{enumerate}[parsep=0.7mm, itemsep=0.7mm, topsep=0.7mm]
\item[(i)] The analogues of Assumptions \ref{H_Z}, \ref{H_regZ}, and \ref{H_f_Lip} on $(X_N,Z_N)$ hold uniformly with respect to $\varepsilon$ and $N$.
\item[(ii)] If, in addition, $Q_N$ is the orthogonal projector of $X$ onto $\mathrm{Ran}(R_N)$, then the analogue of Assumption \ref{H_stab} holds uniformly with respect to $\varepsilon$ and $N$.
\item[(iii)] If, in addition, the commutation and invariance hypotheses of Lemma \ref{lem_discrete_intertwining} are satisfied, then the analogue of Assumption \ref{H_intertwining} holds as well.
\end{enumerate}
Consequently, Assumption \ref{H_Aepsilon_N} is fully verified for orthogonal Galerkin-type discretizations that are compatible with the intertwining operator. Without this extra compatibility, all parts of Assumption \ref{H_Aepsilon_N} are verified except the intertwining one.
\end{proposition}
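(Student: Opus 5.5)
The plan is to assemble the three items from the general transfer lemmas of this section, fed with the properties of the regularized family $(A_\varepsilon,f_\varepsilon)$ established in Section \ref{sec_verif}.

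For (i), the analogue of \ref{H_Z} on $(X_N,Z_N)$ is Remark \ref{rem_H_Z_N}: $\Vert u\Vert_{X_N}\leq C_1\Vert u\Vert_{Z_N}$, with density automatic since $V_N$ is finite-dimensional and the two norms are equivalent on it. For \ref{H_regZ} and \ref{H_f_Lip}, I would invoke Lemma \ref{lem_H_regZ_N}. Its only inputs are \eqref{stab_PX_PZ}, the uniform version of \ref{H_regZ} for $A_\varepsilon$, which is Lemma \ref{lem_Aeps_regZ_PDE}, and \ref{H_f_Lip} for $f_\varepsilon=f$, which is inherited from the standing assumptions. The $N_0$ ensuring $R_Nv\in B_Z(y^0,r)$ comes from \eqref{CV_Z}. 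The resulting constants $C_{\mathrm{stab}}C_4$, etc., do not depend on $\varepsilon$ or $N$.

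For (ii), I would apply Lemma \ref{lem_discrete_stab}. The needed hypothesis is uniform $\omega_0$-dissipativity of $A_\varepsilon[t,z]$ on $Z$, and this is exactly Lemma \ref{lem_Aeps_dissip}, under the dissipativity of $A[t,z]-\omega\,\mathrm{id}$ that Proposition \ref{prop_Aeps_hypotheses_PDE} counts as part of \ref{H_stab}. Since $Q_N$ is the orthogonal projector, Lemma \ref{lem_discrete_stab} gives $\langle A_\varepsilon^N[t,v]u,u\rangle_{X_N}\leq\omega_0\Vert u\Vert_{X_N}^2$. In finite dimension, $A_\varepsilon^N[t,v]$ is a bounded operator on $V_N$, so semigroup generation is trivial: the exponential series converges, as in Remark \ref{rem_Aeps_semigroup_issue}. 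The energy estimate then gives $\Vert e^{sA_\varepsilon^N[t,v]}\Vert_{L(X_N)}\leq e^{\omega_0 s}$. Products of such exponentials then satisfy the stability bound of \ref{H_stab} with $M=1$, because each factor is a contraction up to $e^{\omega_0 s_i}$.

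For (iii), I would invoke Lemma \ref{lem_discrete_intertwining} directly. The one subtlety is that it requires an intertwining structure $(S_\varepsilon,B_\varepsilon)$ for $A_\varepsilon$ at the continuous level. Proposition \ref{prop_Aeps_hypotheses_PDE} and Remark \ref{rem_intertwining_continuous_not_generic} leave this as structural, so I will read the phrase ``the commutation and invariance hypotheses of Lemma \ref{lem_discrete_intertwining}'' as including its standing hypotheses. The resolvent invariance part of \ref{H_intertwining} is automatic, since $Z_N$ and $X_N$ share the underlying set $V_N$. Combining (i) through (iii) gives the final claim. Without compatibility, everything except intertwining holds, with the caveat that (ii) needs the orthogonal-projector hypothesis.

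The main obstacle is bookkeeping rather than analysis. I must check that every constant is uniform in $(\varepsilon,N)$: $\omega_0$ uses $\sup_\varepsilon\Vert\mathcal{H}_\varepsilon\Vert_{L(L^2)}$ from Lemma \ref{lem_Lr-bounds}. I must also check that the balls match, namely that $v\in B_{Z_N}(P_Ny^0,r/2)$ implies $R_Nv\in B_Z(y^0,r)$ for $N\geq N_0$, which is exactly the computation in Lemma \ref{lem_H_regZ_N}.
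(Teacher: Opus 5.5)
Your proposal is correct and follows the paper's own route: (i) comes from Lemma~\ref{lem_H_regZ_N}, fed by the continuous verifications of Proposition~\ref{prop_Aeps_hypotheses_PDE}; (ii) comes from Lemma~\ref{lem_discrete_stab} combined with the dissipativity estimate of Lemma~\ref{lem_Aeps_dissip}; and (iii) is exactly Lemma~\ref{lem_discrete_intertwining}. Your extra bookkeeping is accurate and makes explicit what the paper leaves implicit: that Lemma~\ref{lem_H_regZ_N} only uses \ref{H_regZ} and \ref{H_f_Lip} for $(A_\varepsilon,f_\varepsilon)$, that dissipativity of $A[t,z]-\omega\,\mathrm{id}$ is tacitly assumed, and that the continuous pair $(S_\varepsilon,B_\varepsilon)$ must be part of the hypotheses in (iii).
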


\begin{proof}
Item (i) follows from Lemma \ref{lem_H_regZ_N}, because Proposition \ref{prop_Aeps_hypotheses_PDE} has already verified the corresponding continuous hypotheses for $(A_\varepsilon,f_\varepsilon)$.
Item (ii) follows from Lemma \ref{lem_discrete_stab}, combined with the dissipativity estimate of Lemma \ref{lem_Aeps_dissip}.
Item (iii) is exactly Lemma \ref{lem_discrete_intertwining}.
\end{proof}

\begin{remark}\label{rem_PDE_discrete_wellposedness}
Proposition \ref{prop_AepsN_hypotheses_PDE} is a strong sufficient criterion for the well posedness of the interacting system \eqref{interacting_system_u}. The main approximation theorem below does not require this full criterion. It only requires the existence, on the time interval under consideration, of a lifted solution $y_\varepsilon^N$ remaining in the same $Z$-ball as the continuous regularized solution (see Remark \ref{rem_thm_approx_abstract}). This weaker viewpoint is the one that will be used in the most flexible PDE applications.
\end{remark}

\begin{corollary}\label{cor_approx_PDE}
Assume the standing hypotheses of Section \ref{sec_general_class_quasilinear}, together with \eqref{coeff_W1inf_PDE}, and choose the explicit regularization \eqref{def_Aeps_Heps}.
Let $Y=W^{1,\infty}(\Omega,\R^d)\cap L^2(\Omega,\R^d)$, 
and let $(P_N,R_N)$ be any discretization satisfying Assumption \ref{H_discretization} with this output space $Y$.
Fix $T''\in(0,T']$.

Assume that, for every $\varepsilon\in(0,\varepsilon_0]$, the regularized problem admits a solution
$y_\varepsilon\in \mathscr{C}^0([0,T''],Z)\cap \mathscr{C}^1([0,T''],X)$
with $y_\varepsilon(t)\in B_Z(y^0,r)$ on $[0,T'']$, and that, for every $\varepsilon\in(0,\varepsilon_0]$ and every $N\in\N^*$, the interacting system \eqref{interacting_system_u} admits a lifted solution
$y_\varepsilon^N\in \mathscr{C}^0([0,T''],Z)\cap \mathscr{C}^1([0,T''],X)$
with $y_\varepsilon^N(t)\in B_Z(y^0,r)$ on $[0,T'']$.
This is in particular the case if the semigroup-generation part of Assumption \ref{H_Aepsilon} holds for $A_\varepsilon$ and if Assumption \ref{H_Aepsilon_N} holds for $(A_\varepsilon^N,f_\varepsilon^N)$.

Then there exists $C>0$, independent of $\varepsilon$ and $N$, such that
\begin{equation*}
\Vert y_\varepsilon^N(t)-y(t)\Vert_{L^2(\Omega,\R^d)}
\leq C \left( \varepsilon + \frac{1}{N^\gamma} \right)
\qquad
\forall t\in[0,T''].
\end{equation*}
In particular, if one chooses $\varepsilon_N=N^{-\gamma}$ then
\begin{equation}\label{estim_finale}
\boxed{
\Vert y_{\varepsilon_N}^N(t)-y(t)\Vert_{L^2(\Omega,\R^d)} \leq  \frac{2C}{N^\gamma}
\qquad \forall t\in[0,T'']
}
\end{equation}
For local quasi-uniform discretizations on an $n$-dimensional domain, one typically has $\gamma=1/n$.
\end{corollary}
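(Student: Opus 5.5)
The corollary follows from Theorem \ref{thm_approx_abstract} in its weakened form (Remark \ref{rem_thm_approx_abstract}), once the abstract ingredients are identified for the running PDE class. We apply the triangle inequality
$$
\Vert y_\varepsilon^N(t)-y(t)\Vert_X\leq \Vert y_\varepsilon(t)-y(t)\Vert_X+\Vert y_\varepsilon^N(t)-y_\varepsilon(t)\Vert_X ,
$$
and bound each term by rerunning the Duhamel/Gronwall arguments of Propositions \ref{prop_yepsilon} and \ref{prop_yepsilonN} on $[0,T'']$. Those arguments used only three things: the existence of the trajectories inside $B_Z(y^0,r)$, which is now assumed directly; uniform stability bounds \ref{E_estim_exp} for the evolution systems $U^\varepsilon_{y_\varepsilon}$ and $U^\varepsilon_{y_\varepsilon^N}$ in $L(X)$; and the uniform Lipschitz and consistency constants.

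\textbf{Step 1: the constants.} By Proposition \ref{prop_Aeps_hypotheses_PDE}, \ref{H_regZ} holds for $A_\varepsilon$ uniformly in $\varepsilon$ (Lemma \ref{lem_Aeps_regZ_PDE}). Since $f_\varepsilon=f$, \ref{H_f_Lip} is inherited. Moreover $\chi(\varepsilon)=C_{A,\eta,\rho}\,\varepsilon$ (Lemma \ref{lem_CV_Aeps}) and $L_\varepsilon\leq L_0$ for $Y=W^{1,\infty}\cap L^2$ (Lemma \ref{lem_Aepsy_Lip_PDE}). Thus $\beta$, $a_1(t)$ and $b_\varepsilon(t)$ in \eqref{def_a1a2} and \eqref{def_beps} are bounded on $[0,T'']$ independently of $\varepsilon$. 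In particular,
$$
b_\varepsilon(t)\leq MC_{10}e^{\beta T''}\Big(\Vert y^0\Vert_Z e^{|\omega|T''}+L_0T''e^{|\omega|T''}\Big).
$$

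\textbf{Step 2: the stability of the evolution systems.} This is the main obstacle. The Duhamel representation with $U^\varepsilon_{y_\varepsilon}$ and $U^\varepsilon_{y_\varepsilon^N}$ needs these systems to exist and be $X$-stable uniformly in $\varepsilon$. First I would use the hypothesis that the semigroup-generation part of \ref{H_Aepsilon} holds. Together with the quasi-dissipativity of Lemma \ref{lem_Aeps_dissip}, this gives $\Vert e^{sA_\varepsilon[t,z]}\Vert_{L(X)}\leq e^{\omega_0 s}$, so $M=1$ and $\omega=\omega_0$ uniformly. If intertwining for $A_\varepsilon$ is not available to build $U^\varepsilon$ in Kato's sense, I would avoid $U^\varepsilon$ altogether. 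Instead I would run an energy estimate directly on $e(t)=y_\varepsilon^N(t)-y_\varepsilon(t)$ (and likewise on $y_\varepsilon-y$), which is legitimate because both trajectories are $\mathscr{C}^1([0,T''],X)$ with values in $Z$. Concretely, I would compute $\frac{d}{dt}\tfrac12\Vert e\Vert_{L^2}^2$ using the decomposition written in the proof of Proposition \ref{prop_yepsilonN}, and bound $\langle A_\varepsilon[t,y_\varepsilon^N]e,e\rangle\leq\omega_0\Vert e\Vert^2$ by Lemma \ref{lem_Aeps_dissip}. The remaining terms are estimated by $C'_4$, $C'_6$ and $\Vert (Q_N-\mathrm{id})(\cdot)\Vert_X\leq C_{10}L_0N^{-\gamma}$ from \eqref{CV_X_Y}. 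The initial term is $\Vert Q_Ny^0-y^0\Vert_X\leq C_{10}N^{-\gamma}\Vert y^0\Vert_Z$ from \eqref{CV_X_Z}. Gronwall then yields $\Vert e(t)\Vert_X\leq C N^{-\gamma}$. The same computation, with the consistency term of Lemma \ref{lem_CV_Aeps} and dissipativity of $A$ (respectively $A_\varepsilon$), yields $\Vert y_\varepsilon(t)-y(t)\Vert_X\leq C\varepsilon$.

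\textbf{Step 3: conclusion.} Summing the two bounds gives $C(\varepsilon+N^{-\gamma})$ with $C$ independent of $\varepsilon$ and $N$. Substituting $\varepsilon_N=N^{-\gamma}$ gives \eqref{estim_finale}. For $\varepsilon_N$ to be admissible we need $\varepsilon_N\leq\varepsilon_0$, which holds for large $N$; small $N$ are absorbed by enlarging $C$, using the uniform bound $\Vert y_\varepsilon^N-y\Vert_X\leq 2C_1(\Vert y^0\Vert_Z+r)$ from the ball constraint. The final statement $\gamma=1/n$ for quasi-uniform local discretizations is not proved here; it is the standard first-order $L^2$ approximation of Lipschitz functions at mesh size $h_N\simeq N^{-1/n}$, as discussed in Section \ref{sec_examples_discretizations}.
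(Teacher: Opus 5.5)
Your proof is correct, but it reaches the error estimate by a different mechanism than the paper.

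\textbf{The paper's route.} The paper's proof is a citation. Proposition \ref{prop_Aeps_hypotheses_PDE} supplies $\chi(\varepsilon)=C_{A,\eta,\rho}\,\varepsilon$ and $L_\varepsilon\leq L_0$, and then Theorem \ref{thm_approx_abstract} is applied together with Remark \ref{rem_thm_approx_abstract}. Both error terms are therefore controlled by the Duhamel formulas of Propositions \ref{prop_yepsilon} and \ref{prop_yepsilonN}, written with the evolution systems $U^\varepsilon_{y_\varepsilon}$ and $U^\varepsilon_{y_\varepsilon^N}$.

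\textbf{Your route.} You keep the same triangle splitting and the same identification of $\chi$ and $L_\varepsilon$. The difference is that you replace the Duhamel step by a direct $L^2$ energy estimate on $y_\varepsilon^N-y_\varepsilon$ and on $y_\varepsilon-y$. The principal term is handled by Lemma \ref{lem_Aeps_dissip}, and the remaining terms by Lemmas \ref{lem_Aeps_regZ_PDE} and \ref{lem_CV_Aeps} and by \eqref{CV_X_Z}--\eqref{CV_X_Y}.

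\textbf{What your route buys.} It matches the corollary's hypotheses more faithfully. Knowing only that the trajectories exist does not give uniformly $X$-stable evolution systems for $t\mapsto A_\varepsilon[t,z(t)]$. In Kato's theory those come from the full Assumption \ref{H_Aepsilon}, including intertwining for $A_\varepsilon$, which the paper explicitly leaves unverified (Remark \ref{rem_intertwining_continuous_not_generic}). Your computation needs only that the error lies in $\mathscr{C}^1([0,T''],X)$ with values in $Z$. It does not even use semigroup generation for $A_\varepsilon$.

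\textbf{What it costs.} You need the Hilbert structure of $X$ and the quasi-dissipativity of $A[t,z]-\omega\,\mathrm{id}$. That is the hypothesis of Lemma \ref{lem_Aeps_dissip}, and it does not follow from \ref{H_stab} when $M>1$. You should state it explicitly; the paper relies on it too, in Proposition \ref{prop_Aeps_hypotheses_PDE}. The paper's Duhamel route, once the evolution systems are granted, works with any $M\geq 1$ and a Banach pivot space.

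\textbf{Minor point.} For $N^{-\gamma}>\varepsilon_0$, the trajectory $y^N_{\varepsilon_N}$ is not covered by the hypotheses. So you cannot absorb those $N$ into $C$; simply restrict to $N\geq\varepsilon_0^{-1/\gamma}$.
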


\begin{proof}
By Proposition \ref{prop_Aeps_hypotheses_PDE}, the explicit regularized family $(A_\varepsilon,f_\varepsilon)$ satisfies Assumption \ref{H_Aepsilon_CV} with $\chi(\varepsilon)=C_{A,\eta,\rho}\,\varepsilon$ and Assumption \ref{H_Y} with the output space $Y=W^{1,\infty}(\Omega,\R^d)\cap L^2(\Omega,\R^d)$ and with a constant $L_\varepsilon$ that is bounded uniformly on $(0,\varepsilon_0]$.
Since $(P_N,R_N)$ satisfies Assumption \ref{H_discretization} for this same space $Y$, and since the existence assumptions on $y_\varepsilon$ and $y_\varepsilon^N$ are part of the statement of the corollary, we can apply Theorem \ref{thm_approx_abstract} together with Remark \ref{rem_thm_approx_abstract} (which shows that the full assumption \ref{H_Aepsilon_N} is not needed).
This gives the corollary.
\end{proof}

\begin{remark}\label{rem_explicit_coeffs_deferred}
In the running PDE example, the interaction coefficients of the discrete system can be written explicitly by combining the kernel formula \eqref{def_A_eps_sigma_eps} with the chosen discretization operator $(P_N,R_N)$. Since these coefficients depend strongly on that choice, we do not write them out at this abstract stage; they are made explicit, in the sampling-reconstruction form \eqref{kernel_discrete_general}-\eqref{kernel_discrete_coeff}, in the worked examples of Section \ref{subsec_examples}, and the underlying discretization templates are discussed in Section \ref{sec_examples_discretizations} and Appendix \ref{app_discretization_technical}.
\end{remark}

\begin{remark}\label{rem_PDE_rate_generality}
The estimate \eqref{estim_finale} should be read as a generic spatial/interacting approximation result for the broad class \eqref{general_quasilinear_PDE} of quasilinear PDEs on bounded domains with boundary conditions, once the boundary-compatible kernelization has been verified.
Unlike the time-discretization results cited in Remark \ref{rem_comparison_literature}, the rate is not tied to one specific numerical scheme; it depends only on the approximation order of $Q_N$ in the pivot norm and on the behavior of $L_\varepsilon$ as $\varepsilon\to 0$.
In the present PDE example, the uniform bound $L_\varepsilon\leq L_0$ yields the algebraic choice $\varepsilon_N\simeq N^{-\gamma}$ and the final rate \eqref{estim_finale}.
\end{remark}

\subsection{Worked examples and verification of the assumptions}\label{subsec_examples}

We illustrate Theorem \ref{thm_approx_abstract} and Corollary \ref{cor_approx_PDE} on several emblematic equations on the interval $\Omega = (0,1)$ (so that $n = d = 1$), writing out the corresponding interacting systems and explaining, in each case, how the assumptions of our main results are met. By the running-example machinery of Section \ref{sec_verif} (Proposition \ref{prop_Aeps_hypotheses_PDE}), the regularity \ref{H_regZ}, the output control \ref{H_Y}, the convergence \ref{H_Aepsilon_CV} with $\chi(\varepsilon) = \mathrm{O}(\varepsilon)$, and the dissipativity part of \ref{H_stab} hold for the mollifier regularization $A_\varepsilon = \mathcal{H}_\varepsilon^* A \mathcal{H}_\varepsilon$ of every operator in the running class. The two assumptions that deserve a case-by-case discussion are the intertwining \ref{H_intertwining} and the semigroup-generation part of \ref{H_Aepsilon}. Accordingly, the examples are organized in three tiers: boundary-compatible interacting systems that are rigorously of the abstract form \eqref{kernel_discrete_general}, for which the conclusion of Theorem \ref{thm_approx_abstract} holds on any interval where the relevant trajectories stay in the common $Z$-ball (Section \ref{subsubsec_boundary}); two fully closed cases, periodic conditions and orthogonal spectral truncation, where the generation and intertwining of $A_\varepsilon$ hold uniformly and the conclusion is unconditional (Section \ref{subsubsec_closed}); and higher-order or nonlocal equations, such as Korteweg-de Vries and Benjamin-Ono, which fall outside the present running class but are reachable by the same kernelization once the operator $S$ and the regularization are adapted to their energy structure (Remark \ref{rem_kdv_bo}).

It is useful here to separate two levels. At the level of the \emph{original} operator $A$, the intertwining \ref{H_intertwining} is Kato's classical hypothesis $S A = A S + B S$, i.e., the boundedness on $X$ of $B = [S,A] S^{-1}$, and it holds for all the examples below once $S$ is chosen appropriately. For a first-order operator and $S = \Lambda^s$, $[S,A] S^{-1}$ is of order zero, hence bounded (an elementary pseudodifferential computation). For an operator built from the Dirichlet Laplacian $\Delta$, with $S = (\mathrm{id} - \Delta)^{s/2}$, it even holds with $B = 0$, since $A$ and $S$ are then both functions of $\Delta$ and commute. For higher-order equations such as Korteweg-de Vries (Remark \ref{rem_kdv_bo}), the naive Sobolev choice $S = \Lambda^s$ does not give a bounded $B$, but a problem-adapted $S$ tied to the conserved energies does, which is exactly how such equations are placed in Kato's quasilinear framework (see \cite[Chapter 8]{Pazy} and \cite{Kato_1975}). The first- and second-order examples below all belong to the running class of Section \ref{sec_general_class_quasilinear}, with the fixed Sobolev scale, and are locally well posed.

At the level of the \emph{regularized} family $A_\varepsilon = \mathcal{H}_\varepsilon^* A \mathcal{H}_\varepsilon$, what is specific to a domain with boundary is the generation and the intertwining of $A_\varepsilon$ itself (Remarks \ref{rem_Aeps_semigroup_issue} and \ref{rem_intertwining_continuous_not_generic}). Writing $[S, A_\varepsilon] = [S, \mathcal{H}_\varepsilon^*] A \mathcal{H}_\varepsilon + \mathcal{H}_\varepsilon^* [S,A] \mathcal{H}_\varepsilon + \mathcal{H}_\varepsilon^* A [S, \mathcal{H}_\varepsilon]$, the middle term is controlled by the harmless commutator $[S,A]$ above, but the outer terms involve the commutator $[S, \mathcal{H}_\varepsilon]$ of $S$ with the variable-step mollifier. Near $\partial\Omega$ the mollification radius $\varepsilon\rho(x)$ shrinks, so the relevant frequencies are of size $1/(\varepsilon\rho(x))$, and inserting the derivative $A$ between two mollifiers produces, after dividing by $S$, an amplitude of size $\rho'(x)/\rho(x)$ for a first-order operator (and a further factor $1/\varepsilon$ for a second-order one), which is not uniformly bounded up to the boundary. This is why \ref{H_intertwining} and the generation of $A_\varepsilon$ are kept structural in general, and why they become automatic when the regularization commutes with $S$. We exhibit two such fully closed cases in Section \ref{subsubsec_closed}: periodic boundary conditions, where $\mathcal{H}_\varepsilon$ is a Fourier multiplier, and orthogonal spectral truncation, where the regularizing projector commutes with $S$.

\subsubsection{Boundary-compatible interacting systems}\label{subsubsec_boundary}

In every example below, the regularized operator $A_\varepsilon = \mathcal{H}_\varepsilon^* A \mathcal{H}_\varepsilon$ has the explicit kernel \eqref{def_A_eps_sigma_eps}, supported in $\{\vert x - x'\vert \leq 2\varepsilon\Vert\rho\Vert_{L^\infty}\}$, and the variable-step mollifier preserves the boundary traces (Corollary \ref{cor_preserv_deriv_boundary}); thus the interaction stays inside $\Omega$ and respects the boundary condition.

Let us write the finite-dimensional system \eqref{interacting_system_u} in a way that is consistent with the abstract sampling-reconstruction framework, and not as a pointwise collocation system (recall that point evaluation is excluded by Assumption \ref{H_discretization}, see Section \ref{sec_examples_discretizations}). Choose a basis $(e_j)_{1\leq j\leq N}$ of $V_N$, and set $\theta_j^N = R_N e_j\in Z$. Let $\ell_i^N : X\to\R^d$ denote the $i$-th coordinate functional of $P_N$, so that $P_N g = (\ell_i^N(g))_{1\leq i\leq N}$. Writing $u_\varepsilon^N(t) = \sum_{j=1}^N u_j(t) e_j$ and $y_\varepsilon^N(t) = R_N u_\varepsilon^N(t) = \sum_{j=1}^N u_j(t)\theta_j^N$, the discrete system \eqref{interacting_system_u} reads
\begin{equation}\label{kernel_discrete_general}
\dot u_i(t) = \sum_{j=1}^N K_{ij}^{\varepsilon,N}[t,u(t)]\, u_j(t) + F_i^{\varepsilon,N}[t,u(t)] ,
\end{equation}
where, using $A_\varepsilon^N[t,v] = P_N A_\varepsilon[t,R_Nv] R_N$ and the kernel \eqref{def_A_eps_sigma_eps},
\begin{equation}\label{kernel_discrete_coeff}
K_{ij}^{\varepsilon,N}[t,u] = \ell_i^N\left( x\mapsto \int_\Omega \sigma_\varepsilon[t,R_Nu](x,x')\, \theta_j^N(x')\, dx' \right) ,
\qquad
F_i^{\varepsilon,N}[t,u] = \ell_i^N\big(f_\varepsilon[t,R_Nu]\big) .
\end{equation}
This is the finite-dimensional interacting system covered by Theorem \ref{thm_approx_abstract}. Its structure is local in the following sense: if $P_N$ is a local cell-average sampler and the functions $\theta_j^N$ have local support of diameter $\mathrm{O}(h_N)$, then $K_{ij}^{\varepsilon,N}$ vanishes unless the cell of $\ell_i^N$ lies within distance $\mathrm{O}(\varepsilon + h_N)$ of the support of $\theta_j^N$, so that, for quasi-uniform partitions, each degree of freedom interacts with $\mathrm{O}((1 + \varepsilon/h_N)^n)$ neighbors; if $\varepsilon\simeq h_N$, the interaction graph has uniformly bounded degree. This is the price, and the meaning, of replacing a differential operator by an integral interaction: the stencil width is $\mathrm{O}(1 + \varepsilon/h_N)$ in mesh units, larger than the fixed-width stencil of a finite-difference scheme, but still local. Since the operators below belong to the running class, the estimate of Theorem \ref{thm_approx_abstract} and Corollary \ref{cor_approx_PDE} applies on any interval on which $y_\varepsilon$ and $y_\varepsilon^N$ exist and remain in $B_Z(y^0,r)$ (Remark \ref{rem_thm_approx_abstract}).

If one replaces the stable sampling $P_N$ by point evaluation at grid points $x_i$, then \eqref{kernel_discrete_general} reduces formally to the familiar collocation formula $\dot u_i(t)\simeq \frac{1}{N}\sum_j \sigma_\varepsilon[t,u(t)](x_i,x_j)\, u_j(t)$. This point-collocation formula is useful as intuition, but it is not the $L^2$-stable discretization covered by Assumption \ref{H_discretization}; we therefore keep the kernel-coefficient form \eqref{kernel_discrete_coeff} in what follows.

\begin{example}[Transport with inflow]\label{ex_transport}
For $\partial_t y + \partial_x y = 0$ on $(0,1)$ with inflow condition $y(t,0) = 0$, one has $A = -\partial_x$ on $D(A) = \{y \in H^1 \mid y(0) = 0\}$ and $f \equiv 0$, and the exact solution is $y(t,x) = y^0(x - t)$ for $x > t$ and $0$ for $x < t$. This is a first-order hyperbolic operator; in the usual Kato treatment, with an operator $S$ adapted to the boundary condition, the intertwining property \ref{H_intertwining} is verified through a bounded order-zero commutator (a standard structural input for the original transport family). The kernel is $\sigma_\varepsilon(x,x') = -\int_\Omega H_\varepsilon(x'',x)\, \partial_{x''}H_\varepsilon(x'',x')\, dx''$, which converges to the Schwartz kernel $-\delta'(x - x')$ of $-\partial_x$ as $\varepsilon \to 0$. Away from the boundary, $\rho$ is locally almost constant, so the kernel is almost antisymmetric and the interaction is centered and almost energy-conserving; near $x = 0$ the variable step bends the interaction so as to preserve the inflow trace. The interacting system \eqref{kernel_discrete_general} is linear and finite dimensional, hence the discrete trajectory is globally defined. The conclusion of Theorem \ref{thm_approx_abstract} applies on every interval on which the corresponding regularized continuous trajectory $y_\varepsilon$ exists and remains in the prescribed $Z$-ball; the remaining structural inputs for $A_\varepsilon$, namely its generation and intertwining, are those discussed in Section \ref{subsubsec_closed}.
\end{example}

\begin{example}[Heat equation]\label{ex_heat}
For $\partial_t y = \partial_x^2 y$ on $(0,1)$ with Dirichlet conditions $y(t,0) = y(t,1) = 0$, one has $A = \partial_x^2$ on $D(A) = H^2(0,1) \cap H_0^1(0,1)$, which is dissipative. With $S = (\mathrm{id} - \Delta)^{s/2}$, both $A$ and $S$ are functions of the Dirichlet Laplacian, so they commute and the original \ref{H_intertwining} holds with $B = 0$. Integrating by parts in \eqref{def_A_eps_sigma_eps} gives the symmetric kernel $\sigma_\varepsilon(x,x') = -\int_\Omega \partial_{x''}H_\varepsilon(x'',x)\, \partial_{x''}H_\varepsilon(x'',x')\, dx''$, so that $A_\varepsilon = -(\partial_x \mathcal{H}_\varepsilon)^* (\partial_x \mathcal{H}_\varepsilon) \leq 0$ as a quadratic form on $X$. In an $L^2$-Galerkin realization, the associated matrix $(K_{ij}^{\varepsilon,N})$ in \eqref{kernel_discrete_coeff} is symmetric negative semidefinite and can be interpreted as a mollified graph Laplacian. For a general sampling-reconstruction pair $(P_N,R_N)$, the continuous quadratic form remains dissipative, but the coordinate matrix need not be symmetric; in all cases the interacting system is a dissipative flow.
\end{example}

\begin{example}[Schr\"odinger and wave equations]\label{ex_schrodinger}
For the Schr\"odinger equation $\mathrm{i}\, \partial_t y = -\partial_x^2 y$ on $(0,1)$ with Dirichlet boundary conditions, written as $\partial_t y = \mathrm{i}\, \partial_x^2 y = A y$, the original operator $A$ is skew-adjoint on $X = L^2(0,1; \C)$ and commutes with $S = (\mathrm{id} - \Delta)^{s/2}$, so the original \ref{H_intertwining} holds with $B = 0$. For the variable-step regularized family $A_\varepsilon = \mathcal{H}_\varepsilon^* A \mathcal{H}_\varepsilon$, the symmetric conjugation preserves the formal skew-symmetry at the quadratic-form level. However, as emphasized in Remarks \ref{rem_Aeps_semigroup_issue} and \ref{rem_intertwining_continuous_not_generic}, full skew-adjointness and intertwining of $A_\varepsilon$ are not automatic in the variable-step boundary setting; they hold in the fully commuting cases of Section \ref{subsubsec_closed} (periodic conditions or orthogonal spectral truncation), where the regularized flow is unitary and conserves the $L^2$ charge exactly. The wave equation $\partial_{tt} u = \partial_{xx} u$ with Dirichlet conditions can be treated similarly in first-order form $y = (u, \partial_t u)$ with $d = 2$: the original operator is skew-adjoint for the energy inner product, and the kernelization produces a Hamiltonian-type interacting system, provided the corresponding generation and intertwining properties are verified.
\end{example}

\begin{example}[Generalized Hopf and Burgers equations]\label{ex_hopf}
A quasilinear case is the generalized Hopf equation $\partial_t y + y^k\, \partial_x y = 0$ on $(0,1)$ with inflow condition $y(t,0) = 0$. It corresponds to $A[t,z]y = -z^k \partial_x y$ and $f \equiv 0$, and one may take $Z = H^s(0,1)$ with $s > 5/2$, so that $z \mapsto z^k$ is controlled in $W^{1,\infty}$. At the level of the original operator, this is again a first-order Kato-type quasilinear equation; with a boundary-adapted choice of $S$, the commutator $[S,A[t,z]] S^{-1}$ is an order-zero operator controlled by the usual coefficient bounds, which is the standard structural input needed to place the equation in Kato's framework before shock formation. For the interacting approximation, the coefficient $z^k$ is evaluated on the reconstructed field $R_N u$, not merely at isolated nodal values: in the notation of \eqref{kernel_discrete_general}, the interaction coefficients are
$$
K_{ij}^{\varepsilon,N}[u] = -\ell_i^N\left( x\mapsto \int_\Omega\int_\Omega H_\varepsilon(x'',x)\, \big(R_N u(x'')\big)^k\, \partial_{x''}H_\varepsilon(x'',x')\, \theta_j^N(x')\, dx'\, dx'' \right) ,
$$
so that the scheme $\dot u_i(t) = \sum_{j=1}^N K_{ij}^{\varepsilon,N}[u(t)]\, u_j(t)$ is a quasilinear interacting system. For $k = 1$, this corresponds to inviscid Burgers before shock formation, and for $y^0\in\mathscr{C}^1$ the exact solution is defined up to the shock time $T(y^0) = 1/\max_x \max(0, -(y^0)'(x))$; the abstract convergence estimate applies on any time interval on which the exact, regularized, and lifted discrete solutions remain in the common $Z$-ball, in particular on $[0,T'']$ for any $T''<T(y^0)$. A telling structural feature is the degeneracy of the coefficient $(R_N u(x''))^k$, which reflects the finite-propagation character of the underlying transport dynamics. Because the kernelized operator is nonlocal at scale $\varepsilon$, one should not expect exact support preservation at the discrete level; rather, the influence of the reconstructed field is confined to an $\mathrm{O}(\varepsilon)$ neighborhood of its support, and mass enters a region only as the characteristics carry it there, up to the shock time $T(y^0)$.
\end{example}

\begin{remark}[Higher-order and nonlocal equations]\label{rem_kdv_bo}
Higher-order and nonlocal equations are reached by the kernelization viewpoint as well, although they require a problem-adapted choice of the operator $S$ rather than the fixed Sobolev scale used in our running class. The Korteweg-de Vries equation $\partial_t y + \partial_x^3 y + 6\, y\, \partial_x y = 0$ is a classical instance: it is treated by Kato's quasilinear theory, precisely as the closing application of the abstract framework in \cite[Chapter 8]{Pazy} (see also \cite{Kato_1975}), where the intertwining \ref{H_intertwining} is verified for a suitably chosen $S$ adapted to the conserved energies of the equation. 
At the level of the kernelization, the third-order part produces an antisymmetric dispersive kernel and the quadratic term a nonlinear interaction. Similarly, for the Benjamin-Ono equation $\partial_t y + y\, \partial_x y + H \partial_x^2 y = 0$, where $H$ is the Hilbert transform, the differential part is localized by the mollifier while the nonlocal operator $H$ is inherited by the kernel, producing a nonlocal interacting system. These equations are not part of the running class of Section \ref{sec_general_class_quasilinear}, which is built on the fixed pair $(Z,X)$ and the variable-step mollifier; adapting the kernelization to their natural energy structure is a natural direction, and the abstract Theorem \ref{thm_approx_abstract} already applies once the corresponding $S$, regularization, and discretization are set up.
\end{remark}

\subsubsection{Two fully closed cases: periodic conditions and spectral truncation}\label{subsubsec_closed}

In the boundary examples above, the rate of Corollary \ref{cor_approx_PDE} is conditional on the existence of $y_\varepsilon$ and $y_\varepsilon^N$ on a common interval, since the generation and the intertwining of the mollified operator $A_\varepsilon$ are not granted by the construction alone. We now describe two settings in which the regularization commutes with $S$, so that these structural inputs hold with $B = 0$ and the framework closes unconditionally.

\paragraph{Periodic boundary conditions.}
We work in the running class of Section \ref{sec_general_class_quasilinear} on the flat torus $\Omega = \mathbb{T}^n = (\R/\Z)^n$, for which $\partial\Omega = \emptyset$ and $\vert\Omega\vert = 1$. We use complex Fourier modes, working on the complexification of $X$ and recovering real-valued solutions by the conjugacy symmetry of the Fourier coefficients. Since there is no boundary, we take $\rho \equiv 1$ (as allowed in Appendix \ref{app_convolution}) and we choose the kernel $\eta$ even. The variable-step mollifier then reduces to the classical convolution $\mathcal{H}_\varepsilon g = \eta_\varepsilon * g$, with $\mathcal{H}_\varepsilon^* = \mathcal{H}_\varepsilon$, which is the Fourier multiplier $\widehat{\mathcal{H}_\varepsilon g}(k) = m_\varepsilon(k)\, \hat g(k)$, with $m_\varepsilon(k) = \hat\eta(\varepsilon k) \in \R$ for $k \in \Z^n$, $m_\varepsilon(0) = 1$ and $\vert m_\varepsilon(k)\vert \leq 1$. We consider a linear constant-coefficient operator and no source, $A = \sum_{\vert\alpha\vert \leq p} a_\alpha D^\alpha$ with $a_\alpha \in \R^{d \times d}$ and $f \equiv 0$, and we assume that $A - \omega\, \mathrm{id}$ is dissipative on $X = L^2(\mathbb{T}^n, \R^d)$ for some $\omega \in \R$, i.e., the symbol $\hat A(k) = \sum_{\vert\alpha\vert \leq p} a_\alpha (2\pi \mathrm{i} k)^\alpha$ satisfies $\frac{1}{2}(\hat A(k) + \hat A(k)^*) \leq \omega\, \mathrm{Id}$ for every $k \in \Z^n$. We take $S = \Lambda^s$, the Fourier multiplier $\langle k\rangle^s = (1 + \vert k\vert^2)^{s/2}$, so that $D(S) = H^s$ and the graph norm is equivalent to the $H^s$-norm; here $Z = H^s(\mathbb{T}^n, \R^d)$ with $s > p + 1 + \frac{n}{2}$ (no boundary constraint, so $D = H^p$).

\begin{proposition}\label{prop_periodic_unconditional}
Under the above periodic assumptions, the regularized family $A_\varepsilon = \mathcal{H}_\varepsilon^* A \mathcal{H}_\varepsilon$ satisfies, uniformly in $\varepsilon \in (0,1]$:
\begin{enumerate}[label=(\roman*), parsep=0.4mm, itemsep=0.4mm, topsep=0.4mm]
\item $A_\varepsilon \in L(X)$ for each $\varepsilon$, and $A_\varepsilon - \omega_0\, \mathrm{id}$ is dissipative on $X$ with $\omega_0 = \max(0, \omega)$; in particular $A_\varepsilon$ generates a $\mathscr{C}^0$ semigroup with $\Vert\mathrm{e}^{t A_\varepsilon}\Vert_{L(X)} \leq \mathrm{e}^{\omega_0 t}$;
\item the intertwining \ref{H_intertwining} holds with $S_\varepsilon = S$ and $B_\varepsilon = 0$ (hence $C_5 = 0$);
\item Assumption \ref{H_Aepsilon} holds uniformly in $\varepsilon$.
\end{enumerate}
Consequently, the regularized solution $y_\varepsilon$ of \eqref{abstract_quasilinear_eps} exists globally. Moreover, after choosing the working radius $r$ large enough, depending on $T$ and on the growth bound, the trajectory remains in $B_Z(y^0,r)$ on $[0,T]$. In the contractive or unitary cases, $r\geq 2\Vert y^0\Vert_Z$ suffices; alternatively one may restrict the time interval as in Proposition \ref{prop_existence_uniqueness}. Let moreover $Q_N$ be the $L^2$-orthogonal projection onto the trigonometric space $V_N = \mathrm{span}\{x \mapsto \mathrm{e}^{2\pi \mathrm{i} k\cdot x} c \mid \vert k\vert_\infty \leq K,\ c \in \R^d\}$ of cutoff $K$, so that $N = \dim V_N = d(2K+1)^n$. Then Assumptions \ref{H_discretization} and \ref{H_Aepsilon_N} hold uniformly, with $\gamma = 1/n$, and Theorem \ref{thm_approx_abstract} and Corollary \ref{cor_approx_PDE} apply unconditionally. The choice $\varepsilon_N \simeq N^{-1/n}$ yields
$$
\sup_{t \in [0,T]} \Vert y_{\varepsilon_N}^N(t) - y(t)\Vert_{L^2(\mathbb{T}^n, \R^d)} = \mathrm{O}(N^{-1/n}) .
$$
\end{proposition}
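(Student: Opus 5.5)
Everything diagonalizes in the Fourier basis, so the plan is to reduce each claim to a statement about the symbols $m_\varepsilon(k)^2\hat A(k)$, $\langle k\rangle^s$ and the cutoff indicator, and then invoke Theorem \ref{thm_approx_abstract} and Corollary \ref{cor_approx_PDE}.

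\textbf{Regularized operator.} First, since $\mathcal{H}_\varepsilon^*=\mathcal{H}_\varepsilon$ is the multiplier $m_\varepsilon(k)=\hat\eta(\varepsilon k)$, the operator $A_\varepsilon$ is the multiplier $m_\varepsilon(k)^2\hat A(k)$. Because $\eta$ is smooth, $\hat\eta$ is Schwartz, so $\sup_k |m_\varepsilon(k)|^2\,\Vert\hat A(k)\Vert\leq C_\varepsilon<\infty$ and $A_\varepsilon\in L(X)$. Since $m_\varepsilon(k)^2\in[0,1]$, the Hermitian part satisfies $m_\varepsilon^2\,\tfrac12(\hat A+\hat A^*)\leq m_\varepsilon^2\,\omega\leq \omega_0$, giving dissipativity of $A_\varepsilon-\omega_0\,\mathrm{id}$. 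The semigroup bound then follows by Remark \ref{rem_Aeps_semigroup_issue}, proving (i). All these multipliers commute with $\langle k\rangle^s$, so $SA_\varepsilon=A_\varepsilon S$ on $Z$, i.e.\ $B_\varepsilon=0$. Resolvents preserve $Z$ because $(A_\varepsilon-\lambda)^{-1}$ is again a multiplier; this proves (ii). For (iii), \ref{H_regZ} is immediate from $|m_\varepsilon|\leq1$ and $\Vert\hat A(k)\Vert\lesssim\langle k\rangle^p$, with $C'_4=0$ since $A$ is independent of $z$. \ref{H_stab} holds uniformly because all the semigroups commute and each is bounded by $e^{\omega_0 s}$. \ref{H_f_Lip} is trivial since $f=0$.

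\textbf{Global existence and confinement.} The regularized problem is linear, so $y_\varepsilon(t)=e^{tA_\varepsilon}y^0$ exists globally. Commutation with $S$ gives $\Vert y_\varepsilon(t)\Vert_Z\leq e^{\omega_0 t}\Vert y^0\Vert_Z$. Choosing $r\geq (e^{\omega_0T}+1)\Vert y^0\Vert_Z$ keeps the trajectory in $B_Z(y^0,r)$; in the contractive case this is $r\geq2\Vert y^0\Vert_Z$. The same argument applies to $y$ itself, via $e^{tA}$, which is handled by the same symbol computation with $m\equiv1$.

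\textbf{Discretization.} Let $Q_N$ be the Fourier truncation at $|k|_\infty\leq K$. It is orthogonal in both $L^2$ and $H^s$, so \eqref{stab_PX_PZ} holds with $C_{\mathrm{stab}}=1$, and \eqref{CV_Z} holds by dominated convergence on the Fourier side. For \eqref{CV_X_Z}, the Jackson bound $\Vert Q_Ny-y\Vert_{L^2}\leq K^{-s}\Vert y\Vert_{H^s}$ applies, and $K\simeq N^{1/n}$.

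The main obstacle is \eqref{CV_X_Y} on $Y=W^{1,\infty}\cap L^2$. I would use $\Vert y-Q_Ny\Vert_{L^2}\leq (2\pi K)^{-1}\Vert\nabla y\Vert_{L^2}\leq CK^{-1}\Vert y\Vert_{W^{1,\infty}}$, which uses $|\mathbb{T}^n|=1$ and the fact that the discarded modes satisfy $|k|\geq K$. This gives order $K^{-1}\simeq N^{-1/n}$, hence $\gamma=1/n$.

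For \ref{H_Aepsilon_N}, the space $V_N$ is invariant under $S$, and $Q_N$ commutes with $S$. Lemmas \ref{lem_H_regZ_N}, \ref{lem_discrete_stab} and \ref{lem_discrete_intertwining} therefore apply with $M=1$ and $B^N_\varepsilon=0$. Alternatively, one can argue directly: $Q_N$ commutes with $A_\varepsilon$, so $y^N_\varepsilon=Q_Ny_\varepsilon$, which is confined to the ball by the $H^s$-contractivity of $Q_N$.

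\textbf{Conclusion.} Lemma \ref{lem_CV_Aeps}, or directly $|1-m_\varepsilon(k)^2|\lesssim\varepsilon|k|$ for even $\eta$, gives $\chi(\varepsilon)=O(\varepsilon)$. Lemma \ref{lem_Aepsy_Lip_PDE} gives $L_\varepsilon\leq L_0$; directly, $\Vert A_\varepsilon y\Vert_{W^{1,\infty}}\lesssim\Vert y\Vert_{H^{s}}$ by Sobolev embedding, since $s>p+1+n/2$. Theorem \ref{thm_approx_abstract} with $\varepsilon_N=N^{-1/n}$ then yields the rate $O(N^{-1/n})$ on $[0,T]$.
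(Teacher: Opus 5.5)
Your proposal is correct and follows essentially the same route as the paper. Everything is diagonalized as Fourier multipliers (boundedness of $A_\varepsilon$, quasi-dissipativity, $B_\varepsilon=0$ and resolvent invariance modewise); then \ref{H_discretization} follows from the Jackson bounds for the Fourier truncation, and \ref{H_Aepsilon_N} from Lemmas \ref{lem_H_regZ_N}, \ref{lem_discrete_stab} and \ref{lem_discrete_intertwining}, before invoking Theorem \ref{thm_approx_abstract}. Your direct symbol checks of $\chi(\varepsilon)=\mathrm{O}(\varepsilon)$ and $L_\varepsilon\leq L_0$, the explicit radius $r\geq(e^{\omega_0T}+1)\Vert y^0\Vert_Z$, and the observation $y_\varepsilon^N=Q_Ny_\varepsilon$ are only slightly more self-contained than the paper's appeal to Proposition \ref{prop_Aeps_hypotheses_PDE} with $\rho\equiv1$ and to linearity for $T'=T$.
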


\begin{proof}
Since $\mathcal{H}_\varepsilon$, $A$ and $S$ are Fourier multipliers, they commute pairwise. The multiplier of $A_\varepsilon$ is $m_\varepsilon(k)^2 \hat A(k)$, which is bounded in $k$ for each fixed $\varepsilon$ because $\hat\eta$ is rapidly decreasing while $\hat A(k)$ has polynomial growth; hence $A_\varepsilon \in L(X)$. The dissipativity in (i) is Lemma \ref{lem_Aeps_dissip} together with $\Vert\mathcal{H}_\varepsilon\Vert_{L(X)} \leq 1$, and generation follows from Remark \ref{rem_Aeps_semigroup_issue}, $A_\varepsilon$ being bounded. For (ii), $S A_\varepsilon = A_\varepsilon S$ since the scalar multiplier $\langle k\rangle^s$ commutes with the matrix multiplier $m_\varepsilon(k)^2 \hat A(k)$ modewise; thus $B_\varepsilon = 0$, and the resolvent invariance of $Z$ is immediate. The remaining parts of \ref{H_Aepsilon}, namely \ref{H_regZ}, \ref{H_Aepsilon_CV} and \ref{H_Y}, follow from the verification of Section \ref{sec_verif} (Proposition \ref{prop_Aeps_hypotheses_PDE}), which applies verbatim with $\rho \equiv 1$ (so that $L_\rho = 0$ and all the estimates of Appendix \ref{app_convolution} hold for every $\varepsilon \in (0,1]$); this gives (iii). Existence of $y_\varepsilon$ then follows from Proposition \ref{prop_yepsilon}, with $T' = T$ by linearity. For the discretization, $Q_N$ is the $L^2$-orthogonal projection onto $V_N$, hence $\Vert Q_N\Vert_{L(X)} = \Vert Q_N\Vert_{L(Z)} = 1$; for $g \in Y \subset H^1$ one has $\Vert Q_N g - g\Vert_X \leq C(K+1)^{-1}\Vert g\Vert_{H^1} \leq C(K+1)^{-1}\Vert g\Vert_Y$ and $\Vert Q_N y - y\Vert_X \leq (K+1)^{-s}\Vert y\Vert_{H^s}$, so that \ref{H_discretization} holds with order $K^{-1}$ in the cutoff, that is $\gamma = 1/n$ in the number $N$ of degrees of freedom. Since $Q_N$ is $L^2$-orthogonal, commutes with $S = S_\varepsilon$, and leaves $V_N$ invariant, Lemmas \ref{lem_H_regZ_N}, \ref{lem_discrete_stab} and \ref{lem_discrete_intertwining} yield \ref{H_Aepsilon_N}. Theorem \ref{thm_approx_abstract} applies, and the balance $\varepsilon_N \simeq N^{-1/n}$ in Corollary \ref{cor_approx_PDE} (here $\chi(\varepsilon) = \mathrm{O}(\varepsilon)$ and $L_\varepsilon \leq L_0$) gives the announced rate.
\end{proof}

In the periodic setting the kernel \eqref{def_A_eps_sigma_eps} is translation invariant: with $\rho \equiv 1$ and $\eta$ even, $H_\varepsilon(x,x') = \eta_\varepsilon(x - x')$ and $\sigma_\varepsilon(x,x') = \sum_{\vert\alpha\vert \leq p} a_\alpha\, (\eta_\varepsilon * D^\alpha\eta_\varepsilon)(x - x')$, supported in $\{\vert x - x'\vert \leq 2\varepsilon\}$. In the Fourier-Galerkin discretization of Proposition \ref{prop_periodic_unconditional}, the exact finite-dimensional matrix is the matrix of the projected multiplier $m_\varepsilon(k)^2 \hat A(k)$ for $\vert k\vert_\infty \leq K$. Equivalently, in nodal variables on the uniform periodic grid $x_j = j/N$, it is a circulant interaction matrix with kernel $\sigma_{\varepsilon,N}(x) = \sum_{\vert k\vert_\infty \leq K} m_\varepsilon(k)^2 \hat A(k)\, \mathrm{e}^{2\pi \mathrm{i} kx}$, so that $\dot y_i = \frac{1}{N} \sum_j \sigma_{\varepsilon,N}(x_i - x_j)\, y_j$. This projected kernel $\sigma_{\varepsilon,N}$ should be distinguished from the unprojected collocation formula using the full kernel $\sigma_\varepsilon$. For transport ($A = -c\, \partial_x$), $\sigma_{\varepsilon,N}$ is odd and the matrix is skew-symmetric, the translation-invariant counterpart of Example \ref{ex_transport}; for the heat equation ($A = \partial_x^2$), $\sigma_{\varepsilon,N}$ is even with zero mean and the matrix is a symmetric negative semidefinite graph Laplacian, the counterpart of Example \ref{ex_heat}. The Schr\"odinger and wave equations are covered in the same way, the constant-coefficient skew-adjoint symbol giving an energy-preserving circulant system.

\medskip
Figure \ref{fig_convergence} gives a numerical illustration in the periodic transport case. It illustrates the sharpness of the abstract rate in a simple closed setting. The experiment uses the periodic translation-invariant interacting system in nodal Fourier variables, with the balanced choice $\varepsilon_N \simeq N^{-1}$ and a smooth periodic datum, the time integration being performed by exact exponentiation of the (small, circulant) interaction matrix so that the spatial error is isolated. The observed order is $\mathrm{O}(N^{-2})$, better than the guaranteed $\mathrm{O}(N^{-1})$ bound, reflecting the second-order consistency of the symmetric kernelization on smooth data. The interaction matrix is antisymmetric, so the scheme conserves the discrete energy, in agreement with the skew-symmetry of $A_\varepsilon$. The reported experiment uses the standard normalized bump mollifier $\eta(z) = c_\eta\exp(1/(\vert z\vert^2-1))\mathbf 1_{\vert z\vert<1}$, the smooth datum $y^0(x) = \sin(2\pi x) + \frac{1}{2}\sin(4\pi x + 0.7)$ on the torus, transport speed $c = 1$, final time $t = \frac{1}{4}$, the balance $\varepsilon_N = 4/N$, and exact time integration by matrix exponentiation, with $N\in\{32,64,128,256,512\}$.

\begin{figure}[ht]
\centering
\includegraphics[width=9cm]{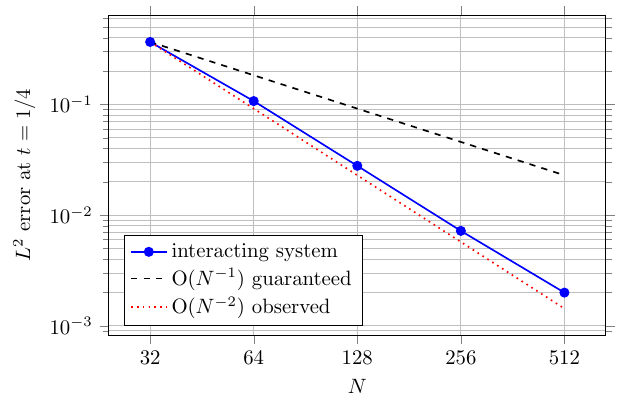}
\caption{Periodic transport interacting system (translation-invariant counterpart of Example \ref{ex_transport}): $L^2$ error at $t = \frac{1}{4}$ versus the number $N$ of degrees of freedom, for $\varepsilon_N \simeq N^{-1}$ and a smooth datum. The error remains below the rate $\mathrm{O}(N^{-1})$ guaranteed by Proposition \ref{prop_periodic_unconditional}, and is in fact of order $\mathrm{O}(N^{-2})$ for smooth data.}
\label{fig_convergence}
\end{figure}

\paragraph{Orthogonal spectral truncation.}
A second fully closed case is available on a bounded domain, with a boundary condition, when $A$ is built from a self-adjoint operator with compact resolvent and the regularization is an orthogonal spectral truncation rather than a mollifier. Let $L$ be a nonnegative self-adjoint operator on $X = L^2(\Omega, \mathbb{K})$ ($\mathbb{K} = \R$ or $\C$) with compact resolvent, and let $(\lambda_k, \phi_k)_{k\geq 1}$ be its eigenpairs, with $0\leq\lambda_1\leq\lambda_2\leq\cdots\to+\infty$ and $(\phi_k)$ an orthonormal basis of $X$ incorporating the boundary condition (for instance $L = -\Delta$ on $(0,1)$ with Dirichlet conditions, $\phi_k(x) = \sqrt{2}\sin(k\pi x)$, $\lambda_k = (k\pi)^2$). We take $S = (\mathrm{id} + L)^{s/2}$, so that $Z = D(S) = D(L^{s/2})$, with $s > 2$. For $\varepsilon\in(0,1]$, let $\pi_\varepsilon$ be the orthogonal projector onto the eigenspaces with $\lambda_k\leq\varepsilon^{-2}$, and set $A_\varepsilon = \pi_\varepsilon A \pi_\varepsilon$.

\begin{proposition}\label{prop_spectral_unconditional}
Let $A = -L$ (heat equation) or $A = -\mathrm{i} L$ (Schr\"odinger equation), with $f\equiv 0$, and let $s\geq 3$ be an integer. Then the regularized family $A_\varepsilon = \pi_\varepsilon A\pi_\varepsilon$ satisfies, uniformly in $\varepsilon\in(0,1]$:
\begin{enumerate}[label=(\roman*), parsep=0.4mm, itemsep=0.4mm, topsep=0.4mm]
\item $A_\varepsilon\in L(X)$ with $\Vert A_\varepsilon\Vert_{L(X)}\leq\varepsilon^{-2}$; moreover $A_\varepsilon$ is dissipative (resp., skew-adjoint) and generates on $X$ a $\mathscr{C}^0$ semigroup of contractions (resp., a unitary group) if $A = -L$ (resp., if $A = -\mathrm{i} L$);
\item the intertwining \ref{H_intertwining} holds with $S_\varepsilon = S$ and $B = 0$, since $A_\varepsilon$ and $S$ are both functions of $L$ and commute;
\item the convergence \ref{H_Aepsilon_CV} holds with $\chi(\varepsilon) = \mathrm{O}(\varepsilon^{s-2})$, and the output bound \ref{H_Y} holds with the choice $Y = D(L^{1/2})$ and a constant $L_\varepsilon\leq L_0$ uniform in $\varepsilon$ (this is where $s\geq 3$ is used).
\end{enumerate}
In particular, Assumption \ref{H_Aepsilon} holds uniformly in $\varepsilon$ and the regularized solution $y_\varepsilon$ exists globally. 
After choosing the working radius $r$ large enough (for the contractive or unitary flow here, $r\geq 2\Vert y^0\Vert_Z$ suffices) or after restricting the time interval as in Proposition \ref{prop_existence_uniqueness}, the trajectory remains in $B_Z(y^0,r)$ on $[0,T]$. 

Let now $V_N = \mathrm{span}(\phi_1, \ldots, \phi_N)$, let $P_N$ be the analysis map $P_N g = (\langle g,\phi_k\rangle_X)_{1\leq k\leq N}$ and $R_N$ the synthesis map $R_N(c_k) = \sum_k c_k\phi_k$, and let $Q_N$ be the $L^2$-orthogonal projection onto $V_N$. Then the approximation estimates hold in spectral form,
$$
\Vert Q_N g - g\Vert_X\leq\lambda_{N+1}^{-1/2}\Vert g\Vert_Y
\qquad
\forall g\in Y = D(L^{1/2}).
$$
In particular, if the eigenvalues satisfy a polynomial lower bound $\lambda_{N+1}^{-1/2}\leq C N^{-\gamma_L}$ for some $\gamma_L>0$, then Assumption \ref{H_discretization} holds with $\gamma = \gamma_L$ (together with \ref{H_Aepsilon_N}), and Theorem \ref{thm_approx_abstract} applies unconditionally. 

Assuming for simplicity that $\lambda_N<\lambda_{N+1}$ (otherwise one takes $N$ at the end of a spectral cluster, or indexes the discretization by the spectral cutoff rather than by $N$), the choice $\varepsilon_N = \lambda_N^{-1/2}$ gives $\pi_{\varepsilon_N} = Q_N$, so that $A_{\varepsilon_N}^N$ is the spectral Galerkin compression of $A$ to $V_N$, and one obtains
$$
\sup_{t\in[0,T]} \Vert y_{\varepsilon_N}^N(t) - y(t)\Vert_{L^2(\Omega, \mathbb{K})} = \mathrm{O}(\lambda_N^{-1/2}) .
$$
For $L = -\Delta$ on a regular $n$-dimensional domain, Weyl's law gives $\lambda_N\simeq c N^{2/n}$, hence $\gamma_L = 1/n$ and the algebraic rate $\mathrm{O}(N^{-1/n})$.
\end{proposition}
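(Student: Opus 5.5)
The plan is to diagonalize everything in the eigenbasis $(\phi_k)$ of $L$. Then $S$, $\pi_\varepsilon$, $Q_N$ and $A_\varepsilon$ all become multipliers acting on the coefficients $c_k=\langle y,\phi_k\rangle_X$, and each assumption reduces to a supremum of an explicit scalar function of $\lambda_k$. I will write $A=-\kappa L$ with $\kappa\in\{1,\mathrm{i}\}$, so that $A_\varepsilon=-\kappa\,\pi_\varepsilon L\pi_\varepsilon$ is the multiplier $-\kappa\lambda_k\mathbf 1_{\lambda_k\leq\varepsilon^{-2}}$.

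\textbf{Continuous assumptions (i)--(iii).}
\begin{enumerate}
\item[(i)] The multiplier is bounded by $\varepsilon^{-2}$, so $A_\varepsilon\in L(X)$ with $\Vert A_\varepsilon\Vert_{L(X)}\leq\varepsilon^{-2}$. Its real part is $-\lambda_k\leq 0$ when $\kappa=1$, and it is purely imaginary when $\kappa=\mathrm{i}$. Generation of a contraction semigroup (resp.\ a unitary group) then follows from Remark \ref{rem_Aeps_semigroup_issue}, with $M=1$ and $\omega=0$.
\item[(ii)] Since $S=(\mathrm{id}+L)^{s/2}$ is the multiplier $(1+\lambda_k)^{s/2}$, it commutes with $A_\varepsilon$, so $B_\varepsilon=0$. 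Resolvents of $A_\varepsilon$ are again multipliers, hence leave $Z$ invariant.
\item[(iii)] Because $\pi_\varepsilon$ commutes with $L$, for $y\in Z$ we have $(A_\varepsilon-A)y=\kappa(\mathrm{id}-\pi_\varepsilon)Ly$. Therefore
$$
\Vert (A_\varepsilon-A)y\Vert_X\leq \sup_{\lambda>\varepsilon^{-2}}\lambda(1+\lambda)^{-s/2}\,\Vert Sy\Vert_X\leq \varepsilon^{s-2}\Vert y\Vert_Z ,
$$
which gives $\chi(\varepsilon)=\varepsilon^{s-2}$; the condition on $f\equiv 0$ is trivial. For \ref{H_Y} I endow $Y=D(L^{1/2})$ with the norm $\Vert(\mathrm{id}+L)^{1/2}\cdot\Vert_X$. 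Then
$$
\Vert A_\varepsilon z\Vert_Y\leq\Vert(\mathrm{id}+L)^{3/2}z\Vert_X\leq\Vert z\Vert_Z\leq\Vert y^0\Vert_Z+r ,
$$
and this is exactly where $s\geq 3$ enters.
\end{enumerate}
The Lipschitz constants $C_4'$ and $C_6'$ vanish by linearity, and $C_4\leq\sup_\lambda\lambda(1+\lambda)^{-s/2}\leq 1$. This settles \ref{H_Aepsilon}. Existence of $y_\varepsilon$ for all times is immediate, since the problem is linear with a bounded generator.

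\textbf{Ball condition.} $S$ commutes with the flow, which is contractive or unitary. Hence $\Vert y_\varepsilon(t)\Vert_Z\leq\Vert y^0\Vert_Z$, so $\Vert y_\varepsilon(t)-y^0\Vert_Z\leq 2\Vert y^0\Vert_Z\leq r$. The same argument applies to $y$ and to $y_\varepsilon^N$. This is why the time interval is all of $[0,T]$ and no restriction as in Remark \ref{rem_prop_existence_uniqueness_T'uniform} is needed.

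\textbf{Discretization.} $Q_N$ is the spectral truncation to $V_N$, so $\Vert Q_N\Vert_{L(X)}=\Vert Q_N\Vert_{L(Z)}=1$. Moreover
$$
\Vert Q_Ng-g\Vert_X^2=\sum_{k>N}\vert c_k\vert^2\leq\lambda_{N+1}^{-1}\sum_{k>N}\lambda_k\vert c_k\vert^2\leq\lambda_{N+1}^{-1}\Vert g\Vert_Y^2 .
$$
The same computation with weight $(1+\lambda_k)^{s}$ gives the $Z\to X$ estimate \eqref{CV_X_Z}, since $(1+\lambda)^{-s/2}\leq\lambda^{-1/2}$. Property \eqref{CV_Z} is just convergence of the tail of an $\ell^2$ series. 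Under $\lambda_{N+1}^{-1/2}\leq CN^{-\gamma_L}$ this is \ref{H_discretization} with $\gamma=\gamma_L$. Since $Q_N$ is $L^2$-orthogonal, commutes with $S$, and $S(V_N)\subset V_N$, Lemmas \ref{lem_H_regZ_N}, \ref{lem_discrete_stab} and \ref{lem_discrete_intertwining} give \ref{H_Aepsilon_N}. Theorem \ref{thm_approx_abstract} then applies.

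\textbf{Final rate.} Assume $\lambda_N<\lambda_{N+1}$ and take $\varepsilon_N=\lambda_N^{-1/2}$. Then $\{\lambda_k\leq\varepsilon_N^{-2}\}=\{1,\dots,N\}$, so $\pi_{\varepsilon_N}=Q_N$ and $A^N_{\varepsilon_N}$ is the spectral Galerkin compression of $A$. The estimate \eqref{CV_y_yepsN}, with $L_\varepsilon\leq L_0$ and $a_1$, $b_\varepsilon$ bounded on $[0,T]$ because $\omega=\beta=0$, gives
$$
\sup_{t\in[0,T]}\Vert y_{\varepsilon_N}^N(t)-y(t)\Vert_X\leq C\bigl(\lambda_N^{-(s-2)/2}+\lambda_{N+1}^{-1/2}\bigr)=\mathrm O(\lambda_N^{-1/2}).
$$
Here I use $s\geq 3$ and $\lambda_N\geq 1$ for $N$ large; small $N$ is absorbed into the constant. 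Weyl's law then yields $\gamma_L=1/n$.

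\textbf{Main obstacle.} I expect the delicate point to be the bookkeeping around the ball condition and the uniform time $T$, which the abstract results only provide on a short interval $[0,T']$. I will handle it by the direct energy argument in $Z$ described above, invoking Remark \ref{rem_thm_approx_abstract} rather than the local-in-time Proposition \ref{prop_existence_uniqueness}. A secondary issue is the choice of $Y$-norm. I take the graph norm $\Vert(\mathrm{id}+L)^{1/2}\cdot\Vert_X$, so the stated bound $\lambda_{N+1}^{-1/2}\Vert g\Vert_Y$ holds. If $\lambda_1=0$, I use the nonnegativity $\Vert L^{1/2}g\Vert_X\leq\Vert g\Vert_Y$, and the argument goes through unchanged.
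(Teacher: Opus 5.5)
Your proposal is correct and follows essentially the same route as the paper. Like the paper, you diagonalize $\pi_\varepsilon$, $A$, $S$ and $Q_N$ simultaneously in the eigenbasis $(\phi_k)$ and read every hypothesis off explicit scalar multipliers: boundedness, dissipativity/skew-adjointness, $B_\varepsilon=0$, $\chi(\varepsilon)=\mathrm{O}(\varepsilon^{s-2})$, the uniform $Y$-bound using $s\geq 3$, and the tail estimate $\Vert Q_Ng-g\Vert_X\leq\lambda_{N+1}^{-1/2}\Vert g\Vert_Y$, before balancing with $\varepsilon_N=\lambda_N^{-1/2}$. Your energy argument in $Z$ for the ball condition on all of $[0,T]$, using the commutation of $S$ with the contractive/unitary flows, is just a more detailed version of the paper's ``$T'=T$ by linearity''.
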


\begin{proof}
Since $\pi_\varepsilon$, $A$ and $S$ are functions of $L$ in the spectral calculus, they commute pairwise and are simultaneously diagonalized by $(\phi_k)$. The multiplier of $A_\varepsilon$ is $a(\lambda_k)\mathbf 1_{\lambda_k\leq\varepsilon^{-2}}$, with $a(\lambda) = -\lambda$ or $-\mathrm{i}\lambda$, so $\Vert A_\varepsilon\Vert_{L(X)} = \sup_{\lambda_k\leq\varepsilon^{-2}}\vert a(\lambda_k)\vert\leq\varepsilon^{-2}$; dissipativity (respectively skew-adjointness) is read off the sign (respectively the purely imaginary value) of $a(\lambda_k)$, and generation follows from Remark \ref{rem_Aeps_semigroup_issue}, $A_\varepsilon$ being bounded. This gives (i), and (ii) is immediate from the modewise commutation. For (iii), since $(A - A_\varepsilon)y = a(L)(\mathrm{id} - \pi_\varepsilon)y$ and $\vert a(\lambda)\vert = \lambda$, one has, for $y\in Z = D(L^{s/2})$,
$$
\Vert (A - A_\varepsilon)y\Vert_X^2 = \sum_{\lambda_k>\varepsilon^{-2}}\lambda_k^2\,\vert\langle y,\phi_k\rangle\vert^2 \leq \varepsilon^{2(s-2)}\sum_{\lambda_k>\varepsilon^{-2}}\lambda_k^s\,\vert\langle y,\phi_k\rangle\vert^2 \leq \varepsilon^{2(s-2)}\Vert y\Vert_Z^2 ,
$$
using $\lambda_k^{2-s}\leq\varepsilon^{2(s-2)}$ for $\lambda_k>\varepsilon^{-2}$ and $s>2$; hence $\chi(\varepsilon) = \mathrm{O}(\varepsilon^{s-2})$. For the output bound, with $Y = D(L^{1/2})$ and $z\in B_Z(y^0,r)$, the contraction $\pi_\varepsilon$ commutes with $L$, so $\Vert A_\varepsilon z\Vert_Y = \Vert(\mathrm{id}+L)^{1/2}L\pi_\varepsilon z\Vert_X\leq\Vert(\mathrm{id}+L)^{1/2}Lz\Vert_X\leq C\Vert z\Vert_{D(L^{3/2})}\leq C\Vert z\Vert_Z$ as soon as $s\geq 3$, which gives $L_\varepsilon\leq L_0$ uniformly. Assumption \ref{H_Aepsilon} thus holds uniformly, and existence of $y_\varepsilon$ on $[0,T]$ follows from Proposition \ref{prop_yepsilon} (with $T' = T$ by linearity).

For the discretization, $Q_N$ is the $L^2$-orthogonal projector onto $V_N$, hence $\Vert Q_N\Vert_{L(X)} = \Vert Q_N\Vert_{L(Z)} = 1$ since $V_N$ is spanned by eigenfunctions; for $g\in Y = D(L^{1/2})$ one has 
$$
\Vert Q_N g - g\Vert_X^2 = \sum_{k>N}\vert\langle g,\phi_k\rangle\vert^2\leq\lambda_{N+1}^{-1}\sum_{k>N}\lambda_k\vert\langle g,\phi_k\rangle\vert^2\leq\lambda_{N+1}^{-1}\Vert g\Vert_Y^2 ,
$$
and similarly $\Vert Q_N y - y\Vert_X\leq\lambda_{N+1}^{-s/2}\Vert y\Vert_Z$. Since $Q_N$ is $L^2$-orthogonal, commutes with $S = S_\varepsilon$, and leaves $V_N$ invariant, Lemma \ref{lem_spectral_scheme_verification} of Appendix \ref{app_discretization_technical} yields \ref{H_discretization} and \ref{H_Aepsilon_N}. Theorem \ref{thm_approx_abstract} then applies, and, when $N$ is chosen at the end of a spectral cluster, the choice $\varepsilon_N = \lambda_N^{-1/2}$ gives $\pi_{\varepsilon_N} = Q_N$, so that $A_{\varepsilon_N}^N = Q_N A Q_N$ is exactly the spectral Galerkin compression; this balances the regularization error $\chi(\varepsilon_N) = \mathrm{O}(\lambda_N^{-(s-2)/2})$ against the discretization error $\mathrm{O}(\lambda_N^{-1/2})$, and since $s\geq 3$, the latter dominates, giving the announced rate. The Weyl asymptotics $\lambda_N\simeq c N^{2/n}$ for $L = -\Delta$ turns it into $\mathrm{O}(N^{-1/n})$.
\end{proof}

\begin{remark}\label{rem_spectral_rate}
The rate in Proposition \ref{prop_spectral_unconditional} is stated for the minimal output space $Y = D(L^{1/2})$, for which $\gamma = 1/n$, in agreement with Corollary \ref{cor_approx_PDE}. Choosing instead $Y = D(L^{m/2})$ with $2\leq m\leq s - 2$ (allowed since $s$ may be taken arbitrarily large) yields the faster rate $\mathrm{O}(\lambda_N^{-m/2}) = \mathrm{O}(N^{-m/n})$ for data of the corresponding smoothness, which is the usual gain of spectral discretizations on smooth data within the Sobolev scale. The price of the spectral construction, compared with the local mollifier interaction of Section \ref{subsubsec_boundary}, is that the interaction becomes global: the kernel of $A_\varepsilon$ is the spectrally truncated kernel of $A$, supported on all of $\Omega\times\Omega$. The heat and Schr\"odinger equations of Examples \ref{ex_heat} and \ref{ex_schrodinger} on $(0,1)$ with Dirichlet conditions thus provide boundary cases in which all the assumptions of Theorem \ref{thm_approx_abstract} hold and the conclusion is unconditional. The wave equation enters an analogous framework after replacing the pivot space by the energy space $X = D(L^{1/2})\times L^2$ and rewriting the equation in first-order form $y = (u,\partial_t u)$, with $L$ the Dirichlet Laplacian and $A$ skew-adjoint for the energy inner product.
\end{remark}

\section{Conclusion, discussion and perspectives}\label{sec_discussion}
The main quantitative results of the paper are contained in Theorem \ref{thm_approx_abstract} and, for the running PDE class, in Corollary \ref{cor_approx_PDE}. The purpose of the present section is to make explicit what these results mean conceptually, numerically, and structurally.
We first interpret the two-step route as a systematic way to build interacting approximations of quasilinear PDEs on bounded domains. We then discuss locality and sparsity, the balance between the regularization scale $\varepsilon$ and the discretization scale $N$, the scope of the strong well-posedness assumption \ref{H_Aepsilon_N}, and several natural directions for extension.

\subsection{From kernelization to an interacting system on \texorpdfstring{$V_N$}{VN}}\label{subsec_examples_interacting}

A key point of the two-step route developed in this paper is that, once the regularized operator $A_\varepsilon[t,z]$ admits a genuine kernel $\sigma_\varepsilon[t,z](x,x')$ (as in \eqref{def_A_eps_sigma_eps} in the running PDE example of Section \ref{sec_main_result}), every discretization based on a projector or quasi-interpolation operator $Q_N$ produces a finite-dimensional interacting system on $V_N$.
Indeed, after reconstruction, the quantity $Q_N(A_\varepsilon[t,z]z)$ depends only on finitely many sampled degrees of freedom and can therefore be encoded as an interaction law on $V_N$.
In this sense, the present framework systematically turns a regularized PDE into an \emph{interacting approximation system}, rather than merely into an abstract finite-dimensional ODE.

This point is particularly transparent in the running PDE example of Sections \ref{sec_general_class_quasilinear} and \ref{sec_main_result}.
The original differential operator has only a distributional Schwartz kernel supported on the diagonal. The boundary-compatible regularization by variable-step mollifiers replaces it with an explicit localized kernel $\sigma_\varepsilon[t,z]$, supported inside $\Omega\times\Omega$, and preserving the boundary traces of sufficiently regular fields.
The interacting structure therefore does not come from the discretization alone; it is already latent in the kernelized operator.

This provides a general procedure to construct interacting approximations of quasilinear PDEs on bounded domains with boundary conditions, together with a quantitative error estimate. In that sense, the present article is not merely about finite-dimensional approximation: it is about a systematic way to pass from quasilinear evolution equations to explicit interacting systems.

At a heuristic level, two emblematic examples may be kept in mind.
For a one-dimensional transport equation, the symmetric construction $A_\varepsilon=\mathcal H_\varepsilon^*A\mathcal H_\varepsilon$ with an even kernel yields a formally skew-symmetric interaction law; in symmetric or Galerkin realizations this gives a centered, energy-conserving stencil once $\varepsilon$ is matched with the meshsize (see Example \ref{ex_transport}). For a heat equation, the continuous quadratic form is dissipative, and in an $L^2$-Galerkin realization the associated matrix is symmetric negative semidefinite, resembling a graph Laplacian (see Example \ref{ex_heat}); for a general sampling-reconstruction pair, the coordinate matrix need not be symmetric, although the continuous regularized form remains dissipative.
These examples illustrate the two main features of the present framework: explicit interaction coefficients and a direct control of the locality pattern through the kernelization scale.

\subsection{Locality of the interactions and sparsity regimes}\label{subsec_examples_locality}
In the running PDE example, the kernel $\sigma_\varepsilon[t,z](x,x')$ is localized near the diagonal: it vanishes whenever $\Vert x-x'\Vert_{\R^n}$ is larger than a constant multiple of $\varepsilon$ (see Section \ref{sec_regularized_operator}).
As a consequence, if the sampling points or cells underlying the discretization are quasi-uniform\footnote{This means that their diameters are all comparable to a common meshsize $h_N$ and, in the Euclidean setting, their volumes are all comparable to $h_N^n$ uniformly in $N$, see for instance \cite[Chapter 4]{BrennerScott} in the finite element setting and \cite[Chapter 3]{EymardGallouetHerbin} in the finite volume setting.}
then each degree of freedom interacts only with those lying within distance $\mathrm{O}(\varepsilon)$.
The number of effective neighbors is therefore of order $\big(\frac{\varepsilon}{h_N}\big)^n$.

This has a clear numerical interpretation.
If $\varepsilon$ is chosen of the same order as $h_N$, then the interaction graph has uniformly bounded degree: the discrete operator is sparse and the cost per time step remains essentially linear in the number of degrees of freedom.
If, by contrast, $\varepsilon$ is much larger than $h_N$, then the interaction matrix becomes denser, which may improve robustness but increases the computational cost.

Compared with classical local discretizations such as finite differences or standard finite volumes, the present approach is usually not the most economical one if one only wants a numerical solution of the PDE.
Its gain lies elsewhere: it provides an explicit interacting representation of the regularized dynamics, compatible with the geometry of $\Omega$ and its boundary conditions.
This makes it conceptually closer to particle, blob, or interacting-agent descriptions (see for instance \cite{Chertock_HNA2017, CottetKoumoutsakos, HockneyEastwood, Monaghan1992, Wendland2005}), while still remaining connected to the original PDE through a rigorous error estimate.

\subsection{Scale balance suggested by the error estimate}\label{subsec_examples_balance}
Theorem \ref{thm_approx_abstract} shows that the total error is the sum of two independent mechanisms:
the regularization error, measured by $\chi(\varepsilon)$, and the discretization error, measured by $(1+L_\varepsilon)N^{-\gamma}$.

In the running PDE example of Sections \ref{sec_general_class_quasilinear} and \ref{sec_main_result}, one has
$\chi(\varepsilon)=\mathrm{O}(\varepsilon)$ and $L_\varepsilon\leq L_0$, so that the natural balance is simply
$\varepsilon_N\simeq N^{-\gamma}$ .
In the usual quasi-uniform sense described above, this means that $\varepsilon_N$ is of the same order as the meshsize $h_N$.
Thus the kernelization scale and the discretization scale are matched, and one obtains simultaneously a local interaction graph of bounded degree, an algebraic convergence rate in $N$, and a transparent interpretation of the approximation as a sparse interacting system.

More generally, when $L_\varepsilon$ grows as $\varepsilon\to 0$, the theorem makes explicit the trade-off between regularization and discretization.
This is one of the main conceptual outputs of the paper: the rate is not attached to a particular numerical method, but to the balance between a kernelization scale and a reconstruction scale.

\subsection{Why a two-step route?}\label{subsec_discussion_two_step}
The abstract framework separates two distinct mechanisms:
\begin{itemize}[parsep=0.7mm, itemsep=0.7mm, topsep=0.7mm]
\item The regularization step, indexed by $\varepsilon$, is designed to \emph{``kernelize"} the dynamics: it converts operators with distributional kernels and boundary constraints into operators that can be interpreted as integral interactions inside $\Omega$. On bounded domains, boundary compatibility is a structural constraint, not merely a technicality.
\item The discretization step, indexed by $N$, turns a regularized infinite-dimensional dynamics into a finite-dimensional interacting system. This step is deliberately flexible and covers classical discretizations (finite elements, finite volumes) as well as meshfree and blob-type reconstructions.
\end{itemize}
If one only aims at a finite-dimensional approximation of an evolution problem, direct discretizations of the unbounded operator $A$ are of course available and often preferable. The interest of the present route is that it produces, through kernelization, an explicit interaction mechanism inside $\Omega$ that preserves boundary traces. This is the key feature needed to interpret the approximation as an interacting system on a bounded domain with boundary conditions. Compared with deterministic particle methods for diffusion, where the diffusive mechanism is encoded in a Lagrangian transport velocity (see \cite{Russo_CPAM1990}), here the corresponding step is the boundary-compatible kernelization $A\mapsto A_\varepsilon=\mathcal{H}_\varepsilon^*A\mathcal{H}_\varepsilon$, which keeps the analytic structure operator-theoretic and produces an explicit kernel on $\Omega\times\Omega$.

This should also be compared with the observation of \cite{HochbruckPazurSchnaubelt_NM2018} that the operator-theoretic Kato framework (and its refinement \cite{Muller}) is less flexible for certain boundary value problems, especially in Maxwell equations with standard boundary conditions.
One motivation of the present two-step route is precisely to replace a distributional diagonal kernel by a boundary-compatible integral kernel supported inside $\Omega\times\Omega$.

\paragraph{When can one skip the regularization?}
If the original operator $A[t,z]$ already admits a discretization with good stability and consistency properties on the scale $(Z,X)$, one may set formally $A_\varepsilon=A$ and $\chi(\varepsilon)=0$, or interpret the analysis as a one-step argument with a single parameter $N$. This corresponds to classical Galerkin or finite element approximations. The price is that the discrete operator is then built directly from the differential operator, rather than from an explicit kernel supported in $\Omega\times\Omega$. In contrast, the PDE construction of Sections \ref{sec_general_class_quasilinear} and \ref{sec_main_result} provides a canonical kernelization on bounded domains, which can then be discretized by many different sampling-reconstruction choices.

\subsection{Discrete well-posedness versus a posteriori error estimates}\label{subsec_discussion_wellposedness}
Assumption \ref{H_Aepsilon_N} is a strong sufficient criterion ensuring that, for each $(\varepsilon,N)$, the discrete dynamics is well posed in $V_N$ and enjoys stability estimates uniform in $N$. It is natural in Galerkin or spectral settings, and also in discretizations equipped with an independent stability mechanism.
Typical examples include:
\begin{itemize}[parsep=0.7mm, itemsep=0.7mm, topsep=0.7mm]
\item orthogonal Galerkin approximations, where discrete dissipativity follows from the projector structure (see for instance \cite{Thomee});
\item monotone finite-volume schemes for conservation laws, where stability follows from discrete maximum principles (see \cite{LeVeque, GodlewskiRaviart, EymardGallouetHerbin});
\item explicit schemes under CFL conditions, where well-posedness is controlled by the time step and the mesh (see again \cite{LeVeque, GodlewskiRaviart}).
\end{itemize}
Under Remark \ref{rem_thm_approx_abstract}, Theorem \ref{thm_approx_abstract} itself is logically weaker, as it only requires the existence of a discrete trajectory staying in the relevant ball, and consistency of $Q_N$ on the output class $Y$.
This distinction may be important in practice: the well-posedness of the discrete scheme is often proved by arguments specific to the chosen discretization, whereas the error estimate itself only needs a stable lifted trajectory.

For wave-type problems with state-dependent positivity constraints, even discrete well-posedness may require stronger, mesh-dependent norms and a bootstrap argument on the numerical solution.
This is the strategy used, for instance, in \cite{Dorich_FoCM2025}, and in a different time-discretization setting in \cite{DorichHochbruck_SINUM2022}.
This observation supports the present separation between the strong sufficient criterion \ref{H_Aepsilon_N} and the logically weaker viewpoint emphasized in Remark \ref{rem_thm_approx_abstract}.

\subsection{On the intertwining assumption}\label{subsec_discussion_intertwining}
The intertwining assumption \ref{H_intertwining} encodes the propagation of $Z$-regularity along the flow and is standard in Kato's theory for quasilinear evolution equations. For the \emph{original} operator $A$, it amounts to the boundedness on $X$ of $B = [S,A] S^{-1}$, which holds in all the examples of Section \ref{subsec_examples}: it is the order-zero commutator for first-order operators, and it even vanishes for operators that are functions of the reference operator defining $S$ (such as the Dirichlet Laplacian). For the \emph{regularized} family $A_\varepsilon = \mathcal{H}_\varepsilon^* A \mathcal{H}_\varepsilon$, the relevant object is instead $[S, A_\varepsilon]$, whose delicate part is not $[S,A]$ but the commutator $[S, \mathcal{H}_\varepsilon]$ of $S$ with the variable-step mollifier; on a domain with boundary, this commutator is not uniformly bounded on $X$, which is why we keep \ref{H_intertwining} structural for the mollified family. It does become automatic whenever the regularization commutes with $S$, as in the two fully closed cases of Section \ref{subsubsec_closed} (periodic conditions, where $\mathcal{H}_\varepsilon$ is a Fourier multiplier, and orthogonal spectral truncation, where the regularizing projector commutes with $S$). Developing more systematic verification criteria for the mollified family, especially for higher-order operators with boundary conditions, is a natural direction for future work.

\subsection{Beyond fixed sampling grids}\label{subsec_discussion_moving_particles}
The discretization framework is formulated for fixed operators $(P_N,R_N)$. Extending it to time-dependent sampling (moving particles, adaptive grids) would require a refined analysis of time-dependent reconstruction errors and stability in the spirit of nonautonomous evolution systems. Such extensions would connect the present setting to genuinely Lagrangian particle methods and to adaptive meshfree approximations, see e.g. \cite{HockneyEastwood,Monaghan1992,Wendland2005} for classical paradigms.

\subsection{Beyond semigroups}
The semigroup property enters the paper only through the first approximation step, namely through Proposition \ref{prop_yepsilon}, where it is used to obtain the Duhamel formula and the quantitative control of $y_\varepsilon-y$.
If this estimate can be obtained by other means for a given problem, then the rest of the framework still applies.
For instance, on very restricted classes of data, backward heat evolutions may be handled by explicit spectral or analytic arguments, but such situations are strongly ill-posed in general and lie outside the robust semigroup setting considered here.
Likewise, some equations that are nonlinear in derivatives can be recast in a quasilinear form after an algebraic rewriting or an integration-by-parts argument, as in Burgers-type models. However, genuinely nonlinear dependence on higher-order derivatives usually falls outside the present abstract framework and would require a different structural analysis.

\paragraph{Acknowledgment.}
We are indebted to Claude Bardos, Julien Barr\'e, Arnaud Debussche, Nicolas Fournier, Isabelle Gallagher, Thierry Gallay, Alain Joye, Beno\^{\i}t Perthame and Laure Saint-Raymond for useful discussions.

\appendix

\section{Appendix: variable-step mollifiers}\label{app_convolution}
As in Sections \ref{sec_general_class_quasilinear} and \ref{sec_main_result}, let $\Omega$ be the compact closure of a bounded open subset of $\R^n$ with a Lipschitz boundary, endowed with the induced Euclidean distance, of Lebesgue volume $\vert\Omega\vert=1$. We denote by $\mathring{\Omega}$ the interior of $\Omega$.
The assumption $\vert\Omega\vert=1$ is not a limitation; it simplifies the writing by avoiding to have the factor $\vert\Omega\vert$ in the estimates.

In this appendix, we recall the concept of variable-step mollifier, developed by Burenkov in \cite{Burenkov_1982, Burenkov_1998} and studied recently in \cite{Hintermuller}. The main feature of this modification of the usual convolution is to provide an operator that smoothens functions $g\in L^1(\Omega)$ and preserves the values of $g$ and of its derivatives at the boundary when $g$ is sufficiently regular.

\subsection{Setting and definition}\label{app_convolution_setting}
Let $q\in\N^*$ be fixed.
Let $\eta\in \mathscr{C}^\infty_c(\R^n)$ be a nonnegative smooth real-valued function on $\R^n$, supported in the unit Euclidean ball $\mathsf{B}_1=B_{\R^n}(0,1)$ of $\R^n$, and such that $\int_{\R^n}\eta(x)\, dx=1$.%
\footnote{For example, 
$$
\eta(x) = \left\{\begin{array}{ll} 
c\, e^{1/(\Vert x\Vert^2-1)} & \textrm{if}\ \Vert x\Vert < 1, \\
0 & \textrm{otherwise} ,
\end{array}\right.
$$
where $c>0$ is a normalization constant. 
}
For any $\varepsilon\in(0,1]$, we define $\eta_\varepsilon\in \mathscr{C}^\infty_c(\R^n)$  by
$$
\eta_\varepsilon(x) = \frac{1}{\varepsilon^n}\eta\left(\frac{x}{\varepsilon}\right) \qquad\forall x\in\R^n .
$$
Let $\rho\in \mathscr{C}^q(\Omega)$ be a nonnegative real-valued function on $\Omega$ such that $\rho>0$ in $\mathring{\Omega}$ and
\begin{equation}\label{def_rho}
\rho(x)\leq \mathrm{d}_{\R^n}(x,\partial\Omega) \qquad \forall x\in\Omega ,
\end{equation}
and such that all derivatives of $\rho$ of order $\leq q-1$ vanish on $\partial\Omega$,%
\footnote{For example, one can take $\rho(x) = C\, \mathrm{d}_{\R^n}(\cdot,\partial\Omega)^q$ for some adequate $C\in(0,1]$ when $\partial\Omega$ is sufficiently regular, and when $\partial\Omega$ is only Lipschitz one can take $\rho(x)=C\delta(x)^q$ where $\delta\in \mathscr{C}^q(\Omega)$ is a smooth regularized distance satisfying $c_1\,\mathrm{d}_{\R^n}(x,\partial\Omega)\leq \delta(x)\leq c_2\,\mathrm{d}_{\R^n}(x,\partial\Omega)$ on $\Omega$. 
}
When $\partial\Omega=\emptyset$, one may simply take $\rho(x)=1$. 
We set $L_\rho=\Vert\nabla\rho\Vert_{L^\infty}$. 

\begin{definition}\label{def_H_eps}
For any $\varepsilon\in(0,1]$, any $g\in L^1(\Omega)$ and any $x\in\mathring{\Omega}$, we define
\begin{align}
\mathcal{H}_\varepsilon g(x) 
&= \int_{B(0,\varepsilon)} \eta_\varepsilon(x') g(x-\rho(x)x')\, dx' 
= \int_{\mathsf{B}_1} \eta(x') g(x-\varepsilon\rho(x)x')\, dx'  \label{def_Teps1} \\
&= \int_{B(x,\varepsilon\rho(x))} H_\varepsilon(x,x') g(x')\, dx' , \qquad H_\varepsilon(x,x') = \frac{1}{\rho(x)^n}\eta_\varepsilon\!\left( \frac{x-x'}{\rho(x)} \right)  \label{def_Teps2} .
\end{align}
\end{definition}

In \eqref{def_Teps1} and \eqref{def_Teps2}, the domain of integration can be replaced by $\Omega$ because $x-\rho(x)x'\in B(x,\rho(x))\subset\Omega$ (by using \eqref{def_rho}). 

Clearly, $\mathcal{H}_\varepsilon g\in\mathscr{C}^\infty(\mathring{\Omega})$ because $H_\varepsilon$ is smooth with respect to $x\in\mathring{\Omega}$, i.e., \eqref{def_Teps2} defines a linear operator $\mathcal{H}_\varepsilon:L^1(\Omega)\rightarrow\mathscr{C}^\infty(\mathring{\Omega})$ of kernel $H_\varepsilon$. Note that $\mathcal{H}_\varepsilon$ is not symmetric in general. 

When $g\in L^1(\Omega,\R^d)$ for $d\in\N^*$, one defines $\mathcal{H}_\varepsilon g$ by applying $\mathcal{H}_\varepsilon$ to each component of $g$, so we next keep $g$ with values in $\R$ to describe properties of $\mathcal{H}_\varepsilon$.


\begin{lemma}\label{lem_preserv_value_boundary}
If $g\in\mathscr{C}^0(\Omega)$ then the definition \eqref{def_Teps1} can be extended to $x\in\partial\Omega$, and we have $\mathcal{H}_\varepsilon g\in\mathscr{C}^0(\Omega)$ and $\mathcal{H}_\varepsilon g(x) = g(x)$ for every $x\in\partial\Omega$.
\end{lemma}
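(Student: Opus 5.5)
The plan is to work directly with the second expression in \eqref{def_Teps1}, namely $\mathcal{H}_\varepsilon g(x)=\int_{\mathsf{B}_1}\eta(x')\,g(x-\varepsilon\rho(x)x')\,dx'$. This formula makes sense for every $x\in\Omega$, including $x\in\partial\Omega$, as soon as $g$ is defined pointwise on $\Omega$. That is the case for $g\in\mathscr{C}^0(\Omega)$, because $\Omega$ is compact and $g$ is defined on the closure. We therefore take this formula as the extension of $\mathcal{H}_\varepsilon g$ to all of $\Omega$.

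First I check that the integrand is well defined. For $x\in\mathring{\Omega}$ and $x'\in\mathsf{B}_1$, the point $x-\varepsilon\rho(x)x'$ lies in $B(x,\rho(x))\subset\Omega$, by \eqref{def_rho} and $\varepsilon\leq1$. For $x\in\partial\Omega$ we have $\rho(x)\leq \mathrm{d}(x,\partial\Omega)=0$, hence $\rho(x)=0$ since $\rho\geq0$, and the argument reduces to $x\in\Omega$. The boundary value is then immediate: for $x\in\partial\Omega$,
$$
\mathcal{H}_\varepsilon g(x)=\int_{\mathsf{B}_1}\eta(x')g(x)\,dx'=g(x),
$$
using $\int\eta=1$.

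Next comes continuity of $\mathcal{H}_\varepsilon g$ on $\Omega$, which I expect to be the only point needing care, and even that is mild. Take $x_k\to x$ in $\Omega$. Since $\rho\in\mathscr{C}^q(\Omega)\subset\mathscr{C}^0(\Omega)$, we get $x_k-\varepsilon\rho(x_k)x'\to x-\varepsilon\rho(x)x'$ for each $x'\in\mathsf{B}_1$, and all these points lie in $\Omega$. Continuity of $g$ gives pointwise convergence of the integrands. Moreover $|\eta(x')g(\cdot)|\leq \eta(x')\Vert g\Vert_{L^\infty(\Omega)}$, which is integrable, with $\Vert g\Vert_{L^\infty(\Omega)}<\infty$ by compactness. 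Dominated convergence then yields $\mathcal{H}_\varepsilon g(x_k)\to\mathcal{H}_\varepsilon g(x)$.

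A slicker version of the same step: $g$ is uniformly continuous on the compact set $\Omega$, with modulus $\omega_g$. The map $x\mapsto x-\varepsilon\rho(x)x'$ is continuous, uniformly in $x'\in\mathsf{B}_1$, so $|\mathcal{H}_\varepsilon g(x)-\mathcal{H}_\varepsilon g(y)|\leq \omega_g(|x-y|+\varepsilon|\rho(x)-\rho(y)|)$. This gives even uniform continuity.

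Finally I note consistency with Definition \ref{def_H_eps}. On $\mathring{\Omega}$, the extended formula coincides with \eqref{def_Teps1}–\eqref{def_Teps2} via the change of variables $x''=x-\varepsilon\rho(x)x'$ (valid since $\rho(x)>0$). So the extension really is an extension, and $\mathcal{H}_\varepsilon g\in\mathscr{C}^0(\Omega)$ with $\mathcal{H}_\varepsilon g=g$ on $\partial\Omega$.
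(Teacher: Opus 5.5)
Your proof is correct and follows the paper's argument: at a boundary point $\rho(x)=0$, so the integrand in \eqref{def_Teps1} collapses to $g(x)$, and $\int\eta=1$ then gives $\mathcal{H}_\varepsilon g(x)=g(x)$. Your dominated-convergence (or uniform-continuity) argument for $\mathcal{H}_\varepsilon g\in\mathscr{C}^0(\Omega)$ makes explicit a step that the paper leaves implicit.
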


\begin{proof}
Indeed, if $x\in\partial\Omega$ then $\rho(x)=0$ and thus, by \eqref{def_Teps1}, $\mathcal{H}_\varepsilon g(x) = \int_{B(0,\varepsilon)} \eta_\varepsilon(x') g(x)\, dx' = \big(\int_{\R^n}\eta(x')\, dx' \big) g(x) = g(x)$.
\end{proof}

When $g$ only belongs to $L^1(\Omega)$, no boundary value is canonically defined and the identity of the above lemma is not asserted. We will generalize Lemma \ref{lem_preserv_value_boundary} further (see Corollary \ref{cor_preserv_deriv_boundary}).

\subsection{\texorpdfstring{$L^r$}{Lr} bounds}\label{app_Lr-bounds}

\begin{lemma}\label{lem_Lr-bounds}
For every $r\in[1,+\infty]$ and every $\varepsilon\in(0,\frac{1}{1+L_\rho}]$, the operator $\mathcal{H}_\varepsilon$ extends boundedly to $L(L^r(\Omega))$, 
and
\begin{equation}\label{eq:Lq-bound-Schur}
\Vert \mathcal{H}_\varepsilon\Vert_{L(L^r(\Omega))} \leq \left(\Vert\eta\Vert_{L^\infty}\,\vert \mathsf{B}_1\vert\,\left(\frac{1+\varepsilon L_\rho}{1-\varepsilon L_\rho}\right)^n\right)^{\frac{1}{r}},
\end{equation}
with the convention $1/r=0$ when $r=+\infty$.
As a consequence, assuming moreover that $\varepsilon L_\rho\leq \frac{1}{2}$, we have 
\begin{equation}\label{eq:Lq-bound-Schur_not_eps}
\Vert \mathcal{H}_\varepsilon\Vert_{L(L^r(\Omega))}\leq \left(\Vert\eta\Vert_{L^\infty}\,\vert \mathsf{B}_1\vert\,3^n \right)^{1/r} .
\end{equation}
\end{lemma}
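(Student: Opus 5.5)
The plan is a Schur test (Appendix \ref{app_Schur}) applied to the kernel $H_\varepsilon(x,x')=\rho(x)^{-n}\eta_\varepsilon\big((x-x')/\rho(x)\big)=(\varepsilon\rho(x))^{-n}\eta\big((x-x')/(\varepsilon\rho(x))\big)$, which is nonnegative. We need a row bound $\sup_x\int_\Omega H_\varepsilon(x,x')\,dx'\leq C_\infty$ and a column bound $\sup_{x'}\int_\Omega H_\varepsilon(x,x')\,dx\leq C_1$. Riesz–Thorin (or the weighted Schur test) then gives $\Vert\mathcal{H}_\varepsilon\Vert_{L(L^r)}\leq C_\infty^{1-1/r}C_1^{1/r}$. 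The target bound suggests $C_\infty=1$, and that $C_1$ equals the quantity $\Vert\eta\Vert_{L^\infty}\vert\mathsf{B}_1\vert\big(\frac{1+\varepsilon L_\rho}{1-\varepsilon L_\rho}\big)^n$.

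\textbf{Row bound.} By \eqref{def_Teps1}, for fixed $x$ the substitution $x'=x-\varepsilon\rho(x)w$ gives $\int H_\varepsilon(x,x')\,dx'=\int\eta=1$. Since $\eta\geq0$, this yields $\Vert\mathcal{H}_\varepsilon\Vert_{L(L^\infty)}\leq1$, which is the $r=\infty$ case. It is consistent with the stated formula, since the exponent $1/r=0$ there.

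\textbf{Column bound (main step).} Fix $x'$. The integrand is nonzero only if $\Vert x-x'\Vert\leq\varepsilon\rho(x)$. Since $\rho$ is $L_\rho$-Lipschitz, this gives $\rho(x)\leq\rho(x')+L_\rho\varepsilon\rho(x)$, that is $\rho(x)\leq\rho(x')/(1-\varepsilon L_\rho)$. Symmetrically, $\rho(x)\geq\rho(x')-\varepsilon L_\rho\rho(x)$, that is $\rho(x)\geq\rho(x')/(1+\varepsilon L_\rho)$. Both use $\varepsilon L_\rho<1$, which holds on the stated range of $\varepsilon$. The support in $x$ is therefore contained in the ball $B\big(x',\varepsilon\rho(x')/(1-\varepsilon L_\rho)\big)$, of volume $\vert\mathsf{B}_1\vert\varepsilon^n\rho(x')^n(1-\varepsilon L_\rho)^{-n}$. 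On this set the kernel is at most $\Vert\eta\Vert_{L^\infty}(\varepsilon\rho(x))^{-n}\leq\Vert\eta\Vert_{L^\infty}\varepsilon^{-n}\rho(x')^{-n}(1+\varepsilon L_\rho)^n$. Multiplying the two bounds gives exactly $C_1$. If $x'\in\partial\Omega$ then $\rho(x')=0$, so the set is empty up to measure zero; this is harmless.

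\textbf{Conclusion.} Interpolating between the two bounds gives $\Vert\mathcal{H}_\varepsilon\Vert_{L(L^r)}\leq C_1^{1/r}$, which is the bound \eqref{eq:Lq-bound-Schur}. Extension from a dense class follows by boundedness. Finally, when $\varepsilon L_\rho\leq1/2$ we have $(1+\varepsilon L_\rho)/(1-\varepsilon L_\rho)\leq3$, which gives \eqref{eq:Lq-bound-Schur_not_eps}.

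The main obstacle is the column bound. The two-sided comparison of $\rho(x)$ with $\rho(x')$ is the only place where the Lipschitz constant $L_\rho$ and the smallness of $\varepsilon$ enter, and it must be applied in both directions: once to bound the support, and once to bound the amplitude of the kernel.
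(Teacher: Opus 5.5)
Your proposal is correct and matches the paper's proof: the paper applies the Schur test of Appendix~\ref{app_Schur} with row integral exactly $1$. For the column integral, it uses the two-sided comparison $\rho(x')/(1+\varepsilon L_\rho)\leq\rho(x)\leq\rho(x')/(1-\varepsilon L_\rho)$: one direction bounds the support radius and the other bounds the amplitude. This gives the same constant as yours.
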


\begin{proof}
Noting that $H_\varepsilon\geq 0$, the general Schur test recalled in Lemma \ref{lem:Schur-general} (Appendix \ref{app_Schur}) gives
\begin{equation*}
\Vert \mathcal{H}_\varepsilon\Vert_{L(L^r(\Omega))} \leq 
\left(\sup_{x}\int_\Omega H_\varepsilon(x,x')\,dx'\right)^{1-\frac{1}{r}}
\left(\sup_{x'}\int_\Omega H_\varepsilon(x,x')\,dx\right)^{\frac{1}{r}}.
\end{equation*}
The first term at the right-hand side is equal to $1$ because $\int_\Omega H_\varepsilon(x,x')\,dx'=1$ for any $x$.
To estimate the second term, we first note that, for any fixed $x'\in\Omega$, if $H_\varepsilon(x,x')\neq 0$ then $\Vert x-x'\Vert_{\R^n}\leq \varepsilon \rho(x)$ and
\begin{equation}\label{eq:rho-comparability}
\frac{1}{1+\varepsilon L_\rho}\,\rho(x') \leq \rho(x) \leq \frac{1}{1-\varepsilon L_\rho}\,\rho(x') .
\end{equation}
(Note that $\varepsilon L_\rho<\varepsilon(1+L_\rho)\leq 1$ by assumption). 
Indeed, by Lipschitz continuity of $\rho$, we have $\vert\rho(x)-\rho(x')\vert \leq L_\rho\Vert x-x'\Vert_{\R^n} \leq \varepsilon L_\rho\,\rho(x)$, hence $\rho(x')\leq \rho(x)+\varepsilon L_\rho\,\rho(x)=(1+\varepsilon L_\rho)\rho(x)$ and $\rho(x')\geq \rho(x)-\varepsilon L_\rho\,\rho(x)=(1-\varepsilon L_\rho)\rho(x)$, and \eqref{eq:rho-comparability} follows. 
In particular, using that $\Vert x-x'\Vert_{\R^n}\leq \varepsilon\rho(x)\leq \varepsilon\rho(x')/(1-\varepsilon L_\rho)$, it follows that
$$
\{x\in\Omega\ \mid\ H_\varepsilon(x,x')\neq 0\}\subset B\Big(x',\frac{\varepsilon\,\rho(x')}{1-\varepsilon L_\rho}\Big) \subset \Omega ,
$$
where, to obtain the latter inclusion, we have used that $\frac{\varepsilon}{1-\varepsilon L_\rho}\leq 1$ because $\varepsilon\leq\frac{1}{1+L_\rho}$.
Hence, using that $\rho(x)\geq \rho(x')/(1+\varepsilon L_\rho)$ by \eqref{eq:rho-comparability}, we get
\begin{multline*}
\int_\Omega H_\varepsilon(x,x')\,dx
\leq \Vert\eta\Vert_{L^\infty} \int_{B\big(x',\frac{\varepsilon\rho(x')}{1-\varepsilon L_\rho}\big)} \frac{1}{\big(\varepsilon\rho(x)\big)^n} \,dx \\
\leq \Vert\eta\Vert_{L^\infty}\,\frac{(1+\varepsilon L_\rho)^n}{\big(\varepsilon\rho(x')\big)^n}\,\left\vert B\big(x',\tfrac{\varepsilon\rho(x')}{1-\varepsilon L_\rho}\big)\right\vert
= \Vert\eta\Vert_{L^\infty}\,\vert \mathsf{B}_1\vert\,\left(\frac{1+\varepsilon L_\rho}{1-\varepsilon L_\rho}\right)^n.
\end{multline*}
The estimate \eqref{eq:Lq-bound-Schur} follows.
To obtain \eqref{eq:Lq-bound-Schur_not_eps} under the additional assumption $\varepsilon L_\rho\leq \frac{1}{2}$, it suffices to note that $\big(\frac{1+\varepsilon L_\rho}{1-\varepsilon L_\rho}\big)^n\leq 3^n$.
\end{proof}

\begin{remark}
The fact that $\mathcal{H}_\varepsilon\in L(L^r(\Omega))$ is proved in \cite{Hintermuller} by using the Hardy-Littlewood maximal inequality and the Marcinkiewicz theorem. Our proof above is much simpler.
\end{remark}

\subsection{Adjoint in \texorpdfstring{$L^2(\Omega)$}{L2(Omega)}}\label{app_adjoint_H_eps}
As a particular case of Lemma \ref{lem_Lr-bounds}, $\mathcal{H}_\varepsilon\in L(L^2(\Omega))$ for any $\varepsilon\in(0,\frac{1}{1+L_\rho}]$.
The lemma below then follows from \eqref{def_Teps2}.

\begin{lemma}\label{lem_adjoint_H_eps}
For any $\varepsilon\in(0,\frac{1}{1+L_\rho}]$, the adjoint of $\mathcal{H}_\varepsilon$ in $L^2(\Omega)$ is given by
$$
\mathcal{H}_\varepsilon^*g(x) = \int_\Omega H_\varepsilon(x',x) g(x')\, dx' 
=\int_\Omega \frac{1}{\rho(x')^n} \eta_\varepsilon\!\left(\frac{x'-x}{\rho(x')}\right) g(x')\,dx'
\qquad \forall g\in L^2(\Omega). 
$$
\end{lemma}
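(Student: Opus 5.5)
The plan is a direct duality computation based on the kernel representation \eqref{def_Teps2}. Fix $\varepsilon\in(0,\frac{1}{1+L_\rho}]$. By Lemma \ref{lem_Lr-bounds} with $r=2$, $\mathcal{H}_\varepsilon$ extends to a bounded operator on $L^2(\Omega)$. Hence its Hilbert adjoint $\mathcal{H}_\varepsilon^*\in L(L^2(\Omega))$ exists and is characterized by $\langle \mathcal{H}_\varepsilon f,g\rangle_{L^2}=\langle f,\mathcal{H}_\varepsilon^* g\rangle_{L^2}$ for all $f,g\in L^2(\Omega)$. It therefore suffices to identify $\mathcal{H}_\varepsilon^*$ on all of $L^2$ with the integral operator $K g(x)=\int_\Omega H_\varepsilon(x',x)g(x')\,dx'$.

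First, I would check that $K$ is itself bounded on $L^2(\Omega)$. Its kernel is $(x,x')\mapsto H_\varepsilon(x',x)$, i.e.\ the transpose of $H_\varepsilon$. The two Schur quantities $\sup_x\int_\Omega H_\varepsilon(x,x')\,dx'=1$ and $\sup_{x'}\int_\Omega H_\varepsilon(x,x')\,dx\le \Vert\eta\Vert_{L^\infty}\vert\mathsf{B}_1\vert(\frac{1+\varepsilon L_\rho}{1-\varepsilon L_\rho})^n$ were both computed in the proof of Lemma \ref{lem_Lr-bounds}. They are simply exchanged for the transposed kernel. So Lemma \ref{lem:Schur-general} with $r=2$ shows that $K\in L(L^2(\Omega))$, and in particular the defining integral converges absolutely for a.e.\ $x$.

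Next, for $f,g\in L^2(\Omega)$, I would write
$$\langle \mathcal{H}_\varepsilon f,g\rangle=\int_\Omega\int_\Omega H_\varepsilon(x,x')f(x')g(x)\,dx'\,dx$$
and interchange the order of integration by Fubini--Tonelli. The justification is that $H_\varepsilon\ge0$, so by Tonelli
$$\int\!\!\int H_\varepsilon(x,x')\vert f(x')\vert\vert g(x)\vert\,dx'\,dx=\langle \mathcal{H}_\varepsilon\vert f\vert,\vert g\vert\rangle\le\Vert\mathcal{H}_\varepsilon\Vert\,\Vert f\Vert\,\Vert g\Vert<\infty.$$
The interchange then gives $\langle f,Kg\rangle$. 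Since this holds for all $f$, we get $\mathcal{H}_\varepsilon^*g=Kg$ a.e.

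The second expression in the statement follows by substituting $H_\varepsilon(x',x)=\rho(x')^{-n}\eta_\varepsilon((x'-x)/\rho(x'))$ from \eqref{def_Teps2}. The boundary set where $\rho=0$ contributes nothing, since $\partial\Omega$ has Lebesgue measure zero for a Lipschitz domain.

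The only point needing care is the absolute integrability behind Fubini. It is handled by positivity of the kernel together with the $L^2$ bound, so I do not expect a genuine obstacle.
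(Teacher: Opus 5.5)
Your proposal is correct and follows the same route as the paper. The paper simply notes that $\mathcal{H}_\varepsilon\in L(L^2(\Omega))$ by Lemma \ref{lem_Lr-bounds} and reads off the adjoint as the transposed kernel from \eqref{def_Teps2}. Your Tonelli justification, together with the Schur bound guaranteeing $Kg\in L^2(\Omega)$ (which is what lets you conclude $\mathcal{H}_\varepsilon^*g=Kg$ from the duality identity), makes explicit the details the paper leaves implicit.
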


\begin{lemma}\label{lem_adjoint_Lip}
Assume $q\geq 2$, so that $\rho\in\mathscr{C}^2(\Omega)$. For any $\varepsilon\in(0,\inf(\frac{1}{1+L_\rho},\frac{1}{2L_\rho})]$, the adjoint operator $\mathcal{H}_\varepsilon^*$ maps $W^{1,\infty}(\Omega)$ into itself. Moreover, there exists $C>0$, depending only on $\eta$ and on $\rho$ through $\Vert \nabla\rho\Vert_{L^\infty}$ and $\Vert D^2\rho\Vert_{L^\infty}$, such that
\begin{equation}\label{Hepsstar_W1infty}
\Vert \mathcal{H}_\varepsilon^* g\Vert_{W^{1,\infty}}\leq C \Vert g\Vert_{W^{1,\infty}}
\qquad
\forall g\in W^{1,\infty}(\Omega).
\end{equation}
\end{lemma}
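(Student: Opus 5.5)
The plan is to handle the $L^\infty$ part and the gradient part of the $W^{1,\infty}$ norm separately. We use the explicit formula of Lemma \ref{lem_adjoint_H_eps}, rewritten after the change of variables $x'=x+\varepsilon\rho(x')w$ (or directly in the original variables).

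\textbf{$L^\infty$ bound.} This is immediate from Lemma \ref{lem_Lr-bounds}. The Schur computation there shows that $\sup_{x}\int_\Omega H_\varepsilon(x',x)\,dx'\leq \Vert\eta\Vert_{L^\infty}\vert\mathsf{B}_1\vert 3^n$ when $\varepsilon L_\rho\leq\frac12$. Hence $\Vert\mathcal{H}_\varepsilon^*g\Vert_{L^\infty}\leq C\Vert g\Vert_{L^\infty}$.

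\textbf{Gradient bound.} We avoid differentiating in $x$ the singular factor $\rho(x')^{-n}\eta_\varepsilon((x'-x)/\rho(x'))$, since that would produce $1/(\varepsilon\rho)$. Instead we transfer the derivative onto $g$ by integration by parts, using the identity $\partial_{x}\big[\eta_\varepsilon((x'-x)/\rho(x'))\big]=-\partial_{x'}$-part plus commutator terms. Concretely, set $\Phi(x,x')=\rho(x')^{-n}\eta_\varepsilon\big((x'-x)/\rho(x')\big)$. Then
\[
\nabla_x\Phi=-\nabla_{x'}\Phi+E(x,x'),
\]
where $E$ collects the terms coming from differentiating $\rho(x')$. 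Each such term carries a factor $\nabla\rho(x')$ times either $n\rho(x')^{-1}\Phi$ or $\rho(x')^{-n-1}(\nabla\eta_\varepsilon)(\cdot)\otimes (x'-x)/\rho(x')$. Since $\vert x'-x\vert\leq\varepsilon\rho(x')$, one has $\vert(\nabla\eta_\varepsilon)(\cdot)\vert\cdot\vert x'-x\vert/\rho(x')\lesssim \varepsilon^{-n}\vert\nabla\eta\vert$. This, however, still leaves a factor $\rho(x')^{-1}$ multiplying $\nabla\rho(x')$, which is not bounded. This is the delicate step.

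To cure it, we use the structure of $E$. After the substitution $x'=x+\varepsilon\rho(x')w$, the terms in $E$ combine into a divergence-type expression whose integral against a constant vanishes. This is because $\int\Phi(x,x')\,dx'$ is not identically $1$, but its derivative in $x$ is controlled. We therefore split $g(x')=g(x)+(g(x')-g(x))$. For the constant part, we need $\nabla_x\int\Phi\,dx'$ bounded; computing via the change of variables, $\int\Phi\,dx'=\int\eta(w)\,J(x,w)^{-1}\,dw$-type expressions are smooth in $x$ with derivatives involving $D^2\rho$ and $\nabla\rho$ only, which is where $q\geq2$ enters. For the difference part, $\vert g(x')-g(x)\vert\leq\Vert\nabla g\Vert_{L^\infty}\varepsilon\rho(x')$, which exactly absorbs the dangerous factor $\rho(x')^{-1}$ and even an $\varepsilon^{-1}$. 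So one can simply differentiate $\Phi$ directly in $x$ on this part, getting $\vert\nabla_x\Phi\vert\lesssim(\varepsilon\rho(x'))^{-1}\varepsilon^{-n}\rho(x')^{-n}$ on the support. The Schur-type bound on the support volume, using \eqref{eq:rho-comparability}, then gives $\int\vert\nabla_x\Phi\vert\,\vert g(x')-g(x)\vert\,dx'\leq C\Vert\nabla g\Vert_{L^\infty}$.

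This last route makes the integration by parts unnecessary. The main obstacle is thus the constant term $g(x)\nabla_x\int_\Omega\Phi(x,x')\,dx'$. We plan to compute it via the implicit change of variables $x'\mapsto w=(x'-x)/(\varepsilon\rho(x'))$. Its Jacobian is $\varepsilon^n\rho(x')^n\det(I+\ldots)^{-1}$, with the correction involving $\varepsilon w\otimes\nabla\rho$; it is invertible because $\varepsilon L_\rho\leq\frac12$. The aim is to show that the resulting function of $x$ has gradient bounded by $C(\Vert\nabla\rho\Vert_\infty,\Vert D^2\rho\Vert_\infty)$ uniformly in $\varepsilon$. Finally, combining the two parts with the $L^\infty$ bound gives \eqref{Hepsstar_W1infty}. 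Near $\partial\Omega$ the support of $\Phi(x,\cdot)$ shrinks, but all bounds are uniform, so $\mathcal{H}_\varepsilon^*g$ is Lipschitz on all of $\Omega$.
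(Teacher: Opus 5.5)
Your argument is correct in outline, but it takes a different route from the paper.

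\textbf{The paper's route.} The paper changes variables once, in the whole operator. It shows that $\Phi_z(x)=x-\varepsilon\rho(x)z$ is a bi-Lipschitz homeomorphism of $\Omega$ with $\Vert D\Phi_z^{-1}\Vert\leq 2$: injectivity comes from $\Phi_z$ being a $\frac12$-Lipschitz perturbation of the identity, surjectivity from a degree argument. It then writes $\mathcal{H}_\varepsilon^*g(y)=\int_{\mathsf{B}_1}\eta(z)\,g(\Phi_z^{-1}(y))\,J_z(y)\,dz$, an average of weighted composition operators. The Lipschitz bound follows directly from those of $\Phi_z^{-1}$ and $J_z$, and $J_z$ is where $D^2\rho$ enters.

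\textbf{Your route.} You keep the kernel form and use a cancellation argument. You split $g(x')=g(x)+(g(x')-g(x))$. The singular factor $(\varepsilon\rho(x'))^{-1}$ in $\nabla_x\Phi$ is then absorbed by $\vert g(x')-g(x)\vert\leq\Vert\nabla g\Vert_{L^\infty}\varepsilon\rho(x')$. This is legitimate because $[x,x']\subset B(x',\rho(x'))\subset\Omega$. Everything then reduces to bounding $\nabla m$, where $m=\mathcal{H}_\varepsilon^*1$.

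That last step is only announced, but it works. Your substitution $w=(x'-x)/(\varepsilon\rho(x'))$ has inverse $w\mapsto X(x,w)=\Phi_w^{-1}(x)$, and
\[
m(x)=\int_{\mathsf{B}_1}\frac{\eta(w)\,dw}{1-\varepsilon\langle w,\nabla\rho(X(x,w))\rangle},
\qquad
D_xX=\big(\mathrm{id}-\varepsilon\,w\otimes\nabla\rho(X)\big)^{-1},
\]
whence $\vert\nabla m\vert\leq C\varepsilon\Vert D^2\rho\Vert_{L^\infty}$.

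\textbf{What you must add.} The substitution needs more than an invertible Jacobian. You need $x'\mapsto w$ to be a bijection from the support of $\Phi(x,\cdot)$ onto $\mathsf{B}_1$. That is exactly the global injectivity and surjectivity of $\Phi_w$ that the paper proves.

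\textbf{Comparison.} Once that bijection is available, applying it to the whole operator, as the paper does, is shorter. It also gives a constant that does not depend on $\Vert\nabla\eta\Vert_{L^\infty}$. Your version pays that dependence, but it isolates the only geometric quantity, $\nabla(\mathcal{H}_\varepsilon^*1)$. It would also apply to any kernel with the same size and support bounds.

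\textbf{Minor point.} In the integration-by-parts paragraph you abandon, $\int_\Omega E(x,x')\,dx'$ equals $\nabla m(x)$, not $0$. This is harmless, since you do not use that route.
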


\begin{proof}
Fix $z\in \mathsf{B}_1$ and define $\Phi_z(x)=x-\varepsilon \rho(x) z$ for every $x\in\Omega$.
Since $D\Phi_z(x)=\mathrm{id}-\varepsilon z\otimes \nabla\rho(x)$, one has
$\Vert D\Phi_z(x)-\mathrm{id}\Vert\leq \varepsilon \Vert \nabla\rho\Vert_{L^\infty}\Vert z\Vert\leq \varepsilon L_\rho\leq \frac{1}{2}$.
Hence $\Phi_z$ is locally bi-Lipschitz on $\Omega$, with
$\Vert D\Phi_z^{-1}\Vert_{L^\infty}\leq (1-\varepsilon L_\rho)^{-1}\leq 2$.
Moreover $\Phi_z$ is a bi-Lipschitz homeomorphism of $\Omega$ onto $\Omega$: since $\rho$ vanishes on $\partial\Omega$, one has $\Phi_z=\mathrm{id}$ on $\partial\Omega$, and $\Phi_z$ is a $\frac{1}{2}$-Lipschitz perturbation of the identity, so it is injective and proper; being the identity on the boundary, its degree equals that of the identity, hence it is also surjective onto $\Omega$. Let $J_z(y)=\det(D\Phi_z^{-1}(y))$.
A change of variables in the expression of $(\mathcal{H}_\varepsilon^* g)(x)$ shows that, for almost every $y\in\Omega$,
\begin{equation}\label{Hepsstar_changevar}
(\mathcal{H}_\varepsilon^* g)(y)
=
\int_{\mathsf{B}_1} \eta(z) g\big(\Phi_z^{-1}(y)\big) J_z(y)\,dz.
\end{equation}
Since $\Vert J_z\Vert_{L^\infty}$ is uniformly bounded, \eqref{Hepsstar_changevar} immediately gives
$\Vert \mathcal{H}_\varepsilon^* g\Vert_{L^\infty}\leq C \Vert g\Vert_{L^\infty}$.
Let $y_1,y_2\in\Omega$. Using \eqref{Hepsstar_changevar}, we write
\begin{multline*}
\vert \mathcal{H}_\varepsilon^* g(y_1)-\mathcal{H}_\varepsilon^* g(y_2)\vert
\leq \int_{\mathsf{B}_1} \eta(z) \vert g\big(\Phi_z^{-1}(y_1)\big)-g\big(\Phi_z^{-1}(y_2)\big)\vert \vert J_z(y_1)\vert\,dz \\
+ \int_{\mathsf{B}_1} \eta(z) \vert g\big(\Phi_z^{-1}(y_2)\big)\vert \vert J_z(y_1)-J_z(y_2)\vert\,dz.
\end{multline*}
Because $\Phi_z^{-1}$ is uniformly Lipschitz, the first term is bounded by $C \Lip(g) \vert y_1-y_2\vert$.%
\footnote{We denote by $\Lip(\Omega)$ the space of Lipschitz functions $g:\Omega\to\R$, with
$$
\mathrm{Lip}(g) = \sup \left\{ \frac{\vert g(x)-g(x')\vert}{\Vert x-x'\Vert_{\R^n}} \ \mid\ x,x'\in\Omega, \ x\neq x' \right\} 
$$}
For the second term, note that $D\Phi_z(x)$ depends linearly on $\nabla\rho(x)$, hence $D\Phi_z^{-1}(y)$ depends smoothly on $D\Phi_z$, and therefore $J_z$ is Lipschitz with $\Lip(J_z)\leq C \varepsilon \Vert D^2\rho\Vert_{L^\infty}\leq C$, uniformly in $z\in\mathsf{B}_1$ and $\varepsilon\in(0,1]$. Thus the second term is bounded by $C \Vert g\Vert_{L^\infty} \vert y_1-y_2\vert$.
Hence $\Lip(\mathcal{H}_\varepsilon^* g) \leq C (\Vert g\Vert_{L^\infty}+\Lip(g)) \leq C \Vert g\Vert_{W^{1,\infty}}$, which gives \eqref{Hepsstar_W1infty}.
\end{proof}

\begin{lemma}\label{lem_CV_Hepsstar}
There exists $C_{\eta,\rho}>0$ such that, for every $\varepsilon\in(0,\inf(\frac{1}{1+L_\rho},\frac{1}{2L_\rho})]$ and every $g\in W^{1,2}(\Omega)$, 
\begin{equation}\label{Hepsstar_L2_rate}
\Vert \mathcal{H}_\varepsilon^* g-g\Vert_{L^2}
\leq C_{\eta,\rho}\, \varepsilon \Vert g\Vert_{W^{1,2}(\Omega)} .
\end{equation}
\end{lemma}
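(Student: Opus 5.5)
The plan is to reuse the change-of-variables representation \eqref{Hepsstar_changevar} from the proof of Lemma \ref{lem_adjoint_Lip}. For almost every $y\in\Omega$ it reads
$$
(\mathcal{H}_\varepsilon^* g)(y)=\int_{\mathsf{B}_1}\eta(z)\,g\big(\Phi_z^{-1}(y)\big)J_z(y)\,dz .
$$
Since $\int_{\mathsf{B}_1}\eta=1$, we can split
$$
\mathcal{H}_\varepsilon^* g(y)-g(y)=\int_{\mathsf{B}_1}\eta(z)\Big(g\big(\Phi_z^{-1}(y)\big)-g(y)\Big)J_z(y)\,dz+\int_{\mathsf{B}_1}\eta(z)\,g(y)\big(J_z(y)-1\big)\,dz .
$$
We bound the two terms separately in $L^2$ by Minkowski's integral inequality, which moves the $L^2_y$ norm inside the $z$-integral. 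Here $\eta\,dz$ is a probability measure, so it suffices to bound each term uniformly in $z\in\mathsf{B}_1$.

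\textbf{Jacobian term.} From $D\Phi_z=\mathrm{id}-\varepsilon z\otimes\nabla\rho$ with $\Vert D\Phi_z-\mathrm{id}\Vert\leq\varepsilon L_\rho\leq\frac12$, we get $\vert\det D\Phi_z-1\vert\leq C\varepsilon L_\rho$. Since $J_z(y)=1/\det D\Phi_z(\Phi_z^{-1}(y))$ and $\det D\Phi_z\geq c>0$, this gives $\Vert J_z-1\Vert_{L^\infty}\leq C\varepsilon$. Hence the second term is at most $C\varepsilon\Vert g\Vert_{L^2}$. In the first term the factor $J_z$ is uniformly bounded, so it remains to estimate $\Vert g\circ\Phi_z^{-1}-g\Vert_{L^2}$.

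\textbf{Translation term.} We write $w=\Phi_z^{-1}(y)$, so that $y=w-\varepsilon\rho(w)z$ and $\Vert w-y\Vert=\varepsilon\rho(w)\Vert z\Vert\leq\varepsilon\Vert\rho\Vert_{L^\infty}$. We first take $g\in\mathscr{C}^1(\Omega)\cap W^{1,2}(\Omega)$ and use the segment $\gamma_\tau=y+\tau(w-y)$, $\tau\in[0,1]$. The whole segment lies in $\Omega$, because $\Vert\gamma_\tau-w\Vert\leq\varepsilon\rho(w)\leq\rho(w)\leq\mathrm{d}(w,\partial\Omega)$, so it sits in the ball $B(w,\rho(w))\subset\Omega$ and convexity of the ball is enough. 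This gives
$$
\vert g(w)-g(y)\vert\leq\varepsilon\Vert\rho\Vert_{L^\infty}\int_0^1\vert\nabla g(\gamma_\tau)\vert\,d\tau .
$$
Squaring, applying Jensen in $\tau$ and integrating in $y$, we are led to show that for each $\tau$ the map $y\mapsto y+\tau(\Phi_z^{-1}(y)-y)$ has a Jacobian bounded below uniformly. It is then a change of variables with bounded inverse Jacobian, and $\int\vert\nabla g(\gamma_\tau(y))\vert^2dy\leq C\Vert\nabla g\Vert_{L^2}^2$.

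This Jacobian bound is the step I expect to be the main obstacle. The cleanest route is to parametrize by $w$ instead of $y$. The map is $w\mapsto(1-\tau)(w-\varepsilon\rho(w)z)+\tau w=w-(1-\tau)\varepsilon\rho(w)z$, which is exactly $\Phi_{(1-\tau)z}$ with $(1-\tau)z\in\mathsf{B}_1$. By the argument of Lemma \ref{lem_adjoint_Lip} it is therefore a bi-Lipschitz homeomorphism of $\Omega$ with $\vert\det D\vert\in[c,C]$. Together with the change of variables $y=\Phi_z(w)$, whose Jacobian is also bounded, this yields $\Vert g\circ\Phi_z^{-1}-g\Vert_{L^2}\leq C\varepsilon\Vert\nabla g\Vert_{L^2}$.

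\textbf{Conclusion.} Both terms are now bounded by $C\varepsilon\Vert g\Vert_{W^{1,2}}$ for smooth $g$. To pass to general $g\in W^{1,2}(\Omega)$, we use density of $\mathscr{C}^1(\Omega)$ in $W^{1,2}(\Omega)$, which holds on Lipschitz domains, together with the uniform $L^2$-boundedness of $\mathcal{H}_\varepsilon^*$ from Lemma \ref{lem_Lr-bounds}; this extends the estimate \eqref{Hepsstar_L2_rate} by continuity, with a constant depending only on $\eta$, $\Vert\rho\Vert_{L^\infty}$ and $L_\rho$.
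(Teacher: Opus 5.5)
Your proof is correct and has the same skeleton as the paper's. Both start from the representation \eqref{Hepsstar_changevar} and split off the Jacobian term, bounded via $\Vert J_z-1\Vert_{L^\infty}\leq C\varepsilon$. Both then use Minkowski to reduce the rest to $\Vert g\circ\Phi_z^{-1}-g\Vert_{L^2}\leq C\varepsilon\Vert\nabla g\Vert_{L^2}$, prove that by a segment/fundamental-theorem-of-calculus estimate, and conclude by density.

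The genuine difference is in how you handle the translation term. The paper worries that on a non-convex Lipschitz domain the segment from $y$ to $\Psi(y)=\Phi_z^{-1}(y)$ may leave $\Omega$. So it extends $g$ with a Sobolev extension operator $E$ and changes variables through $F_\tau=(1-\tau)\mathrm{id}+\tau\Psi$. It asserts that $F_\tau$ is uniformly bi-Lipschitz ``for $\varepsilon$ small'' and that $F_\tau(\Omega)$ stays in a bounded neighbourhood of $\Omega$.

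You observe instead that the segment lies in $B(w,\rho(w))\subset\Omega$ because $\rho\leq\mathrm{d}(\cdot,\partial\Omega)$. You also note that $F_\tau=\Phi_{(1-\tau)z}\circ\Phi_z^{-1}$, a bi-Lipschitz homeomorphism of $\Omega$ onto itself. Its Jacobian is controlled on the whole admissible range $\varepsilon L_\rho\leq\frac12$, since $\det D\Phi_{z'}=1-\varepsilon\langle\nabla\rho,z'\rangle\in[\frac12,\frac32]$ for every $z'\in\mathsf{B}_1$.

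This buys several things:
\begin{itemize}
\item the extension operator becomes unnecessary and the argument stays intrinsic to $\Omega$;
\item the constant depends only on $n$, $\Vert\rho\Vert_{L^\infty}$ and $L_\rho$, not on the extension constant of $\Omega$;
\item the Jacobian lower bound for the interpolating maps is justified exactly, rather than ``for $\varepsilon$ small'';
\item it shows that the paper's $F_\tau$ actually maps $\Omega$ into itself, so its neighbourhood argument is overkill.
\end{itemize}

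The paper's route is more generic: it would work for any near-identity perturbation $\Psi$ without exploiting the special form of $\Phi_z$. The cost is the extension step and a looser justification of the bi-Lipschitz property of $F_\tau$.

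Your density step, via the uniform $L^2$-bound on $\mathcal{H}_\varepsilon^*$, is an equally valid alternative to the paper's use of uniformly bounded composition operators $u\mapsto u\circ\Psi$.
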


\begin{proof}
Using \eqref{Hepsstar_changevar}, we write $\mathcal{H}_\varepsilon^* g-g=I_1+I_2$ with
$$
I_1(y) = \int_{\mathsf{B}_1} \eta(z) \big(g(\Phi_z^{-1}(y))-g(y)\big) J_z(y)\,dz, \qquad
I_2(y) = g(y) \int_{\mathsf{B}_1} \eta(z) \big(J_z(y)-1\big)\,dz.
$$
Since $D\Phi_z^{-1}=\mathrm{id}+\mathrm{O}(\varepsilon)$ uniformly, one has
$\Vert J_z-1\Vert_{L^\infty}\leq C \varepsilon$ for every $z\in\mathsf{B}_1$.
Therefore $\Vert I_2\Vert_{L^2}\leq C \varepsilon \Vert g\Vert_{L^2}$.
Let us estimate $I_1$. By Minkowski's inequality and the uniform $L^\infty$ bound on $J_z$, it is enough to show that
\begin{equation}\label{comp_diff_Psi}
\Vert g\circ \Phi_z^{-1}-g\Vert_{L^2}
\leq C \varepsilon \Vert \nabla g\Vert_{L^2}
\qquad
\forall z\in\mathsf{B}_1.
\end{equation}
Fix such a $z$ and set $\Psi=\Phi_z^{-1}$ and $h=\Psi-\mathrm{id}$.
Since $\Psi$ is uniformly bi-Lipschitz and $\Psi(y)-y=\varepsilon \rho(\Psi(y)) z$, one has
$\Vert h\Vert_{L^\infty(\Omega)}\leq C \varepsilon$ and $\Vert D\Psi-\mathrm{id}\Vert_{L^\infty(\Omega)}\leq C \varepsilon$.
To handle a possibly non-convex Lipschitz domain, where the segment between $y$ and $\Psi(y)$ may leave $\Omega$, we first extend $g$ to a function $Eg\in W^{1,2}(\R^n)$ by a bounded Sobolev extension operator $E$, with $\Vert Eg\Vert_{W^{1,2}(\R^n)}\leq C \Vert g\Vert_{W^{1,2}(\Omega)}$, and we run the estimate for $Eg$ on $\R^n$ before restricting to $\Omega$.
For smooth $Eg$, writing $Eg(\Psi(y))-Eg(y) = \int_0^1 \langle \nabla Eg(y+\tau h(y)),h(y)\rangle_{\R^n}\,d\tau$ where $F_\tau=\mathrm{id}+\tau h=(1-\tau)\mathrm{id}+\tau \Psi$,
we get
$$
\Vert Eg\circ \Psi-Eg\Vert_{L^2(\Omega)}^2 \leq \Vert h\Vert_{L^\infty}^2 \int_0^1 \int_\Omega \big\vert \nabla Eg\big(F_\tau(y)\big)\big\vert^2\,dy\,d\tau.
$$
Because $\Vert D\Psi-\mathrm{id}\Vert_{L^\infty}\leq C \varepsilon$ and $\varepsilon$ is small, each $F_\tau$ is uniformly bi-Lipschitz, and its Jacobian determinant is bounded above and below away from $0$, uniformly in $\tau\in[0,1]$; moreover $F_\tau(\Omega)\subset\R^n$ remains in a fixed bounded neighborhood of $\Omega$. A change of variables therefore gives
$\int_\Omega \big\vert \nabla Eg\big(F_\tau(y)\big)\big\vert^2\,dy \leq C \Vert \nabla Eg\Vert_{L^2(\R^n)}^2\leq C \Vert g\Vert_{W^{1,2}(\Omega)}^2$.
Since $\Vert h\Vert_{L^\infty}\leq C \varepsilon$ and $g=Eg$ on $\Omega$, this proves \eqref{comp_diff_Psi} for smooth $g$. For a general $g\in W^{1,2}(\Omega)$, choose $g_j\in C^\infty(\overline\Omega)$ with $g_j\to g$ in $W^{1,2}(\Omega)$; the composition operators $u\mapsto u\circ\Psi$ are uniformly bounded on $L^2(\Omega)$ because $\Psi$ is uniformly bi-Lipschitz, so passing to the limit in the estimate for $g_j$ yields \eqref{comp_diff_Psi} for $g$.
We conclude that $\Vert I_1\Vert_{L^2}\leq C \varepsilon \Vert \nabla g\Vert_{L^2}$.
Combining the estimates on $I_1$ and $I_2$ proves \eqref{Hepsstar_L2_rate}.
\end{proof}

\subsection{Derivatives}
Assume that $g\in W^{1,1}(\Omega)$. 
Using \eqref{def_Teps1} and \eqref{def_Teps2}, we have
\begin{align}
\partial_i\mathcal{H}_\varepsilon g(x) 
&= \mathcal{H}_\varepsilon \partial_i g(x) - \varepsilon \int_{\mathsf{B}_1} \eta(x') \langle \nabla g(x-\varepsilon\rho(x)x'),x' \rangle\, dx'\ \partial_i\rho(x)  \label{grad_Teps1} \\
&= \int_\Omega \partial_i H_\varepsilon(x,x') g(x')\, dx'  \label{grad_Teps2}
\end{align}
for every $x\in\mathring{\Omega}$, every $i\in\{1,\ldots,n\}$ and every $\varepsilon\in(0,1]$. In \eqref{grad_Teps2}, $\partial_i$ acts on $x$, and $g\in L^1(\Omega)$ is enough to have \eqref{grad_Teps2}.
Using \eqref{grad_Teps1}, we see that if $\rho$ and $\nabla\rho$ vanish along $\partial\Omega$ and if $g\in\mathscr{C}^1(\Omega)$ (up to the boundary) then $\mathcal{H}_\varepsilon g\in\mathscr{C}^1(\Omega)$ (up to the boundary) and $\partial_i\mathcal{H}_\varepsilon g(x)=\partial_i g(x)$ for every $x\in\partial\Omega$. 
Moreover, $\partial_i\mathcal{H}_\varepsilon g = \mathcal{H}_\varepsilon\partial_i g + \mathrm{O}(\varepsilon) \Vert g\Vert_{W^{1,\infty}}$, where the $\mathrm{O}(\varepsilon)$ only depends on $\eta$ and $\rho$, if $g\in W^{1,\infty}(\Omega)$. 
More generally, we have the following lemma.

\begin{lemma}\label{lem_commutator}
Let $\alpha=(\alpha_1,\ldots,\alpha_n)\in\N^n$ with $\vert\alpha\vert=m\in\{1,\ldots,q\}$.
For any $\varepsilon\in(0,1]$ and any $g\in L^1(\Omega)$, we have $D^\alpha\mathcal{H}_\varepsilon g\in\mathscr{C}^\infty(\mathring{\Omega})$ and
$$
D^\alpha\mathcal{H}_\varepsilon g(x) = \int_\Omega D^\alpha_x H_\varepsilon(x,x') g(x')\, dx'  \qquad\forall x\in\mathring{\Omega}.
$$
If moreover $g\in W^{m,r}(\Omega)$ for some $r\in[1,+\infty]$, then, for any $\varepsilon\in(0,\frac{1}{1+L_\rho}]$, $D^\alpha\mathcal{H}_\varepsilon g\in L^r(\Omega)$ and
\begin{equation}\label{crochetDalphaHeps}
D^\alpha\mathcal{H}_\varepsilon g = \mathcal{H}_\varepsilon D^\alpha g + [D^\alpha,\mathcal{H}_\varepsilon]g ,
\end{equation}
where the function $[D^\alpha,\mathcal{H}_\varepsilon]g \in L^r(\Omega)$ is given by%
\footnote{Alternatively, we have $[D^\alpha,\mathcal{H}_\varepsilon]g(x)=\int_\Omega \big(D_x^\alpha H_\varepsilon(x,x') - (-1)^{\vert\alpha\vert}D_{x'}^\alpha H_\varepsilon(x,x')\big)\,g(x')\,dx'$, but this is not useful. 
}
\begin{equation}\label{eq:comm-structure}
[D^\alpha,\mathcal{H}_\varepsilon]g(x)
= \sum_{k=1}^{m}\varepsilon^k \sum_{\vert\gamma\vert\leq m} P_{\alpha,k}\big(D\rho(x),\ldots,D^m\rho(x)\big) \int_{\mathsf{B}_1}\eta(x')\,\Vert x'\Vert^k (D^\gamma g) (x-\varepsilon\rho(x)x')\,dx'
\end{equation}
for every $x\in\Omega$, where the $P_{\alpha,k}$ are polynomials in the derivatives $D^\beta\rho(x)$ ($1\leq\vert\beta\vert\leq m$) and moments of $\eta$, satisfying $P_{\alpha,k}(0,\ldots,0)=0$.
Assuming moreover that $\varepsilon L_\rho\leq \frac{1}{2}$, we have
\begin{equation}\label{eq:comm-Lp}
\Vert [D^\alpha,\mathcal{H}_\varepsilon]g\Vert_{L^r(\Omega)}
\leq C_{\alpha,\eta} \sum_{k=1}^{m}\varepsilon^k \sum_{\vert\gamma\vert\leq m}\Vert D^\gamma g\Vert_{L^r(\Omega)}
\leq C_{\alpha,\eta,\rho}\,\varepsilon\,\Vert g\Vert_{W^{m,r}(\Omega)}
\end{equation}
with $C_{\alpha,\eta,\rho}$ depending on $n$, on the moments $\int_{\mathsf{B}_1}\eta(x')\Vert x'\Vert^k\,dx'$ for $1\leq k\leq m$, on $\Vert\eta\Vert_{L^\infty}$, and on $\sup_{1\leq\vert\beta\vert\leq m}\Vert D^\beta\rho\Vert_{L^\infty}$, but not depending on $\varepsilon$ nor on $g$. 
\end{lemma}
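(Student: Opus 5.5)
The plan is to work entirely from the representation \eqref{def_Teps1}, $\mathcal{H}_\varepsilon g(x)=\int_{\mathsf{B}_1}\eta(x')\,g(x-\varepsilon\rho(x)x')\,dx'$, and to differentiate under the integral sign. The first claim needs no regularity of $g$. On $\mathring{\Omega}$ we have $\rho>0$, so by \eqref{def_Teps2} the kernel $H_\varepsilon(x,x')$ is $\mathscr{C}^\infty$ in $x$ locally uniformly. It is also supported in $\{\Vert x-x'\Vert\leq\varepsilon\rho(x)\}$. Differentiating under the integral in \eqref{def_Teps2} by dominated convergence, with $g\in L^1$, then gives $D^\alpha\mathcal{H}_\varepsilon g(x)=\int_\Omega D^\alpha_xH_\varepsilon(x,x')g(x')\,dx'$ and smoothness on $\mathring{\Omega}$. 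This extends \eqref{grad_Teps2}.

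For $g\in W^{m,r}$, I first take $g$ smooth up to the boundary, i.e.\ $g\in\mathscr{C}^\infty(\Omega)$, using density, which holds for Lipschitz $\Omega$. I then argue by induction on $m=\vert\alpha\vert$ using the chain rule on $x\mapsto g(x-\varepsilon\rho(x)x')$. One derivative gives
\[
\partial_i\big[g(x-\varepsilon\rho(x)x')\big]=(\partial_ig)(\cdot)-\varepsilon\,\partial_i\rho(x)\,\langle\nabla g(\cdot),x'\rangle ,
\]
which is \eqref{grad_Teps1}. Iterating the chain rule (Fa\`a di Bruno), each term of $D^\alpha[g(x-\varepsilon\rho(x)x')]$ other than $(D^\alpha g)(\cdot)$ contains at least one factor $\varepsilon D^\beta\rho(x)\,x'$ with $1\leq\vert\beta\vert\leq m$. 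It is multiplied by some $D^\gamma g$ with $\vert\gamma\vert\leq m$ evaluated at $x-\varepsilon\rho(x)x'$. If $k$ denotes the number of such factors, the term carries $\varepsilon^k$, a coefficient polynomial in $D\rho,\dots,D^m\rho$ vanishing at $0$, and a monomial in $x'$ of degree $k$, which is bounded by $\Vert x'\Vert^k$. Integrating against $\eta$ gives the structure \eqref{eq:comm-structure}. Strictly speaking the $x'$-monomials should be kept inside the integral, and the stated form with $\Vert x'\Vert^k$ is to be read up to this; only the bound matters below. The $k=0$ term is exactly $\mathcal{H}_\varepsilon D^\alpha g$, which yields \eqref{crochetDalphaHeps}.

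For the $L^r$ bound \eqref{eq:comm-Lp}, the coefficients are bounded by $\sup_{1\leq\vert\beta\vert\leq m}\Vert D^\beta\rho\Vert_{L^\infty}$, finite since $\rho\in\mathscr{C}^q$ and $m\leq q$. Each integral has the form $\int_{\mathsf{B}_1}\eta(x')\Vert x'\Vert^k(D^\gamma g)(x-\varepsilon\rho(x)x')\,dx'$. This is the same operator as $\mathcal{H}_\varepsilon$ applied to $D^\gamma g$, but with the nonnegative weight $\eta\Vert x'\Vert^k$ in place of $\eta$. The Schur argument of Lemma \ref{lem_Lr-bounds} then applies verbatim: the row sums equal the moment $\int\eta\Vert x'\Vert^k$, and the column sums are controlled by $\Vert\eta\Vert_{L^\infty}\vert\mathsf{B}_1\vert 3^n$ when $\varepsilon L_\rho\leq\frac12$. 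Since $\varepsilon\leq1$, we have $\sum_k\varepsilon^k\leq m\varepsilon$, which gives the second inequality.

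The main obstacle is the passage from smooth $g$ to $W^{m,r}$ and the treatment of $x$ near $\partial\Omega$, where $\rho\to0$. For $r<\infty$ I would use density of $\mathscr{C}^\infty(\Omega)$ in $W^{m,r}$ together with the uniform $L^r$ bounds just obtained, which show that both sides of \eqref{crochetDalphaHeps} are continuous in $W^{m,r}$. The interior distributional derivatives of $\mathcal{H}_\varepsilon g$ then agree with the limit. For $r=\infty$, I would instead apply the smooth-case formula locally: it holds in $W^{m,r'}$ for every finite $r'$ on the bounded domain, and the $L^\infty$ bound then follows directly from the formula. The formula \eqref{eq:comm-structure} is stated for every $x\in\Omega$, and it makes sense on the boundary as well since $\rho=0$ there.
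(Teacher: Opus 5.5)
Your proof is correct and follows essentially the paper's route. The paper does the same chain-rule computation on $x\mapsto g(x-\varepsilon\rho(x)x')$, but organizes it as an induction through $[D^\alpha,\mathcal{H}_\varepsilon]g=[\partial_j,\mathcal{H}_\varepsilon]D^{\alpha-e_j}g+\partial_j\big([D^{\alpha-e_j},\mathcal{H}_\varepsilon]g\big)-[D^{\alpha-e_j},\mathcal{H}_\varepsilon]\partial_j g$, in which the $D^{\gamma+e_j}g$ pieces cancel, rather than a one-shot Fa\`a di Bruno expansion, and it then bounds each integral by the weighted Schur operator $T_{\varepsilon,k}$ of Lemma~\ref{lem:schur-weight}, exactly as you do.

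Your density passage to $W^{m,r}$ (with the separate $r=\infty$ step), your dominated-convergence proof of the kernel formula, and your remark that the integrands carry monomials in $x'$ rather than $\Vert x'\Vert^k$ supply details the paper leaves implicit. One caveat applies to both you and the paper: with $\rho$ only in $\mathscr{C}^q$, the kernel $H_\varepsilon$ is only $\mathscr{C}^q$ in $x$, not $\mathscr{C}^\infty$ as you assert. So the interior $\mathscr{C}^\infty$ claim needs $\rho$ smooth in $\mathring{\Omega}$ (as for a regularized distance); otherwise one only gets $D^\alpha\mathcal{H}_\varepsilon g\in\mathscr{C}^{q-m}(\mathring{\Omega})$.
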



\begin{proof}
Let us first prove \eqref{eq:comm-structure} by induction on $m=\vert\alpha\vert$.
The case $m=1$ follows from \eqref{grad_Teps1}.
Now, assume that \eqref{eq:comm-structure} holds for all multi-indices of length $\leq m-1$. Fix $\alpha$ with $\vert\alpha\vert=m$ and write $D^\alpha=\partial_j D^{\alpha-e_j}$ for some $j$, where $e_j=(0,\ldots,1,\ldots,0)$ (with the $1$ at the $j^\mathrm{th}$ position). Then
\begin{equation}\label{eq:three-terms}
[D^\alpha,\mathcal{H}_\varepsilon]g = [\partial_j,\mathcal{H}_\varepsilon]\,D^{\alpha-e_j}g + \partial_j\left([D^{\alpha-e_j},\mathcal{H}_\varepsilon]g\right)-[D^{\alpha-e_j},\mathcal{H}_\varepsilon]\partial_j g.
\end{equation}
For the first term, apply the base case with $g$ replaced by $D^{\alpha-e_j}g$:
$$
[\partial_j,\mathcal{H}_\varepsilon]\,D^{\alpha-e_j}g(x) = - \varepsilon(\partial_j\rho)(x)\int_{\mathsf{B}_1}\eta(x')\big\langle \nabla D^{\alpha-e_j}g\big(x-\varepsilon\rho(x)x'\big),x'\big\rangle \,dx',
$$
which matches \eqref{eq:comm-structure} with $k=1$ and $\vert\gamma\vert=m$.
For the second and third terms, we apply the induction hypothesis to $[D^{\alpha-e_j},\mathcal{H}_\varepsilon]h$ with $h=g$ and with $h=\partial_j g$. 
Differentiating the first with respect to $x_j$, the derivative hits either the coefficient
$P_{\alpha-e_j,k}\big(D\rho(x),\ldots,D^{m-1}\rho(x)\big)$, producing the same structure with derivatives of $\rho$ up to order $m$, or the integrand. When it hits the integrand, the chain rule applied to
$x\mapsto (D^\gamma g) (x-\varepsilon\rho(x)x')$ yields
$$
\partial_j\big( D^\gamma g (x-\varepsilon\rho(x)x')\big)
= D^{\gamma+e_j}g (x-\varepsilon\rho(x)x') - \varepsilon (\partial_j\rho)(x) \big\langle \nabla D^\gamma g (x-\varepsilon\rho(x)x'), x'\big\rangle .
$$
The piece with $D^{\gamma+e_j}g$ cancels exactly with the corresponding term in $[D^{\alpha-e_j},\mathcal{H}_\varepsilon]\partial_j g$ in \eqref{eq:three-terms}. The remaining pieces all carry at least one explicit factor $\varepsilon$, and have the form
$$
\varepsilon^k\,\widetilde P_{\alpha,k}\big(D\rho(x),\ldots,D^{m}\rho(x)\big)\,
\int_{\mathsf{B}_1}\eta(x') \Vert x'\Vert^k (D^\gamma g) (x-\varepsilon\rho(x)x')\,dx',
$$
with $1\leq k\leq m$ and $\vert\gamma\vert\leq m$. Collecting all contributions gives \eqref{eq:comm-structure}.

Let us now prove \eqref{eq:comm-Lp}. Each integral on the right-hand side of \eqref{eq:comm-structure} can be written as $T_{\varepsilon,k}(D^\gamma g)$, where $T_{\varepsilon,k}$ is defined by \eqref{def_Ts} in Lemma \ref{lem:schur-weight} hereafter, and $T_{\varepsilon,k}\in L(L^r(\Omega))$ with a norm bounded uniformly for $\varepsilon$ small enough. Bounding the polynomial coefficients $P_{\alpha,k}$ by $\sup_{1\leq \vert\beta\vert\leq m}\Vert D^\beta\rho\Vert_{L^\infty}$ 
gives the result.
\end{proof}

\begin{lemma}\label{lem:schur-weight}
For any $\varepsilon\in(0,1]$, any $s\geq 0$ and any $h\in L^1(\Omega)$, define
\begin{equation}\label{def_Ts}
(T_{\varepsilon,s}h)(x) = \int_{\mathsf{B}_1}\eta(x') \Vert x'\Vert^s h (x-\varepsilon\rho(x)x')\,dx'.
\end{equation}
Then $T_{\varepsilon,s}\in L(L^r(\Omega))$ for any $r\in[1,+\infty]$ and any $\varepsilon\in(0,\frac{1}{1+L_\rho}]$, and we have
\begin{equation}\label{eq:TsLp}
\Vert T_{\varepsilon,s}\Vert_{L(L^r(\Omega))}
\leq \left(\int_{\mathsf{B}_1}\eta(x') \Vert x'\Vert^s\,dx'\right)^{1-\frac{1}{r}}
\left(\Vert\eta\Vert_{L^\infty} \vert \mathsf{B}_1\vert \Big(\frac{1+\varepsilon L_\rho}{1-\varepsilon L_\rho}\Big)^{n+s}\right)^{\frac{1}{r}},
\end{equation}
with the convention $1/r=0$ for $r=+\infty$. As a consequence, assuming moreover that $\varepsilon L_\rho\leq \frac{1}{2}$, 
\begin{equation}\label{schur-weight_no_eps}
\Vert T_{\varepsilon,s}\Vert_{L(L^r(\Omega))}\leq \left(\int_{\mathsf{B}_1}\eta(x') \Vert x'\Vert^s\,dx' \right)^{1-\frac{1}{r}} \left( \Vert\eta\Vert_{L^\infty} \vert \mathsf{B}_1\vert 3^{n+s} \right)^{\frac{1}{r}} .
\end{equation}
\end{lemma}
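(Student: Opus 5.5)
The plan is to apply the general Schur test of Lemma~\ref{lem:Schur-general} to the nonnegative kernel of $T_{\varepsilon,s}$, exactly as in the proof of Lemma~\ref{lem_Lr-bounds}. The only change is the weight $\Vert x'\Vert^s$. Using the change of variables $x''=x-\varepsilon\rho(x)x'$ from \eqref{def_Teps1}--\eqref{def_Teps2}, I will write
$$
(T_{\varepsilon,s}h)(x)=\int_\Omega K_{\varepsilon,s}(x,x'')\,h(x'')\,dx'' ,
$$
with
$$
K_{\varepsilon,s}(x,x'')=H_\varepsilon(x,x'')\left(\frac{\Vert x-x''\Vert}{\varepsilon\rho(x)}\right)^{s}\geq 0 .
$$
This kernel is supported in $\{\Vert x-x''\Vert\leq\varepsilon\rho(x)\}$, and for $x''$ in that set we have $x''\in\Omega$ by \eqref{def_rho}.

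Next comes the row bound. For fixed $x\in\mathring\Omega$, undoing the change of variables gives
$$
\int_\Omega K_{\varepsilon,s}(x,x'')\,dx''=\int_{\mathsf{B}_1}\eta(x')\Vert x'\Vert^s\,dx' .
$$
This identity supplies the factor raised to the power $1-\frac1r$ in \eqref{eq:TsLp}. Points $x\in\partial\Omega$ form a null set and are irrelevant.

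The column bound is the main step. Fix $x''$ and suppose $K_{\varepsilon,s}(x,x'')\neq0$. The comparability \eqref{eq:rho-comparability}, already proved in Lemma~\ref{lem_Lr-bounds}, gives two facts. First, $x$ lies in $B\big(x'',\varepsilon\rho(x'')/(1-\varepsilon L_\rho)\big)$. Second, $\rho(x)\geq\rho(x'')/(1+\varepsilon L_\rho)$. Combining these, the weight satisfies
$$
\left(\frac{\Vert x-x''\Vert}{\varepsilon\rho(x)}\right)^s\leq\Big(\frac{1+\varepsilon L_\rho}{1-\varepsilon L_\rho}\Big)^s\left(\frac{\Vert x-x''\Vert(1-\varepsilon L_\rho)}{\varepsilon\rho(x'')}\right)^s\leq\Big(\frac{1+\varepsilon L_\rho}{1-\varepsilon L_\rho}\Big)^s ,
$$
since $\Vert x-x''\Vert\leq\varepsilon\rho(x'')/(1-\varepsilon L_\rho)$. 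Alternatively, and more simply, the weight is at most $1$ because $\Vert x-x''\Vert\leq\varepsilon\rho(x)$; either way the stated power $n+s$ is obtained as a valid upper bound, since the ratio is $\geq1$.

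For the factor $H_\varepsilon$ itself, I bound it by $\Vert\eta\Vert_{L^\infty}(\varepsilon\rho(x))^{-n}\leq\Vert\eta\Vert_{L^\infty}(1+\varepsilon L_\rho)^n(\varepsilon\rho(x''))^{-n}$. I then integrate over the ball of radius $\varepsilon\rho(x'')/(1-\varepsilon L_\rho)$, as in the proof of Lemma~\ref{lem_Lr-bounds}. This yields
$$
\sup_{x''}\int_\Omega K_{\varepsilon,s}(x,x'')\,dx\leq\Vert\eta\Vert_{L^\infty}\vert\mathsf{B}_1\vert\Big(\frac{1+\varepsilon L_\rho}{1-\varepsilon L_\rho}\Big)^{n+s} .
$$

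Finally, the Schur test combines the two bounds into \eqref{eq:TsLp}. The case $r=\infty$ needs only the row bound. If $\varepsilon L_\rho\leq\frac12$, then $\frac{1+\varepsilon L_\rho}{1-\varepsilon L_\rho}\leq3$, which gives \eqref{schur-weight_no_eps}. The only point requiring care is the support and measurability bookkeeping, namely that the kernel representation stays inside $\Omega$. This is inherited verbatim from Lemma~\ref{lem_Lr-bounds} under $\varepsilon\leq\frac1{1+L_\rho}$.
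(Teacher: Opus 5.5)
Your proposal is correct and follows essentially the same route as the paper: you write $T_{\varepsilon,s}$ as an integral operator with the nonnegative kernel $H_\varepsilon(x,x'')\big(\Vert x-x''\Vert/(\varepsilon\rho(x))\big)^s$, obtain the row bound by undoing the change of variables, obtain the column bound from the comparability \eqref{eq:rho-comparability} together with the enlarged ball $B\big(x'',\varepsilon\rho(x'')/(1-\varepsilon L_\rho)\big)$, and conclude with Lemma~\ref{lem:Schur-general}. Your remark that the weight is at most $1$ on the support, so the extra power $s$ is only a harmless overestimate, slightly sharpens the paper's bookkeeping but does not change the argument.
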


\begin{proof}
We write $T_{\varepsilon,s}$ as an operator with kernel
$$
K_{\varepsilon,s}(x,x')=\frac{1}{\big(\varepsilon\rho(x)\big)^n}\,\eta\Big(\frac{x-x'}{\varepsilon\rho(x)}\Big)\,\Big\Vert\frac{x-x'}{\varepsilon\rho(x)}\Big\Vert^s \geq 0 .
$$
For $x$ fixed, the change of variables $x'=x-\varepsilon\rho(x)z$ gives $\int_\Omega K_{\varepsilon,s}(x,x')\,dx'=\int_{\mathsf{B}_1}\eta(x') \Vert x'\Vert^s\,dx'$.
For $x'$ fixed, we follow the argument developed in the proof of Lemma \ref{lem_Lr-bounds}: if $K_{\varepsilon,s}(x,x')\neq 0$ then $\Vert x-x'\Vert_{\R^n}\leq\varepsilon\rho(x)$ and we have \eqref{eq:rho-comparability}, hence $x$ ranges in $B\big(x',\frac{\varepsilon\rho(x')}{1-\varepsilon L_\rho}\big)$ and $\rho(x)\geq \rho(x')/(1+\varepsilon L_\rho)$ there. Therefore
$$
\int_\Omega K_{\varepsilon,s}(x,x')\,dx \leq \Vert\eta\Vert_{L^\infty}\,\frac{(1+\varepsilon L_\rho)^{n+s}}{\big(\varepsilon\rho(x')\big)^n}\,
\big\vert B\big(x',\tfrac{\varepsilon\rho(x')}{1-\varepsilon L_\rho}\big)\big\vert
=\Vert\eta\Vert_{L^\infty} \vert \mathsf{B}_1\vert \Big(\frac{1+\varepsilon L_\rho}{1-\varepsilon L_\rho}\Big)^{n+s}.
$$
Lemma \ref{lem:Schur-general} in Appendix \ref{app_Schur} yields \eqref{eq:TsLp}.
Assuming moreover that $\varepsilon L_\rho\leq \frac{1}{2}$, we have $\big(\frac{1+\varepsilon L_\rho}{1-\varepsilon L_\rho}\big)^{n+s}\leq 3^{n+s}$, and \eqref{schur-weight_no_eps} follows.
\end{proof}

The corollary below follows from Lemmas \ref{lem_Lr-bounds} and \ref{lem_commutator}.

\begin{corollary}\label{H_eps_Wmr}
For any $m\in\{0,\ldots,q\}$ and any $r\in[1,+\infty]$, there exists $C_{m,r}>0$ such that $\Vert \mathcal{H}_\varepsilon g\Vert_{W^{m,r}} \leq C_{m,r} \Vert g\Vert_{W^{m,r}}$ for any $g\in W^{m,r}(\Omega)$ and any $\varepsilon\in(0,\inf(\frac{1}{1+L_\rho},\frac{1}{2L_\rho})]$.
\end{corollary}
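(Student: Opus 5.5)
The bound follows by induction on $m$, combining the $L^r$ bound of Lemma~\ref{lem_Lr-bounds} with the commutator estimate of Lemma~\ref{lem_commutator}. Recall that the $W^{m,r}$ norm is $\max_{\vert\alpha\vert\leq m}\Vert D^\alpha\cdot\Vert_{L^r}$. So it suffices to bound $\Vert D^\alpha\mathcal{H}_\varepsilon g\Vert_{L^r}$ for each multi-index $\alpha$ with $\vert\alpha\vert\leq m$, by a constant independent of $\varepsilon$ times $\Vert g\Vert_{W^{m,r}}$.

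For $\alpha=0$, I use \eqref{eq:Lq-bound-Schur_not_eps}. It applies because $\varepsilon\leq\frac{1}{1+L_\rho}$ and $\varepsilon L_\rho\leq\frac12$ on the stated range. It gives $\Vert\mathcal{H}_\varepsilon g\Vert_{L^r}\leq(\Vert\eta\Vert_{L^\infty}\vert\mathsf{B}_1\vert 3^n)^{1/r}\Vert g\Vert_{L^r}$.

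For $1\leq\vert\alpha\vert=k\leq m\leq q$, I use the decomposition \eqref{crochetDalphaHeps}: $D^\alpha\mathcal{H}_\varepsilon g=\mathcal{H}_\varepsilon D^\alpha g+[D^\alpha,\mathcal{H}_\varepsilon]g$. This is valid since $g\in W^{m,r}\subset W^{k,r}$. The first term is bounded as in the case $\alpha=0$: $\Vert\mathcal{H}_\varepsilon D^\alpha g\Vert_{L^r}\leq C\Vert D^\alpha g\Vert_{L^r}$. For the second term, \eqref{eq:comm-Lp} (under $\varepsilon L_\rho\leq\frac12$) gives $\Vert[D^\alpha,\mathcal{H}_\varepsilon]g\Vert_{L^r}\leq C_{\alpha,\eta,\rho}\,\varepsilon\Vert g\Vert_{W^{k,r}}\leq C_{\alpha,\eta,\rho}\Vert g\Vert_{W^{m,r}}$, using $\varepsilon\leq1$. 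Taking the maximum over $\alpha$ gives the constant $C_{m,r}$. It depends only on $n$, $\eta$, and the derivatives of $\rho$ up to order $m$, which is finite because $\rho\in\mathscr{C}^q(\Omega)$ and $m\leq q$.

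The main obstacle is justifying that $D^\alpha\mathcal{H}_\varepsilon g$, computed pointwise in $\mathring\Omega$, is the weak derivative on $\Omega$ and lies in $L^r$. Since $\partial\Omega$ has measure zero for a Lipschitz domain, the interior pointwise derivatives coincide with the distributional derivatives on $\mathring\Omega$. Lemma~\ref{lem_commutator} already asserts $D^\alpha\mathcal{H}_\varepsilon g\in L^r(\Omega)$, so no further work is needed beyond invoking it. For $r=\infty$, Schur with $1/r=0$ gives constant $1$, and the same argument goes through unchanged.
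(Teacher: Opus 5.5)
Your proof is correct and follows the paper's route: the paper states that the corollary follows from Lemma \ref{lem_Lr-bounds}, which bounds $\mathcal{H}_\varepsilon D^\alpha g$, and Lemma \ref{lem_commutator}, which bounds the commutator term in \eqref{crochetDalphaHeps}, and you combine them exactly this way. One small point: the opening phrase ``by induction on $m$'' is inaccurate, since what you actually give is a direct bound for each multi-index $\alpha$ with $\vert\alpha\vert\leq m$.
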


We also have the following corollary, which extends Lemma \ref{lem_preserv_value_boundary}.

\begin{corollary}\label{cor_preserv_deriv_boundary}
For any $m\in\{1,\ldots,q-1\}$, any $\varepsilon\in(0,\frac{1}{1+L_\rho}]$, any $g\in\mathscr{C}^m(\Omega)$ and any $\alpha\in\N^n$ such that $\vert\alpha\vert=m$, we have $D^\alpha\mathcal{H}_\varepsilon g\in \mathscr{C}^0(\Omega)$ and $D^\alpha\mathcal{H}_\varepsilon g(x) = D^\alpha g(x)$ for every $x\in\partial\Omega$.
\end{corollary}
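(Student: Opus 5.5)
The plan is to combine the commutator formula of Lemma \ref{lem_commutator} with the boundary-value argument of Lemma \ref{lem_preserv_value_boundary}. Fix $m\in\{1,\ldots,q-1\}$, $g\in\mathscr{C}^m(\Omega)$ and $\vert\alpha\vert=m$. By \eqref{crochetDalphaHeps}, on $\mathring{\Omega}$,
$$
D^\alpha\mathcal{H}_\varepsilon g=\mathcal{H}_\varepsilon D^\alpha g+[D^\alpha,\mathcal{H}_\varepsilon]g ,
$$
and the commutator has the explicit structure \eqref{eq:comm-structure}. I would show that each of the two terms on the right-hand side extends continuously up to $\partial\Omega$. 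The first term should equal $D^\alpha g$ on $\partial\Omega$, and the second should vanish there. The identity then gives the continuous extension of $D^\alpha\mathcal{H}_\varepsilon g$ together with the boundary value.

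\textbf{First term.} Since $D^\alpha g\in\mathscr{C}^0(\Omega)$, Lemma \ref{lem_preserv_value_boundary} applies directly. It gives $\mathcal{H}_\varepsilon D^\alpha g\in\mathscr{C}^0(\Omega)$ and $\mathcal{H}_\varepsilon D^\alpha g(x)=D^\alpha g(x)$ for $x\in\partial\Omega$. Continuity up to the boundary comes from formula \eqref{def_Teps1}:
\begin{itemize}
\item the map $x\mapsto x-\varepsilon\rho(x)x'$ is continuous on $\Omega$ and takes values in $\Omega$;
\item $D^\alpha g$ is uniformly continuous on the compact set $\Omega$;
\item dominated convergence therefore applies.
\end{itemize}

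\textbf{Second term.} In \eqref{eq:comm-structure}, the coefficients $P_{\alpha,k}(D\rho(x),\ldots,D^m\rho(x))$ are continuous on $\Omega$, because $\rho\in\mathscr{C}^q$ and $m\leq q-1$. The integrals $\int_{\mathsf{B}_1}\eta(x')\Vert x'\Vert^k (D^\gamma g)(x-\varepsilon\rho(x)x')\,dx'$ are continuous up to the boundary by the same dominated convergence argument, since $\vert\gamma\vert\leq m$ and $g\in\mathscr{C}^m$. At $x\in\partial\Omega$, all derivatives of $\rho$ of order $1$ to $m\leq q-1$ vanish by the standing assumption on $\rho$. Since $P_{\alpha,k}(0,\ldots,0)=0$, every coefficient vanishes at boundary points. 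Hence the commutator extends continuously to $\Omega$ with value $0$ on $\partial\Omega$.

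\textbf{Main obstacle.} The delicate point is justifying \eqref{eq:comm-structure} pointwise up to the boundary, with continuous coefficients, rather than only on $\mathring\Omega$ or in an $L^r$ sense. I would handle it by working on $\mathring\Omega$, where all manipulations in the induction of Lemma \ref{lem_commutator} are classical derivatives of $\mathscr{C}^m$ compositions. Every resulting expression is a finite sum of products of continuous functions on $\Omega$, so it extends by continuity.

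It then remains to identify this continuous extension with $D^\alpha$ of the continuous extension of $\mathcal{H}_\varepsilon g$. Since $D^\alpha\mathcal{H}_\varepsilon g$ equals a function continuous on $\Omega$ and the domain is Lipschitz, I would argue by induction on $\vert\alpha\vert$ using Lemma \ref{lem_preserv_value_boundary} as the base case. At each order, the lower-order derivatives extend continuously and their first derivatives extend continuously. This gives $\mathcal{H}_\varepsilon g\in\mathscr{C}^m(\Omega)$ and hence $D^\alpha\mathcal{H}_\varepsilon g\in\mathscr{C}^0(\Omega)$ with $D^\alpha\mathcal{H}_\varepsilon g=D^\alpha g$ on $\partial\Omega$.
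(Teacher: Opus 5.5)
Your proposal is correct and follows the same route as the paper. You split $D^\alpha\mathcal{H}_\varepsilon g=\mathcal{H}_\varepsilon D^\alpha g+[D^\alpha,\mathcal{H}_\varepsilon]g$ and apply Lemma \ref{lem_preserv_value_boundary} to $D^\alpha g$. You then kill the commutator at $\partial\Omega$ using \eqref{eq:comm-structure}, $P_{\alpha,k}(0,\ldots,0)=0$, and the vanishing of $D\rho,\ldots,D^m\rho$ on $\partial\Omega$ (since $m\leq q-1$). Your extra care about continuity up to $\partial\Omega$, and about identifying the continuous extension with $D^\alpha$ of $\mathcal{H}_\varepsilon g$, addresses a point the paper leaves implicit, and your treatment of it is fine.
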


\begin{proof}
Recall that $\rho\in \mathscr{C}^q(\Omega)$ is a nonnegative real-valued function on $\Omega$, satisfying $\rho>0$ in $\mathring{\Omega}$ and \eqref{def_rho}, and such that all derivatives of $\rho$ of order $\leq q-1$ vanish along $\partial\Omega$.
It follows from \eqref{eq:comm-structure} and from the properties of the polynomials $P_{\alpha,k}$ that, for any $m\in\{1,\ldots,q-1\}$, 
any $g\in\mathscr{C}^m(\Omega)$ and any $\alpha\in\N^n$ such that $\vert\alpha\vert=m$, 
we have $[D^\alpha,\mathcal{H}_\varepsilon]g\in\mathscr{C}^0(\Omega)$ and $[D^\alpha,\mathcal{H}_\varepsilon]g(x)=0$ for every $x\in\partial\Omega$.
Since $D^\alpha g\in\mathscr{C}^0(\Omega)$, Lemma \ref{lem_preserv_value_boundary} implies that $\mathcal{H}_\varepsilon D^\alpha g\in\mathscr{C}^0(\Omega)$ and $\mathcal{H}_\varepsilon D^\alpha g(x)=D^\alpha g(x)$ for every $x\in\partial\Omega$.
The result follows, using \eqref{crochetDalphaHeps}.
\end{proof}

\subsection{Convergence properties}

\begin{lemma}\label{lem_CV_Heps}
For any $r\in[1,+\infty]$, there exists $C_{r,\eta,\rho}>0$ such that
\begin{equation}\label{eq:Heps-rate}
\Vert \mathcal{H}_\varepsilon g-g\Vert_{L^r} \leq C_{r,\eta,\rho}\,\varepsilon\,\Vert \nabla g\Vert_{L^r}  
\qquad\forall g\in W^{1,r}(\Omega) \qquad \forall \varepsilon\in\big(0,\inf\big(\tfrac{1}{1+L_\rho},\tfrac{1}{2L_\rho}\big)\big].
\end{equation}
In particular, if $g\in W^{1,\infty}(\Omega)$, $\mathcal{H}_\varepsilon g\to g$ uniformly on $\Omega$ as $\varepsilon\to 0$. 

In addition, for $r\in[1,+\infty)$ and $g\in L^r(\Omega)$, one has $\mathcal{H}_\varepsilon g \to g$ in $L^r(\Omega)$ as $\varepsilon\to 0$.
\end{lemma}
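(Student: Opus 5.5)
The plan is to start from the representation \eqref{def_Teps1}, which gives
$$
\mathcal{H}_\varepsilon g(x)-g(x)=\int_{\mathsf{B}_1}\eta(z)\big(g(x-\varepsilon\rho(x)z)-g(x)\big)\,dz ,
$$
and to reduce the estimate to a bound on the composition maps $\Phi_z(x)=x-\varepsilon\rho(x)z$, uniformly in $z\in\mathsf{B}_1$. By Minkowski's integral inequality and $\int\eta=1$, it suffices to show
$$
\Vert g\circ\Phi_z-g\Vert_{L^r(\Omega)}\leq C\varepsilon\Vert\nabla g\Vert_{L^r}\qquad\forall z\in\mathsf{B}_1 .
$$
This is the same structure as \eqref{comp_diff_Psi} in the proof of Lemma \ref{lem_CV_Hepsstar}, now with $\Phi_z$ in place of $\Phi_z^{-1}$ and the exponent $r$ in place of $2$.

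For $r<\infty$, I would first argue for smooth $g\in\mathscr{C}^1(\Omega)$ and conclude by density, since composition with the bi-Lipschitz map $\Phi_z$ is bounded on $L^r$. For $x\in\mathring\Omega$ the segment $[\Phi_z(x),x]$ lies in the ball $B(x,\varepsilon\rho(x))\subset B(x,\rho(x))\subset\Omega$ by \eqref{def_rho}. This is convex, so no Sobolev extension is needed, which is simpler than in Lemma \ref{lem_CV_Hepsstar}. Writing $F_\tau(x)=x-\tau\varepsilon\rho(x)z$ for $\tau\in[0,1]$,
$$
g(\Phi_z(x))-g(x)=-\varepsilon\rho(x)\int_0^1\langle\nabla g(F_\tau(x)),z\rangle\,d\tau .
$$
Hence, by Jensen's inequality in $\tau$,
$$
\Vert g\circ\Phi_z-g\Vert_{L^r}^r\leq(\varepsilon\Vert\rho\Vert_\infty)^r\int_0^1\int_\Omega\vert\nabla g(F_\tau(x))\vert^r\,dx\,d\tau .
$$

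The key step, and the expected main obstacle, is the change of variables $y=F_\tau(x)$. Each $F_\tau$ is the map $\Phi_{\tau z}$ from the proof of Lemma \ref{lem_adjoint_Lip}. Under $\varepsilon L_\rho\leq\frac12$ it is a bi-Lipschitz homeomorphism of $\Omega$ onto itself: it equals the identity on $\partial\Omega$, is a $\frac12$-Lipschitz perturbation of the identity, and the degree argument gives surjectivity. Its Jacobian satisfies $\det DF_\tau=1-\tau\varepsilon\langle z,\nabla\rho\rangle\geq\frac12$, so
$$
\int_\Omega\vert\nabla g(F_\tau(x))\vert^r\,dx\leq 2\Vert\nabla g\Vert_{L^r}^r ,
$$
which yields the claim with $C_{r,\eta,\rho}=2^{1/r}\Vert\rho\Vert_\infty$. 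Since $\mathcal{H}_\varepsilon$ is uniformly bounded on $L^r$ by Lemma \ref{lem_Lr-bounds}, the density step passes to all $g\in W^{1,r}(\Omega)$, using density of $\mathscr{C}^1(\overline\Omega)$ in $W^{1,r}$ on Lipschitz domains.

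For $r=\infty$, $g\in W^{1,\infty}(\Omega)$ is locally Lipschitz, and the segment lies in a ball inside $\Omega$. Therefore $\vert g(\Phi_z(x))-g(x)\vert\leq\varepsilon\rho(x)\Vert\nabla g\Vert_\infty$ pointwise, which gives \eqref{eq:Heps-rate} directly. The statement about uniform convergence follows because $\mathcal{H}_\varepsilon g$ and $g$ are continuous on $\Omega$ by Lemma \ref{lem_preserv_value_boundary}, so the $L^\infty$ bound is a sup bound.

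For the last assertion, take $g\in L^r$ with $r<\infty$ and approximate it by $g_j\in\mathscr{C}^1(\overline\Omega)$. Then
$$
\Vert\mathcal{H}_\varepsilon g-g\Vert_{L^r}\leq(\sup_\varepsilon\Vert\mathcal{H}_\varepsilon\Vert_{L(L^r)}+1)\Vert g-g_j\Vert_{L^r}+C\varepsilon\Vert\nabla g_j\Vert_{L^r} ,
$$
using \eqref{eq:Lq-bound-Schur_not_eps}. Letting first $\varepsilon\to0$ and then $j\to\infty$ gives the result.
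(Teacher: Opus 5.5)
Your proof is correct, but its key $L^r$ step differs from the paper's.

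The paper does not fix the direction $z$. It keeps the $\eta$-average inside and derives the pointwise bound $\vert\mathcal{H}_\varepsilon g(x)-g(x)\vert\leq\varepsilon\rho(x)\int_0^1 (T_{\varepsilon\tau,0}\vert\nabla g\vert)(x)\,d\tau$. Here $T_{\varepsilon\tau,0}$ is just the variable-step mollifier with step $\varepsilon\tau$. It then concludes with the Schur-test bound of Lemma~\ref{lem:schur-weight}, which is uniform in $\tau$. That argument only uses the local comparability of $\rho(x)$ and $\rho(x')$ on the support of the kernel, so no global property of the maps $x\mapsto x-\varepsilon\tau\rho(x)z$ is needed.

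You instead apply Minkowski in $z$ and change variables along each $F_\tau=\Phi_{\tau z}$, exactly as in Lemma~\ref{lem_CV_Hepsstar}. This requires $F_\tau$ to be injective, with $F_\tau(\Omega)\subset\Omega$ and $\det DF_\tau\geq\frac12$; surjectivity and the degree argument are not actually used. In exchange you get the cleaner constant $2^{1/r}\Vert\rho\Vert_{L^\infty}$, which is independent of $\eta$ and $n$. Your direct pointwise treatment of $r=\infty$ and your density arguments are fine.

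One point to tighten concerns injectivity. On a non-convex domain, the phrase ``$\frac12$-Lipschitz perturbation of the identity'' does not by itself give injectivity, because $\vert\rho(x_1)-\rho(x_2)\vert\leq L_\rho\Vert x_1-x_2\Vert$ can fail for distant points. Injectivity still holds, by the following argument:
\begin{itemize}
\item Suppose $F_\tau(x_1)=F_\tau(x_2)=y$. Then $x_i=y+t_iz$ with $t_i=\tau\varepsilon\rho(x_i)\geq0$, so both points lie on one ray from $y$.
\item Say $t_1\leq t_2$. Then both points lie on the segment $[y,x_2]\subset\overline{B}(x_2,\varepsilon\rho(x_2))\subset\Omega$.
\item Along this segment, the function $t\mapsto t-\tau\varepsilon\rho(y+tz)$ has derivative at least $1-\varepsilon L_\rho\geq\frac12$ and vanishes at both $t_1$ and $t_2$. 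Hence $t_1=t_2$, so $x_1=x_2$.
\end{itemize}
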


\begin{proof}
For $x\in\Omega$, by \eqref{def_Teps1}, we have
$$
\mathcal{H}_\varepsilon g(x)-g(x)
=\int_{\mathsf{B}_1}\eta(x') \big(g(x-\varepsilon\rho(x)x')-g(x)\big)\,dx' ,
$$
and writing $g(x-\varepsilon\rho(x)x')-g(x) = -\int_0^{\varepsilon\rho(x)} \langle\nabla g(x-tx'),x'\rangle\,dt$, since $\Vert x'\Vert_{\R^n}\leq 1$, we obtain, with the change of variables $t=\varepsilon\rho(x)\tau$ with $\tau\in[0,1]$,
$$
\vert \mathcal{H}_\varepsilon g(x)-g(x)\vert
\leq 
\varepsilon\rho(x) \int_0^1 \int_{\mathsf{B}_1}\eta(x') \Vert \nabla g(x-\varepsilon\tau\rho(x) x')\Vert_{\R^n} \,dx' \,d\tau 
= \varepsilon\rho(x) \int_0^1 (T_{\varepsilon\tau,0}\Vert\nabla g\Vert_{\R^n} )(x)\, d\tau
$$
where $T_{\varepsilon\tau,0}$ is defined by \eqref{def_Ts}. We infer \eqref{eq:Heps-rate} from \eqref{schur-weight_no_eps} in Lemma \ref{lem:schur-weight}.

Let us prove the last statement, by a density argument. By \eqref{eq:Lq-bound-Schur_not_eps} in Lemma \ref{lem_Lr-bounds}, $\mathcal{H}_\varepsilon$ is bounded on $L^r(\Omega)$, uniformly with respect to $\varepsilon$ small enough. Let $g\in L^r(\Omega)$. Let $(g_k)_{k\in\N}\subset W^{1,r}(\Omega)$ such that $g_k\to g$ in $L^r(\Omega)$ as $k\to +\infty$. Then
\begin{align*}
\Vert \mathcal{H}_\varepsilon g-g\Vert_{L^r}
&\leq \Vert \mathcal{H}_\varepsilon(g-g_k)\Vert_{L^r} + \Vert \mathcal{H}_\varepsilon g_k-g_k\Vert_{L^r} + \Vert g_k-g\Vert_{L^r}\\
&\leq \mathrm{Cst}\,\Vert g-g_k\Vert_{L^r} + \Vert \mathcal{H}_\varepsilon g_k-g_k\Vert_{L^r} + \Vert g_k-g\Vert_{L^r}.
\end{align*}
Fix $\delta>0$. First choose $k$ large enough so that $\Vert g_k-g\Vert_{L^r}\leq \delta$; then choose $\varepsilon>0$ small enough so that $\Vert \mathcal{H}_\varepsilon g_k-g_k\Vert_{L^r}\leq \delta$ by \eqref{eq:Heps-rate}. For such $k$ and $\varepsilon$, the right-hand side above is bounded by $(\mathrm{Cst}+1)\delta+\delta$. As $\delta>0$ is arbitrary, this implies $\Vert \mathcal{H}_\varepsilon g-g\Vert_{L^r}\to 0$ as $\varepsilon\to 0$. 
\end{proof}

\begin{lemma}\label{lem_CV_deriv_Heps}
For any $r\in[1,+\infty]$ and any $\alpha\in\N^n$ with $\vert\alpha\vert=m\in\{1,\ldots,q\}$, there exists $C_{\alpha,r,\eta,\rho}>0$ such that
\begin{equation}\label{eq:Heps-deriv-rate}
\Vert D^\alpha\mathcal{H}_\varepsilon g-D^\alpha g\Vert_{L^r}
\leq C_{\alpha,r,\eta,\rho}\,\varepsilon\,\Vert g\Vert_{W^{m+1,r}} 
\qquad \forall g\in W^{m+1,r}(\Omega) \quad \forall \varepsilon\in\big(0,\inf\big(\tfrac{1}{1+L_\rho},\tfrac{1}{2L_\rho}\big)\big].
\end{equation}
%
In addition, given any $r\in[1,+\infty)$ and $g\in W^{m,r}(\Omega)$, one has $D^\alpha\mathcal{H}_\varepsilon g \to D^\alpha g$ in $L^r(\Omega)$ as $\varepsilon\to 0$.
\end{lemma}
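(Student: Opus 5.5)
Plan: decompose via the commutator identity \eqref{crochetDalphaHeps} of Lemma \ref{lem_commutator}. For $g\in W^{m+1,r}(\Omega)$ and $\vert\alpha\vert=m$, write
$$
D^\alpha\mathcal{H}_\varepsilon g-D^\alpha g=\big(\mathcal{H}_\varepsilon D^\alpha g-D^\alpha g\big)+[D^\alpha,\mathcal{H}_\varepsilon]g .
$$
The first term is handled by Lemma \ref{lem_CV_Heps} applied to $h=D^\alpha g\in W^{1,r}(\Omega)$, giving $\Vert \mathcal{H}_\varepsilon D^\alpha g-D^\alpha g\Vert_{L^r}\leq C_{r,\eta,\rho}\,\varepsilon\Vert\nabla D^\alpha g\Vert_{L^r}\leq C\varepsilon\Vert g\Vert_{W^{m+1,r}}$. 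The second term is handled by \eqref{eq:comm-Lp}, valid since $m\leq q$ and $\varepsilon L_\rho\leq\frac12$ in the stated range: $\Vert[D^\alpha,\mathcal{H}_\varepsilon]g\Vert_{L^r}\leq C_{\alpha,\eta,\rho}\,\varepsilon\Vert g\Vert_{W^{m,r}}\leq C\varepsilon\Vert g\Vert_{W^{m+1,r}}$. Summing yields \eqref{eq:Heps-deriv-rate}. The only care point is that Lemma \ref{lem_commutator} requires $g\in W^{m,r}$ and $m\leq q$, which holds; the constant depends on derivatives of $\rho$ up to order $m$, hence uses $\rho\in\mathscr{C}^q$.

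For the qualitative statement ($r<\infty$, $g\in W^{m,r}$ only), I would use the same decomposition. The first term $\mathcal{H}_\varepsilon D^\alpha g-D^\alpha g\to0$ in $L^r$ by the last part of Lemma \ref{lem_CV_Heps} applied to $D^\alpha g\in L^r$. The commutator term is $O(\varepsilon)\Vert g\Vert_{W^{m,r}}$ by \eqref{eq:comm-Lp} directly, so it tends to $0$. Alternatively, a density argument works: the operators $g\mapsto D^\alpha\mathcal{H}_\varepsilon g$ are uniformly bounded from $W^{m,r}$ to $L^r$ by Corollary \ref{H_eps_Wmr}, smooth functions (or $W^{m+1,r}$) are dense in $W^{m,r}(\Omega)$ for Lipschitz $\Omega$ and $r<\infty$, and on the dense class the rate just proved gives convergence; the $3\delta$ argument as in Lemma \ref{lem_CV_Heps} concludes.

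I expect no real obstacle; the only subtle step is checking that \eqref{eq:comm-Lp} holds with a $W^{m,r}$ (not $W^{m+1,r}$) norm and uniform constants on the stated $\varepsilon$-range, which is exactly what Lemma \ref{lem_commutator} and the Schur bound of Lemma \ref{lem:schur-weight} provide.
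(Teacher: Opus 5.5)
Your proposal is correct and matches the paper's proof. It uses the same splitting $D^\alpha\mathcal{H}_\varepsilon g-D^\alpha g=[D^\alpha,\mathcal{H}_\varepsilon]g+(\mathcal{H}_\varepsilon D^\alpha g-D^\alpha g)$, with \eqref{eq:comm-Lp} for the commutator and Lemma \ref{lem_CV_Heps} applied to $D^\alpha g$ for the remaining term. For the qualitative part the paper only says ``by density, as for Lemma \ref{lem_CV_Heps}''; your direct variant is equally valid and slightly shorter. There the commutator is $\mathrm{O}(\varepsilon)\Vert g\Vert_{W^{m,r}}$, and $\mathcal{H}_\varepsilon D^\alpha g\to D^\alpha g$ in $L^r$ by the last part of Lemma \ref{lem_CV_Heps}, which already contains the density step.
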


\begin{proof}
Writing $D^\alpha\mathcal{H}_\varepsilon g - D^\alpha g = [D^\alpha,\mathcal{H}_\varepsilon]g + \big(\mathcal{H}_\varepsilon D^\alpha g - D^\alpha g\big)$, the first term at the right-hand side is tackled by the estimate \eqref{eq:comm-Lp} of Lemma \ref{lem_commutator}. The second term is estimated by applying Lemma \ref{lem_CV_Heps} to $D^\alpha g$. We obtain \eqref{eq:Heps-deriv-rate}. 
The last statement of the lemma is proved by density, as for Lemma \ref{lem_CV_Heps}.
\end{proof}

\section{A general Schur test}\label{app_Schur}
In this appendix, we recall a general Schur-type lemma for integral operators on $L^r$, $1\leq r\leq +\infty$, which is used several times in Appendix \ref{app_convolution} and in Sections \ref{sec_general_class_quasilinear} and \ref{sec_main_result}.
This lemma is sometimes called Young's inequality for integral operators (see, e.g., \cite[Theorem 0.3.1]{Sogge_book}). We recall a proof for completeness.

\begin{lemma}\label{lem:Schur-general}
Let $(E,\mu)$ be a $\sigma$-finite measure space and let $K:E\times E\to\R$ be a measurable kernel. Define the integral operator $T$ by
$$
(Tf)(x)=\int_E K(x,y)\,f(y)\,d\mu(y)
$$
whenever the integral is well defined. Assume that
\begin{equation}\label{eq:Schur-row-col}
M_1 = \sup_{x\in E}\int_E \vert K(x,y)\vert\,d\mu(y) < +\infty,\qquad
M_2 = \sup_{y\in E}\int_E \vert K(x,y)\vert\,d\mu(x) < +\infty.
\end{equation}
Then, for every $r\in[1,+\infty]$, the operator $T$ extends boundedly to $L^r(E,\mu)$ and
\begin{equation}\label{eq:Schur-norm}
\Vert T\Vert_{L(L^r(E))} \leq M_1^{1-\frac{1}{r}}\,M_2^{\frac{1}{r}},
\end{equation}
with the convention $1/r=0$ when $r=+\infty$.
Actually, for $r=1$ (resp., for $r=+\infty$) only the assumption $M_2<+\infty$ (resp., $M_1<+\infty$) is required.
\end{lemma}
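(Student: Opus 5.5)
We prove the generalized Schur test (Lemma \ref{lem:Schur-general}) by treating the endpoint exponents directly and obtaining the intermediate range through Hölder's inequality with the kernel split as $\vert K\vert=\vert K\vert^{1-1/r}\vert K\vert^{1/r}$. Interpolation theorems are not needed.

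\textbf{Case $r=+\infty$.} For $f\in L^\infty$ and every $x$,
$$\vert Tf(x)\vert\leq\int_E\vert K(x,y)\vert\,\vert f(y)\vert\,d\mu(y)\leq M_1\Vert f\Vert_{L^\infty}.$$
Hence $\Vert T\Vert\leq M_1$, and only $M_1<+\infty$ is used. Measurability of $Tf$ follows from Fubini--Tonelli, since $K$ is jointly measurable and $\mu$ is $\sigma$-finite.

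\textbf{Case $r=1$.} By Tonelli,
$$\int_E\vert Tf(x)\vert\,d\mu(x)\leq\int_E\vert f(y)\vert\int_E\vert K(x,y)\vert\,d\mu(x)\,d\mu(y)\leq M_2\Vert f\Vert_{L^1}.$$
Here only $M_2<+\infty$ is used. The same computation shows that the integral defining $Tf(x)$ converges absolutely for a.e.\ $x$, so $Tf$ is well defined.

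\textbf{Case $1<r<+\infty$.} Let $r'$ be the conjugate exponent. Apply Hölder in $y$ to the product $\vert K\vert^{1/r'}\cdot\vert K\vert^{1/r}\vert f\vert$:
$$\vert Tf(x)\vert\leq\Big(\int_E\vert K(x,y)\vert\,d\mu(y)\Big)^{1/r'}\Big(\int_E\vert K(x,y)\vert\,\vert f(y)\vert^r\,d\mu(y)\Big)^{1/r}.$$
The first factor is at most $M_1^{1/r'}=M_1^{1-1/r}$. Raise to the power $r$, integrate in $x$, and use Tonelli to swap the integrals:
$$\Vert Tf\Vert_{L^r}^r\leq M_1^{r-1}\int_E\vert f(y)\vert^r\int_E\vert K(x,y)\vert\,d\mu(x)\,d\mu(y)\leq M_1^{r-1}M_2\Vert f\Vert_{L^r}^r.$$
Taking $r$-th roots gives \eqref{eq:Schur-norm}.

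\textbf{Where care is needed.} The only delicate point is ensuring $Tf$ is well defined a.e.\ for $f\in L^r$ before estimating it. The displayed bound applied to $\vert f\vert$ shows that $\int_E\vert K(x,y)\vert\,\vert f(y)\vert\,d\mu(y)$ is finite for a.e.\ $x$. So the integral defining $Tf(x)$ converges absolutely a.e., and $T$ is a bounded operator on all of $L^r(E,\mu)$ (consistent with "extends boundedly" in the statement).
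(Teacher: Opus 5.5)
Your proof is correct and follows the paper's argument essentially verbatim: you treat $r=1$ by Tonelli, $r=+\infty$ by a direct bound, and $1<r<+\infty$ by H\"older with the splitting $\vert K\vert=\vert K\vert^{1-1/r}\vert K\vert^{1/r}$ followed by Tonelli. Your added remarks on the measurability of $Tf$ and on the a.e.\ absolute convergence of its defining integral are a welcome precision that the paper leaves implicit.
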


\begin{proof}
Without loss of generality, we assume that $K\geq 0$ (otherwise, replace $K$ by $\vert K\vert$). 

Let us first treat the case $r=1$.
For $f\in L^1(E,\mu)$,
\begin{align*}
\Vert Tf\Vert_{L^1}
&= \int_E \left\vert\int_E K(x,y)\,f(y)\,d\mu(y)\right\vert\,d\mu(x)
\leq \int_E\int_E K(x,y)\,\vert f(y)\vert\,d\mu(y)\,d\mu(x)\\
&= \int_E\left(\int_E K(x,y)\,d\mu(x)\right)\vert f(y)\vert\,d\mu(y)
\leq M_2\,\Vert f\Vert_{L^1}.
\end{align*}
Thus $\Vert T\Vert_{L(L^1)}\leq M_2$.

Let us now treat the case $r=+\infty$. For $f\in L^\infty(E,\mu)$ and $\mu$-almost every $x\in E$,
$$
\vert Tf(x)\vert \leq \int_E K(x,y)\,\vert f(y)\vert\,d\mu(y)
\leq \Vert f\Vert_{L^\infty}\int_E K(x,y)\,d\mu(y)\leq M_1\,\Vert f\Vert_{L^\infty}.
$$
Hence $\Vert T\Vert_{L(L^\infty)}\leq M_1$.

Let us then treat the case $1<r<+\infty$.
For $\mu$-almost every $x\in E$ fixed, by H\"older's inequality,
\begin{equation*}
\vert Tf(x)\vert^r
=\left\vert \int_E K(x,y)\,f(y)\,d\mu(y)\right\vert^r
\leq \left(\int_E K(x,y)\,d\mu(y)\right)^{r-1}\int_E K(x,y)\,\vert f(y)\vert^r\,d\mu(y).
\end{equation*}
%
Integrating 
over $x\in E$ and using the bounds in \eqref{eq:Schur-row-col} gives
\begin{multline*}
\int_E \vert Tf(x)\vert^r\,d\mu(x)
\leq \int_E \left(\int_E K(x,y)\,d\mu(y)\right)^{r-1}
\int_E K(x,y)\,\vert f(y)\vert^r\,d\mu(y)\,d\mu(x)\\
\leq M_1^{r-1}\int_E\int_E K(x,y)\,\vert f(y)\vert^r\,d\mu(y)\,d\mu(x) 
\leq M_1^{r-1}M_2\int_E \vert f(y)\vert^r\,d\mu(y)
\end{multline*}
where we have used the Fubini theorem to obtain the latter inequality, which gives \eqref{eq:Schur-norm}.
\end{proof}

\section{Discretization hypotheses}\label{app_discretization_technical}

This appendix collects technical material related to Section \ref{sec_examples_discretizations}.
Its purpose is twofold: first, to explain how Assumption \ref{H_discretization} reduces to properties of a projector or quasi-interpolation operator; second, to record representative examples and limitations.

\subsection{Generalities}
\paragraph{A projector reduction principle.}
The following observation explains why most practical discretizations are naturally expressed in terms of a family of projectors.

\begin{lemma}\label{lem_projector_reduction}
Let $V_N\subset Z$ be a finite-dimensional subspace and let $Q_N:X\to V_N$ be a linear map such that $Q_N u=u$ for every $u\in V_N$.
Define $P_N=Q_N:X\to V_N$ and let $R_N:V_N\hookrightarrow Z$ be the inclusion map.
Then $P_N R_N=\mathrm{id}_{V_N}$.
Moreover, the norms induced on $V_N$ by Assumption \ref{H_discretization} are simply the restrictions of the continuous norms: $\Vert u\Vert_{X_N}=\Vert u\Vert_X$ and $\Vert u\Vert_{Z_N}=\Vert u\Vert_Z$ for every $u\in V_N$.
Therefore, Assumption \ref{H_discretization} is equivalent to the uniform boundedness of $Q_N$ on $X$ and on $Z$, together with the approximation estimate in $X$, the strong convergence in $Z$, and the approximation estimate of $Q_N$ on the output space $Y$.
\end{lemma}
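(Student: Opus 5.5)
The statement to prove is Lemma \ref{lem_projector_reduction}. Since it is purely algebraic, the plan is to unwind the definitions of Assumption \ref{H_discretization} for the specific pair $(P_N,R_N)=(Q_N,\iota)$, where $\iota:V_N\hookrightarrow Z$ is the inclusion.

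\textbf{Step 1: the identity $P_NR_N=\mathrm{id}_{V_N}$.} For $u\in V_N$ we have $P_NR_Nu=Q_N\iota u=Q_Nu=u$, by the projection hypothesis on $Q_N$. Hence $R_NP_N=\iota Q_N$, which is $Q_N$ viewed as an endomorphism of $X$ or of $Z$. So the projector called $Q_N$ in Assumption \ref{H_discretization} is exactly the given map $Q_N$, composed with the inclusion.

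\textbf{Step 2: the induced norms.} By definition $\Vert u\Vert_{X_N}=\Vert R_Nu\Vert_X=\Vert u\Vert_X$, and likewise $\Vert u\Vert_{Z_N}=\Vert u\Vert_Z$. So the discrete norms are simply the restrictions of the continuous ones.

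\textbf{Step 3: the equivalence, checked condition by condition.}
\begin{itemize}
\item The membership $R_N\in L(V_N,Z)$ is automatic, since $V_N$ is finite-dimensional.
\item The membership $P_N\in L(X,V_N)$ is equivalent to boundedness of $Q_N:X\to X$, because all norms on $V_N$ are equivalent.
\item The bounds \eqref{stab_PX_PZ} are literally the uniform bounds on $\Vert Q_N\Vert_{L(X)}$ and $\Vert Q_N\Vert_{L(Z)}$. These are also the norms $\Vert P_N\Vert_{L(X,X_N)}$ and $\Vert P_N\Vert_{L(Z,Z_N)}$, by Step 2.
\item The estimates \eqref{CV_X_Z}, \eqref{CV_X_Y} and \eqref{CV_Z} involve only $Q_N$, so they read identically under both formulations.
\end{itemize}

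The only point needing care is the meaning of $Q_N\in L(Z)$. Because $Q_N$ takes values in $V_N\subset Z$, its restriction to $Z$ maps into $Z$. Boundedness $Z\to Z$ then follows from the continuous embedding $Z\hookrightarrow X$, the boundedness of $Q_N$ on $X$, and norm equivalence on $V_N$. This gives finiteness for each fixed $N$; uniformity in $N$ is exactly what is assumed.

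There is no real obstacle. The only subtlety is this distinction between per-$N$ boundedness, which is automatic by finite dimensionality, and uniform-in-$N$ boundedness, which is the actual content of the hypotheses in both directions.
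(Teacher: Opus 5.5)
Your proposal is correct. The paper gives no separate proof of this lemma and treats it as an immediate unwinding of the definitions in Assumption \ref{H_discretization}, and your Steps 1--3 are exactly that unwinding, including the point the paper leaves implicit that $Q_N\in L(Z)$ holds automatically for each fixed $N$ (via $Z\hookrightarrow X$ and finite dimensionality), so only uniformity in $N$ carries content.
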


\paragraph{A density-extension lemma.}
The next lemma is convenient when the strong convergence $Q_N z\to z$ in $Z$ is first proved only on a dense subspace.

\begin{lemma}\label{lem_dense_extension_QN}
Let $(Q_N)_{N\in\N^*}$ be a family of linear operators on $Z$ such that $\Vert Q_N\Vert_{L(Z)}$ is uniformly bounded in $N$. Assume that there exists a dense subspace $Z_0\subset Z$ such that $Q_N z\to z$ in $Z$ for every $z\in Z_0$.
Then $Q_N z\to z$ in $Z$ for every $z\in Z$.
\end{lemma}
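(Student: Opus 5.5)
This is the classical Banach--Steinhaus density argument, an $\varepsilon/3$ splitting. Let $K=\sup_N\Vert Q_N\Vert_{L(Z)}<+\infty$, which is finite by assumption.

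Fix $z\in Z$ and $\delta>0$. By density of $Z_0$ in $Z$, pick $z_0\in Z_0$ with $\Vert z-z_0\Vert_Z\leq\delta$. Then write
$$
Q_Nz-z = Q_N(z-z_0) + (Q_Nz_0-z_0) + (z_0-z).
$$
Bound the three pieces separately in $Z$:
\begin{itemize}
\item the first by $\Vert Q_N\Vert_{L(Z)}\Vert z-z_0\Vert_Z\leq K\delta$, uniformly in $N$;
\item the third by $\delta$;
\item the middle one tends to $0$ as $N\to+\infty$ by the hypothesis on $Z_0$, so there is $N_1$ such that it is at most $\delta$ for $N\geq N_1$.
\end{itemize}
Hence $\Vert Q_Nz-z\Vert_Z\leq (K+2)\delta$ for all $N\geq N_1$. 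Since $\delta$ is arbitrary, $\limsup_N\Vert Q_Nz-z\Vert_Z=0$, which is the claim.

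There is no real obstacle. The only point to watch is that the uniform bound on $\Vert Q_N\Vert_{L(Z)}$ is what allows the first term to be controlled independently of $N$. Without it, the density of $Z_0$ alone would not suffice. In the application, this bound is exactly the $Z$-part of \eqref{stab_PX_PZ}, so the lemma yields \eqref{CV_Z} once convergence has been checked on a dense subspace, for example smooth functions.
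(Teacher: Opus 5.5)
Your proof is correct and is the same argument as the paper's: insert an approximant $z_m\in Z_0$, bound $\Vert Q_N(z-z_m)\Vert_Z+\Vert z_m-z\Vert_Z$ by $(\sup_N\Vert Q_N\Vert_{L(Z)}+1)\Vert z-z_m\Vert_Z$ uniformly in $N$, then let $N\to+\infty$ on the middle term. Your remark that the uniform bound is what makes this work, and that it is supplied by the $Z$-part of \eqref{stab_PX_PZ}, matches how the paper uses the lemma.
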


\begin{proof}
Let $z\in Z$ and let $(z_m)_m\subset Z_0$ converge to $z$ in $Z$.
For every $N$ and every $m$,
\begin{align*}
\Vert Q_N z-z\Vert_Z
&\leq \Vert Q_N(z-z_m)\Vert_Z + \Vert Q_N z_m-z_m\Vert_Z + \Vert z_m-z\Vert_Z \\
&\leq \big(\sup_N \Vert Q_N\Vert_{L(Z)}+1\big)\Vert z-z_m\Vert_Z + \Vert Q_N z_m-z_m\Vert_Z.
\end{align*}
First choose $m$ so large that the first term is arbitrarily small, then let $N\to+\infty$ and use the convergence on $Z_0$.
\end{proof}

\paragraph{A local quasi-interpolation criterion.}
The next proposition isolates the properties that are really needed in local schemes such as finite volumes with smooth reconstruction, finite elements, splines, or meshfree partition-of-unity methods.

\begin{lemma}\label{lem_local_quasi_interpolation}
Assume $X=L^2(\Omega,\R^d)$ and $Z=H^s(\Omega,\R^d)$ with $s>\frac{n}{2}+1$, and let $Y=W^{1,\infty}(\Omega,\R^d)\cap L^2(\Omega,\R^d)$ endowed with its natural norm.
Let $V_N\subset Z$ be a finite-dimensional subspace and let $Q_N:X\to V_N$ be a projector.
Assume that there exist $C_{\mathrm{stab}},C_Y>0$ and $\gamma>0$, independent of $N$, such that
$\Vert Q_N\Vert_{L(X)}\leq C_{\mathrm{stab}}$ and $\Vert Q_N\Vert_{L(Z)}\leq C_{\mathrm{stab}}$, 
and
\begin{equation}\label{local_QN_Lip}
\Vert Q_N g-g\Vert_X\leq \frac{C_Y}{N^\gamma}\Vert g\Vert_Y
\qquad
\forall g\in Y.
\end{equation}
Assume moreover that there exists a dense subspace $Z_0\subset Z$ such that
$Q_N z\to z$ in $Z$ for every $z\in Z_0$.
Then the pair $(P_N,R_N)$ associated with $Q_N$ by Lemma \ref{lem_projector_reduction} satisfies Assumption \ref{H_discretization}. 
\end{lemma}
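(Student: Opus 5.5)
The proof reduces to Lemma \ref{lem_projector_reduction}, which identifies Assumption \ref{H_discretization} for the pair $(P_N,R_N)=(Q_N,\text{inclusion})$ with five properties of $Q_N$. These are the identity $P_NR_N=\mathrm{id}_{V_N}$, the stability bounds \eqref{stab_PX_PZ}, the $X$-approximation on $Z$ \eqref{CV_X_Z}, the $X$-approximation on $Y$ \eqref{CV_X_Y}, and the strong convergence \eqref{CV_Z}. The first holds because $Q_N$ is a projector onto $V_N$. The stability bounds are assumed outright. So I will check the remaining three properties one by one, using the same constant $C_{10}$ throughout.

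First, \eqref{CV_X_Y} is exactly the hypothesis \eqref{local_QN_Lip}, with $C_{10}\geq C_Y$.

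Second, \eqref{CV_X_Z} will follow from \eqref{CV_X_Y} through the embedding $Z\hookrightarrow Y$. Since $s>\frac n2+1$, the Sobolev embedding gives $H^s(\Omega)\hookrightarrow W^{1,\infty}(\Omega)$; this uses that $\Omega$ is a bounded Lipschitz domain, so an extension operator is available. Trivially $H^s\hookrightarrow L^2$ as well. Hence $\Vert y\Vert_Y\leq C_{\mathrm{emb}}\Vert y\Vert_Z$ for all $y\in Z$. Combining with \eqref{local_QN_Lip} gives $\Vert Q_Ny-y\Vert_X\leq C_YC_{\mathrm{emb}}N^{-\gamma}\Vert y\Vert_Z$. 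Taking $C_{10}=C_Y\max(1,C_{\mathrm{emb}})$ covers both estimates.

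Third, \eqref{CV_Z} is obtained by applying Lemma \ref{lem_dense_extension_QN} to the family $(Q_N)$ restricted to $Z$. That restriction is uniformly bounded in $L(Z)$ by assumption, and it converges strongly on the dense subspace $Z_0$ by assumption, so it converges strongly on all of $Z$.

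The only step that needs real care is the embedding $Z\hookrightarrow Y$, specifically its $W^{1,\infty}$ part on a merely Lipschitz domain. I will cite the standard Sobolev embedding through a bounded extension operator rather than reprove it. Apart from that, one bookkeeping point remains: $Q_N$ must map $Z$ into $V_N\subset Z$, which holds since $V_N\subset Z$, and its bound in $L(Z)$ is part of the hypotheses.
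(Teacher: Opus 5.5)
Your proposal is correct and follows essentially the same route as the paper: reduce to Lemma \ref{lem_projector_reduction}, read \eqref{CV_X_Y} off \eqref{local_QN_Lip}, derive \eqref{CV_X_Z} from the Sobolev embedding $Z\hookrightarrow Y$ (valid since $s>\frac n2+1$), and obtain \eqref{CV_Z} from Lemma \ref{lem_dense_extension_QN}. Your explicit choice $C_{10}=C_Y\max(1,C_{\mathrm{emb}})$ and the remark about the extension operator on a Lipschitz domain are just slightly more detailed bookkeeping than the paper gives.
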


\begin{proof}
By Lemma \ref{lem_projector_reduction}, it remains to check \eqref{CV_X_Z}, \eqref{CV_X_Y} and \eqref{CV_Z}.
The estimate \eqref{CV_X_Y} is exactly the assumption \eqref{local_QN_Lip}.
Since $s>\frac{n}{2}+1$, the Sobolev embedding gives
$\Vert z\Vert_Y\leq C_{\mathrm{emb}}\Vert z\Vert_Z$ for every $z\in Z$.
Applying \eqref{local_QN_Lip} to $g=z\in Z$, we obtain
$\Vert Q_N z-z\Vert_X \leq \frac{C_Y C_{\mathrm{emb}}}{N^\gamma}\Vert z\Vert_Z$,
which is exactly \eqref{CV_X_Z}, up to redefining $C_{10}$.
The strong convergence \eqref{CV_Z} follows from Lemma \ref{lem_dense_extension_QN}.
\end{proof}

\begin{remark}\label{rem_local_quasi_interpolation}
Lemma \ref{lem_local_quasi_interpolation} shows that, in the PDE regime relevant to Sections \ref{sec_general_class_quasilinear} and \ref{sec_main_result}, the genuinely important estimate is the approximation of Lipschitz or $W^{1,\infty}$ outputs in the pivot norm $X$.
This is why Assumption \ref{H_Y} is the right one for the main approximation theorem.
\end{remark}

\subsection{Finite volumes with smooth blob reconstruction}
We now explain how the previous criterion applies to corrected finite-volume or particle-in-cell type reconstructions.

Let $(\Omega_i^N)_{1\leq i\leq N}$ be a shape-regular partition\footnote{Shape-regular means that the cells have uniformly comparable diameters and volumes, satisfy a uniform interior-ball condition, and have uniformly bounded overlap of neighboring patches after enlargement by a factor comparable to the meshsize. This is a standard notion for finite elements and finite volumes, see, e.g., \cite[Chapter 4]{BrennerScott} and \cite[Chapter 3]{EymardGallouetHerbin}.} of $\Omega$ with meshsize $h_N$ satisfying $h_N\leq C_\Omega N^{-\gamma}$.
Choose points $x_i^N\in\Omega_i^N$.
Assume that one has a family of smooth basis functions $(\theta_i^N)_{1\leq i\leq N}\subset C^\infty(\overline\Omega)$ such that $\mathrm{supp}(\theta_i^N)\subset B(\Omega_i^N,C h_N)$ and $\Vert D^\alpha \theta_i^N\Vert_{L^\infty(\Omega)}\leq C_\alpha h_N^{-\vert\alpha\vert}$ for every multi-index $\alpha$ with $\vert\alpha\vert\leq s$, the overlap is uniformly bounded in $N$, the family reproduces constants, i.e., $\sum_{i=1}^N\theta_i^N(x)=1$ for every $x\in\Omega$, and it is biorthogonal to cell averages, i.e., $\frac{1}{\vert\Omega_j^N\vert}\int_{\Omega_j^N}\theta_i^N(x)\,dx=\delta_{ij}$.
Define
$$
Q_N y=\sum_{i=1}^N \Big(\frac{1}{\vert\Omega_i^N\vert}\int_{\Omega_i^N} y(x)\,dx\Big)\theta_i^N.
$$
Then $Q_N$ is a projector onto the reconstruction space $V_N=\mathrm{span}\{\theta_1^N,\dots,\theta_N^N\}$, and we have the following result.

\begin{lemma}\label{lem_blob_scheme_verification}
The family $(Q_N)_N$ satisfies \eqref{local_QN_Lip} and is uniformly bounded on $L^2(\Omega)$. If, in addition, $(Q_N)_N$ is uniformly bounded on $H^s(\Omega)$ and converges strongly in $H^s(\Omega)$ on a dense subspace of $H^s(\Omega)$, then Assumption \ref{H_discretization} holds through Lemma \ref{lem_local_quasi_interpolation}.
\end{lemma}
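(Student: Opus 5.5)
The proof splits into an $L^2$-stability bound and an approximation estimate for Lipschitz $g$; the second sentence of the lemma is then a direct application of Lemma \ref{lem_local_quasi_interpolation}. Throughout, write $\bar g_i=\frac{1}{\vert\Omega_i^N\vert}\int_{\Omega_i^N}g$, so that $Q_Ng=\sum_i\bar g_i\theta_i^N$. Two geometric facts are used repeatedly:
\begin{itemize}
\item each $\theta_i^N$ has $\Vert\theta_i^N\Vert_{L^\infty}\leq C_0$ and is supported in the patch $\omega_i=B(\Omega_i^N,Ch_N)$;
\item by the bounded overlap, each point of $\Omega$ lies in at most $K$ patches, with $K$ independent of $N$.
\end{itemize}

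\textbf{$L^2$ stability.} Fix $x\in\Omega$. Only the indices $i$ with $x\in\omega_i$ contribute to $Q_Ng(x)$, and there are at most $K$ of them. By Cauchy--Schwarz over this finite set,
$$
\vert Q_Ng(x)\vert^2\leq K C_0^2\sum_{i:\,x\in\omega_i}\vert\bar g_i\vert^2 .
$$
Jensen's inequality gives $\vert\bar g_i\vert^2\leq \vert\Omega_i^N\vert^{-1}\Vert g\Vert_{L^2(\Omega_i^N)}^2$. Integrating over $x$ produces the factor $\vert\omega_i\cap\Omega\vert$ for each $i$. Shape regularity gives $\vert\omega_i\vert\leq Ch_N^n\leq C'\vert\Omega_i^N\vert$, so
$$
\Vert Q_Ng\Vert_{L^2}^2\leq C\sum_i\Vert g\Vert_{L^2(\Omega_i^N)}^2=C\Vert g\Vert_{L^2}^2 ,
$$
the last equality holding because the cells partition $\Omega$. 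Componentwise application handles $\R^d$-valued $g$.

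\textbf{Approximation estimate \eqref{local_QN_Lip}.} Let $g\in Y$ be Lipschitz. Partition of unity gives $g(x)=\sum_i g(x)\theta_i^N(x)$, hence
$$
Q_Ng(x)-g(x)=\sum_{i:\,x\in\omega_i}\big(\bar g_i-g(x)\big)\theta_i^N(x).
$$
Fix a contributing index $i$. Every $y\in\Omega_i^N$ satisfies $\vert y-x\vert\leq \mathrm{diam}(\Omega_i^N)+Ch_N\leq C''h_N$, because $x\in\omega_i$. Averaging $\vert g(y)-g(x)\vert\leq\Lip(g)\vert y-x\vert$ over $y\in\Omega_i^N$ therefore gives $\vert\bar g_i-g(x)\vert\leq C''h_N\Lip(g)$.

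This step is where I expect the main obstacle. The difference quotient over the segment from $x$ to $y$ is controlled by $\Vert\nabla g\Vert_{L^\infty}$ only if that segment stays in $\Omega$, which can fail when $\Omega$ is not convex. The remedy is to work with the intrinsic Lipschitz constant, defined via the footnote in Lemma \ref{lem_adjoint_Lip}. On a bounded Lipschitz domain this constant is bounded by $C_\Omega\Vert g\Vert_{W^{1,\infty}}$, because the domain is uniformly quasiconvex: any two points can be joined inside $\Omega$ by a path of length at most a constant times their distance. Alternatively, one can extend $g$ to $W^{1,\infty}(\R^n)$ by a bounded Lipschitz extension operator and estimate on $\R^n$.

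Summing over the at most $K$ contributing indices, with $\vert\theta_i^N\vert\leq C_0$, yields
$$
\Vert Q_Ng-g\Vert_{L^\infty}\leq KC_0C''h_N\Lip(g).
$$
Since $\vert\Omega\vert=1$, the $L^2$ norm is dominated by the $L^\infty$ norm, and $h_N\leq C_\Omega N^{-\gamma}$ by assumption. Hence $\Vert Q_Ng-g\Vert_X\leq C_YN^{-\gamma}\Vert g\Vert_Y$, which is \eqref{local_QN_Lip}.

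\textbf{Conclusion.} Under the additional hypotheses of the lemma (uniform $H^s$ boundedness of $Q_N$ and strong $H^s$ convergence on a dense subspace), all the hypotheses of Lemma \ref{lem_local_quasi_interpolation} are met. It remains only to note that $Q_N$ is a projector onto $V_N$. This follows from the biorthogonality condition: $Q_N\theta_j^N=\sum_i\delta_{ij}\theta_i^N=\theta_j^N$ for each $j$. Lemma \ref{lem_local_quasi_interpolation} then gives Assumption \ref{H_discretization}.
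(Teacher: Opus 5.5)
Your proof is correct and follows the paper's own argument: Jensen plus support and bounded overlap for the $L^2$ stability, then partition of unity plus a Lipschitz bound over the $\mathrm{O}(h_N)$-nearby cells for \eqref{local_QN_Lip}, then an appeal to Lemma \ref{lem_local_quasi_interpolation}. You add details the paper leaves implicit: Cauchy--Schwarz over the at most $K$ contributing indices, the quasiconvexity or extension argument for non-convex Lipschitz domains, and the biorthogonality check that $Q_N$ is a projector.
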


\begin{proof}
Let $g\in W^{1,\infty}(\Omega,\R^d)$.
Using the reproduction of constants, for $x\in\Omega$ one has
$$
Q_N g(x)-g(x)
=
\sum_{i=1}^N \theta_i^N(x)
\Big(
\frac{1}{\vert\Omega_i^N\vert}\int_{\Omega_i^N} g(x')\,dx'-g(x)
\Big).
$$
Because of the support condition, only indices $i$ such that $\Omega_i^N$ is at distance $\mathrm{O}(h_N)$ from $x$ contribute to the sum.
Using the Lipschitz regularity of $g$ and the bounded overlap condition, we obtain the pointwise estimate
$\vert Q_N g(x)-g(x)\vert\leq C h_N \Vert g\Vert_{W^{1,\infty}}$.
Taking the $L^2$-norm gives
$$
\Vert Q_N g-g\Vert_{L^2}
\leq
C h_N \Vert g\Vert_{W^{1,\infty}}
\leq
\frac{C}{N^\gamma}\Vert g\Vert_Y.
$$
This proves \eqref{local_QN_Lip}.
The $L^2$-boundedness follows from the local support, bounded overlap, and volume comparability of the shape-regular partition: by Jensen's inequality,
$$
\bigg\vert \frac{1}{\vert\Omega_i^N\vert}\int_{\Omega_i^N}g(x')\,dx'\bigg\vert^2\leq\frac{1}{\vert\Omega_i^N\vert}\int_{\Omega_i^N}\vert g(x')\vert^2\,dx',
$$
and the support and overlap assumptions give
$$
\Vert Q_N g\Vert_{L^2}^2\leq C\sum_i\int_{\Omega_i^N}\vert g(x')\vert^2\,dx'=C\Vert g\Vert_{L^2}^2.
$$
The uniform $H^s$-stability is a standard local quasi-interpolation estimate, see for instance \cite{BabuskaMelenk, BrennerScott}.
Then Lemma \ref{lem_local_quasi_interpolation} applies.
\end{proof}

\begin{remark}
The blob construction above includes, after standard corrections, many smooth finite-volume or particle-in-cell type reconstructions.
It also covers a large class of meshfree partition-of-unity reconstructions and moving least squares schemes.
What is essential is not the exact formula of the basis, but the combination of locality, bounded overlap, reproduction of the sampled moments, and uniform stability in the strong norm.
\end{remark}

\subsection{Finite elements, splines, and meshfree quasi-interpolation}
The same abstract criterion applies to many classical Galerkin-type constructions, provided one chooses a projector or quasi-interpolation operator that is simultaneously stable on $X$ and on $Z$.

\begin{lemma}\label{lem_FE_spline_meshfree}
Assume $X=L^2(\Omega,\R^d)$, $Z=H^s(\Omega,\R^d)$ with $s>\frac{n}{2}+1$, and $Y=W^{1,\infty}(\Omega,\R^d)\cap L^2(\Omega,\R^d)$.
Suppose that $V_N\subset Z$ is either a conforming finite element space on a shape-regular quasi-uniform mesh, or a spline space on a quasi-uniform knot sequence, or a meshfree partition-of-unity or moving least squares reconstruction space.
Assume that there exists a projector or quasi-interpolation operator $Q_N:X\to V_N$ such that
$\Vert Q_N\Vert_{L(X)}+\Vert Q_N\Vert_{L(Z)}\leq C$
uniformly in $N$, and such that
$\Vert Q_N g-g\Vert_X\leq C h_N \Vert g\Vert_Y$ for every $g\in Y$.
Assume moreover that $Q_N z\to z$ in $Z$ on a dense subspace of $Z$.
Then Assumption \ref{H_discretization} holds.
\end{lemma}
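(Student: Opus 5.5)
The plan is to reduce the statement to Lemma \ref{lem_local_quasi_interpolation}, exactly as was done for the blob reconstruction in Lemma \ref{lem_blob_scheme_verification}. First I will use Lemma \ref{lem_projector_reduction} to associate with $Q_N$ the pair $(P_N,R_N)$, where $P_N=Q_N$ and $R_N$ is the inclusion $V_N\hookrightarrow Z$. This requires $Q_N$ to act as the identity on $V_N$. If $Q_N$ is only a quasi-interpolant and not an exact projector, I would note that the standard quasi-interpolants in each of the three families can be chosen to reproduce $V_N$: Scott--Zhang for conforming finite elements, dual-basis quasi-interpolants for splines, and moving least squares with a reproducing property in the meshfree case. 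I would make this a standing convention of the statement, since Assumption \ref{H_discretization} demands $P_NR_N=\mathrm{id}_{V_N}$.

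Next I will check the hypotheses of Lemma \ref{lem_local_quasi_interpolation} one at a time.

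\textbf{Uniform stability.} $\Vert Q_N\Vert_{L(X)}\leq C_{\mathrm{stab}}$ and $\Vert Q_N\Vert_{L(Z)}\leq C_{\mathrm{stab}}$ follow immediately from the assumed bound $\Vert Q_N\Vert_{L(X)}+\Vert Q_N\Vert_{L(Z)}\leq C$.

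\textbf{Approximation on $Y$.} Estimate \eqref{local_QN_Lip} follows from $\Vert Q_Ng-g\Vert_X\leq Ch_N\Vert g\Vert_Y$ once $h_N$ is tied to $N$. For quasi-uniform meshes, knot sequences, or quasi-uniform point clouds in an $n$-dimensional domain, one has $\dim V_N\simeq h_N^{-n}$. Indexing the discretization by $N\simeq\dim V_N$, or simply assuming $h_N\leq C_\Omega N^{-\gamma}$ as in the blob setting, then gives $h_N\leq CN^{-\gamma}$ with $\gamma=1/n$.

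\textbf{Strong convergence.} Strong convergence in $Z$ on a dense subspace is assumed directly.

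Lemma \ref{lem_local_quasi_interpolation} will then give the remaining estimates \eqref{CV_X_Z}, via the Sobolev embedding $Z\hookrightarrow Y$ valid for $s>\frac n2+1$, and \eqref{CV_Z}, via Lemma \ref{lem_dense_extension_QN}. This completes Assumption \ref{H_discretization}.

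The only delicate point is the bookkeeping between the meshsize $h_N$ and the index $N$ of the abstract framework, together with the requirement that $Q_N$ restricted to $V_N$ be the identity. Everything else is a direct citation of the lemmas above. I would therefore state explicitly in the proof that quasi-uniformity gives $h_N\simeq N^{-1/n}$, and I would flag that the $H^s$-stability and the dense-subspace convergence are taken as hypotheses rather than proved. Those two properties are scheme-specific and are established in the standard references (\cite{BrennerScott, BabuskaMelenk}).
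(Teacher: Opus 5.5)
Your proposal is correct and follows the paper's route: the paper's proof is a direct appeal to Lemma \ref{lem_local_quasi_interpolation}, with references for the existence of such operators. Your extra bookkeeping (requiring $Q_N$ to be the identity on $V_N$, and tying $h_N\leq C N^{-\gamma}$) only makes explicit what the paper's statement leaves implicit. One caveat on a side remark: moving least squares approximants are in general not idempotent on their own span, because their shape functions lack the Kronecker property, so for that family the projector property is a genuine added assumption rather than something the standard construction provides.
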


\begin{proof}
This is a direct application of Lemma \ref{lem_local_quasi_interpolation}.
The existence of such quasi-interpolation operators is classical in finite elements and splines (see \cite{BrennerScott, ScottZhang}), and standard in partition-of-unity and meshfree approximations (see \cite{BabuskaMelenk}).
\end{proof}

\begin{remark}
This lemma is intentionally formulated at the level of the projector $Q_N$.
Indeed, for finite elements and splines, several natural choices of projector coexist, and not all of them are simultaneously stable on $L^2$ and on high Sobolev norms.
The abstract framework does not force a particular construction; it only requires the stability and approximation properties that are later used in the convergence proof.
\end{remark}

\subsection{Spectral Galerkin discretizations}
The strong discrete well-posedness assumption \ref{H_Aepsilon_N} is particularly natural in spectral Galerkin settings.

\begin{lemma}\label{lem_spectral_scheme_verification}
Assume that $X$ is a Hilbert space, that $Z\subset X$ is dense, and that $V_N\subset Z$ is a family of finite-dimensional subspaces.
Let $Q_N:X\to V_N$ be the orthogonal projector onto $V_N$.
Assume that $\Vert Q_N\Vert_{L(Z)}\leq C$ uniformly in $N$ and that there exist $C_{10}>0$ and $\gamma>0$ such that
\begin{align*}
\Vert Q_N z-z\Vert_X\leq \frac{C_{10}}{N^\gamma}\Vert z\Vert_Z \qquad \forall z\in Z, \\
\Vert Q_N g-g\Vert_X\leq \frac{C_{10}}{N^\gamma}\Vert g\Vert_Y \qquad \forall g\in Y.
\end{align*}
Assume moreover that $Q_N z\to z$ in $Z$ for every $z\in Z$ and that, for every $\varepsilon\in(0,\varepsilon_0]$, the subspace $V_N$ is invariant under the discrete intertwining structure used for $A_\varepsilon$, and $Q_N$ commutes with the corresponding intertwining operator. Assume finally that $S_\varepsilon^N = P_N S_\varepsilon R_N$ realizes the discrete graph norm, namely that $\Vert u\Vert_{Z_N}$ is uniformly equivalent to $\Vert u\Vert_{X_N} + \Vert S_\varepsilon^N u\Vert_{X_N}$ (which holds in the spectral case, where $V_N$ is spanned by eigenfunctions and $S_\varepsilon$ commutes with $Q_N$).

Then Assumption \ref{H_discretization} holds.
Moreover, if the continuous dissipativity hypothesis of Lemma \ref{lem_discrete_stab} is satisfied, then the strong discrete assumption \ref{H_Aepsilon_N} holds. 
\end{lemma}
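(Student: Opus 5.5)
The plan is to treat the two conclusions separately. First I recover Assumption \ref{H_discretization} from the projector structure. Then I obtain \ref{H_Aepsilon_N} by collecting Lemmas \ref{lem_H_regZ_N}, \ref{lem_discrete_stab} and \ref{lem_discrete_intertwining}, plus a check of the discrete Banach-space hypotheses \ref{H_Z}--\ref{H_S}.

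For \ref{H_discretization}, I apply Lemma \ref{lem_projector_reduction} with $P_N=Q_N$ and $R_N$ the inclusion $V_N\hookrightarrow Z$. This reduces everything to properties of $Q_N$ alone. Since $Q_N$ is an orthogonal projector in the Hilbert space $X$, we get $\Vert Q_N\Vert_{L(X)}\leq 1$. Together with the assumed uniform bound on $\Vert Q_N\Vert_{L(Z)}$, this gives \eqref{stab_PX_PZ} with $C_{\mathrm{stab}}=\max(1,C)$. The estimates \eqref{CV_X_Z} and \eqref{CV_X_Y} are hypotheses verbatim, and so is \eqref{CV_Z}. If only convergence on a dense subspace were known, Lemma \ref{lem_dense_extension_QN} would supply \eqref{CV_Z}.

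For \ref{H_Aepsilon_N}, I check the items one at a time.
\begin{itemize}
\item \ref{H_Z} on $(X_N,Z_N)$ is Remark \ref{rem_H_Z_N}. Density is trivial because both norms live on the same finite-dimensional space, so they are equivalent.
\item \ref{H_S} is exactly the assumed uniform equivalence between $\Vert u\Vert_{Z_N}$ and $\Vert u\Vert_{X_N}+\Vert S_\varepsilon^N u\Vert_{X_N}$. Strictly, \ref{H_S} asks for equality of norms. I would replace $\Vert\cdot\Vert_{Z_N}$ by this equivalent graph norm and absorb the equivalence constants into $C_4,C_6,r$; this keeps uniformity in $(\varepsilon,N)$ because the constants are uniform.
\item \ref{H_regZ} and \ref{H_f_Lip} come from Lemma \ref{lem_H_regZ_N}.
\item \ref{H_stab} comes from Lemma \ref{lem_discrete_stab}, using the orthogonality of $Q_N$ and the continuous dissipativity hypothesis. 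Because $A_\varepsilon^N[t,v]$ is an operator on a finite-dimensional space, generation is automatic, and the energy estimate gives $M=1$ uniformly, also for products of semigroups.
\item \ref{H_intertwining} comes from Lemma \ref{lem_discrete_intertwining}, whose commutation and invariance hypotheses are assumed here. Its resolvent-invariance part is trivial, since $Z_N=X_N=V_N$ as sets.
\end{itemize}

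The main obstacle is the graph-norm mismatch in \ref{H_S}, since the discrete operator $S_\varepsilon^N$ is $\varepsilon$-dependent. In the spectral case I would verify the norm equivalence directly. There $Q_N$ commutes with $S_\varepsilon$ and $V_N$ is invariant, so $S_\varepsilon^N u=S_\varepsilon u$ for $u\in V_N$, and $\Vert u\Vert_{X_N}+\Vert S_\varepsilon^N u\Vert_{X_N}$ is exactly the continuous graph norm restricted to $V_N$. A secondary point is continuity of $B_\varepsilon^N$ in $(t,v)$, which I would inherit from the continuity of $B_\varepsilon$ composed with the bounded maps $P_N$ and $R_N$.
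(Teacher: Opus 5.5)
Your proposal is correct and follows the paper's route. You obtain \ref{H_discretization} from Lemma \ref{lem_projector_reduction}, with $\Vert Q_N\Vert_{L(X)}\leq 1$ by orthogonality, and \ref{H_Aepsilon_N} by assembling Lemmas \ref{lem_H_regZ_N}, \ref{lem_discrete_stab} and \ref{lem_discrete_intertwining}; your extra checks of the discrete \ref{H_Z}--\ref{H_S} via the assumed graph-norm equivalence, of the trivial resolvent invariance, and of the continuity of $B_\varepsilon^N$ just make explicit what the paper's two-line proof leaves implicit.
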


\begin{proof}
The first part follows from Lemma \ref{lem_projector_reduction}.
The second part is exactly the content of Lemmas \ref{lem_H_regZ_N}, \ref{lem_discrete_stab} and \ref{lem_discrete_intertwining}.
\end{proof}

\begin{remark}
This lemma explains why the strong assumption \ref{H_Aepsilon_N} is natural in Galerkin and spectral settings: orthogonality gives the discrete stability, while invariance and commutation give the discrete intertwining relation.
This mechanism is absent in most local reconstruction schemes, which is why Proposition \ref{prop_yepsilonN} should be viewed as a convenient sufficient criterion rather than as the generic form of all discretizations.
\end{remark}

\subsection{Point sampling is not bounded on \texorpdfstring{$L^2$}{L2}}

\begin{lemma}\label{lem_point_sampling_not_bounded}
Let $X=L^2(\Omega)$, where $\Omega\subset\R^n$ has nonempty interior.
For any fixed $x_0\in\Omega$, the evaluation map $y\mapsto y(x_0)$ is not continuous on $L^2(\Omega)$.
In particular, point-sampling maps of the form $P_N y=(y(x_1^N),\dots,y(x_N^N))$ cannot satisfy Assumption \ref{H_discretization}.
\end{lemma}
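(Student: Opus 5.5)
The plan is to build an explicit sequence in $L^2(\Omega)$ that tends to $0$ in norm while its value at $x_0$ is fixed at $1$, which rules out continuity of $y\mapsto y(x_0)$. The one subtlety is that elements of $L^2$ are equivalence classes, so "evaluation at $x_0$" only makes sense on some dense subspace where a canonical representative exists, for instance $\mathscr{C}^0(\Omega)\cap L^2(\Omega)$. I will read the statement as saying that no bounded linear functional on $L^2(\Omega)$ agrees with $y\mapsto y(x_0)$ on continuous functions. Showing that the functional is unbounded on that dense subspace proves exactly this.

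The construction goes as follows. Fix $x_0\in\Omega$. For $\delta>0$ small, set $\phi_\delta(x)=\max\big(0,1-\Vert x-x_0\Vert_{\R^n}/\delta\big)$ restricted to $\Omega$. Each $\phi_\delta$ is continuous on $\Omega$, satisfies $\phi_\delta(x_0)=1$, and is supported in $B(x_0,\delta)$. This gives $\Vert\phi_\delta\Vert_{L^2(\Omega)}^2\leq\vert B(x_0,\delta)\vert=\vert\mathsf{B}_1\vert\delta^n\to0$.

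This works whether $x_0$ is interior or a boundary point. The nonempty-interior hypothesis is used only to guarantee that $L^2(\Omega)$ is a nontrivial space of functions, so that the statement is meaningful. Now suppose $\ell\in L(L^2(\Omega),\R)$ satisfied $\ell(y)=y(x_0)$ for all continuous $y$. Then $1=\ell(\phi_\delta)\leq\Vert\ell\Vert\,\Vert\phi_\delta\Vert_{L^2}\to0$, which is a contradiction.

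For the consequence, suppose a point-sampling map $P_Ny=(y(x_1^N),\dots,y(x_N^N))$ satisfied Assumption \ref{H_discretization}. Then $P_N\in L(X,V_N)$, so each coordinate functional would be a bounded linear functional on $L^2$ agreeing with evaluation at $x_i^N$ on continuous functions. The first part forbids this. The same sequence $\phi_\delta$ centered at $x_1^N$ shows directly that $\Vert P_N\phi_\delta\Vert\geq1$ while $\Vert\phi_\delta\Vert_X\to0$.

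I do not expect any real obstacle here. The only point requiring care is making the notion of evaluation on $L^2$ precise, as done above. The rest is a one-line volume estimate.
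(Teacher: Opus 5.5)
Your proof is correct and is essentially the paper's argument. The paper uses the rescaled bump $y_\delta(x)=\delta^{-n/2}\varphi\big((x-x_0)/\delta\big)$, whose $L^2$ norm stays bounded while $y_\delta(x_0)\to+\infty$; you instead fix $\phi_\delta(x_0)=1$ and let $\Vert\phi_\delta\Vert_{L^2}\lesssim\delta^{n/2}\to 0$, which is the same concentration argument up to multiplying by $\delta^{-n/2}$ (and your version additionally makes precise what evaluation on $L^2$ means, via continuous representatives, and works verbatim when $x_0\in\partial\Omega$).
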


\begin{proof}
Choose $\varphi\in C_c^\infty(\Omega)$, nonzero, supported in a small ball around $x_0$, and define $y_\delta(x)=\delta^{-\frac{n}{2}}\varphi\big(\frac{x-x_0}{\delta}\big)$.
Then $\Vert y_\delta\Vert_{L^2}$ remains bounded as $\delta\to 0$, while $y_\delta(x_0)=\delta^{-\frac{n}{2}}\varphi(0)\to +\infty$.
Therefore evaluation at $x_0$ is not a bounded linear functional on $L^2(\Omega)$.
\end{proof}

\begin{remark}\label{rem_related_particle_limits}
The examples discussed in this appendix should also be compared with several neighboring viewpoints in the literature.
The deterministic particle approximation of nonlinear diffusion in \cite{LionsMas-Gallic} may be seen as an early instance of a finite interacting approximation of a PDE.
The mean-field, hydrodynamic, and graph-limit perspective developed in \cite{PaulTrelat} is complementary to the present work: it analyzes passages from interacting finite-dimensional systems to continuum limits, whereas the present paper adds an upstream kernelization step from an unbounded PDE operator $A$ to a boundary-compatible interacting operator $A_\varepsilon$.
Finally, the graph-limit treatment of a nonlinear heat equation in \cite{Medvedev_SIMA2014} provides another example where interaction structures and continuum PDEs are linked through a limiting procedure.
\end{remark}

\let\OLDthebibliography\thebibliography
\renewcommand\thebibliography[1]{
  \OLDthebibliography{#1}
  \setlength{\parskip}{3pt}
  \setlength{\itemsep}{1pt plus 0.3ex}
}

\end{document}